\crefname{subsection}{subsection}{subsections}
\algrenewcommand{\algorithmiccomment}[1]{\hfill\textcolor{gray}{\texttt{\#} #1}}
\newcommand{\PD}{\RM{PD}}
\newcommand{\edge}{\mathrel{-}}
\newcommand{\ot}{\leftarrow}
\newcommand{\pa}{\RM{pa}}
\newcommand{\nd}{\RM{nd}}
\newcommand{\de}{\RM{de}}
\newcommand{\an}{\RM{an}}
\newcommand{\ch}{\RM{ch}}
\renewcommand{\top}{\RM{top}}
\newcommand{\cir}{\RM{cir}}
\newcommand{\vcr}{\RM{vcr}}
\newcommand{\ecr}{\RM{ecr}}
\newcommand{\vc}{\RM{vc}}
\newcommand{\ec}{\RM{ec}}
\newcommand{\Real}{\mathbb{R}}
\newcommand{\PosReal}{\mathbb{R}_{>0}}
\DeclareMathOperator{\im}{im}
\DeclareMathOperator{\diag}{diag}
\DeclareMathOperator{\adj}{adj}
\DeclareMathOperator{\id}{id}
\DeclareMathOperator{\Var}{Var}
\newcommand{\fa}{\RM{fam}}
\title{Colored Gaussian directed acyclic graphical models}
\author{Tobias Boege}
\author{Kaie Kubjas}
\author{Pratik Misra}
\author{Liam Solus}
\address[T.~Boege \& L.~Solus]{Department of Mathematics, KTH Royal Institute of Technology, Sweden}
\email{post@taboege.de, solus@kth.se}
\address[K.~Kubjas]{Department of Mathematics and Systems Analysis, Aalto University, Finland}
\email{kaie.kubjas@aalto.fi}
\address[P.~Misra]{TUM School of Computation, Information and Technology, Technical University of Munich, Germany}
\email{pratik.misra@tum.de}
\date{\today}
\subjclass[2020]{62H22 (primary) 62R01, 62D20, 13C70, 13P25 (secondary)}
\keywords{%
  graphical model,
  Bayesian network,
  partial homoscedasticity,
  partial homogeneity,
  Markov property,
  causal discovery,
  causal community detection%
}
\begin{document}

\begin{abstract}
We study submodels of Gaussian DAG models defined by partial homogeneity constraints imposed on the model error variances and structural coefficients.  We represent these models with colored DAGs and investigate their properties for use in statistical and causal inference.  Local and global Markov properties are provided and shown to characterize the colored DAG model.  Additional properties relevant to causal discovery are studied, including the existence and non-existence of faithful distributions and structural identifiability.  Extending prior work of Peters and Bühlmann and Wu and Drton, we prove structural identifiability under the assumption of homogeneous structural coefficients, as well as for a family of models with partially homogeneous structural coefficients.  The latter models, termed BPEC-DAGs, capture additional causal insights by clustering the direct causes of each node into communities according to their effect on their common target.  An analogue of the GES algorithm for learning BPEC-DAGs is given and evaluated on real and synthetic data.  Regarding model geometry, we provide a proof of a conjecture of Sullivant which generalizes to colored DAG models, colored undirected graphical models and directed ancestral graph models.  The proof yields a tool for identification of Markov properties for any rationally parameterized model with globally, rationally identifiable parameters.
\end{abstract}

\maketitle

\section{Introduction}
Directed acyclic graphs (DAGs) and their associated DAG models are fundamental to the field of causal inference; see for instance \cite{drton2018algebraic, koller2009probabilistic, maathuis2018handbook, Causality, peters2017elements}.
Although defined nonparametrically, a substantial amount of research focuses on DAG models for specified families of parametric distributions, with a popular choice being the Gaussian family.
A Gaussian DAG model is a linear structural equation model with independent, normally distributed errors where the structural equations are specified by the edges of the associated DAG.
Each edge $i \rightarrow j$ of the DAG is assigned a real-valued parameter $\lambda_{ij}$ serving as its associated structural coefficient.
When unconcerned with first moments, we may assume that the error variables associated to each node $i$ in the DAG have mean $0$ and therefore contribute a single additional parameter $\omega_i > 0$, i.e., its error variance.

When interpreted causally, the edges $i \rightarrow j$ of the DAG are taken to represent direct causal relations with the structural coefficient $\lambda_{ij}$ interpreted as the direct causal effect of the variable $X_i$ on $X_j$.
From the perspective of inference, two natural tasks arise: the first being to identify the direct causal relations from a random sample from the joint distribution contained in the DAG model, and the second being to identify the causal effects $\lambda_{ij}$.
In the case of Gaussian DAG models, the latter problem has a well-known solution, as the model is known to satisfy global rational identifiability \cite{drton2018algebraic}.
On the other hand, it is also well-known that different DAGs can represent the same collection of linear SEMs, which implies that the underlying causal DAG may not be identifiable from observational data alone.
This is a phenomenon termed \emph{Markov equivalence} of DAGs, which occurs more generally in the non-parametric setting.

Among the many advantages of DAG models is their admittance of local and global Markov properties, which, in the parametric setting, amount to a family of polynomial constraints that define the model.
By way of these constraints, several characterizations of Markov equivalence have been obtained \cite{AMPEquivalence, chickering2013transformational, VermaPearl}, and the corresponding graphical constraints have been utilized in the development of \emph{causal discovery algorithms} which are used to estimate the Markov equivalence class of the data-generating DAG from a random sample; see, for example, \cite{chickering2002optimal, solus2021consistency, spirtes1991algorithm}.

A natural consideration for these causal discovery algorithms is to identify the conditions under which they are consistent.
The conventional assumption under which we expect such an algorithm to be consistent is known as \emph{faithfulness}, which assumes that the data-generating distribution satisfies precisely the set of constraints associated to the global Markov property of the causal DAG.
An important feature of the Gaussian DAG models is that a generic distribution in the model is faithful to its DAG \cite{spirtes2000causation}, meaning that standard causal discovery algorithms will consistently estimate its Markov equivalence class (MEC).

However, from the causal perspective, an accurate estimate of the MEC of the causal DAG remains less than ideal, as it often leaves the direction of multiple edges in the graph undetermined; meaning that we cannot recover the direction of causation.
Hence, a substantial amount of research has focused on methods for determining the true causal DAG from within its MEC.
The gold standard approach, is to use available experimental data, which, depending on the nodes targeted, may completely or only partially refine the MEC \cite{hauser2012characterization, yang2018characterizing}.
Such methods are only applicable in situations where experimental data is available, which is often expensive or even unethical to obtain.

Alternatively, a growing body of research has focused on using additional parametric assumptions on the distribution to achieve \emph{structural identifiability}; i.e., the identification of the true DAG defining the data-generating SEM.
Methods observed to yield structural identifiability include, among others, using linear models with non-Gaussian errors \cite{shimizu2006linear, shimizu2014lingam}, using nonlinear models with additive noise \cite{hoyer2008nonlinear}, and in the Gaussian linear SEM setting, imposing homoscedasticity \cite{peters2014identifiability} or partial homoscedasticity \cite{wu2023partial} constraints.

The methods of \cite{peters2014identifiability, wu2023partial} obtain structural identifiability for Gaussian DAG models by imposing equality constraints $\omega_i = \omega_j$ on the error variances in the model.
Wu and Drton consider specifically \emph{partial} homoscedasticity constraints, in which nodes in the DAG are partitioned into classes in which their associated error variances are all equal.
To represent these constraints graphically, they color vertices in the DAG the same whenever their error variances are assumed to be equal.
This utilizes a special case of the more general representation of a \emph{colored DAG}, recently introduced in \cite{makam2022symmetries}, in which nodes are colored the same whenever $\omega_i = \omega_j$ and edges are colored the same whenever $\lambda_{ij} = \lambda_{k\ell}$.
In \cite{peters2014identifiability}, the authors motivated their homoscedasticity assumption as being applicable in situations where the variables are derived from similar domains; analogously one may interpret edge colors as representing similar causal effects.

\subsection*{Our contributions}
Based on the recent activity around colored Gaussian DAG models, we establish in this paper some basic properties of these models that will be of general use for their emerging applications. Our aims are three-fold:
The first is to establish extensions of the fundamental properties for (uncolored) Gaussian DAG models to the recently introduced colored DAG models of \cite{makam2022symmetries}.
In \Cref{sec:markovproperties}, we derive local and global Markov properties for the colored Gaussian DAG models.
In direct analogy to the uncolored Gaussian DAG models, we prove that these Markov properties each provide an alternative definition of the model via a collection of polynomial constraints satisfied by every distribution in the model.
We additionally provide some fundamental geometric properties of colored DAG models, deriving the model dimension and observing that each colored DAG model is a smooth submanifold of the positive definite cone (\Cref{thm:Smoothness}).
These properties allow for the use of standard techniques from large-sample asymptotic theory when performing likelihood ratio tests \cite{DrtonSmooth}. We also define the colored analogue of the conditional independence ideal for colored DAGs and use it to obtain an algebraic connection between the conditional independence ideal and the vanishing ideal. In the process, we prove a conjecture posed by Sullivant in \cite{Sullivant} and provide a generalized version of the same for undirected graphs and directed ancestral graphs.

Second, in \Cref{sec: model equivalence}, we investigate the existence of faithful distributions in colored DAG models and structural identifiability.
In \Cref{subsec: faithfulness}, we make precise the notion of faithfulness to a colored DAG and show that when a model has only colored edges or colored vertices then a generic distribution in the model will be faithful to its DAG.
In contrast to the uncolored setting, we observe that there exist colored DAG models that do not contain faithful distributions.
In \Cref{subsec: model equivalence edge}, we prove structural identifiability results providing edge-colored analogues to the identifiability results of both \cite{peters2014identifiability} and \cite{wu2023partial}.
Namely, we show that structural identifiability is obtained when all structural coefficients $\lambda_{ij}$ are equal, as well as in a special case where edges with equal structural coefficients are assumed target the same node.
The latter condition provides us with a family of colored DAG models which can be interpreted as clustering the direct causes of each node in the graph into \emph{causal communities} based on similar causal effects.
In \Cref{sec:causaldiscovery}, we provide an analogue of the GES causal discovery algorithm \cite{chickering2002optimal}.
Our algorithm provides estimates of the causal DAG together with additional information on how the causes are clustered into causal communities.
The method is evaluated on both real and synthetic data, where our algorithm generally appears to outperform GES, especially when learning dense models, while also offering the additional benefits of causal community identification and structurally identifiable DAG estimates.

Our third main contribution arises from our proof of the conjecture of Sullivant \cite{Sullivant} regarding the structure of uncolored Gaussian DAG models for several families of graphical models.
The conjecture was motivated by an effort to understand how the set of polynomial constraints defining the model via its Markov properties relates to the parametric definition of the model. %
Specifically, a \emph{Markov property} is a set of constraints characterizing a parametrically defined model that may be used to test data against the model (e.g., as done in basic constraint-based causal discovery algorithms).
In the Gaussian setting, both the parametric and constraint-based definitions of the DAG model have polynomial interpretations, and understanding their relation can provide insights into testable model constraints for rationally parametrized statistical models with no known Markov property.
The original conjecture of Sullivant for uncolored Gaussian DAG models was addressed in~\cite{SaturationProof}, however, there are inaccuracies in the details of the proof.
We provide a short proof of this conjecture that proves the result in the more general context of Gaussian colored DAG models, colored undirected graphical models \cite{UndirectedColored, lauritzen1996graphical}, as well as directed ancestral graph models \cite{richardson2002ancestral}.
Our proof method further yields a tool (\Cref{lemma:Saturation}) for identifying a Markov property (e.g., a set of testable, model-characterizing constraints) for rationally parameterized statistical models with globally, rationally identifiable parameters (see \Cref{rem:MarkovProperty}).
In \Cref{sec:discussion}, we end with a brief summary of future directions for further development of the family of colored Gaussian DAG models as motivated by the results derived in this paper.

\section{Preliminaries}
\label{sec: preliminaries}
This section summarizes basic notation and results for Gaussian DAG models that will be used throughout the paper.
For readers familiar with the basic theory of graphical models, this section serves mainly as a reference for notation.

\subsection{Graph theory}
\label{subsec: graphs}

Let $G=(V,E)$ be a directed acyclic graph (DAG) on vertex set $V$ with edges~$E$. We usually assume $V=[p] \defas\{1,\ldots, p\}$. An edge $i \to j$ is given by an ordered pair $(i,j)$ which we often denote as $ij$ for brevity. %
A \emph{topological ordering} of the DAG $G$ is a linear ordering $\pi = \pi_1\ldots \pi_p$ of its vertices such that $i$ precedes $j$ in $\pi$ whenever $ij$ is an edge in $G$. The DAG $G$ is \emph{naturally ordered} if $\pi = 1\ldots p$ is a topological ordering.
If $ij$ is an edge in $G$, then $i$ is a \textit{parent} of $j$ and $j$ is a \textit{child} of $i$ in $G$. If there exists an edge between $i$ and $j$ in either direction, they are \emph{adjacent}.
A sequence of distinct vertices $(i_1,\ldots, i_m)$ such that $i_k$ and $i_{k+1}$ are adjacent for all $k \in [m-1]$ is called a \textit{path} in $G$.
A path is called \textit{directed} if the edges are directed as $i_k\rightarrow i_{k+1}$ for all $k \in [m-1]$. If there exists a directed path from $i$ to $j$ in $G$, then $i$ is called an \textit{ancestor} of $j$ and $j$ is called a \textit{descendant} of $i$. If $j \neq i$ and $j$ is not a descendant of $i$, then $j$ is called a \textit{nondescendant} of $i$. A vertex $i$ is a \textit{source} node in $G$ if there is no incoming edge from any other vertex $j$ to $i$ (i.e., edge of the form $ji$) in the edge set. Similarly, $i$ is called a \textit{sink} node if there is no outgoing edge from $i$ to any other vertex. As all these definitions are specific to a given DAG $G$, we use the notation $\pa_G(i), \ch_G(i), \an_G(i), \de_G(i)$, and  $\nd_G(i)$ to denote the parents, children, ancestors, descendants and nondescendants of $i$ in $G$. We further use $\overline{\de}_G(i)=\{i\}\cup \de_G(i)$ to denote the \textit{closure} of descendants.

The \textit{skeleton} of a DAG $G$ is defined as the undirected version of the DAG, i.e., if $ij$ is an edge in $G$, then $i \edge j$ is an edge in the skeleton of $G$. The edges $ij$ and $kj$ are said to form a \textit{v-structure} in $G$ if $i$ and $k$ are not adjacent in $G$. An edge $ij$ is said to be \textit{covered} in $G$ if $\pa_G(j) = \pa_G(i)\cup \{i\}$.
If a path contains edges of the form $ij$ and $kj$, then $j$ is said to be a \textit{collider} vertex within that path. A \textit{trek} in $G$ from a vertex $i$ to a vertex $j$ is a pair $\tau = (P_L,P_R)$, where $P_L$ is a directed path from some vertex $s$ to $i$ and $P_R$ is a directed path from the same vertex $s$ to $j$. Here, $s$ is called the \textit{top-most} vertex of the trek and we write $s = \top(\tau)$.
If $A$, $B$, and $C$ are disjoint subsets of $V$, then $C$ \emph{d-separates} $A$ and $B$ if every path in $G$ connecting a vertex $i \in A$ to a vertex $j \in B$ contains a vertex $k$ that is either a non-collider that belongs to $C$ or a collider that does not belong to $C$ and has no descendants that belong to~$C$.

\subsection{Matrix notation and Gaussian conditional independence}
\label{subsec: Gaussian CI}

We work with symmetric matrices $\Sigma$ whose rows and columns are indexed by the set of vertices $V$ of a graph, and their submatrices. The entries of $\Sigma$ are denoted by lowercase $\sigma_{ij}$ with $i, j \in V$. Subsets of the index set $V$ are usually denoted by $A, B, C, K, L, \ldots$ and elements by $i, j, k, \ell, \ldots$. We use two notational conventions: (1) the union $A \cup B$ of index sets is often abbreviated to $AB$, and (2) an index $i$ is used interchangeably with the singleton subset $\Set{i}$. For example, $iA$ and $A \setminus i$ should implicitly be understood as the sets $\Set{i} \cup A$ and $A \setminus \Set{i}$, respectively, where $i \in V$ and $A \subseteq V$.

Given a real symmetric $V \times V$ matrix $\Sigma = (\sigma_{ij})$ and sets $A,B\subseteq V$, we let $\Sigma_{A,B} = (\sigma_{ij})_{i\in A, j\in B}$ denote the $A \times B$ submatrix of $\Sigma$ with rows indexed by $A$ and columns indexed by~$B$. The determinant of $\Sigma$ is $|\Sigma|$.
When $|A| = |B| = m$, the submatrix determinant $|\Sigma_{A,B}|$ is called an \emph{$m$-minor} of $\Sigma$. If $A = B$, we use the shorthand notation $\Sigma_A = \Sigma_{A,A}$ and the determinant $|\Sigma_A|$ is a \emph{principal minor}. A matrix is positive definite if all of its principal minors are positive. We denote the set of positive definite $V \times V$ matrices by~$\PD^V$.

Let $X \sim \mathrm{N}(\mu, \Sigma)$ be a multivariate Gaussian random vector with mean $\mu \in \BB R^V$ and covariance matrix $\Sigma \in \PD^V$. The conditional independence relations among the components of $X$ are completely determined by certain minors of~$\Sigma$. Namely, the following equivalence holds for all pairwise disjoint $I, J, K \subseteq V$:
\begin{align*}
  \label{eq:CI} \tag{$\CIperp$}
  \CI{X_I,X_J|X_K} \quad\Leftrightarrow\quad \text{$\Sigma_{IK,JK}$ has rank $|K|$}.
\end{align*}
See for example \cite[Proposition~4.1.9]{Sullivant} for a proof. Of special interest is the situation where $|I| = |J| = 1$. By \cite[Lemma~2.2]{Studeny}, these \emph{elementary} conditional independence relations completely characterize all conditional independences in~$X$. In this case the rank condition above simplifies to the vanishing of a single determinant of the form $|\Sigma_{ij|K}| \defas |\Sigma_{iK,jK}|$ which we call an \emph{almost-principal minor}.
We frequently use the Schur complement expansion of a determinant; cf.~\cite{Zhang}. In particular, for an almost-principal minor, this expansion is
$|\Sigma_{ij|K}| = |\Sigma_K|\left( \sigma_{ij} - \Sigma_{i,K} \, \Sigma_K^{-1} \, \Sigma_{K,j}\right)$.

\subsection{Gaussian DAG models}
\label{subsec: SEMs}
A \emph{linear structural equation model (SEM)} consists of a random vector $X = (X_i)_{i \in V}$ satisfying the relation
\[
X = \Lambda^T X + \eps,
\]
where $\eps = (\eps_i)_{i\in V}$ is a noise vector and $\Lambda = (\lambda_{ij})$ is a $V\times V$ real matrix. We work in the Gaussian setting where the error $\eps$ has mutually independent and normally distributed components, i.e., $\eps \sim \mathrm{N}(0, \Omega)$ where $\Omega = \diag((\omega_i)_{i \in V})$ is a diagonal matrix with positive diagonal.
The nonzero entries in the matrix $\Lambda$ determine the dependence relations (sometimes called the causal structure) amongst the variables in the system.
In this paper, we assume that this causal structure is representable by a DAG $G$; that is, we assume the matrix $\Lambda$ can be brought into strictly upper triangular form by a simultaneous permutation of its rows and columns (corresponding to a topological ordering of the vertices of $G$). In this case, an entry $\lambda_{ij} \neq 0$ corresponds to a directed edge $i\rightarrow j$ in~$G$. The value $\lambda_{ij}$ is sometimes called the \emph{causal effect} of $i$ on $j$, as it encodes the direct influence of $X_i$ on $X_j$ via the structural equations.

If $G$ is a DAG, the covariance matrix of a random vector in the linear structural equation model associated to $G$ is easily derived. Namely, for any choice of $\Omega \in \PosReal^V$ and $\Lambda$ in
\[
\Real^E = \{\Lambda = (\lambda_{ij}) \in\Real^{V\times V} : \lambda_{ij} = 0 \textrm{ whenever } ij\notin E\}
\]
the covariance matrix $\Sigma$ of $X$ is given by the image under the parametrization map
\begin{equation*}
    \begin{split}
        \phi_G: \PosReal^V \times \Real^E &\longrightarrow \PD^V, \\
                (\Omega, \Lambda) &\longmapsto (\BBm1_V - \Lambda)^{-T} \, \Omega \, (\BBm1_V - \Lambda)^{-1}.
    \end{split}
\end{equation*}
The \emph{Gaussian DAG model}, denoted $\CC M(G)$, is defined to be the collection of multivariate normal distributions with covariance matrix lying in the image of the map $\phi_G$.
Since we assume all errors have mean $0$, each covariance matrix corresponds to a unique distribution in the model, so we identify the set of distributions $\CC M(G)$ with the set of covariance matrices $\im(\phi_G)$; that is,  $\CC M(G) = \im(\phi_G)$.

Gaussian DAG models are well-understood, and known to admit several properties that are useful for inference.
For instance, the dimension of the model $\CC M(G)$ for $G = (V,E)$ is known to be $|V| + |E|$, and %
$\CC M(G)$ is a smooth submanifold of the positive definite cone.
This ensures that standard asymptotic theory for hypothesis testing can be used when performing statistical inference with these models \cite{DrtonSmooth}.

Gaussian DAG models also enjoy several properties relevant to the field of causality.
For instance, $\CC M(G)$ can be characterized as the set of distributions satisfying a collection of conditional independence relations specified by a so-called \emph{Markov property} with respect to $G = (V,E)$.
We say that a distribution $X \sim \textrm{N}(0,\Sigma)$ satisfies the
\begin{enumerate}
    \item \emph{local Markov property} with respect to $G$ if $\CI{X_i,X_{\nd_G(i)\setminus\pa_G(i)}|X_{\pa_G(i)}}$ for all $i\in V$;
    \item \emph{global Markov property} with respect to $G$ if $\CI{X_I,X_J|X_K}$ whenever $I$ and $J$ are d-separated given $K$ in $G$;
    \item \emph{ordered pairwise Markov property} with respect to $G$ if $\CI{X_i,X_j|X_{1,\ldots, j-1}}$ for all $i < j$ where $ij\notin E$.
\end{enumerate}

A fundamental result states that a distribution lies in $\CC M(G)$ if and only if it satisfies any one of the above Markov properties \cite{lauritzen1996graphical}.
\begin{theorem}
    \label{thm:uncoloredMP}
    Let $G = (V, E)$ be a DAG and $\Sigma \in \PD^V$. The following are equivalent:
    \begin{enumerate}[noitemsep, itemsep=0.3em]
        \item\label{thm:uncoloredMP:model} $\Sigma\in \CC M(G)$,
        \item\label{thm:uncoloredMP:local} $\Sigma$ satisfies the local Markov property with respect to $G$,
        \item\label{thm:uncoloredMP:global} $\Sigma$ satisfies the global Markov property with respect to $G$, and
        \item\label{thm:uncoloredMP:ordered} $\Sigma$ satisfies the ordered pairwise Markov property with respect to $G$.
    \end{enumerate}
\end{theorem}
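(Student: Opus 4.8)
The plan is to prove the four statements equivalent by establishing the cycle $(\ref{thm:uncoloredMP:model}) \Rightarrow (\ref{thm:uncoloredMP:global}) \Rightarrow (\ref{thm:uncoloredMP:local}) \Rightarrow (\ref{thm:uncoloredMP:ordered}) \Rightarrow (\ref{thm:uncoloredMP:model})$. Before starting, I would relabel the vertices along a topological ordering so that $G$ is naturally ordered; this makes every $\Lambda \in \Real^E$ strictly upper triangular and identifies $\{1,\ldots,j-1\}$ with a set containing $\pa_G(j)$, while leaving all four conditions unchanged, since d-separation, the local and ordered pairwise constraints, and the parametrization $\phi_G$ are all equivariant under a simultaneous permutation of rows and columns.

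For $(\ref{thm:uncoloredMP:model}) \Rightarrow (\ref{thm:uncoloredMP:global})$, I would begin from the structural equations $X = \Lambda^T X + \eps$ and note that the induced density factorizes recursively as $\prod_i p(x_i \mid x_{\pa_G(i)})$. The substantive content is then the soundness of d-separation: whenever $K$ d-separates $I$ and $J$ in $G$, this factorization must force $\CI{X_I,X_J|X_K}$. In the Gaussian setting this can be recast through the trek rule for $\Sigma = \phi_G(\Omega,\Lambda)$, where a d-separating set $K$ yields a trek system bounding $\operatorname{rank}(\Sigma_{IK,JK})$ by $|K|$ and hence the rank condition in \eqref{eq:CI}. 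I expect this to be the main obstacle, being the one step that demands a genuine graph-theoretic argument rather than a direct manipulation of minors or probabilities.

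The implication $(\ref{thm:uncoloredMP:global}) \Rightarrow (\ref{thm:uncoloredMP:local})$ is then immediate, since for each $i$ the set $\pa_G(i)$ d-separates $i$ from $\nd_G(i)\setminus\pa_G(i)$, so the local constraints are instances of the global ones. For $(\ref{thm:uncoloredMP:local}) \Rightarrow (\ref{thm:uncoloredMP:ordered})$, I would fix $i<j$ with $ij\notin E$; in the natural ordering one has $\{1,\ldots,j-1\}\subseteq\nd_G(j)$ and $\pa_G(j)\subseteq\{1,\ldots,j-1\}$, so the local statement at $j$ reads $\CI{X_j,X_{\nd_G(j)\setminus\pa_G(j)}|X_{\pa_G(j)}}$. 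Applying decomposition to shrink the middle set to $\{1,\ldots,j-1\}\setminus\pa_G(j)$ and then weak union to move all of it except $X_i$ into the conditioning set produces $\CI{X_i,X_j|X_{\{1,\ldots,j-1\}\setminus i}}$, the desired ordered pairwise constraint; these semigraphoid steps hold for every distribution.

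Finally, for $(\ref{thm:uncoloredMP:ordered}) \Rightarrow (\ref{thm:uncoloredMP:model})$ I would invoke that every $\Sigma\in\PD^V$ has a unique factorization $\Sigma=(\BBm1_V-\Lambda)^{-T}\,\Omega\,(\BBm1_V-\Lambda)^{-1}$ with $\Lambda$ strictly upper triangular and $\Omega$ a positive diagonal matrix, so that $\Sigma\in\CC M(G)$ precisely when the recovered $\Lambda$ lies in $\Real^E$, i.e.\ $\lambda_{ij}=0$ for all $ij\notin E$. The dictionary I would exploit is that $\lambda_{ij}$ is the coefficient of $X_i$ in the regression of $X_j$ on $X_{\{1,\ldots,j-1\}}$, so that, with $K=\{1,\ldots,j-1\}\setminus i$, the Schur complement expansion $|\Sigma_{ij|K}|=|\Sigma_K|(\sigma_{ij}-\Sigma_{i,K}\Sigma_K^{-1}\Sigma_{K,j})$ shows $\lambda_{ij}=0$ exactly when the almost-principal minor $|\Sigma_{ij|K}|$ vanishes, which is the ordered pairwise constraint for the pair $(i,j)$. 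Hence $(\ref{thm:uncoloredMP:ordered})$ forces $\lambda_{ij}=0$ at every non-edge, placing $\Sigma$ in $\im(\phi_G)=\CC M(G)$ and closing the cycle; this direction is mechanical once the regression interpretation of the Cholesky factor is established.
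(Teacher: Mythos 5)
The paper does not actually prove this theorem: it is stated as a classical result with a citation to Lauritzen, so there is no in-paper argument to compare against. Judged on its own, your cycle $(1)\Rightarrow(3)\Rightarrow(2)\Rightarrow(4)\Rightarrow(1)$ is the standard textbook route and each link is sound. The reduction to a naturally ordered DAG is legitimate since all four conditions are equivariant under simultaneous permutation; $(3)\Rightarrow(2)$ follows because $\pa_G(i)$ d-separates $i$ from $\nd_G(i)\setminus\pa_G(i)$; $(2)\Rightarrow(4)$ is a correct application of decomposition and weak union, which hold for arbitrary distributions; and $(4)\Rightarrow(1)$ correctly exploits the unique factorization $\Sigma=(\BBm1_V-\Lambda)^{-T}\Omega(\BBm1_V-\Lambda)^{-1}$ with $\Lambda$ strictly upper triangular, together with the fact (this is exactly the identity \eqref{eq:lambda} of \Cref{lemma:Ident} applied to the complete naturally ordered DAG) that the regression coefficient $\lambda_{ij}$ vanishes iff $|\Sigma_{ij|\{1,\dots,j-1\}\setminus i}|=0$, so the ordered pairwise constraints force $\Lambda\in\Real^E$. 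The one place where real work remains is the one you flag yourself: soundness of d-separation in $(1)\Rightarrow(3)$ is asserted rather than proved, and either of your two suggested routes (the recursive factorization plus the nonparametric soundness theorem, or the trek-rule rank bound on $\Sigma_{IK,JK}$) would need to be carried out in detail to make this a complete proof. As a proposal it is correct and appropriately identifies where the genuine content lies.
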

The equivalence of \eqref{thm:uncoloredMP:model}--\eqref{thm:uncoloredMP:global} in \Cref{thm:uncoloredMP} in fact does not require Gaussianity; e.g., it holds in the general (nonparametric) setting. %
The global Markov property is referred to as such since it describes all conditional independence relations that a distribution is required to satisfy if it belongs to the model $\CC M(G)$.
Namely, it is \emph{complete}, meaning that any conditional independence relation $\CI{X_I,X_J|X_K}$ satisfied by all $\Sigma\in \CC M(G)$ is represented by a d-separation relation in $G$; i.e., $I$ and $J$ are d-separated given $K$ in $G$.
The fact that the global Markov property is complete for Gaussian DAG models can be seen from the existence of distributions in the model $\CC M(G)$ that are faithful to $G$.
Namely, a distribution $\Sigma\in \CC M(G)$ is said to be \emph{faithful} to $G$ if $I$ and $J$ are d-separated given $K$ in $G$ whenever $\CI{X_I,X_J|X_K}$.

The completeness of the global Markov property allows us to recover a characterization of the DAGs~$G$ and $H$ that satisfy $\CC M(G) = \CC M(H)$ via purely graph-theoretic means.
We say that $G$ and $H$ are \emph{Markov- (or model-) equivalent} if $\CC M(G) = \CC M(H)$, and we call the set of all DAGs Markov equivalent to $G$ its \emph{Markov equivalence class (MEC)}.
\begin{theorem}[\cite{VermaPearl}]
    \label{thm:vp}
    Two graphs $G$ and $H$ are Markov equivalent if and only if they have the same skeleton and v-structures.
\end{theorem}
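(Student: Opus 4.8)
The plan is to first reduce Markov equivalence to equality of d-separation relations, and then to prove the purely combinatorial equivalence that two DAGs induce the same d-separations exactly when they share a skeleton and v-structures. For the reduction I would use \Cref{thm:uncoloredMP}: the model $\CC M(G)$ consists precisely of those $\Sigma \in \PD^V$ satisfying the global Markov property, i.e.\ every conditional independence coming from a d-separation in $G$. Because $\CC M(G)$ contains a distribution faithful to $G$, the conditional independences that hold throughout $\CC M(G)$ are exactly the d-separations of $G$. Hence if $\CC M(G) = \CC M(H)$, a distribution faithful to $H$ lies in $\CC M(G)$ and must satisfy every d-separation of $G$, so the d-separations of $G$ are among those of $H$; by symmetry the two DAGs have identical d-separation relations, and the converse is immediate from \Cref{thm:uncoloredMP}. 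It therefore suffices to prove that $G$ and $H$ have the same d-separations if and only if they have the same skeleton and v-structures.

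I would prove the forward implication by recovering the skeleton and the v-structures from the d-separation relations alone. Two distinct vertices $i,j$ are adjacent if and only if no set d-separates them: the edge between adjacent vertices is a path with no internal vertex and so can never be blocked, while if $i,j$ are non-adjacent then, since a DAG cannot have each of $i,j$ a descendant of the other, one of them---say $j$---lies in $\nd_G(i)$, and the local Markov property gives $\CI{X_i, X_j | X_{\pa_G(i)}}$. Thus adjacency is determined by the d-separation relations and $G,H$ share a skeleton. For v-structures, fix a non-adjacent pair $i,k$ with a common neighbor $j$ and look at the length-two path $i \edge j \edge k$: if $j$ is a collider there, conditioning on $j$ opens this path, so $i,k$ are d-connected given every set containing $j$ yet remain separable by a set avoiding $j$ (for instance $\pa_G(i)$, which omits the child $j$); if $j$ is a non-collider, the path is open whenever $j$ is not conditioned and cannot be blocked elsewhere, so every separating set contains $j$. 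This gives the dichotomy that $j$ is a collider on $i \edge j \edge k$ if and only if some d-separator of $i,k$ avoids $j$---a statement about d-separations only. As $G$ and $H$ share both skeleton (hence common neighbors) and d-separations, they share all v-structures.

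The converse---that a common skeleton and common v-structures force identical d-separations---is where I expect the real work to lie. Here the aim is to show that any path active relative to $K$ in one DAG yields a path active relative to $K$ in the other, so that d-connection, and hence d-separation, transfers between $G$ and $H$. The obstruction is that a \emph{shielded} collider (a collider on the path whose two path-neighbors are adjacent, and which is therefore not a v-structure) may be oriented oppositely in $G$ and $H$. The key lemma to establish is that whenever $i,j$ are d-connected given $K$ there is an active path all of whose colliders are unshielded---i.e.\ genuine v-structures, shared by both graphs---proved by taking a shortest active path and bypassing each shielded collider triple $a \to c \leftarrow b$ through the edge joining its adjacent shoulders $a,b$, verifying that activeness is preserved. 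Once every collider on the path is a shared v-structure, the activation condition at each vertex refers only to the common skeleton, the common v-structures, and which descendants of path vertices lie in $K$; a parallel shortcutting argument realizes the collider activation certificates (a descendant in $K$) using shared structure as well. Carrying out this exchange analysis and the descendant bookkeeping is the crux; the remainder follows from \Cref{thm:uncoloredMP} and the reduction above.
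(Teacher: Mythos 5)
The paper offers no proof of this statement at all---it is imported verbatim from Verma and Pearl---so there is nothing internal to compare your route against. On its own merits, your reduction of Markov equivalence to equality of d-separation relations (via \Cref{thm:uncoloredMP} together with the existence of faithful distributions) is correct, and your forward direction is the standard, sound argument: adjacency of $i,j$ is equivalent to the non-existence of a separating set (using $\pa_G(i)$ or $\pa_G(k)$ as appropriate), and for a non-adjacent pair with common neighbour $j$, the middle vertex is a collider precisely when some separator omits $j$.

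The gap is in the converse, and it sits exactly where you place ``the crux.'' Your key lemma---that a shortest active path has only unshielded colliders---is true and the bypass argument can be made to work, but it is not strong enough for the transfer step, because the activation condition does \emph{not} ``refer only to the common skeleton, the common v-structures, and which descendants of path vertices lie in $K$.'' Two distinct problems: (i) a \emph{non-collider} triple $a \edge c \edge b$ on the path may be shielded, and then $c$ can be a (shielded) collider in $H$ without contradicting equality of v-structures, so the path is blocked in $H$. Concretely, take $G$ with edges $1\to 2$, $2\to 3$, $1\to 3$ and $H$ with edges $1\to 2$, $3\to 2$, $1\to 3$: same skeleton, neither has a v-structure, and the path $1\edge 2\edge 3$ has no colliders at all (so vacuously satisfies your lemma) and is active given $K=\emptyset$ in $G$, yet is blocked in $H$. (ii) The collider activation certificate $\de(c)\cap K\neq\emptyset$ is not shared structure either, since descendant sets differ between Markov-equivalent DAGs (already for $i\to j$ versus $j\to i$). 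Both defects are repairable---one must additionally eliminate shielded non-colliders and upgrade ``descendant in $K$'' to ``collider in $K$'' by rerouting, with a case analysis at the shoulders of each shortcut where a collider can become a non-collider lying in $K$ and vice versa---but that case analysis is the actual content of the hard direction, and your sketch asserts rather than supplies it. An alternative that avoids the path surgery entirely is to pass through Chickering's transformational characterization (a sequence of covered-edge reversals, each of which visibly preserves d-separations), though that theorem of course also requires proof.
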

The Markov properties and characterizations of Markov equivalence play a fundamental role in the problem of causal discovery, where one aims to learn the DAG structure $G$ from data.

By the \emph{trek rule} \cite[Proposition~14.2.13]{Sullivant}, a matrix $\Sigma = (\sigma_{ij})$ satisfies $\Sigma = \phi_G(\Omega, \Lambda)$ for some $(\Omega, \Lambda)\in \PosReal^V\times\Real^E$ (i.e., $\Sigma$ belongs to the model $\CC M(G)$) if and only if
\begin{align*}
  \label{eq:TrekRule} \tag{T}
  \sigma_{ij} = \sum_{\substack{\text{$\tau$ : trek} \\ \text{from $i$ to $j$}}} \omega_{\top(\tau)} \lambda^\tau,
\end{align*}
for all $i,j \in V$, where $\omega_{\top(\tau)} \lambda^\tau$ is the \emph{trek monomial} of~$\tau$: namely, if $\tau$ consists of two directed paths $s = i_n \to i_{n-1} \to \dots \to i_0 = i$ and $s = j_m \to j_{m-1} \to \dots \to j_0 = j$, then $\top(\tau) = s$ and $\lambda^\tau = \prod_{\ell=1}^n \lambda_{i_{\ell-1} i_{\ell}} \cdot \prod_{\ell=1}^m \lambda_{j_{\ell-1} j_{\ell}}$.

The DAG model $\CC M(G)$ also admits parameter identifiability results that are useful in causal inference.
It is well-known that the parameters $(\Omega, \Lambda)$ for which $\Sigma = \phi_G(\Omega, \Lambda)\in \CC M(G)$ are \emph{rationally identifiable}, meaning that there are rational functions of the $\sigma_{ij}$ coordinates yielding the $\omega_i$ and $\lambda_{ij}$ parameter values defining $\Sigma$. These rational functions are shown in \Cref{lemma:Ident}.

\subsection{Algebra preliminaries}
\label{subsec: algebra preliminaries}
Several results in this paper, including the structural identifiability results, do not yield to standard proof techniques from probability theory.
However, proofs may be obtained using techniques from algebra, for which we now recall the basics relevant to statistical problems.
Recall that Gaussian DAG models may be defined via conditional independence constraints coming from d-separations.
A conditional independence statement $\CI{X_i,X_j|X_K}$ for a Gaussian distribution is equivalent to a polynomial equation in its covariance matrix, namely the vanishing of the subdeterminant $|\Sigma_{ij|K}|$ introduced in \Cref{subsec: Gaussian CI}.
It is well-known that the parameters $\omega_i$ and $\lambda_{ij}$ of the structural equations can be recovered from the covariance matrix using rational functions (see also \Cref{subsec: parameter idenfitication} for more details).
This suggests to study Gaussian DAG~models via functions from the covariance matrices to the field of real numbers.
We now review some concepts from algebra which are required for our geometric~approach.
More details can be found in any book on commutative algebra such as \cite{CommAlgebraBook} and \cite{Kemper}.
\subsubsection{Polynomial rings}
We denote by $\BB R[x]:=\BB R[x_1,x_2,\ldots,x_n]$ the \emph{polynomial ring} with variables $x_1,x_2,\ldots,x_n$ and real coefficients.
For a given DAG $G = (V,E)$, we have two polynomial rings. The first is $\BB R[\Sigma]$ with variables $\sigma_{ij}$, where $i,j \in V$ and $\sigma_{ij} = \sigma_{ji}$ reflecting the symmetry of the covariance matrix. The second ring is $\BB R[\Omega,\Lambda]$ in variables $\omega_i$, where $i \in V$, and $\lambda_{ij}$, where $ij \in E$.
Given two affine spaces $\BB R^n$ and $\BB R^m$ with corresponding polynomial rings $\BB R[x] = \BB R[x_1, \ldots, x_n]$ and $\BB R[t] = \BB R[t_1, \ldots, t_m]$ and a map $f\colon \BB R^m \to \BB R^n$, its \emph{pullback} is the map $f^*\colon \BB R[x] \to \BB R[t]$ defined by $f^*(\alpha) = \alpha \circ f$ for $\alpha \in \BB R[x]$. This map is a \emph{ring homomorphism}, meaning that it satisfies $f^*(\alpha + \beta) = f^*(\alpha) + f^*(\beta)$ and $f^*(\alpha\beta) = f^*(\alpha) f^*(\beta)$ for all $\alpha, \beta \in \BB R[x]$.  For a Gaussian DAG model the parametrization $\phi_G\colon \mathbb{R}_{>0}^V \times \mathbb{R}^E \to \PD^V$ naturally has a pullback $\phi_G^*\colon \mathbb{R}[\Sigma] \to \mathbb{R}[\Omega, \Lambda]$. It sends any function $\alpha(\Sigma)$ whose input is a covariance matrix to the corresponding function $\alpha(\phi_G(\Omega, \Lambda))$ whose inputs are the parameters of the model.
\subsubsection{Ideals}
A subset $I \subseteq \BB R[x]$ is an \emph{ideal} if (1) $0 \in I$, (2) $f,g \in I$ imply $f+g \in I$, and (3) $f \in I$ and $h \in \BB R[x]$ imply $hf \in I$. To any set of polynomials $f_1,\ldots,f_s \in \BB R[x]$ we associate the set consisting of all finite sums of the form $\sum_{i=1}^s h_i f_i$ with $h_1,\ldots,h_s \in \BB R[x]$. This set is denoted $\langle f_1, \ldots, f_s\rangle$ and it is the smallest ideal containing $f_1, \ldots, f_s$. If $I = \langle f_1, \ldots, f_s\rangle$, then $\{f_1, \ldots, f_s\}$ is called a \emph{generating set} for~$I$.
The \emph{sum} of two ideals $I,J \subseteq \BB R[x]$ is the set $I+J=\{f+g:f \in I, g \in J\}$, which can be shown to be an ideal.
Lastly, an ideal $I \subseteq \BB R[x]$ is \emph{prime} if $fg \in I$ for some $f,g \in \BB R[x]$ necessitates that $f \in I$ or $g \in I$.

By rewriting the global Markov property via \eqref{eq:CI} we get the following algebraic description of the covariance matrices of distributions in a Gaussian DAG model via an ideal:
\begin{lemma}
    \label{lem: CI polynomials}
    The Gaussian DAG model $\CC M(G)$ is the collection of all Gaussian distributions with covariance matrices $\Sigma\in \PD^V$ on which all the polynomials in
    \[
    J_G = \ideal{\,
    \mbox{$(|K| + 1)$-minors of } \Sigma_{IK,JK} : \mbox{$I$ and $J$ are d-separated given $K$ in $G$}\,
}
    \]
    evaluate to zero.
\end{lemma}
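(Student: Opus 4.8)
The plan is to combine the global Markov characterization recorded in \Cref{thm:uncoloredMP} with the algebraic reformulation of conditional independence in \eqref{eq:CI}, and to observe that positive definiteness collapses the relevant rank condition into a pure minor-vanishing condition. First I would reduce the hypothesis ``every polynomial in $J_G$ vanishes at $\Sigma$'' to ``every generator of $J_G$ vanishes at $\Sigma$.'' This is the standard fact that the common vanishing locus of an ideal coincides with that of any generating set: if the generators vanish at $\Sigma$, then so does every polynomial combination $\sum_i h_i g_i$, and conversely the generators themselves lie in $J_G$. Thus the hypothesis is equivalent to requiring, for each triple $(I,J,K)$ with $I$ and $J$ d-separated given $K$, that all $(|K|+1)$-minors of the submatrix $\Sigma_{IK,JK}$ vanish.

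Next I would translate this minor condition into a rank condition. For a fixed triple $(I,J,K)$, the simultaneous vanishing of all $(|K|+1)$-minors of $\Sigma_{IK,JK}$ is exactly the statement that $\operatorname{rank}\Sigma_{IK,JK} \leq |K|$. The key point is that the reverse inequality holds automatically when $\Sigma \in \PD^V$: the block $\Sigma_{K,K} = \Sigma_K$ is a principal submatrix of both $\Sigma$ and $\Sigma_{IK,JK}$, sitting in the rows and columns indexed by $K$, and as a principal submatrix of a positive definite matrix it is itself positive definite, hence invertible, so $\operatorname{rank}\Sigma_{IK,JK} \geq |K|$. Therefore, under positive definiteness, ``all $(|K|+1)$-minors vanish'' is equivalent to ``$\operatorname{rank}\Sigma_{IK,JK} = |K|$,'' which by \eqref{eq:CI} is precisely the conditional independence statement $\CI{X_I,X_J|X_K}$.

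Assembling these equivalences, the hypothesis that $\Sigma$ is a common zero of $J_G$ becomes equivalent to $\CI{X_I,X_J|X_K}$ holding for every triple $(I,J,K)$ with $I$ and $J$ d-separated given $K$; that is, to $\Sigma$ satisfying the global Markov property with respect to $G$. By the equivalence of model membership and the global Markov property established in \Cref{thm:uncoloredMP}, this holds if and only if $\Sigma \in \CC M(G)$, which completes the argument.

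The only genuinely substantive step is the rank-versus-minors reconciliation in the second paragraph. A priori the rank equality ``$\operatorname{rank}\Sigma_{IK,JK} = |K|$'' demanded by \eqref{eq:CI} also requires some $|K|$-minor to be nonzero, a condition entirely absent from the definition of $J_G$, which imposes only the vanishing of the $(|K|+1)$-minors. It is precisely the positive definiteness of the principal block $\Sigma_K$ that supplies this missing lower bound on the rank for free, so that on the domain $\PD^V$ the two formulations coincide. I would therefore take care to state explicitly that the lemma is asserted only over $\PD^V$, as the equivalence would fail on an arbitrary symmetric matrix where $\Sigma_K$ could be singular.
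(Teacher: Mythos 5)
Your argument is correct and is exactly the one the paper intends: it derives the lemma by combining the global Markov characterization of \Cref{thm:uncoloredMP} with the rank reformulation \eqref{eq:CI}, using positive definiteness of $\Sigma_K$ to upgrade the vanishing of the $(|K|+1)$-minors to the exact rank condition. Your explicit attention to that last point is a welcome clarification of a step the paper leaves implicit.
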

The collection of polynomials $J_G$ is sometimes called the \emph{(global) conditional independence ideal} of the model $\CC M(G)$ \cite{AlgGeoOfGBN}.
As hinted in \Cref{lem: CI polynomials}, ideals~capture algebraic properties of collections of functions which evaluate to zero on a set of points.
For example, assume that $\alpha_1, \ldots, \alpha_s \in \mathbb{R}[\Sigma]$ all satisfy $\alpha_i(\Sigma_0) = 0$ for some covariance matrix~$\Sigma_0$.
Then every polynomial in the ideal $\langle \alpha_1, \ldots, \alpha_s\rangle$ will also evaluate to zero on~$\Sigma_0$.
Given a set of polynomials $F \subseteq \BB R[x]$, the \emph{vanishing locus} of $F$ or the \emph{affine variety} defined by $F$ is the set $\{a \in \BB R^n: f(a)=0 \quad \forall f \in F\}$. Conversely, given any set $\mathcal{M} \subseteq \BB R^n$, the set $\{ f \in \BB R[x] : f(a) = 0 \quad \forall a \in \mathcal{M}\}$ is an ideal and is called the \emph{vanishing ideal} of~$\mathcal{M}$.
In the context of statistics, the set $\CC M$ could be, for example, the set of covariance matrices in $\mathbb{R}^{V\times V}$ of the distributions in a Gaussian DAG model.
The ideal $J_G$ is generally not the same as the vanishing ideal of $\CC M(G)$, since the vanishing ideal contains \emph{all} polynomials vanishing on $\CC M$.
The ideal $J_G$ may only contain some of these polynomials, and hence may be missing polynomial constraints needed in certain statistical analyses.

A set is \emph{irreducible} if its vanishing ideal is prime.
Given an irreducible set $\mathcal{M} \subseteq \BB R^n$, we say that a property $P(a)$ holds \emph{generically} (or \emph{for a generic point}) on $\mathcal{M}$ if there exists a polynomial $f \in \BB R[x]$ which does not vanish on every point of $\mathcal{M}$ and such $\text{$P(a)$ fails}$ implies that $f(a) = 0$, so that $P(a)$ holds for all points of $\mathcal{M}$ outside of the vanishing locus of~$f$. By irreducbility and a dimension argument, this implies that $P(a)$ holds for almost all points in $\mathcal{M}$ with respect to the Lebesgue measure.
\subsubsection{Multiplicatively closed sets}
A set $S \subseteq \BB R[x]$ is \emph{multiplicatively closed} if (1) $1 \in S$ and (2) $f,g \in S$ implies $fg \in S$. With such a set one can define the \emph{localization} $S^{-1}\BB R[x]$ of $\BB R[x]$ at $S$ which consists of all rational functions $\frac{f(x)}{g(x)}$, where $f(x) \in \BB R[x]$ and $g(x) \in S$. This is a version of the polynomial ring where division by elements of~$S$ is allowed.
We usually apply this technique in cases where the elements of $S$ are known to never vanish on the set of points $\CC M \subseteq \BB R^n$.
For example, our set $\CC M$ may be the set of covariance matrices corresponding to the distributions in a Gaussian DAG model.
Since a covariance matrix $\Sigma$ in a Gaussian DAG model is positive definite, we may localize at the multiplicatively closed set generated by all the principal minors $|\Sigma_A|$ for $A \subseteq V$.
Relating this to ideals and their use in describing polynomials that vanish on a set of points $\CC M$,
if a product $fs$ is contained in an ideal $I$ (so $fs$ vanishes on all points in $\CC M$) and $s \in S$ is known to never vanish, then this means that $f$ should vanish. To incorporate this knowledge that $s$ never vanishes into the ideal, we have the following construction:

Given an ideal $I$ and a multiplicatively closed set $S$ in $\BB R[x]$, the \emph{saturation} of $I$ by $S$ is the ideal $I:S=\{f \in \BB R[x]: fg \in I \text{ for some } g \in S\}$.
One of our main results in \Cref{subsec:conj} gives a description of the vanishing ideal of a colored Gaussian DAG model as a saturation of a \emph{colored conditional independence ideal}.
This means that we know all vanishing polynomials on these models and, as a consequence, are able to tell when two polynomial functions of the covariance matrices are equal on a model.
This information can be used to prove structure identifiability results as we do in \Cref{subsec: model equivalence edge}.

\section{Markov properties for colored Gaussian DAG models} \label{sec:markovproperties}
\label{sec:coloredDAGmodels}

A \emph{colored DAG} is a triple $(V, E, c)$ consisting of a directed acyclic graph $G = (V, E)$ together with a \emph{coloring map} $c: V \sqcup E \to C$, where $C$ is a finite set and $c(V)\cap c(E) = \emptyset$.
We will often denote the colored DAG $(V, E, c)$ simply by $(G,c)$.
The image of a vertex or edge under this map is referred to as its \emph{color}. The coloring induces a partition on vertices and edges into \emph{color classes}. We denote the number of vertex and edge color classes by $\vc$ and $\ec$, respectively.
\begin{convention}
\label{conv: coloring}
When drawing colored DAGs, white nodes and black edges are ``uncolored,'' i.e., they belong to a color class containing exactly one element (themselves).  That is, a white node implicitly has a color different from every other node (including other white nodes) and similarly for black edges.
\end{convention}

For $\Sigma$ in the DAG model $\CC M(G)$, with corresponding structural equations
\begin{equation*}
\begin{split}
    X_i = \sum_{k\in\pa_G(i)}\lambda_{ki}X_k + \eps_i, \qquad i\in V
\end{split}
\end{equation*}
where $\eps_i\sim \textrm{N}(0,\omega_i)$, we say that $\Sigma$ is \emph{Markov} to the colored DAG $(G,c)$ if $\omega_i = \omega_k$ whenever $c(i) = c(k)$ and $\lambda_{ij} = \lambda_{k\ell}$ whenever $c(ij) = c(k\ell)$.

\begin{definition}
    \label{def:coloredDAGmodel}
    The \emph{colored Gaussian DAG model} $\CC M(G,c)$ for the colored DAG $(G,c)$ is the set of all $\Sigma\in \PD^V$ that are Markov to $(G,c)$.
    That is,
    \[
    \CC M(G,c) = \Set{\phi_{G,c}(\Omega, \Lambda) \in \PD^V : (\Omega, \Lambda)\in A(G,c)},
    \]
    where
    \begin{equation*}
        \begin{split}
            A(G,c) = \{\,
    (\Omega, \Lambda) \in \PosReal^V \times \Real^{V\times V} :{}&
    \lambda_{ij} = \lambda_{c(ij)} \, \,  \forall ij\in E, \\
    &\omega_i = \omega_{c(i)} \, \,  \forall i \in V, \\
    &\lambda_{ij} = 0 \, \,  \forall ij\notin E, \\
    &\mbox{and } \lambda_{c(ij)}\in \Real, \omega_{c(i)}\in \PosReal
    \,\}
        \end{split}
    \end{equation*}
    is the colored parameter space and
    \begin{equation*}
    \begin{split}
        \phi_{G,c}: A(G,c) &\longrightarrow \PD^V \\
                    (\Omega, \Lambda) &\longmapsto (\BBm1_V - \Lambda)^{-T} \, \Omega \, (\BBm1_V - \Lambda)^{-1}.
    \end{split}
\end{equation*}
\end{definition}

The colored Gaussian DAG model $\CC M(G,c)$ is therefore the image of the parametrization $\phi_{G,c}$ arising from $\phi_G$ (see \Cref{subsec: SEMs}) by replacing all occurrences of parameters in the same color class with a single parameter corresponding to the color class.

We introduce some further terminology that will be used in the paper.
Let $(G,c)$ be a colored DAG and $\mathbf{c}$ a color class for $(G,c)$.
The color class $\mathbf{c}$ is either a set of nodes or a set of edges of $G$.
If $i$ indexes the smallest vertex or $ij$ indexes the smallest edge in $\mathbf{c}$ with respect to the lexicographic ordering  from the right on the elements of $\mathbf{c}$, then $\sigma_{ii}$ and $\sigma_{ij}$, respectively, are called the \emph{base variables} for $\mathbf{c}$, and correspondingly $\omega_i$ and $\lambda_{ij}$ are the \emph{base parameters}.

The uncolored model $\CC M(G)$ is obtained as a special case of the colored model by choosing $C = V \sqcup E$ and $c$ to be the identity map, i.e., each vertex and edge has its own, unique color.
In this case, we say that the DAG is \emph{uncolored}.
Analogously, we say that $(G,c)$ is \emph{vertex-colored} if $c|_E$ is the identity and \emph{edge-colored} if $c|_V$ is the identity.
Note that $\CC M(G,c)\subseteq \CC M(G)$ for all colorings $c$ of $G$; that is, all colored DAG models are submodels of the corresponding uncolored DAG model.

\begin{example}
Consider the following colored DAGs:

\begin{minipage}{\linewidth}
\begin{minipage}{.3\linewidth}
\begin{center}
\begin{tikzpicture}[thick,scale=1]

 	 \node[circle, draw, fill=white, inner sep=1pt, minimum width=1pt] (1) at (0,0)  {$1$};
	   \node[circle, draw, fill=white, inner sep=1pt, minimum width=1pt] (2) at (1,0) {$2$};
          \node[circle, draw, fill=white, inner sep=1pt, minimum width=1pt] (3) at (2,0) {$3$};
          \node[circle, draw, fill=white, inner sep=1pt, minimum width=1pt] (4) at (3,0) {$4$};

          \draw[->,black]   (1) -- (2) ;
          \draw[->,black]   (2) -- (3) ;
          \draw[->,black]   (3) -- (4) ;
    \end{tikzpicture}
\end{center}
\end{minipage}
\begin{minipage}{.3\linewidth}
\begin{center}
\begin{tikzpicture}[thick,scale=1]

 	 \node[circle, draw, fill=red!50, inner sep=1pt, minimum width=1pt] (1) at (0,0)  {$1$};
	   \node[circle, draw, fill=white, inner sep=1pt, minimum width=1pt] (2) at (1,0) {$2$};
          \node[circle, draw, fill=red!50, inner sep=1pt, minimum width=1pt] (3) at (2,0) {$3$};
          \node[circle, draw, fill=white, inner sep=1pt, minimum width=1pt] (4) at (3,0) {$4$};

          \draw[->,blue!70!white]   (1) -- (2) ;
          \draw[->,black]  (2) -- (3) ;
          \draw[->,blue!70!white]   (3) -- (4) ;
    \end{tikzpicture}
\end{center}
\end{minipage}%
\begin{minipage}{.3\linewidth}
\begin{center}
\begin{tikzpicture}[thick,scale=1]

 	 \node[circle, draw, fill=red!50, inner sep=1pt, minimum width=1pt] (1) at (0,0)  {$1$};
	   \node[circle, draw, fill=green!50, inner sep=1pt, minimum width=1pt] (2) at (1,0) {$2$};
          \node[circle, draw, fill=red!50, inner sep=1pt, minimum width=1pt] (3) at (2,0) {$3$};
          \node[circle, draw, fill=yellow!50, inner sep=1pt, minimum width=1pt] (4) at (3,0) {$4$};

          \draw[->,blue!70!white]   (1) -- (2) ;
          \draw[->,orange]  (2) -- (3) ;
          \draw[->,blue!70!white]   (3) -- (4) ;
    \end{tikzpicture}
\end{center}
\end{minipage}
\end{minipage}

By the \Cref{conv: coloring}, the left DAG is uncolored, i.e., it has seven color classes which are all singletons. The corresponding colored Gaussian DAG model has seven independent parameters $\omega_1, \omega_2, \omega_3, \omega_4, \lambda_{12}, \lambda_{23}, \lambda_{34}$.
The colored DAG on the right imposes the additional constraints that $\omega_1 = \omega_3$ and $\lambda_{12} = \lambda_{34}$. By \Cref{conv: coloring}, it defines the same model as the colored DAG in the middle since nodes $2$ and $4$ and edge $2 \to 3$ on the right have singleton color classes.
\end{example}

In this section, we establish a local and a global Markov property for colored Gaussian DAG models analogous to the classic (uncolored) case in~\Cref{subsec: SEMs}.
To do so, we use a characterization of the rational functions of the covariance parameters $\sigma_{ij}$ that may be used to identify the error variances $\omega_i$ and structural coefficients $\lambda_{ij}$.

\subsection{Parameter identification}
\label{subsec: parameter idenfitication}

Let $G = (V,E)$ be a DAG and let $\Sigma \in \CC M(G)$. Then there exist $(\Omega, \Lambda)\in \PosReal^{V} \times \Real^E$ such that $\phi_G(\Omega, \Lambda) = \Sigma$. Since $G$ is acyclic, the parametrization map is injective or, in other words, the parameters $\Omega$ and $\Lambda$ are \emph{globally identifiable} from the covariance matrix $\Sigma$ \cite[Theorem~16.2.1]{Sullivant}, \cite[Section~3]{NestedDeterminants}. Moreover, each parameter can be expressed as a rational function in~$\Sigma$ whose denominator is a principal minor and therefore nonzero on~$\PD^V$. The next lemma shows the explicit formulas for parameter recovery.

\begin{lemma} \label{lemma:Ident}
Let $G$ be a DAG and $\phi_G$ its trek rule parametrization. For $\Sigma = \phi_G(\Omega, \Lambda) \in \CC M(G)$, the $\Omega$ and $\Lambda$ parameters are recovered via
\begin{align}
\label{eq:omega} \tag{$\omega$} \omega_i &= \RM{Var}(X_i \mid X_{\pa(i)}) = \frac{|\Sigma_{\pa(i) \cup i}|}{|\Sigma_{\pa(i)}|}, \\
\label{eq:lambda} \tag{$\lambda$} \lambda_{ij} &= \frac{\RM{Cov}(X_i, X_j \mid X_{\pa(j) \setminus i})}{\RM{Var}(X_i \mid X_{\pa(j) \setminus i})} = \frac{|\Sigma_{ij|\pa(j) \setminus i}|}{|\Sigma_{\pa(j)}|}.
\end{align}
\end{lemma}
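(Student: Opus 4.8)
The plan is to read both formulas directly off the structural equations $X_i = \sum_{k\in\pa(i)}\lambda_{ki}X_k + \eps_i$ by exploiting the independence structure of the errors, and then to convert the resulting conditional moments into determinant ratios using the Schur-complement identity for almost-principal minors recorded in \Cref{subsec: Gaussian CI}. The one probabilistic fact underlying everything is that, because $G$ is acyclic and the $\eps_k$ are mutually independent, $X_i$ is a function of $\{\eps_k : k\in\an(i)\cup\{i\}\}$ alone. Consequently $\eps_i$ is independent of $X_A$ for any set $A$ of nondescendants of $i$; in particular $\eps_i$ is independent of $X_{\pa(i)}$, and $\eps_j$ is independent of $X_{\pa(j)}$.

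First I would prove the error-variance formula \eqref{eq:omega}. Since $\eps_i$ is independent of $X_{\pa(i)}$, the structural equation exhibits $X_i$ as a linear function of its parents plus a residual uncorrelated with those parents; for a jointly Gaussian vector this is the population regression, so $E[X_i\mid X_{\pa(i)}] = \sum_{k\in\pa(i)}\lambda_{ki}X_k$ and $\Var(X_i\mid X_{\pa(i)}) = \Var(\eps_i) = \omega_i$. The Gaussian conditional variance equals the Schur complement $\sigma_{ii} - \Sigma_{i,\pa(i)}\Sigma_{\pa(i)}^{-1}\Sigma_{\pa(i),i}$; applying the Schur expansion of the almost-principal minor with $K = \pa(i)$ and the identity $\Sigma_{ii|\pa(i)} = \Sigma_{\pa(i)\cup i}$ gives $\omega_i = |\Sigma_{\pa(i)\cup i}|/|\Sigma_{\pa(i)}|$.

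Next I would treat the structural coefficients \eqref{eq:lambda}. Fix $ij\in E$, set $K = \pa(j)\setminus i$, and isolate the relevant term in the equation for $X_j$:
\[
X_j = \lambda_{ij}X_i + \sum_{k\in K}\lambda_{kj}X_k + \eps_j .
\]
Taking the covariance with $X_i$ conditionally on $X_K$, the sum over $k\in K$ contributes nothing since each such $X_k$ is constant given $X_K$, and the error term contributes nothing since $\eps_j$ is independent of $(X_i,X_K)$ (because $\{i\}\cup K = \pa(j)$ consists of nondescendants of $j$). This leaves $\operatorname{Cov}(X_i,X_j\mid X_K) = \lambda_{ij}\,\Var(X_i\mid X_K)$, which is the middle expression in \eqref{eq:lambda}. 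To reach the determinant form I would expand $\operatorname{Cov}(X_i,X_j\mid X_K) = |\Sigma_{ij|K}|/|\Sigma_K|$ and $\Var(X_i\mid X_K) = |\Sigma_{ii|K}|/|\Sigma_K|$ via Schur complements; the factor $|\Sigma_K|$ cancels, and using $\{i\}\cup K = \pa(j)$ so that $\Sigma_{ii|K} = \Sigma_{\pa(j)}$ collapses the ratio to $|\Sigma_{ij|\pa(j)\setminus i}|/|\Sigma_{\pa(j)}|$.

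The Schur-complement bookkeeping is routine; the step requiring the most care is the vanishing of the cross terms, namely that $\eps_j$ is jointly independent of $X_{\pa(j)}$ and that conditioning on $X_K$ annihilates $\operatorname{Cov}(X_i,X_k\mid X_K)$ for $k\in K$. Both reduce to the recursive dependence $X_i = f_i(\eps_{\an(i)\cup\{i\}})$ noted at the outset, but one must also invoke that for jointly Gaussian vectors independence of $\eps_j$ from $(X_i,X_K)$ forces the conditional covariance to be exactly zero. The small identities $\Sigma_{ii|\pa(i)} = \Sigma_{\pa(i)\cup i}$ and $\{i\}\cup(\pa(j)\setminus i) = \pa(j)$ (valid because $i\in\pa(j)$) are precisely what make the denominators appear as the stated principal minors rather than as Schur complements.
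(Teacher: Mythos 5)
Your proof is correct, but it takes a genuinely different route from the paper's. The paper argues purely algebraically from the trek rule: it writes the matrix identities $\Sigma_{\pa(i),i} = \Sigma_{\pa(i)}\Lambda_{\pa(i),i}$ and $\sigma_{ii} = \omega_i + \Lambda_{i,\pa(i)}\Sigma_{\pa(i)}\Lambda_{\pa(i),i}$ to get \eqref{eq:omega}, and for \eqref{eq:lambda} it identifies $\lambda_{ij}$ as the $i$-th entry of $\Sigma_{\pa(j)}^{-1}\Sigma_{\pa(j),j}$ and then performs a double Laplace/cofactor expansion with careful sign bookkeeping to recognize that entry as $|\Sigma_{ij|\pa(j)\setminus i}|/|\Sigma_{\pa(j)}|$. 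You instead read the population regression off the structural equations and use the probabilistic structure: independence of $\eps_i$ from $X_{\pa(i)}$ gives \eqref{eq:omega} immediately, and for \eqref{eq:lambda} the linearity of Gaussian conditional covariance together with the two annihilation facts ($\mathrm{Cov}(X_i,X_k\mid X_K)=0$ for $k\in K$ and $\eps_j\perp X_{\pa(j)}$) collapses the computation to one line before the Schur-complement conversion. For \eqref{eq:omega} the two arguments are nearly the same (the trek-rule identity $\Sigma_{\pa(i),i}=\Sigma_{\pa(i)}\Lambda_{\pa(i),i}$ is exactly your normal equations); the real divergence is in \eqref{eq:lambda}, where your approach avoids the sign-heavy determinant expansion entirely, at the cost of invoking Gaussian conditional-moment facts rather than staying at the level of polynomial identities in $\mathbb{R}[\Sigma]$ as the paper does (a choice that meshes with its later ideal-theoretic use of these formulas). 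Both yield the same rational-function identities valid at every point of $\CC M(G)$. One minor remark: the paper's proof also records the degenerate case $i\notin\pa(j)$ (where $\lambda_{ij}=0$ and the numerator vanishes by the local Markov property), which you omit; since the $\Lambda$ parameters are indexed by $E$, this omission is harmless for the lemma as stated.
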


These formulas are well-known: see \cite[Theorem~3.1]{wu2023partial} for \eqref{eq:omega} and \cite[Section~4.4]{PathDiagrams} for \eqref{eq:lambda} in the more general context of path diagrams.
Since a homogeneity constraint amounts to setting $\omega_i = \omega_j$ or $\lambda_{ij} = \lambda_{k\ell}$, the above rational functions provide constraints that capture when two nodes, or respectively two edges, have the same color; for instance, the function
\[
f(\Sigma) = \frac{|\Sigma_{\pa(i) \cup i}|}{|\Sigma_{\pa(i)}|} - \frac{|\Sigma_{\pa(j) \cup j}|}{|\Sigma_{\pa(j)}|}
\]
will evaluate to $0$ on all $\Sigma \in \CC M(G,c)$ where $c(i) = c(j)$ since this equality of colors corresponds to the equality of parameters $\omega_i = \omega_j$. To formulate a global Markov property for colored DAG models, we would like a description of all such constraints on the model that arise from the chosen coloring.
To do so, we utilize a characterization of the ways in which we may identify the parameter values $\omega_i$ and $\lambda_{ij}$ using rational functions of the same form as \eqref{eq:omega} and \eqref{eq:lambda}.

\begin{definition} \label{def:RatFun}
We define two families of rational functions of covariance matrices $\Sigma$:
\begin{align*}
  \omega_{i|A}(\Sigma) \defas \frac{|\Sigma_{iA}|}{|\Sigma_{A}|} \quad\text{and}\quad
  \lambda_{ij|A}(\Sigma) \defas \frac{|\Sigma_{ij|A\setminus i}|}{|\Sigma_A|}.
\end{align*}
Fix a DAG $G = (V,E)$. A set $A \subseteq V$ is \emph{identifying for the vertex $i \in V$} if $\omega_{i|A}(\phi_G(\Omega, \Lambda)) = \omega_i$ for all $(\Omega, \Lambda)\in \PosReal^V \times \Real^E$. It is \emph{identifying for the edge $ij \in E$} if $\lambda_{ij|A}(\phi_G(\Omega, \Lambda)) = \lambda_{ij}$  for all $(\Omega, \Lambda)\in \PosReal^V \times \Real^E$. Let $\CC A_G(i)$ and $\CC A_G(ij)$ denote the sets of $i$- respectively $ij$-identifying sets.
\end{definition}

The vertex-identifying sets are characterized in recent work of Drton and Wu \cite[Theorem~3.3]{wu2023partial}. %
Characterizations of edge-identifying sets are introduced in~\cite{PathDiagrams} and~\cite{pearl1998graphs}.
We give an independent proof of the characterization of edge-identifying sets in Supplementary material \Cref{section:IdentifyingSets}.
Together, these results yield the following theorem.
Recall that $\overline{\de}_G(i)=\{i\}\cup \de_G(i)$.

\begin{theorem}
\label{thm:IdentifyingSets}
Let $G = (V,E)$ be a DAG. Then:
\begin{enumerate}
\item\label{thm:IdentifyingSets:Vertices}%
$\omega_i = \omega_{i|A}(\Sigma)$ for every $\Sigma \in \CC M(G)$ if and only if $\pa(i) \subseteq A \subseteq V \setminus \ol{\de}(i)$.
\item\label{thm:IdentifyingSets:NonEdges}%
If $ij \not\in E$, then $\lambda_{ij} = 0 = \lambda_{ij|A}(\Sigma)$ for every $\Sigma \in \CC M(G)$ if and only if $A \setminus i$ d-separates $i$ and $j$ in~$G$.
\item\label{thm:IdentifyingSets:Edges}%
If $ij \in E$, then $\lambda_{ij} = \lambda_{ij|A}(\Sigma)$ for every $\Sigma \in \CC M(G)$ if and only if $i \in A \subseteq V \setminus \ol{\de}(j)$ and $A \setminus i$~d-separates $i$ and $j$ in the graph $G_{ij}$ which arises from $G$ by deleting the edge $ij$ and the vertices $\de(j)$.
\end{enumerate}
\end{theorem}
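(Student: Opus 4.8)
The plan is to reduce every statement to the probabilistic meaning of the rational functions of \Cref{def:RatFun} via Schur complements. Since $|\Sigma_A| = |\Sigma_{A\setminus i}|\cdot\Var(X_i\mid X_{A\setminus i})$ whenever $i\in A$, the Schur expansion recalled in \Cref{subsec: Gaussian CI} yields
\begin{align*}
\omega_{i|A}(\Sigma) = \Var(X_i\mid X_A), \qquad
\lambda_{ij|A}(\Sigma) = \frac{\RM{Cov}(X_i,X_j\mid X_{A\setminus i})}{\Var(X_i\mid X_{A\setminus i})} \quad (i\in A),
\end{align*}
so $\omega_{i|A}$ is a conditional variance and, when $i\in A$, $\lambda_{ij|A}$ is the population regression coefficient of $X_j$ on $X_i$ controlling for $A\setminus i$; the target values $\omega_i$ and $\lambda_{ij}$ are those of \Cref{lemma:Ident}. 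Note that if $i\notin A$ then $\lambda_{ij|A} = \RM{Cov}(X_i,X_j\mid X_A)$ is a bare conditional covariance, which cannot identically equal the free parameter $\lambda_{ij}$; this already forces the requirement $i\in A$ in \eqref{thm:IdentifyingSets:Edges}.

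For \eqref{thm:IdentifyingSets:Vertices}, sufficiency is immediate: if $\pa(i)\subseteq A\subseteq V\setminus\overline{\de}(i)=\nd(i)$, then writing $A=\pa(i)\cup B$ with $B\subseteq\nd(i)\setminus\pa(i)$, the local Markov property gives $\CI{X_i,X_B|X_{\pa(i)}}$, whence $\Var(X_i\mid X_A)=\Var(X_i\mid X_{\pa(i)})=\omega_i$. The converse—that omitting a parent or including $i$ or a descendant breaks identification—is exactly \cite[Theorem~3.3]{wu2023partial}, which we cite. For \eqref{thm:IdentifyingSets:NonEdges}, positivity of the denominator gives $\lambda_{ij|A}\equiv 0$ on $\CC M(G)$ iff $\RM{Cov}(X_i,X_j\mid X_{A\setminus i})\equiv 0$, i.e.\ iff $\CI{X_i,X_j|X_{A\setminus i}}$ holds for every $\Sigma\in\CC M(G)$; by the global Markov property (\Cref{thm:uncoloredMP}) together with completeness—the existence of faithful distributions for Gaussian DAG models—this is equivalent to $A\setminus i$ d-separating $i$ and $j$ in $G$.

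The substance is \eqref{thm:IdentifyingSets:Edges}. I would decompose the structural equation at $j$ as $X_j=\lambda_{ij}X_i+R_j$, where $R_j=\sum_{k\in\pa(j)\setminus i}\lambda_{kj}X_k+\eps_j$ is precisely the $j$-coordinate of the SEM on the graph $G'$ obtained from $G$ by deleting the edge $ij$. Writing $C=A\setminus i$ and using bilinearity of conditional covariance,
\begin{align*}
\lambda_{ij|A}(\Sigma) = \lambda_{ij} + \frac{\RM{Cov}(X_i,R_j\mid X_C)}{\Var(X_i\mid X_C)}.
\end{align*}
When $A\subseteq V\setminus\overline{\de}(j)$, none of $X_i$, $X_C$, or $R_j$ depends on the coefficients or error at the deleted-into vertex $j$ or its descendants, so these variables carry the same joint law in $\CC M(G')$; hence the correction term vanishes for all parameter values iff $\CI{X_i,X_j|X_C}$ holds throughout $\CC M(G')$, which—again by \Cref{thm:uncoloredMP} and completeness—holds iff $i$ and $j$ are d-separated given $C$ in $G'$. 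It then remains to replace $G'$ by $G_{ij}$: I would prove a graph lemma stating that, under $A\subseteq V\setminus\overline{\de}(j)$ (so $C$ contains no descendant of $j$), any $i$–$j$ path in $G'$ meeting $\de(j)$ is necessarily blocked given $C$, whence d-separation of $i,j$ given $C$ agrees in $G'$ and in $G_{ij}$.

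Finally, necessity of the two conditions in \eqref{thm:IdentifyingSets:Edges}: if $A$ meets $\overline{\de}(j)$, then conditioning on $j$ itself trivializes the numerator while conditioning on a proper descendant opens a collider path at $j$, so the clean $G'$-decomposition fails and $\lambda_{ij|A}\not\equiv\lambda_{ij}$; and if $C$ fails to d-separate $i,j$ in $G_{ij}$ there is a $C$-active $i$–$j$ path in $G'$, so by faithfulness the correction term $\RM{Cov}(X_i,R_j\mid X_C)/\Var(X_i\mid X_C)$ is generically nonzero. I expect the main obstacle to be this last graph-theoretic reduction from $G'$ to $G_{ij}$, together with the bookkeeping needed to convert each failed condition into an explicitly $C$-active path witnessing non-identifiability; the probabilistic identities themselves are routine.
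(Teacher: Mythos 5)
Your reductions of $\omega_{i|A}$ and $\lambda_{ij|A}$ to conditional variances and regression coefficients are exactly right, and parts \eqref{thm:IdentifyingSets:Vertices} and \eqref{thm:IdentifyingSets:NonEdges} match the paper's treatment (part \eqref{thm:IdentifyingSets:Vertices} is simply cited from Wu--Drton, and part \eqref{thm:IdentifyingSets:NonEdges} is the same one-line d-separation argument). For the sufficiency direction of \eqref{thm:IdentifyingSets:Edges} your route is different in flavor but structurally parallel: you decompose $X_j = \lambda_{ij}X_i + R_j$ and transfer the correction term to the model on the graph $G'$ obtained by deleting only the edge $ij$, whereas the paper works with the covariance matrix directly and uses multilinearity of the determinant in the $j$-column to get $|\Sigma_{ij|A}| = |\Sigma'_{ij|A}| + \lambda_{ij}|\Sigma_{iA}|$ with $\Sigma' \in \CC M(G_{ij})$ --- the algebraic shadow of your identity. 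One difference worth noting: the paper avoids your $G'$-versus-$G_{ij}$ reduction lemma entirely by observing that, since $j$ is a sink in the induced subgraph on $ijA$, the relevant submatrix is unaffected by deleting $\de(j)$, so it compares against $\CC M(G_{ij})$ directly. Your proposed lemma is nonetheless true and provable as you indicate: any $C$-active $i$--$j$ path entering $\de(j)$ must contain either a collider in $\de(j)$, which is blocked because $C$ and the descendants of that collider all avoid $C$, or a non-collider in $\de(j)$, which forces the path to run forward inside $\de(j)$ and never reach $j$. Your use of faithful distributions in $\CC M(G')$ for the d-connecting case is also legitimate, and arguably cleaner than the paper's explicit tridiagonal-determinant computation along a single connecting path.

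The one genuine gap is the necessity argument when $A$ contains a proper descendant $k$ of $j$ (and, more mildly, the cases $i \notin A$ and $j \in A$, each of which needs its one-line witness: setting all structural coefficients except $\lambda_{ij}$ to zero gives $\lambda_{ij|A}(\Sigma) = \lambda_{ij}\omega_i \neq \lambda_{ij}$ in the first case, and a repeated column kills the numerator in the second). Saying that conditioning on $k$ ``opens a collider path at $j$, so the clean $G'$-decomposition fails'' does not yet prove $\lambda_{ij|A} \not\equiv \lambda_{ij}$: the failure of the decomposition to transfer to $\CC M(G')$ is not itself a witness of non-identifiability, and the correction term $\RM{Cov}(X_i, R_j \mid X_C)$ now involves conditioning on variables that themselves depend on $\lambda_{ij}$, so the faithfulness argument you use elsewhere does not apply as stated ($R_j$ is not a coordinate of $X$ and the relevant model is no longer $\CC M(G')$). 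The paper closes this case by restricting to the submodel supported on a directed path $i \to j \to k_1 \to \dots \to k_n = k$ with $A = \{i,k\}$ and computing the resulting $2\times 2$ defect explicitly, finding it equal to a nonzero product of trek monomials. Some such explicit witness, or an augmented-graph faithfulness argument treating $R_j$ as a node, is needed to complete your proof.
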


The condition~\eqref{thm:IdentifyingSets:Edges} for edge identification resembles the \emph{backdoor criterion} of Pearl \cite{Causality} but it ensures recovery of the \emph{direct (causal) effect}, as opposed to the \emph{total effect}.
In~\cite[Theorem~5.3.1]{Causality}, the edge-identifying sets are termed the \emph{single-door criterion for direct effects}.

\subsection{Markov properties}
\label{subsec: MPs}
\Cref{thm:IdentifyingSets} characterizes the subsets of nodes that may be used to identify a given model parameter in a DAG model $\CC M(G)$.
We define the \emph{vertex-coloring constraint} for the quadruple $(i,j, A, B)$ to be
\[
\textrm{vcc}(i,j ; A, B) = \omega_{i | A}(\Sigma) - \omega_{j | B}(\Sigma).
\]
Similarly, we define an \emph{edge-coloring constraint} for the quadruple $(ij, k\ell, A, B)$ to be
\[
\textrm{ecc}(ij, k\ell; A, B) = \lambda_{ij |A}(\Sigma) - \lambda_{k\ell | B}(\Sigma).
\]
We say that $\Sigma$ satisfies the coloring constraint $\textrm{vcc}(i,j ; A, B)$ (resp. $\textrm{ecc}(ij, k\ell; A, B)$) whenever the given function evaluates to zero on $\Sigma$.
Recall from \Cref{lem: CI polynomials} that conditional independence for multivariate Gaussian distributions is characterized by the vanishing of certain functions of its covariance matrix (namely, the partial correlations). The functions $\textrm{vcc}$ and $\textrm{ecc}$ play the analogous role for coloring constraints: their vanishing on a covariance matrix detects the coloring constraints with which it is compatible.

Just as a Markov property for $\CC M(G)$ corresponds a collection of rational functions capturing conditional independence relations, we may define a Markov property for $\CC M(G,c)$ as a collection of rational functions capturing conditional independence and coloring relations.
\begin{definition}
    \label{def: local colored MP}
    We say that $\Sigma$ satisfies the \emph{local Markov property} with respect to the colored DAG $(G,c)$ if
    \begin{enumerate}[noitemsep, itemsep=0.3em]
        \item\label{def: local colored MP:1} $\Sigma$ satisfies the local Markov property with respect to $G$,
        \item\label{def: local colored MP:2} $\Sigma$ satisfies $\textrm{vcc}(i,j ; \pa_G(i), \pa_G(j))$ if $c(i) = c(j)$, and
        \item\label{def: local colored MP:3} $\Sigma$ satisfies $\textrm{ecc}(ij, k\ell; \pa_G(j), \pa_G(\ell))$ if $c(ij) = c(k\ell)$.
    \end{enumerate}
\end{definition}

Condition~\eqref{def: local colored MP:1} in \Cref{def: local colored MP} states that $\CC M(G,c)$ is a submodel of $\CC M(G)$, while conditions~\eqref{def: local colored MP:2} and~\eqref{def: local colored MP:3} encode the assumptions that we have equal error variances whenever $c(i) = c(j)$ and equal structural coefficients whenever $c(ij) = c(k\ell)$.
Note also that condition~\eqref{def: local colored MP:2} is equivalent to $\Var(X_i \mid X_{\pa_G(i)}) = \Var(X_j \mid X_{\pa_G(j)})$.
Hence, the local Markov property for a colored DAG is precisely what we would expect.
\Cref{def: global colored MP} provides the corresponding global Markov property.
\begin{definition}
    \label{def: global colored MP}
    We say that $\Sigma$ satisfies the \emph{global Markov property} with respect to the colored DAG $(G,c)$ if
    \begin{enumerate}[noitemsep, itemsep=0.3em]
        \item\label{def: global colored MP:ci} $\Sigma$ satisfies the global Markov property with respect to $G$,
        \item\label{def: global colored MP:vcc} $\Sigma$ satisfies $\textrm{vcc}(i,j ; A, B)$ for all $(A,B) \in \mathcal{A}_G(i)\times \mathcal{A}_G(j)$ if $c(i) = c(j)$, and
        \item\label{def: global colored MP:ecc} $\Sigma$ satisfies $\textrm{ecc}(ij, k\ell; A, B)$ for all $(A,B) \in \mathcal{A}_G(ij)\times \mathcal{A}_G(k\ell)$ if $c(ij) = c(k\ell)$.
    \end{enumerate}
\end{definition}

\begin{remark}
    \label{rmk: global MP}
    Note that a distribution $(X_1,\ldots, X_d)^T \sim \textrm{N}(0,\Sigma)$ satisfies the coloring constraint $\textrm{vcc}(i,j ; A, B)$ if and only if $\Var(X_i \mid X_A ) = \Var(X_j \mid X_B)$.
    In other words, the model $\CC M(G,c)$ is defined as the submodel of $\CC M(G)$ specified by a collection of invariance constraints among conditional distributions corresponding to the coloring.
    The local Markov property considers the local invariance constraints corresponding to families (i.e., $i \cup \pa_G(i)$).
    The global Markov property then includes all additional invariance constraints that are implied by these local constraints.
\end{remark}

\begin{remark}
    \label{rmk: invariances}
    This definition of the global Markov property based on the addition of invariance constraints is in direct analogy to the definition of the $\mathcal{I}$-Markov property for general interventional DAG models \cite[Definition 3.6]{yang2018characterizing}.
    In the interventional context, the relevant additional invariance constraints satisfied by the model are placed on conditional distributions in different experimental settings.
    In this context, the relevant invariance constraints are on conditional distributions derived from a single (observational) distribution and fix the variables of interest ($X_i$ and $X_j$ or $X_{\{i,j\}}$ and $X_{\{k,\ell\}}$) but vary the conditioning sets.
\end{remark}

It follows from global rational identifiability and the definition of the coloring constraints that $\Sigma\in \CC M(G,c)$ if and only if $\Sigma$ satisfies the local Markov property with respect to $(G,c)$.
It can further be shown that these conditions are also equivalent to $\Sigma$ satisfying the global Markov property with respect to $(G,c)$.
\begin{theorem}
    \label{thm: MP}
    Let $(G,c)$ be a colored DAG and $\Sigma \in \PD^V$. The following are equivalent:
    \begin{enumerate}[noitemsep, itemsep=0.3em]
        \item\label{thm: MP:1} $\Sigma \in \CC M(G,c)$,
        \item\label{thm: MP:2} $\Sigma$ satisfies the local Markov property with respect to $(G,c)$, and
        \item\label{thm: MP:3} $\Sigma$ satisfies the global Markov property with respect to $(G,c)$.
    \end{enumerate}
\end{theorem}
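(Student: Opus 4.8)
The plan is to prove the cycle $\eqref{thm: MP:1}\Rightarrow\eqref{thm: MP:3}\Rightarrow\eqref{thm: MP:2}\Rightarrow\eqref{thm: MP:1}$, with the genuine content concentrated in the first and last arrows and the middle arrow amounting to the observation that the local property is a special instance of the global one. For $\eqref{thm: MP:1}\Rightarrow\eqref{thm: MP:3}$, suppose $\Sigma=\phi_{G,c}(\Omega,\Lambda)$ with $(\Omega,\Lambda)\in A(G,c)$. Since $\CC M(G,c)\subseteq\CC M(G)$, \Cref{thm:uncoloredMP} yields that $\Sigma$ satisfies the global Markov property with respect to $G$, which is condition \eqref{def: global colored MP:ci}. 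For the coloring constraints, fix $i,j$ with $c(i)=c(j)$ and any $(A,B)\in\CC A_G(i)\times\CC A_G(j)$. By the defining property of identifying sets (\Cref{def:RatFun}) we have $\omega_{i\mid A}(\Sigma)=\omega_i$ and $\omega_{j\mid B}(\Sigma)=\omega_j$; since $(\Omega,\Lambda)\in A(G,c)$ forces $\omega_i=\omega_j$, the constraint $\textrm{vcc}(i,j;A,B)$ vanishes. The edge case \eqref{def: global colored MP:ecc} is identical, using $\lambda_{ij\mid A}(\Sigma)=\lambda_{ij}$ for $A\in\CC A_G(ij)$.

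For $\eqref{thm: MP:3}\Rightarrow\eqref{thm: MP:2}$, I would show that every constraint of the local property is already present in the global property. The local conditional independences \eqref{def: local colored MP:1} follow from the global ones via \Cref{thm:uncoloredMP}. For the coloring constraints it suffices to check that the parent sets appearing in \Cref{def: local colored MP} lie in the relevant identifying families, namely $\pa_G(i)\in\CC A_G(i)$ and, whenever $ij\in E$, $\pa_G(j)\in\CC A_G(ij)$. The first is immediate from \Cref{thm:IdentifyingSets}\eqref{thm:IdentifyingSets:Vertices}, since $\pa_G(i)\subseteq\pa_G(i)\subseteq V\setminus\overline{\de}_G(i)$. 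The second follows from \Cref{thm:IdentifyingSets}\eqref{thm:IdentifyingSets:Edges}: acyclicity gives $i\in\pa_G(j)\subseteq V\setminus\overline{\de}_G(j)$, and $\pa_G(j)\setminus i$ d-separates $i$ and $j$ in $G_{ij}$ because, after deleting the edge $ij$ and the vertices $\de_G(j)$, the vertex $j$ is adjacent only to its remaining parents $\pa_G(j)\setminus i$, each of which is a non-collider on any $i$--$j$ path and hence blocks it once conditioned upon.

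For $\eqref{thm: MP:2}\Rightarrow\eqref{thm: MP:1}$, suppose $\Sigma$ satisfies the local Markov property with respect to $(G,c)$. By \eqref{def: local colored MP:1} and \Cref{thm:uncoloredMP} we have $\Sigma\in\CC M(G)$, so by global rational identifiability there are unique $(\Omega,\Lambda)\in\PosReal^V\times\Real^E$ with $\phi_G(\Omega,\Lambda)=\Sigma$, recovered by the formulas of \Cref{lemma:Ident}. The key point is that these formulas are exactly the $A=\pa_G(\cdot)$ instances of the functions in \Cref{def:RatFun}, i.e. $\omega_i=\omega_{i\mid\pa_G(i)}(\Sigma)$ and $\lambda_{ij}=\lambda_{ij\mid\pa_G(j)}(\Sigma)$. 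Consequently the local coloring constraints \eqref{def: local colored MP:2} and \eqref{def: local colored MP:3} read precisely $\omega_i=\omega_j$ when $c(i)=c(j)$ and $\lambda_{ij}=\lambda_{k\ell}$ when $c(ij)=c(k\ell)$. Thus the recovered parameters respect the coloring, so $(\Omega,\Lambda)\in A(G,c)$ and $\Sigma=\phi_{G,c}(\Omega,\Lambda)\in\CC M(G,c)$.

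I expect the main obstacle to be the edge half of the $\eqref{thm: MP:3}\Rightarrow\eqref{thm: MP:2}$ step, namely verifying through \Cref{thm:IdentifyingSets}\eqref{thm:IdentifyingSets:Edges} that $\pa_G(j)$ is genuinely edge-identifying for $ij$; this rests on the short d-separation argument in the modified graph $G_{ij}$ sketched above. By contrast, once the recovery formulas of \Cref{lemma:Ident} are recognized as the distinguished $A=\pa$ instances of the identifying functions in \Cref{def:RatFun}, the two outer arrows reduce to bookkeeping, and the equivalence $\eqref{thm: MP:1}\Leftrightarrow\eqref{thm: MP:2}$ in particular becomes a direct translation between equality of recovered parameters and satisfaction of the local coloring constraints.
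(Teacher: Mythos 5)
Your proposal is correct and follows essentially the same route as the paper: the cycle through the three conditions, with the outer implications resting on global rational identifiability together with the identifying-set characterization of \Cref{thm:IdentifyingSets}, and the implication from the global to the local colored property being the observation that the parent sets are themselves identifying sets. The only difference is cosmetic — the paper dismisses that last implication as immediate, whereas you spell out the d-separation check in $G_{ij}$ showing $\pa_G(j)\in\CC A_G(ij)$, which is a harmless (and arguably welcome) elaboration.
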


In \Cref{sec: model equivalence}, we will make use of these Markov properties to derive structural identifiability results which are then applied to yield causal discovery methods in \Cref{sec:causaldiscovery}.

\begin{example}\label{example:colored path}
Let $(G,c)$ be the colored DAG as show below.
\begin{center}
\begin{tikzpicture}[thick,scale=1]

 	 \node[circle, draw, fill=red!50, inner sep=1pt, minimum width=1pt] (1) at (0,0)  {$1$};
	   \node[circle, draw, fill=white, inner sep=1pt, minimum width=1pt] (2) at (1,0) {$2$};
          \node[circle, draw, fill=red!50, inner sep=1pt, minimum width=1pt] (3) at (2,0) {$3$};
          \node[circle, draw, fill=white, inner sep=1pt, minimum width=1pt] (4) at (3,0) {$4$};

          \draw[->,blue!70!white] (1) -- (2) ;
          \draw[->,black]         (2) -- (3) ;
          \draw[->,blue!70!white] (3) -- (4) ;
    \end{tikzpicture}
\end{center}
The corresponding structural equations are
\begin{eqnarray*}
X_1=\epsilon_1, \hspace{.5cm}   X_2= \lambda_{12}X_1 + \epsilon_2, \hspace{.5cm} X_3= \lambda_{23}X_2 + \epsilon_3, \hspace{.5cm} X_4= \lambda_{12}X_3 + \epsilon_4.
\end{eqnarray*}
Recall that by \Cref{conv: coloring}, the vertices $2$ and $4$ and the edge $2\rightarrow 3$ lie in singleton color classes. This gives us the colored parametrization $\phi_{G,c}$ as
\begin{eqnarray*}
(\Omega, \Lambda) \mapsto \begin{pmatrix}
1 & -\lambda_{12} & 0 & 0 \\
0 & 1 & -\lambda_{23} & 0 \\
0 & 0 & 1 & -\lambda_{12} \\
0 & 0 & 0 & 1
\end{pmatrix}^{-T}
\begin{pmatrix}
\omega_1 & 0 & 0 & 0 \\
0 & \omega_2 & 0 & 0 \\
0 & 0 & \omega_1 & 0 \\
0 & 0 & 0 &  \omega_4
\end{pmatrix}
\begin{pmatrix}
1 & -\lambda_{12} & 0 & 0 \\
0 & 1 & -\lambda_{23} & 0 \\
0 & 0 & 1 & -\lambda_{12} \\
0 & 0 & 0 & 1
\end{pmatrix}^{-1}.
\end{eqnarray*}
Note that $\omega_1$ and $\lambda_{12}$ are the base parameters for the respective color classes and we set $\omega_3=\omega_1$ and $\lambda_{34}=\lambda_{12}$. The local Markov properties with respect to $G$ are $\CI{X_3,X_1|X_2}$ and $\CI{X_4,{X_1,X_2}| X_3}$. Now, as $c(1)=c(3)$, the vertex coloring constraint $\textrm{vcc}(1,3;\emptyset, 2)$ is given by
\begin{eqnarray}\label{eqnarray:vcc(1,3)}
    \textrm{vcc}(1,3;\emptyset,2) = \frac{|\Sigma_{1}|}{|\Sigma_{\emptyset}|} -\frac{|\Sigma_{2,3}|}{|\Sigma_{2}|}. %
\end{eqnarray}
Similarly, the edge coloring constraint $\textrm{ecc}(12,34;1,3)$ for the equality $c(12)=c(34)$ is given by
\begin{eqnarray}\label{eqnarray:ecc(12,34)}
    \textrm{ecc}(12,34;1,3)= \frac{|\Sigma_{12|\emptyset}|}{|\Sigma_{1}|} - \frac{|\Sigma_{34|\emptyset}|}{|\Sigma_{3}|}. %
\end{eqnarray}
Thus, $\Sigma$ satisfies the local Markov property with respect to $(G,c)$ if $\Sigma$ satisfies the local Markov properties with respect to $G$ along with equations \ref{eqnarray:vcc(1,3)} and \ref{eqnarray:ecc(12,34)}. We can similarly obtain the vertex and edge identifying sets by Theorem \ref{thm:IdentifyingSets} to obtain the global Markov property of $\Sigma$ with respect to $(G,c)$.
\end{example}

\section{Model geometry}
\label{sec:ModelGeometry}

The map identified in \Cref{lemma:Ident}, which plays a fundamental role in recovering the global Markov property in \Cref{def: global colored MP}, may also be used to deduce geometric properties of the model and determine its dimension.
In~\Cref{subsec:smooth}, we use this map to show that all colored DAG models are smooth submanifolds of the positive definite cone.
This observation implies that standard large-sample asymptotic theory may be applied when performing statistical inference with colored Gaussian DAG models, while such techniques may fail for models containing singularities \cite{DrtonSmooth, DrtonXiao}.
For instance, likelihood ratio tests with null and alternative contained in a colored DAG model will have test variables that are asymptotically $\chi^2$-distributed.

In \Cref{subsec:conj}, we use the map in \Cref{lemma:Ident} to give a concise proof of a conjecture from applied algebra regarding the geometry of (uncolored) Gaussian DAG models.
We prove this conjecture more generally for the family of colored Gaussian DAG models, and we observe that our techniques also apply to prove analogous results for other well-studied graphical models, including undirected Gaussian graphical models \cite{lauritzen1996graphical}, RCON models \cite{UndirectedColored} and Gaussian ancestral graph models \cite{richardson2002ancestral}.
The former observation provides us with a computational method for deducing when a colored DAG model does not admit faithful distributions (see \Cref{subsec: faithfulness}).
The main tool in our proof (Lemma~\ref{lemma:Saturation}) also provides a general tool for deriving Markov properties for rationally parameterized statistical models with globally, rationally identifiable parameters.
Hence, one may, more generally, apply the results of this section in the identification of constraints that may be used to test data against such statistical models (see \Cref{rem:MarkovProperty}).

\subsection{Smoothness, dimension and topology}\label{subsec:smooth}

Global identifiability shows that the parametrization $\phi_{G,c}: \PosReal^{\vc} \times \BB R^{\ec} \to \CC M(G,c)$ is a homeomorphism: it is bijective and in both directions it is given by well-defined rational functions, which are continuous. We further note that $\CC M(G,c)$ is a smooth submanifold of $\PD^V$ and that the parametrization and its inverse establish a diffeomorphism, i.e., an isomorphism of differentiable manifolds.

\begin{theorem} \label{thm:Smoothness}
The parametrization $\phi_{G,c}: \PosReal^{\vc} \times \BB R^{\ec} \to \CC M(G,c) \subseteq \PD^V$ of a colored Gaussian DAG model is a diffeomorphism from its domain onto its image equipped with the induced smooth structure from~$\PD^V$. Hence, $\CC M(G,c)$ is a smooth submanifold of~$\PD^V$ diffeomorphic to an open ball of dimension $\vc + \ec$.
\end{theorem}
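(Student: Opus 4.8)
The plan is to realize $\phi_{G,c}$ as a smooth embedding by producing a globally defined smooth left inverse, and then invoke the standard fact that an injective immersion which is a homeomorphism onto its image is a smooth embedding whose image is an embedded submanifold. First I would record that the domain $\PosReal^{\vc}\times\Real^{\ec}$ is an open convex subset of $\Real^{\vc+\ec}$ (a product of half-lines and lines), hence diffeomorphic to an open ball of dimension $\vc+\ec$; this gives the final clause of the statement once the diffeomorphism onto the image is established. Since the preceding paragraph already supplies that $\phi_{G,c}$ is a homeomorphism onto its image, the substance of the theorem is upgrading this to the smooth category.

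Next I would verify that $\phi_{G,c}$ is smooth. Fixing a topological ordering makes $\BBm1_V - \Lambda$ unitriangular, so $\Lambda$ is nilpotent and $(\BBm1_V - \Lambda)^{-1} = \sum_{k \ge 0}\Lambda^k$ is a \emph{finite} sum, hence a polynomial in the entries of $\Lambda$. Substituting the coloring constraints (each $\omega_i$, $\lambda_{ij}$ replaced by its base parameter), $\phi_{G,c}$ becomes a polynomial map in the $\vc+\ec$ base parameters taking values in $\PD^V$, which is open in the space of symmetric $V\times V$ matrices; polynomial maps between such spaces are smooth.

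The key step is to construct the smooth retraction. By global identifiability together with \Cref{lemma:Ident}, each base parameter is a specific error variance $\omega_i$ or structural coefficient $\lambda_{ij}$, and is therefore recovered from $\Sigma$ by a rational function of the form \eqref{eq:omega} or \eqref{eq:lambda} whose denominator is a principal minor $|\Sigma_A|$. As every principal minor is strictly positive on $\PD^V$, these formulas assemble into a single smooth map $\psi \colon \PD^V \to \Real^{\vc+\ec}$ satisfying $\psi \circ \phi_{G,c} = \id$ on the whole domain. The chain rule then gives $D\psi_{\phi_{G,c}(p)} \circ D(\phi_{G,c})_p = \id$, so each differential $D(\phi_{G,c})_p$ is injective and $\phi_{G,c}$ is an immersion; moreover $\phi_{G,c}^{-1}$ is the restriction of the continuous map $\psi$ to $\im(\phi_{G,c})$, so $\phi_{G,c}$ is a homeomorphism onto its image. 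An injective immersion that is a homeomorphism onto its image is a smooth embedding, whence $\CC M(G,c) = \im(\phi_{G,c})$ is an embedded smooth submanifold of $\PD^V$ and $\phi_{G,c}$ is a diffeomorphism onto it; transporting the ball structure of the domain then yields the stated dimension and diffeomorphism type.

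The main obstacle — and precisely the point where the smooth structure, rather than the merely topological one already on record, is obtained — is that the inverse must be exhibited as a smooth map on an \emph{open ambient neighborhood} of the image (indeed on all of $\PD^V$), not just on the image itself. This is exactly what the nonvanishing of the principal-minor denominators on $\PD^V$ secures: it turns the rational formulas of \Cref{lemma:Ident} into an honest smooth retraction. Without such a globally defined smooth left inverse one would obtain only an injective immersion, and would then have to separately rule out the image being an immersed-but-not-embedded submanifold; the retraction bypasses this entirely.
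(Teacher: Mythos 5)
Your proof is correct, but it takes a genuinely different route from the paper's. The paper reduces the theorem (via Guillemin--Pollack) to showing that $\phi_{G,c}$ is a \emph{proper immersion}, and then establishes the immersion property by a direct analysis of the Jacobian: the rows and columns are ordered so that the Jacobian becomes block-triangular (\Cref{lemma:Smoothness:Triangular}), and the diagonal blocks of type $\lambda$ are shown to have full rank by an argument involving compound matrices $\CC A_r(\Sigma)$ and their positive definiteness (\Cref{lemma:Smoothness:Local}). That last step is the technical heart of the paper's proof, and it is needed there because in the colored model the partial derivative $\partial_{\lambda_{kj}}\sigma_{ij}$ is a \emph{sum} $\sum_{l:\,c(lj)=c(kj)}\sigma_{il}$ over the whole color class, so invertibility of the relevant blocks is not obvious. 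You bypass all of this: since the identification formulas of \Cref{lemma:Ident} have principal-minor denominators, they assemble into a smooth left inverse $\psi$ defined on the entire open set $\PD^V$, and the chain rule applied to $\psi\circ\phi_{G,c}=\id$ yields injectivity of every differential for free, while continuity of $\psi$ restricted to the image recovers the homeomorphism property. The trade-off is that your argument is shorter and more conceptual but yields no structural information about the Jacobian, whereas the paper's computation exhibits its explicit block-triangular form and the positive definiteness of the diagonal blocks, which could be of independent use (e.g., for Fisher information or asymptotic computations). One small point worth making explicit if you write this up: $\psi\circ\phi_{G,c}=\id$ holds because $\CC M(G,c)\subseteq\CC M(G)$, so \Cref{lemma:Ident} applied to the \emph{uncolored} graph recovers each $\omega_i$ and $\lambda_{ij}$, and evaluating at the base vertex/edge of each color class returns exactly the base parameters coordinatizing the domain.
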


\subsection{Algebraic description}\label{subsec:conj}
In this subsection we give a proof of a conjecture of Sullivant \cite{Sullivant} on Gaussian DAG models by proving the result more generally for colored Gaussian DAG models.
The proof follows from a lemma (Lemma~\ref{lemma:Saturation}) that allows us to additionally prove the same result for colored (and uncolored) undirected Gaussian graphical models as well as ancestral Gaussian graphical models.

We give natural generators of an ideal $I_{G,c} \subseteq \BB R[\Sigma]$ whose saturation at certain principal minors gives the vanishing ideal $P_{G,c}$ of the colored Gaussian DAG model $\CC M(G,c)$. For algebraists, the ideal $I_{G,c}$ is generally not prime but the proof shows that $P_{G,c}$ is the unique prime above $I_{G,c}$ which intersects the cone of positive definite matrices.

To this end, let us first reexamine the results of \Cref{lemma:Ident} from an algebraic angle. The trek rule parametrization $\phi_G$ for any DAG is a polynomial map. By global rational identifiability, its inverse $\psi_G$ is a rational map whose denominators are principal minors with respect to parent~sets.

\begin{definition}
Let $G$ be a DAG. For any vertex $i$, the polynomial $|\Sigma_{\pa(i)}| \in \BB R[\Sigma]$ is a \emph{parental principal minor}. Denote by $S_G$ the multiplicatively closed subset generated by all parental principal minors in~$G$, i.e., $S_G = \Set{ \prod_{i \in V} |\Sigma_{\pa(i)}|^{k_i} : k_i \in \BB N }$.
\end{definition}

\begin{definition}
    \label{def:vanishingideal}
    The kernel of the map $\phi^\ast_{G,c}: \BB R[\Sigma]\to\BB R[\Lambda,\Omega]$ is called the \emph{vanishing ideal} of the model $\CC M(G,c)$, and it is denoted $P_{G,c}$.
    When the coloring $c$ is an injection (i.e., $\CC M(G,c) = \CC M(G)$), we denote this ideal by $P_G$.
\end{definition}

The map $\phi^\ast_{G,c}$ in \Cref{def:vanishingideal} is the pullback of the parametrization map $\phi_{G,c}$ of the colored DAG model $\CC M(G)$ as defined in \Cref{subsec: algebra preliminaries}.
It follows from standard results in algebra that the vanishing ideal of $\CC M(G)$ defined in \Cref{subsec: algebra preliminaries} is the same as the ideal $P_{G,c}$ in \Cref{def:vanishingideal}.
Namely, the ideal $P_{G,c}$ is the set of all polynomials in variables $\sigma_{ij}$ with real coefficients that evaluate to zero on every $\Sigma\in \CC M(G,c)$.

\begin{definition} \label{def:Relations}
Let $(G, c)$ be a colored DAG. We define the \emph{vertex coloring relations}, \emph{edge coloring relations} and the \emph{conditional independence relations} as the following polynomials in~$\BB R[\Sigma]$:
\begin{align*}
\vcr_c(i,j) &\defas |\Sigma_{\pa(i)}| \cdot |\Sigma_{\pa(j)}| \cdot (\omega_{i|\pa(i)}(\Sigma) - \omega_{j|\pa(j)}(\Sigma)) \text{ for $c(i) = c(j)$}, \\
\ecr_c(ij,kl) &\defas |\Sigma_{\pa(j)}| \cdot |\Sigma_{\pa(l)}| \cdot (\lambda_{ij|\pa(j)}(\Sigma) - \lambda_{kl|\pa(l)}(\Sigma)) \text{ for $c(ij) = c(kl)$}, \\
\cir_G(i,j) &\defas |\Sigma_{\pa(j)}| \cdot \lambda_{ij|\pa(j)}(\Sigma) \text{ for $ij \not\in E$}.
\end{align*}
Let $I_G$ denote the ideal generated by all $\cir_G$ relations, $I_c$ the ideal generated by all $\vcr_c$ and $\ecr_c$ relations.  We call the ideal $I_{G,c} = I_G + I_c$ the \emph{local colored conditional independence ideal} of~$(G,c)$.
\end{definition}

\begin{remark}
The vanishing of $\lambda_{ij|\pa(j)}(\Sigma)$ for $ij \not\in E$ encodes the conditional independence statement $\CI{i,j|\pa(j)}$. Hence, $I_G$ is the ideal of the \emph{directed (pairwise) local Markov property} of~$G$.
\end{remark}

Recall the global conditional independence ideal $J_G$ from \Cref{lem: CI polynomials}. By definition, the polynomials generating $I_G$ are contained in $J_G$ and therefore $I_G\subseteq J_G$.
The following is a conjecture of Sullivant.
\begin{conjecture}[{\cite{AlgGeoOfGBN}}]
    \label{conj:sullivant}
    Let $S_V = \Set{ \prod_{A \subseteq V} |\Sigma_A|^{k_A}: k_A\in \mathbb{N}}$.  Then $P_G = J_G : S_V$.
\end{conjecture}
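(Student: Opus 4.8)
The plan is to prove the equality $P_G = J_G : S_V$ by a double inclusion, using the rational identifiability of the parameters (\Cref{lemma:Ident}) as the central tool. The inclusion $J_G : S_V \subseteq P_G$ is the routine direction: since $I_G \subseteq J_G$ and every generator of $J_G$ is a conditional independence minor that vanishes on all of $\CC M(G)$, we have $J_G \subseteq P_G$; and because $P_G$ is prime (being the kernel of a ring homomorphism into an integral domain) while every element of $S_V$ is a principal minor that is strictly positive on the positive-definite matrices in $\CC M(G)$ and hence does not lie in $P_G$, primeness gives $P_G : S_V = P_G$, so $J_G : S_V \subseteq P_G : S_V = P_G$.

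The substantive direction is $P_G \subseteq J_G : S_V$, and here I would exploit the global rational identifiability of the parametrization. The idea is that the inverse rational map $\psi_G$ recovers each parameter $\omega_i$ and $\lambda_{ij}$ from $\Sigma$ using denominators that are products of parental principal minors, i.e.\ elements of $S_G \subseteq S_V$. Take any $f \in P_G$, so that $\phi_G^\ast(f) = 0$ in $\BB R[\Omega,\Lambda]$. First I would clear denominators: substituting the rational identification formulas and multiplying by a suitable element $s \in S_V$, one obtains a polynomial identity in $\BB R[\Sigma]$ witnessing that $s \cdot f$ is expressible through the relations that force $\Sigma$ to lie in the parametrized family. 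Concretely, modulo $I_G$ (which encodes exactly the vanishing of the off-diagonal $\lambda_{ij|\pa(j)}$ for non-edges), the matrix $\Sigma$ behaves as if it were the image $\phi_G(\Omega,\Lambda)$ with $\Omega,\Lambda$ replaced by the identifying rational functions; hence $s \cdot f \equiv s\cdot (f \circ \phi_G \circ \psi_G) \equiv 0 \pmod{I_G}$ after clearing, placing $s\cdot f \in I_G \subseteq J_G$. This yields $f \in J_G : S_V$.

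The cleanest way to organize the middle step is to establish, as a lemma, that the reduction of $\BB R[\Sigma]$ modulo $I_G$ and then localizing at $S_V$ makes the composite $\phi_G^\ast$ an isomorphism onto the appropriate localization of $\BB R[\Omega,\Lambda]$; this is precisely the content promised by \Cref{lemma:Saturation} for rationally, globally identifiable models. Given such a lemma, $f \in P_G = \ker \phi_G^\ast$ maps to zero, which after inverting the elements of $S_V$ means $f$ lies in the extension of $I_G$, and saturating back gives $f \in I_G : S_V \subseteq J_G : S_V$. The equality of the two saturations $I_G : S_V = J_G : S_V$ can be recorded separately, since $I_G \subseteq J_G \subseteq P_G$ sandwiches both between the same prime after saturation.

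The main obstacle I anticipate is the bookkeeping in the clearing-of-denominators step: one must verify that the denominators appearing in $\psi_G$ are genuinely products of parental principal minors lying in $S_G$ (and hence in $S_V$), and that substituting these rational expressions back into an arbitrary $f$ and clearing produces a polynomial that is provably in $I_G$ rather than merely vanishing on the variety. This requires arguing at the level of the coordinate ring or its localization, controlling that no spurious factors outside $S_V$ are introduced, and invoking that $\phi_G^\ast$ restricted to the localized rings is an isomorphism so that its kernel is generated by the $\cir_G$ relations defining $I_G$. The saturation at $S_V$ rather than merely $S_G$ is a convenience that makes the statement symmetric and is harmless because $S_G \subseteq S_V$.
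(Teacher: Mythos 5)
Your proposal follows essentially the same route as the paper: the easy inclusion via primeness of $P_G$ and $P_G \cap S_V = \emptyset$, the reduction from $J_G$ to $I_G$ via the sandwich $I_G \subseteq J_G \subseteq P_G$, and the hard inclusion by writing $f = \psi_G^*(\phi_G^*(f))$, noting the denominators of the identification map are parental principal minors in $S_G \subseteq S_V$, and clearing them to land in $I_G$ --- which is exactly the content and application of \Cref{lemma:Saturation} (the paper sets this up formally for the complete DAG with $I' = \ideal{\lambda_{ij} : ij \notin E}$, so that $P_G = {\phi^*}^{-1}(I')$ and the numerators of $\psi^*(\lambda_{ij})$ are the $\cir_G$ generators of $I_G$). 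The bookkeeping obstacle you flag is precisely what that lemma resolves, so your argument is correct and matches the paper's.
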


Since $I_G \subseteq J_G \subseteq P_G$, showing that $P_G = I_G : S_V$ is sufficient to prove \Cref{conj:sullivant}.
Moreover, it is further sufficient to show that $I_G : S^\prime = P_G$ for some $S^\prime\subseteq S_V$.

In the following, we give a short proof of \Cref{conj:sullivant} based on the observations made in \Cref{sec:markovproperties}.
We obtain our proof by proving a more general result for colored Gaussian DAG models.
Hence, before the proof, we note that one can analogously define a \emph{global colored conditional independence ideal} for $(G,c)$ as $J_{G,c} = J_G + J_c$, where $J_c$ is the ideal generated by the $\vcr$ and $\ecr$ relations
\begin{equation*}
    \begin{split}
        \vcr_c(i, j : A, B) &= |\Sigma_A|\, |\Sigma_B|\, (\omega_{i | A}(\Sigma) - \omega_{j | B}(\Sigma)),\\
        \ecr_c(ij,kl; A, B) &= |\Sigma_A|\, |\Sigma_B|\, (\lambda_{ij |A}(\Sigma) -\lambda_{k\ell |B}(\Sigma)),
    \end{split}
\end{equation*}
for all pairs $(A,B)\in \mathcal{A}(ij)\times \mathcal{A}(k\ell)$ when $c(ij) = c(k\ell)$ and $(A,B)\in \mathcal{A}(i)\times \mathcal{A}(j)$ when $c(i) = c(j)$.
Hence, $J_{G,c}$ is the ideal associated to the global colored Markov property, and we have that $I_{G,c}\subseteq J_{G,c}\subseteq P_{G,c}$.
Given this set-up, the proof of \Cref{conj:sullivant} is quick, depending only on \Cref{lemma:Saturation} below.
We also note %
that this technique can be used to prove the analogous result for other families of models for which this question has been studied:
\begin{paraenum}
\item
An \emph{undirected colored Gaussian graphical model} consists of all covariance matrices $\Sigma$ such that entries in the concentration matrix $K = \Sigma^{-1}$ are zero or equal, as specified by an undirected graph $U = (V, E)$ with coloring $c: V\sqcup E \to [k]$; see~\cite{UndirectedColored}.
The vanishing ideal $P_{U,c}$ for the model is the kernel of the pullback of the parameterizing map
\begin{equation*}
    \begin{split}
        \phi_{(U,c)}:{}& \Theta \longrightarrow \Real^{V\times V};\\
         & K \longmapsto K^{-1}
    \end{split}
\end{equation*}
where $\Theta$ is the space of invertible symmetric matrices $K = (\kappa_{ij})_{i,j \in V}$ subject to the linear constraints $\kappa_{ij} = 0$ whenever $ij \not\in E$, $\kappa_{ii} = \kappa_{jj}$ whenever $c(i) = c(j)$ and $\kappa_{ij} = \kappa_{kl}$ whenever $c(ij) = c(kl)$.
The colored conditional independence ideal $I_{U,c}$ for this model is the ideal
\[
I_{U,c} = \ideal{|\Sigma_{ij|V \setminus ij}| : ij \not\in E} + \ideal{|\Sigma_{V \setminus i}| - |\Sigma_{V \setminus j}| : c(i) = c(j)} + \ideal{|\Sigma_{ij|V \setminus ij}| - |\Sigma_{kl|V \setminus kl}| : c(ij) = c(kl)}.
\]

\item
A \emph{mixed graph} is an ordered triple $A = (V, D, B)$, where $V$ is the node set of the graph, $D$ is the set of directed edges $i\rightarrow j$ in the graph and $B$ is the set of bidirected edges $i\leftrightarrow j$.
The mixed graph $A$ is called \emph{directed ancestral} if it does not contain any directed cycles or bidirected edges $i \leftrightarrow j$ for which $i \in \de_A(j)$.
Consider the linear structural equation model $X = (X_i)_{i \in V}$
\begin{equation*}
    \begin{split}
        X_i = \sum_{k\in\pa_A(i)}\lambda_{ki}X_k + \eps_i
    \end{split}
\end{equation*}
with normally distributed errors $\eps_i\sim \textrm{N}(0,\omega_i)$ where $\eps_i$ and $\eps_j$ are correlated if and only if $i\leftrightarrow j\in B$.
In this case we let $\omega_{ij}$ denote the covariance of $\eps_i$ and $\eps_j$.
Letting $\Omega$ denote the covariance matrix of the noise vector $\eps = (\eps_i)_{i \in V}$, we obtain that $X$ has covariance matrix
\[
\Sigma = \phi_A(\Omega, \Lambda) = (\BBm1_V - \Lambda)^{-T} \, \Omega \, (\BBm1_V - \Lambda)^{-1}.
\]
The \emph{Gaussian directed ancestral graph model} for $A$ is then
\[
\CC M(A) = \{\phi_A(\Omega, \Lambda)\in \PD^V : \Lambda\in\Real^{D}, \Omega\in \PD^B\},
\]
where $\PD^B$ is the set of $V\times V$ positive definite matrices with zero pattern specified by $B$.
As~noted in \cite[Section~7]{drton2018algebraic}, the parameters $(\Omega, \Lambda)$ for $\Sigma\in \CC M(A)$ are globally identifiable and given by an extension of the map in \Cref{lemma:Ident} to include bidirected edge parameters; namely,
\begin{align*}
\lambda_{ij} &= \lambda_{ij | \pa_A(j)}(\Sigma) \mbox{ for $i\rightarrow j\in D$}, \\
\omega_{ij} &= \lambda_{ij | \pa_A(i)\cup \pa_A(j)}(\Sigma) \mbox{ for $i\leftrightarrow j\in B$} \; \mbox{ and } \\
\omega_{ii} &= \omega_{i | \pa_A(i)}(\Sigma) \mbox{ for $i \in V$}.
\end{align*}
These formulas yield the following conditional independence ideal:
\begin{align*}
I_A = & \ideal{\sigma_{ij} : \an_A(i) \cup \an_A(j) = \emptyset} + {} \\
&\ideal{|\Sigma_{ij|\an_A(j)\setminus i}| : i \in \an_A(j) \setminus \pa_A(j)} + {} \\
&\ideal{|\Sigma_{ij|\an_A(i)\cup\an_A(j)}| : i < j, \; i \notin \an_A(j) \neq \emptyset}.
\end{align*}
The details of this derivation are provided in \Cref{app: ancestral}. The vanishing ideal of the model $\CC M(A)$ is denoted by $P_A$.
\end{paraenum}

Since Gaussian colored DAG models, colored undirected graphical models and directed ancestral graph models are all rationally identifiable (as described above), we obtain the following general answer to the conjecture of Sullivant:

\begin{theorem}
    \label{thm: all the graphs}
    Let $S_V = \{ \prod_{A \subseteq V} |\Sigma_A|^{k_A} : k_A \in \BB N \}\subseteq\BB R[\Sigma]$ and $S_V^\prime = \{ |\Sigma|^{k} : k \in \BB N \}\subseteq\BB R[\Sigma]$.
    \begin{enumerate}
        \item $P_{G,c} = I_{G,c} : S_V$ for any colored DAG $(G,c)$. \label{cor:coloredDAGideals}
        \item $P_{U,c} = I_{U,c} : S_V^\prime$ for any colored undirected graph $(U,c)$. \label{cor:RCONideals}
        \item $P_A = I_A : S_V$ for any directed ancestral graph $A$. \label{cor:ancestralideals}
    \end{enumerate}
\end{theorem}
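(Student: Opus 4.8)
The plan is to prove the three parts of \Cref{thm: all the graphs} by a single uniform argument, isolating the algebraic mechanism common to the three families, which I expect to be the content of \Cref{lemma:Saturation}. In each case we are given a rational parametrization $\phi$ whose pullback $\phi^\ast$ has prime kernel $P$ (the vanishing ideal), a multiplicatively closed set $S$ of principal minors, and a rational inverse $\psi$ provided by global rational identifiability whose denominators lie in $S$. First I would dispatch the easy inclusion $I : S \subseteq P$. All generators of $I$ vanish on the model, so $I \subseteq P$. Every element of $S$ is a product of principal minors, hence strictly positive on $\PD^V$ and so nonvanishing on the model; as the model is Zariski dense in the irreducible variety cut out by $P$, no element of $S$ lies in $P$, i.e.\ $S \cap P = \emptyset$. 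Since $P$ is prime this gives $P : S = P$, and therefore $I : S \subseteq P : S = P$.

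The crux is the reverse inclusion $P \subseteq I : S$, which I would obtain from the reconstruction principle underlying \Cref{lemma:Saturation}. Work in the localization $R_S = S^{-1}\BB R[\Sigma]$ and set $I_S = I R_S$. Because $\psi$ has denominators in $S$, each base parameter is identified by a rational function lying in $R_S$; write $\psi^\ast$ for the substitution of these functions into (polynomial or, after localizing, rational) expressions in the parameters. The key claim is the \emph{reconstruction identity}
\[
\sigma_{ij} \equiv \phi^\ast(\sigma_{ij})\big|_{\theta = \psi(\Sigma)} \pmod{I_S} \qquad \text{for all } i, j,
\]
which asserts that re-inserting the identified parameters into the parametrization returns each covariance coordinate \emph{modulo the local ideal} $I$, rather than merely modulo the full vanishing ideal $P$. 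Granting this, the reverse inclusion is one line: for $f \in P$ we have $\phi^\ast(f) = 0$, so $\psi^\ast(\phi^\ast(f)) = f(\phi(\psi(\Sigma))) = 0$ in $R_S$; replacing each $\sigma_{ij}$ in $f$ by the right-hand side of the reconstruction identity shows $f \equiv f(\phi(\psi(\Sigma))) \pmod{I_S}$, and since the latter is $0$ we conclude $f \in I_S \cap \BB R[\Sigma] = I : S$. Combined with the easy inclusion, $P = I : S$.

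It then remains to verify the reconstruction identity in each setting, which is where the model-specific generators of $I$ enter. For a colored DAG I would argue by induction along a topological order: substituting the formulas of \Cref{lemma:Ident} for $\omega$ and $\lambda$ into the trek-rule expansion of $\phi^\ast(\sigma_{ij})$, the trek monomials through a non-edge vanish modulo the $\cir_G$ generators (localized at the parental minors), collapsing $\phi(\psi(\Sigma))_{ij}$ to $\sigma_{ij}$; the relations $\vcr_c$ and $\ecr_c$ force the identified parameters to be constant on color classes, so that $\phi_G(\psi_G(\Sigma)) \equiv \phi_{G,c}(\psi_{G,c}(\Sigma))$ modulo $(I_{G,c})_S$ and the colored parametrization may be used. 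Taking $c$ injective (so $I_c = 0$) recovers \Cref{conj:sullivant}, and since only the parental minors $S_G \subseteq S_V$ are inverted, the statement with $S_V$ follows a fortiori. For the undirected colored model the reconstruction reads $(\widehat K)^{-1} \equiv \Sigma \pmod{(I_{U,c})_{S_V^\prime}}$, where $\widehat K = \Sigma^{-1} = \adj(\Sigma)/|\Sigma|$ has entries in $(S_V^\prime)^{-1}\BB R[\Sigma]$: the generators $|\Sigma_{ij|V\setminus ij}|$ and their colored differences force $\widehat K$ to respect the zero- and equality-pattern of $\Theta$, after which inversion returns $\Sigma$ and only $|\Sigma|$ is inverted, which is why $S_V^\prime = \{|\Sigma|^k\}$ suffices. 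For a directed ancestral graph the argument mirrors the DAG case, now using the extended identifiers for $\lambda_{ij}$, $\omega_{ij}$ and $\omega_{ii}$ recalled before the theorem together with the three generator families of $I_A$.

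The main obstacle is the reconstruction identity, in particular its inductive proof for DAGs and its ancestral analogue: one must verify that substituting the rational identifiers back into the trek rule collapses to the identity modulo the \emph{local} ideal $I$, and not only modulo $P$. The coloring relations require care, as they are exactly what is needed to pass from the uncolored identifiers to a single well-defined value per color class; the cleanest organization is to prove reconstruction for the uncolored model first and then invoke $\vcr_c, \ecr_c \in I_{G,c}$ to identify the per-class values. Once the reconstruction identity is secured in each case, the reduction between the saturating sets $S_G$, $S_V$ and $S_V^\prime$ and the passage to the vanishing ideal are routine.
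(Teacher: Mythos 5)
Your two inclusions are logically sound, but your route to the hard one, $P \subseteq I : S$, is genuinely different from the paper's. The paper never proves a reconstruction identity modulo the local ideal: \Cref{lemma:Saturation} is applied with $\phi^*$ and $\psi^*$ taken to be the trek-rule and identification maps of the \emph{complete} graph, for which $\psi^* \circ \phi^* = \id_{\BB R[\Sigma]}$ holds exactly (the complete DAG parametrizes all of $\PD^V$, so no congruence is needed); the submodel $(G,c)$ is encoded as a linear, hence prime, ideal $I'$ in the parameter ring with $P_{G,c} = (\phi^*)^{-1}(I')$ and $I_{G,c}$ generated by the numerators of $\psi^*$ applied to the generators of $I'$, and the reverse inclusion is then two lines: write $\phi^*(f) = \sum_i k_i' f_i'$, apply $\psi^*$, clear denominators. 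Your proposal instead works with the parametrization of $G$ itself and hinges on the congruence $\sigma_{ij} \equiv \phi(\psi(\Sigma))_{ij}$ modulo $S^{-1}I$, a substantive statement that is \emph{not} the content of \Cref{lemma:Saturation} as you guessed; it is essentially the Roozbehani--Polyanskiy strategy, which the paper reserves for the sharper \Cref{thm:Saturation} (saturation at parental minors only) and reworks carefully in its appendix precisely because the original write-up of that argument had gaps. The trade-off: the paper's route gives a short, induction-free, uniform proof of all three parts as stated, while your route, if the reconstruction identity is carried out at the level of ideals (peeling off $\Sigma_{\pa(j),j} = \Sigma_{\pa(j)}\Lambda_{\pa(j),j}$ along a topological order, using the $\cir_G$ generators when $i \notin \pa(j)$ and the $\vcr_c$, $\ecr_c$ generators to collapse per-edge identifiers to per-class ones), yields the stronger conclusion with $S_G$ in place of $S_V$ — and, as you note, the $S_V$ version then follows from $I : S_G \subseteq I : S_V \subseteq P : S_V = P$. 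The only caution is that the reconstruction identity is where all the real work lives and your sketch must be upgraded from an assertion about values on the model to an honest congruence modulo $S^{-1}I$; your inductive plan is the right way to do that.
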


The main idea behind the proof of \Cref{thm: all the graphs} is that a Gaussian graphical model is described via linear relations on its parameters ($\Omega$ and $\Lambda$ in the DAG setting).
By rational identifiability, these linear relations correspond to rational function equations in $\Sigma$, as shown in \Cref{subsec: MPs}. The ideal $I_{G,c}$ is generated by all the numerators of these equations and $S_V$ is generated by the denominators. Hence, saturating $I_{G,c}$ at $S_V$ should produce an ideal in $\BB R[\Sigma]$ which contains all the polynomial equations entailed by the linear relations among the parameters under the parametrization map $\phi_{G,c}$ --- and this is the vanishing ideal~$P_{G,c}$. The following \namecref{lemma:Saturation} makes this idea precise:

\begin{lemma} \label{lemma:Saturation}
Let $R, R'$ be commutative rings, $S \subseteq R$ a multiplicatively closed set and $\phi^*: R \to R'$ and $\psi^*: R' \to S^{-1} R$ ring homomorphisms with $\psi^* \circ \phi^* = \id_R$. Let $I' \subseteq R'$ be a prime ideal with generators $f'_1, \dots, f'_k$. Denote $\psi^*(f_i') = \sfrac{g_i}{h_i}$ with $h_i \in S$, $I = {\phi^*}^{-1}(I')$ and $J = \ideal{g_1, \dots, g_k}$. If~$J \subseteq I$ and $I \cap S = \emptyset$, then $I = J : S$.
\end{lemma}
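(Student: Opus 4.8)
The plan is to prove the two inclusions $J : S \subseteq I$ and $I \subseteq J : S$ separately. The first uses only primality, while the second is where the interplay between the two homomorphisms and the localization does the real work. As a preliminary observation I would note that $I = {\phi^*}^{-1}(I')$ is itself prime, being the preimage of the prime ideal $I'$ under the ring homomorphism $\phi^*$; this is what makes the first inclusion immediate. Indeed, for the inclusion $J : S \subseteq I$, take $f \in J : S$, so that $fs \in J$ for some $s \in S$. Since $J \subseteq I$ by hypothesis, $fs \in I$, and because $I$ is prime with $s \notin I$ (as $I \cap S = \emptyset$), we conclude $f \in I$.

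For the reverse inclusion $I \subseteq J : S$, which I expect to be the main work, I would start from an arbitrary $f \in I$, so $\phi^*(f) \in I'$. Using the generators I write $\phi^*(f) = \sum_{i=1}^k a_i f'_i$ for some $a_i \in R'$, and then apply the second homomorphism $\psi^*$. The key structural input $\psi^* \circ \phi^* = \id_R$ turns the left-hand side back into $f$, while on the right $\psi^*(f'_i) = g_i/h_i$ with $h_i \in S$, yielding the equality $f = \sum_i \psi^*(a_i)\,(g_i/h_i)$ inside $S^{-1}R$. Writing each $\psi^*(a_i) = b_i/t_i$ with $b_i \in R$ and $t_i \in S$ and clearing denominators over the common denominator $s = \prod_i t_i h_i \in S$, I obtain $f = c/s$ in $S^{-1}R$, where the numerator $c$ is a sum of terms each divisible by some $g_i$, hence $c \in J$.

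The delicate point, and what I regard as the main obstacle, is that the localization map $R \to S^{-1}R$ need not be injective, so the equality $f/1 = c/s$ in $S^{-1}R$ does \emph{not} directly give $fs = c$ in $R$. By the definition of equality of fractions, it only gives an element $u \in S$ with $u(fs - c) = 0$ in $R$. This is precisely what the saturation is designed to absorb: rearranging yields $f \cdot (us) = uc$, and since $c \in J$ and $J$ is an ideal we have $uc \in J$, so $f \cdot (us) \in J$ with $us \in S$. Hence $f \in J : S$, completing the inclusion. The only care required throughout is bookkeeping of the various denominators in $S$ and, at the final step, explicitly invoking the non-injectivity clause of the localization rather than assuming an honest equation in $R$; once the stray annihilator $u$ is folded into the saturating set, the two inclusions combine to give $I = J : S$.
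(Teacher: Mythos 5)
Your proof is correct and follows essentially the same route as the paper's: the first inclusion via primality of $I = {\phi^*}^{-1}(I')$ and its disjointness from $S$, and the second by pushing $f \in I$ through $\phi^*$, expanding in the generators of $I'$, applying $\psi^*$ with $\psi^*\circ\phi^* = \id_R$, and clearing denominators into $S$. Your explicit handling of the possible non-injectivity of the localization map $R \to S^{-1}R$ (absorbing the annihilator $u \in S$ into the saturating element) is a point the paper's write-up glosses over, and it is a worthwhile refinement rather than a deviation.
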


\begin{remark} \label{rem:MarkovProperty}
Note that \Cref{lemma:Saturation} may be applied in the same way to any rationally parametrized statistical model satisfying global rational identifiability.
This is of statistical interest since it provides a natural method for extracting a Markov property (e.g.~a set of model-characterizing constraints) for any rationally identifiable statistical model with rationally identifiable parameters directly from its parametrization.
Specifically, the Markov property corresponds to the polynomial constraints produced by applying the (rational) parameter identification map to the polynomial constraints defining the model parameter space and then clearing denominators.
When the denominators correspond to boundary conditions on the ambient model space (for instance, the boundary of the PD cone) the necessary conditions on the set $S$ in \Cref{lemma:Saturation} will naturally be satisfied and the result of the lemma as applied to the model indicates that these polynomial constraints constitute a Markov property for the model.
The~polynomials are then testable constraints defining the parametrized model~\cite{sturma2022testing}.
\Cref{thm: all the graphs} demonstrates this phenomenon for well-studied families of Gaussian models where these constraints are already known (as they are given by the Markov properties associated to the graphs).
\end{remark}

\begin{remark} \label{rem:ApplyVanishingIdeal}
Having a parametrization for a model as well as an implicit description via the vanishing ideal (up to saturation) as in \Cref{thm: all the graphs} allows to test model equivalence effectively. Suppose $(G,c)$ and $(H,c')$ are given colored DAGs. They induce the same model if and only if their vanishing ideals coincide. The inclusion $P_{G,c} \subseteq P_{H,c'}$ can be tested by plugging the generators of $I_{G,c}$ into the parametrization~$\phi_{H,c'}$. Analogously plugging generators of $I_{H,c'}$ into $\phi_{G,c}$ gives a complete model equivalence test which consists only of evaluating polynomial expressions and is therefore quite cheap.
\end{remark}

\Cref{thm: all the graphs}~\eqref{cor:coloredDAGideals} and~\eqref{cor:RCONideals} prove \Cref{conj:sullivant} for uncolored Gaussian DAG models and uncolored undirected Gaussian graphical models by simply considering an injective coloring.

While \Cref{thm: all the graphs}~\eqref{cor:coloredDAGideals} is sufficient to prove the original conjecture of Sullivant, \Cref{thm:Saturation} below gives a second proof, by observing a stronger property; namely, that it is sufficient to saturate only at the multiplicatively closed set generated by the principal minors indexed by the parent sets of nodes in~$G$.
Proving this stronger result requires some additional steps, which are carried out in \Cref{section:UncoloredSaturation}.

\begin{theorem} \label{thm:Saturation}
Let $(G, c)$ be a colored DAG and let $S_G = \{\prod_{i\in V} |\Sigma_{\pa_G(i)}|^{k_i} : k_i \in \BB N\}$. The vanishing ideal $P_{G,c}$ of the colored Gaussian DAG model $\CC M(G,c)$ is the saturation $I_{G,c} : S_G$.
\end{theorem}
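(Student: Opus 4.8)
The plan is to realize the localized quotient ring $S_G^{-1}(\BB R[\Sigma]/I_{G,c})$ as a localization of a polynomial ring, hence an integral domain, and to read off $P_{G,c}$ as the kernel of the associated localization map. Write $R = \BB R[\Sigma]$ and let $R' = \BB R[\Omega,\Lambda]$ be the uncolored parameter ring of $G$. Let $\mathfrak c \subseteq R'$ be the linear \emph{coloring ideal} generated by $\omega_i - \omega_j$ for $c(i)=c(j)$ and $\lambda_{ij}-\lambda_{k\ell}$ for $c(ij)=c(k\ell)$, so that $\phi_{G,c}^*$ factors as $R \xrightarrow{\phi_G^*} R' \twoheadrightarrow R'/\mathfrak c$ and $P_{G,c} = \ker(R\to R'/\mathfrak c)$; since $\mathfrak c$ is generated by differences of distinct variables, $R'/\mathfrak c$ is again a polynomial ring, hence a domain. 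Put $T = \phi_G^*(S_G)\subseteq R'$, the image of the parental minors. Because $I_{G,c}:S_G$ is exactly the kernel of the localization map $R\to S_G^{-1}(R/I_{G,c})$, it suffices to identify this target ring together with that kernel.

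First I treat the uncolored ideal $I_G$, which is killed by $\phi_G^*$ since its generators $\cir_G$ vanish on the model by \Cref{thm:IdentifyingSets}\eqref{thm:IdentifyingSets:NonEdges}. By \Cref{lemma:Ident} the parameter-recovery map $\psi_G^*\colon R'\to S_G^{-1}R$ sending $\omega_i\mapsto \omega_{i|\pa(i)}(\Sigma)$ and $\lambda_{ij}\mapsto \lambda_{ij|\pa(j)}(\Sigma)$ has all denominators in $S_G$, and $\phi_G^*(\omega_{i|\pa(i)}(\Sigma)) = \omega_i$, $\phi_G^*(\lambda_{ij|\pa(j)}(\Sigma)) = \lambda_{ij}$. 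Hence, writing $C := S_G^{-1}(R/I_G)$, the maps descend to $\bar\phi_G^*\colon C\to T^{-1}R'$ and $\bar\psi_G^*\colon T^{-1}R'\to C$, and \Cref{lemma:Ident} gives $\bar\phi_G^*\circ\bar\psi_G^* = \id_{T^{-1}R'}$ immediately. The reverse composite $\bar\psi_G^*\circ\bar\phi_G^* = \id_C$ reduces, by multiplicativity, to the single congruence
\[
\psi_G^*\bigl(\phi_G^*(\sigma_{ab})\bigr) \equiv \sigma_{ab} \pmod{S_G^{-1}I_G}\qquad\text{for all } a,b\in V,
\]
i.e.\ that reconstructing a covariance entry from the parentally identified parameters returns that entry modulo the local conditional independence relations, once the parental minors are inverted. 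Granting this, $\bar\phi_G^*$ and $\bar\psi_G^*$ are mutually inverse, so $C\cong T^{-1}R'$ is a domain.

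Next I add the coloring. Under the isomorphism $C\cong T^{-1}R'$, the generators $\vcr_c(i,j)$ and $\ecr_c(ij,k\ell)$ of $I_c$ (\Cref{def:Relations}) map, via $\bar\phi_G^*$ and \Cref{lemma:Ident}, to $\phi_G^*(|\Sigma_{\pa(i)}|)\,\phi_G^*(|\Sigma_{\pa(j)}|)\,(\omega_i-\omega_j)$ and $\phi_G^*(|\Sigma_{\pa(j)}|)\,\phi_G^*(|\Sigma_{\pa(\ell)}|)\,(\lambda_{ij}-\lambda_{k\ell})$; the prefactors lie in $T$ and are therefore units. Consequently the extension of $I_{G,c} = I_G + I_c$ corresponds to $T^{-1}\mathfrak c$, and
\[
S_G^{-1}\bigl(\BB R[\Sigma]/I_{G,c}\bigr)\;\cong\; T^{-1}R'/T^{-1}\mathfrak c \;\cong\; T^{-1}\bigl(R'/\mathfrak c\bigr),
\]
a localization of the domain $R'/\mathfrak c$, hence itself a nonzero domain. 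In particular $I_{G,c}:S_G$ is a proper prime ideal.

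Finally, the map $\bar\phi_{G,c}^*\colon S_G^{-1}(R/I_{G,c}) \to T^{-1}(R'/\mathfrak c)$ induced by $\phi_{G,c}^*$ is this isomorphism, so for $f\in R$ its image in $S_G^{-1}(R/I_{G,c})$ vanishes if and only if $\phi_{G,c}^*(f)$ vanishes in $T^{-1}(R'/\mathfrak c)$; as $R'/\mathfrak c$ is a domain injecting into its localization, this holds if and only if $\phi_{G,c}^*(f)=0$, that is $f\in P_{G,c}$. Since that image vanishes if and only if $f\in I_{G,c}:S_G$, we conclude $I_{G,c}:S_G = P_{G,c}$. The one non-formal ingredient is the displayed congruence, which I expect to be the main obstacle: it should follow by induction along a topological order of $G$, using the trek-rule form of $\phi_G^*$ together with the identifying-set characterization in \Cref{thm:IdentifyingSets} to rewrite each reconstruction error $\psi_G^*(\phi_G^*(\sigma_{ab})) - \sigma_{ab}$ as a combination of the $\cir_G$ generators with parental-minor denominators; this is the additional work carried out in \Cref{section:UncoloredSaturation}. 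Notably, this argument is self-contained and does not invoke \Cref{thm: all the graphs}, providing the promised second proof that saturation at the parental minors alone already suffices.
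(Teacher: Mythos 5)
Your argument is correct in its architecture and reaches the theorem by a genuinely different route from the paper. The paper proceeds in two stages: it first proves the uncolored identity $P_G = I_G : S_G$ (\Cref{thm:UncoloredSaturation}) by showing that $I_G : S_G$ is prime (\Cref{lemma:UncoloredPrime}, which uses the elimination statement \Cref{lemma:UncoloredSub} to embed $S_G^{-1}(\BB R[\Sigma]/I_G)$ into a localization of $\BB R[\Sigma_E]$) and then invoking a dimension count (\Cref{lemma:UncoloredDimension}) to upgrade the inclusion of primes $I_G : S_G \subseteq P_G$ to an equality; it then passes to the colored case by applying \Cref{lemma:Saturation} inside the coordinate ring $\BB R[\Sigma]/P_G$ and contracting back. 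You instead exhibit $S_G^{-1}(\BB R[\Sigma]/I_{G,c})$ directly as $T^{-1}(R'/\mathfrak{c})$, a localization of a polynomial ring; this one isomorphism delivers primality \emph{and} identifies the kernel of $\BB R[\Sigma] \to S_G^{-1}(\BB R[\Sigma]/I_{G,c})$ with $\ker\phi_{G,c}^* = P_{G,c}$ in a single stroke, so you need neither the dimension count nor the two-stage contraction. Reducing everything to the round-trip congruence $\psi_G^*\phi_G^*(\sigma_{ab}) \equiv \sigma_{ab} \pmod{S_G^{-1}I_G}$ is the right pivot, but note that this congruence is also needed \emph{before} you may assert that $\bar\psi_G^*$ descends to $T^{-1}R'$: the elements $\psi_G^*(\phi_G^*(s))$ for $s \in S_G$ are only known to be units in $S_G^{-1}(\BB R[\Sigma]/I_G)$ once the congruence is in hand, so those two steps should be reordered. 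What the paper's route buys is reusability — \Cref{lemma:Saturation} is abstract precisely so the same argument covers undirected and ancestral graphs; what your route buys is economy in the DAG case and a transparent explanation of why inverting only the parental minors suffices.

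The one substantive caveat is that the congruence you defer is not literally "the additional work carried out in \Cref{section:UncoloredSaturation}": that appendix proves \Cref{lemma:UncoloredSub}, a variable-elimination statement ($sf$ is congruent modulo $I_G$ to a polynomial in the edge variables), not the statement that $\psi_G^* \circ \phi_G^*$ is the identity modulo $S_G^{-1}I_G$, so you would have to prove it yourself. Fortunately your sketch is viable and the proof is short: induct on $\max(a,b)$ in a topological order. For a source node $a = b$ both sides are $\sigma_{aa}$ on the nose; for $a \neq b$ with $b$ the later vertex, the trek rule gives $\phi_G^*(\sigma_{ab}) = \sum_{l \in \pa(b)} \phi_G^*(\sigma_{al})\,\lambda_{lb}$, so applying $\psi_G^*$ and the inductive hypothesis reduces the claim to showing $\sigma_{ab} - \Sigma_{a,\pa(b)}\Sigma_{\pa(b)}^{-1}\Sigma_{\pa(b),b} \in S_G^{-1}I_G$, which is $0$ when $a \in \pa(b)$ and equals $\cir_G(a,b)/|\Sigma_{\pa(b)}|$ when $ab \notin E$; the diagonal case is handled the same way via $\omega_{b|\pa(b)}$. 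With that lemma supplied (and the easy check that the parental minors remain nonzero in $R'/\mathfrak{c}$, which you omit), your proof is complete and, as you say, independent of \Cref{thm: all the graphs}.
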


\begin{example}
 Let $(G,c)$ be the colored DAG as seen in Example \ref{example:colored path}. As there are three missing edges in $G$, $I_G$ is the ideal generated by the $\cir_G(i,j)$ relations
 \[
 I_G=\langle \sigma_{13}\sigma_{22}-\sigma_{12}\sigma_{23}, \sigma_{14}\sigma_{33}-\sigma_{13}\sigma_{34}, \sigma_{24}\sigma_{33}-\sigma_{23}\sigma_{34} \rangle.
 \]
Similarly, $I_c$ is the ideal generated by the two coloring relations $\vcr_c(1,3)$ and $\ecr_c(12,34)$ and is given by
\[
I_c=\langle \sigma_{11}\sigma_{22}- \sigma_{22}\sigma_{33} +\sigma_{23}^2, \sigma_{12}\sigma_{33}-\sigma_{11}\sigma_{34} \rangle.
\]
Therefore, $I_{G,c} = I_G +I_c$ is the local colored conditional independence ideal of $(G,c)$. The vanishing ideal $P_{G,c}$ of $(G,c)$ is generated by $I_{G,c}$ and 8 additional relations
\begin{eqnarray*}
P_{G,c}&=& I_{G,c} + \langle \sigma_{14}\sigma_{23}-\sigma_{13}\sigma_{24}, \, \sigma_{12}\sigma_{23}-\sigma_{11}\sigma_{24}, \, \sigma_{14}\sigma_{22}-\sigma_{12}\sigma_{24},  \\
&&\sigma_{13}\sigma_{22}-\sigma_{11}\sigma_{24}, \, \sigma_{12}\sigma_{22}+\sigma_{23}\sigma_{24}-\sigma_{22}\sigma_{34}, \, \sigma_{12}\sigma_{13}-\sigma_{11}\sigma_{14},  \\
&&\sigma_{12}^2 + \sigma_{13}\sigma_{24} -\sigma_{12}\sigma_{34}, \, \sigma_{11}\sigma_{12}+\sigma_{13}\sigma_{23}-\sigma_{11}\sigma_{34} \rangle \\
&=& I_{G,c}:S_G.
\end{eqnarray*}
The 8 additional generators appear after saturation of $I_{G,c}$ at $S_G = \{\, \sigma_{11}^k \sigma_{22}^\ell \sigma_{33}^m : k, \ell, m \in \BB N \}$.
\end{example}

\section{Faithfulness Considerations and Model equivalence}
\label{sec: model equivalence}

When investigating consistency of a causal discovery algorithm it is common practice to first check if the algorithm is consistent under the assumption that the data-generating distribution is faithful to the causal graph.
For example, methods such as Greedy Equivalence Search (GES) \cite{chickering2002optimal}, PC algorithm \cite{spirtes1991algorithm} or hybrid algorithms such as GreedySP \cite{solus2021consistency}, were first shown to be consistent under faithfulness before this assumption was relaxed to more general consistency guarantees.
Distributions within $\CC M(G)$ that are faithful to $G$ are known to exist (see for instance \cite{meek1995strong}), and hence this standard assumption for guaranteeing consistency is non-vacuous.

A second important consideration is whether or not two distinct graphs define the same DAG model.
In the case of uncolored DAGs, there are several well-known combinatorial characterizations of this condition, including the classic result due to Verma and Pearl \cite{VPequivalence}:
two DAGs define the same model if and only if they have the same skeleton and v-structures.
There is also the characterization of Andersson et al.~\cite{AMPEquivalence} who characterize model equivalence as two DAGs having the same \emph{essential graph} (or \emph{CPDAG}), and the transformational characterization of model equivalence due to Chickering \cite{chickering2013transformational}.
These characterizations are fundamental to the process of causal discovery, as they describe the structure that is possible to estimate from observational data alone (without further modeling assumptions).

In causality, learning only an equivalence class of DAGs may not achieve the desired outcome, as we typically do not want the directions of our cause-effect relations to be interchangeable.
Hence, there are many works characterizing model equivalence under additional assumptions on the distribution \cite{hoyer2008nonlinear, peters2012identifiability, peters2014identifiability, wu2023partial} as well as with the help of interventional data \cite{hauser2012characterization, yang2018characterizing}.
Under several of these conditions \cite{hoyer2008nonlinear, peters2012identifiability, peters2014identifiability} the assumptions result in equivalence classes of size $1$; in which case the graph is called \emph{structurally identifiable}.

The general colored DAG models $\CC M(G,c)$ fall into the former of the two categories discussed in the preceding paragraph; i.e., where one aims to refine the model equivalence classes of uncolored DAGs with the help of additional parametric constraints, specified in this case by the coloring.
In \Cref{subsec: faithfulness}, we first address the question of existence of distributions that are faithful to the colored DAG $(G,c)$.
In \Cref{subsec: model equivalence edge}, we derive some necessary conditions for model equivalence and obtain as a corollary some structural identifiability results in the case of edge-colored DAGs.

\subsection{Faithfulness}
\label{subsec: faithfulness}
In this subsection we %
show that distributions faithful to colored DAGs exist in some cases but not in general.
Proofs of results in this subsection may be found in the Supplementary material \Cref{section: faithfulness and MP proofs}.

\begin{definition}
Let $(G, c)$ be a colored DAG and $\Sigma \in \CC M(G,c)$. The covariance matrix $\Sigma$ is \emph{faithful to $G$} if it satisfies no more conditional independence statements than those implied by d-separation in~$G$. It is \emph{$G$-faithful to $c$} if it satisfies no more coloring constraints than those in \Cref{def: global colored MP}~\eqref{def: global colored MP:vcc} and~\eqref{def: global colored MP:ecc}; i.e., $\Sigma$ is $G$-faithful to $c$ if
\begin{itemize}
    \item for every pair of nodes $i, j$ in $G$ and sets $A \subseteq V\setminus i$, $B\subseteq V\setminus j$ we have that $\vcr_c(i, j; A, B)$ evaluates to $0$ on $\Sigma$ if and only if $c(i) = c(j)$ and $A\in\mathcal{A}_G(i)$ and $B\in \mathcal{A}_G(j)$; and
    \item for every pair of edges $ij, k\ell$ in $G$ and sets $A\subseteq V \setminus j$, $B\subseteq V\setminus \ell$ we have that $\ecr_c(ij, k\ell; A, B)$ evaluates to $0$ on $\Sigma$ if and only if $c(ij) = c(k\ell)$ and $A\in\mathcal{A}_G(ij)$ and $B\in \mathcal{A}_G(k\ell)$.
\end{itemize}
We say that $\Sigma$ is $G$-faithful to $(G,c)$ if it is \emph{faithful} to $G$ and $G$-faithful to $c$.
\end{definition}

    Note that the definition of $G$-faithful depends on the choice of DAG $G$.  We use $G$-faithful to distinguish from a more general notion of faithfulness to $c$; namely, that the only constraints of the form
    $
    \ecr_c(ij, k\ell; A, B)
    $
    and
    $
    \vcr_c(i, j; A, B)
    $
    are those implied by the assumption that $\Sigma$ is Markov to $(G,c)$.
    In principle, it is possible that $\CC M(G,c) = \CC M(H,c')$ where $(H,c')$ contains two edges of the same color that do not appear in $(G,c)$.
    While $G$-faithfulness does not capture relations possibly arising from model equivalence in which graphs change both edge structure and coloring, it does allow us to distinguish between models in which the DAGs are identical but the colorings are distinct.

\begin{proposition}
\label{prop: faithful to c}
Let $(G,c)$ be a colored DAG containing edges $ij$ and $k\ell$.
Let $A\in \mathcal{A}(ij)$ and $B\in \mathcal{A}(k\ell)$.
For a generic $\Sigma\in \CC M(G,c)$, we have that $c(ij) = c(k\ell)$ if and only $\ecr_c(ij, k\ell; A, B)$ evaluates to $0$ on $\Sigma$.
\end{proposition}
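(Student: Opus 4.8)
The plan is to prove the two directions separately, reducing both to the identifiability of the structural coefficients $\lambda_{ij}$ and $\lambda_{k\ell}$ on the model. First observe that since every $\Sigma \in \CC M(G,c)$ is positive definite, the principal minors $|\Sigma_A|$ and $|\Sigma_B|$ are strictly positive, so $\ecr_c(ij, k\ell; A, B)$ vanishes on such a $\Sigma$ if and only if $\lambda_{ij|A}(\Sigma) = \lambda_{k\ell|B}(\Sigma)$. Because $A \in \mathcal{A}(ij)$ and $B \in \mathcal{A}(k\ell)$ are identifying sets, \Cref{thm:IdentifyingSets}~\eqref{thm:IdentifyingSets:Edges} gives $\lambda_{ij|A}(\Sigma) = \lambda_{ij}$ and $\lambda_{k\ell|B}(\Sigma) = \lambda_{k\ell}$ for every $\Sigma = \phi_{G,c}(\Omega, \Lambda) \in \CC M(G,c)$. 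Hence on the model $\ecr_c(ij, k\ell; A, B)$ vanishes exactly where the parameter values $\lambda_{ij}$ and $\lambda_{k\ell}$ agree, and the problem reduces to comparing these two coefficients.

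For the forward implication this reduction is immediate: if $c(ij) = c(k\ell)$, then by \Cref{def:coloredDAGmodel} every parameter vector in $A(G,c)$ satisfies $\lambda_{ij} = \lambda_{c(ij)} = \lambda_{c(k\ell)} = \lambda_{k\ell}$, so $\ecr_c(ij, k\ell; A, B)$ vanishes on \emph{every} $\Sigma \in \CC M(G,c)$, in particular on a generic one. For the converse I argue the contrapositive together with irreducibility of the model. Suppose $c(ij) \neq c(k\ell)$. Then $\lambda_{c(ij)}$ and $\lambda_{c(k\ell)}$ are two distinct free coordinates of the parameter space $A(G,c)$, so I may choose $(\Omega, \Lambda) \in A(G,c)$ with $\lambda_{c(ij)} \neq \lambda_{c(k\ell)}$. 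Setting $\Sigma_0 = \phi_{G,c}(\Omega, \Lambda) \in \PD^V$, the reduction above yields $\ecr_c(ij, k\ell; A, B)(\Sigma_0) = |\Sigma_{0,A}|\,|\Sigma_{0,B}|\,(\lambda_{ij} - \lambda_{k\ell}) \neq 0$, so this polynomial does not vanish identically on $\CC M(G,c)$.

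To conclude I invoke that the model is irreducible: the vanishing ideal $P_{G,c}$ is prime, being the kernel of the ring homomorphism $\phi_{G,c}^\ast$ into the integral domain $\BB R[\Omega, \Lambda]$ (\Cref{def:vanishingideal}). A polynomial that is nonzero at the single point $\Sigma_0$ therefore does not lie in $P_{G,c}$, so taking $f = \ecr_c(ij, k\ell; A, B)$ in the definition of genericity gives $\ecr_c(ij, k\ell; A, B)(\Sigma) \neq 0$ for generic $\Sigma \in \CC M(G,c)$, which is the contrapositive. The argument has essentially no hard step once \Cref{thm:IdentifyingSets} is applied; the only point requiring care is transferring the genericity statement from the parameter space $A(G,c)$ to its image $\CC M(G,c)$, and this is handled cleanly by the irreducibility of the model together with the explicit witness $\Sigma_0$.
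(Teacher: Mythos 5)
Your proof is correct and follows essentially the same route as the paper's: the forward direction uses that identifying sets recover the (equal) base parameters, and the converse exhibits a parameter choice with $\lambda_{c(ij)} \neq \lambda_{c(k\ell)}$ so that $\ecr_c(ij,k\ell;A,B)$ is nonzero at the corresponding covariance matrix. The only difference is that you spell out the transfer of genericity from parameter space to $\CC M(G,c)$ via primality of $P_{G,c}$, a step the paper leaves implicit.
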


Wu and Drton recently showed that vertex colors alone do not imply additional CI~statements:

\begin{proposition}[{\cite[Proposition~3.1]{wu2023partial}}] \label{prop:vertex coloring faithful to G}
Let $(G, c)$ be a vertex-colored DAG. A generic $\Sigma \in \CC M(G,c)$ is faithful to~$G$.
\end{proposition}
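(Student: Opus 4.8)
The plan is to combine irreducibility of the model with a non-cancellation argument for the conditional-independence minors. Since $\CC M(G,c)$ is the image of the irreducible parameter space $A(G,c)\cong\PosReal^{\vc}\times\Real^{\ec}$ under the rational map $\phi_{G,c}$, it is irreducible, so a property holds for a generic $\Sigma$ exactly when the corresponding polynomial does not vanish identically on the model. Because elementary conditional independence statements determine all of them \cite{Studeny}, there are only finitely many statements $\CI{X_i,X_j|X_K}$ to control, and by \Cref{lem: CI polynomials} faithfulness to $G$ fails only if one of those \emph{not} implied by a d-separation holds. As a finite intersection of dense open sets is again dense and open, it suffices to fix one such statement and show that the almost-principal minor $|\Sigma_{ij|K}|$ does not vanish identically on $\CC M(G,c)$; equivalently, since $P_{G,c}=\ker\phi_{G,c}^\ast$, that its pullback $q\defas\phi_{G,c}^\ast(|\Sigma_{ij|K}|)$, a polynomial in the base vertex- and edge-parameters $\omega,\lambda$, is nonzero.

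To analyze $q$ I would expand the minor through the trek rule \eqref{eq:TrekRule}. Writing $\Sigma=N^{\top}\Omega N$ with $N=(\BBm1_V-\Lambda)^{-1}$ and applying the Cauchy--Binet formula to this $(|K|+1)$-minor gives
\[
|\Sigma_{ij|K}| \;=\; \sum_{S}\Bigl(\prod_{s\in S}\omega_s\Bigr)\,T_S(\Lambda),\qquad T_S(\Lambda)=|N_{S,\,iK}|\,|N_{S,\,jK}|,
\]
a sum over index sets $S$ of size $|K|+1$, where each $T_S$ is a polynomial in the edge parameters alone. This expression is multilinear and squarefree in the $\omega$-variables, so the uncolored minor is nonzero if and only if some $T_S\not\equiv0$; by the trek-separation characterisation of generic rank \cite{Sullivant} this is the case here, since the statement is not implied by a d-separation. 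The decisive point for \emph{vertex} coloring is that passing from $\CC M(G)$ to $\CC M(G,c)$ only identifies the $\omega$-variables and leaves the edge-polynomials $T_S$ untouched: $q$ is obtained from the display by the substitution $\omega_s\mapsto\omega_{c(s)}$, which merges the monomials $\prod_{s\in S}\omega_s$ according to the color-count vector of $S$. Grouping by this vector yields $q=\sum_{\nu}\omega^{\nu}U_\nu(\Lambda)$ with $U_\nu=\sum_{S:\,\mathrm{cnt}(S)=\nu}T_S$, so that $q\neq0$ if and only if some $U_\nu\neq0$.

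It then remains to rule out that every color class $U_\nu$ cancels to zero. When $K=\emptyset$ this is immediate: the minor is $\sigma_{ij}$, whose expansion $\sum_\tau\omega_{\top(\tau)}\lambda^\tau$ has all coefficients equal to $1$, so no cancellation can occur under any identification of the $\omega$'s provided one connecting trek exists, which it does since $i$ and $j$ are not d-separated. For general $K$ I would isolate a surviving monomial: choosing a term order so that the leading $\lambda$-monomial of $|\Sigma_{ij|K}|$ is realized only by trek systems of the maximal size $|K|+1$ having no sided self-intersection (these exist precisely because the statement is not d-separation-implied), the combinatorial core of trek separation shows that all such maximal non-crossing systems enter this leading coefficient with the \emph{same} sign. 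Hence its $\omega$-part is a sum $\sum_{S}\pm\prod_{s\in S}\omega_s$ over the top-sets $S$ with all signs equal; identifying the $\omega$-variables according to $c$ turns this into a same-sign, hence nonzero, combination of distinct base monomials, so the corresponding $U_\nu$ does not vanish and $q\neq0$.

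The main obstacle is exactly this final non-cancellation step for $K\neq\emptyset$: because the almost-principal minor is a determinant, its monomials carry signs, and a priori two trek systems with distinct top-sets but identical color-count and identical edge-monomial could cancel once the $\omega$'s are merged. The resolution I would pursue is the sign-consistency supplied by trek separation, which reduces the whole statement to the uncolored combinatorics together with the observation that vertex coloring never touches the $\lambda$-monomials. An alternative that sidesteps the sign bookkeeping is to keep the base variances as free indeterminates and argue that a lexicographically extremal term in $(\omega,\lambda)$ survives; I expect either route to close the argument, with the sign-consistency lemma being the delicate ingredient.
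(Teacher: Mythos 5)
Your reduction is the same as the paper's: irreducibility of $\CC M(G,c)$ plus a finite-intersection argument reduces faithfulness to showing, for each pair $(ij,K)$ that is d-connected in $G$, that the almost-principal minor $|\Sigma_{ij|K}|$ does not vanish identically on the colored model. But at that point the paper simply invokes Wu and Drton's Theorem~2.2, which states that for vertex-colored DAGs the conditional independences holding on all of $\CC M(G,c)$ are exactly those given by d-separation in $G$ --- that citation \emph{is} the entire content of the proposition. You instead attempt to reprove this from scratch, and the decisive step is missing: your ``sign-consistency lemma,'' asserting that all maximal trek systems with no sided intersection contributing to a chosen leading $\lambda$-coefficient carry the same sign, is neither proved nor a known consequence of trek separation. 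In the Sullivant--Talaska--Draisma determinantal expansion, each trek system contributes with the sign of the bijection it induces between $\{i\}\cup K$ and $\{j\}\cup K$, and two systems realizing the same edge-monomial with different top-sets can induce different bijections, so same-sign is not automatic. You have correctly located the difficulty --- after the substitution $\omega_s\mapsto\omega_{c(s)}$, signed terms with equal color-count vectors and equal $\lambda$-monomials could cancel --- but the proposed resolution is an assertion, not an argument, and your closing paragraph concedes as much.

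It is worth noting that this is precisely the point where the vertex-colored case is genuinely harder than the edge-colored one: the paper's proof of the edge-colored analogue goes through Sullivant--Talaska--Draisma's Lemma~3.2, whose ``crucial step uses the fact that generically all $\Omega$ variables are distinct'' --- exactly the hypothesis that a nontrivial vertex coloring destroys. So the non-cancellation you need is the substance of Wu--Drton's theorem, not a routine corollary of trek separation. Your $K=\emptyset$ case is fine (the trek-rule expansion of $\sigma_{ij}$ has all coefficients $+1$), and the Cauchy--Binet framing $|\Sigma_{ij|K}|=\sum_S\bigl(\prod_{s\in S}\omega_s\bigr)T_S(\Lambda)$ is a sensible starting point, but as written the proposal establishes the proposition only modulo an unproven (and, in the generality stated, doubtful) combinatorial claim. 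Either supply a genuine proof of the non-cancellation for $K\neq\emptyset$ or cite \cite[Theorem~2.2]{wu2023partial} as the paper does.
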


The analogous statement for edge colors is also easy to prove: %

\begin{proposition}
\label{prop: edge faithful}
Let $(G, c)$ be an edge-colored DAG. A generic $\Sigma \in \CC M(G,c)$ is faithful to~$G$.
\end{proposition}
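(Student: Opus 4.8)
The plan is to show that a generic covariance matrix in the edge-colored model satisfies no conditional independence statement beyond those forced by d-separation in $G$. The key observation is that faithfulness is a generic property on an irreducible set: if we can exhibit a single distribution in $\CC M(G,c)$ that violates a given undesired CI statement, then that statement fails on a Zariski-dense open subset, and since there are only finitely many elementary CI statements (by \cite[Lemma~2.2]{Studeny} it suffices to control elementary ones), the intersection of these dense open sets is again dense. Hence it is enough to produce, for each CI statement $\CI{X_i,X_j|X_K}$ \emph{not} implied by d-separation in $G$, at least one parameter choice in $A(G,c)$ whose image under $\phi_{G,c}$ fails that statement.

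First I would set up the irreducibility: the colored parameter space $A(G,c)$ for an edge-colored DAG is parametrized by the edge-color base parameters $\lambda_{\mathbf c}\in\Real$ together with the error variances $\omega_i\in\PosReal^V$ (the vertices are uncolored), so it is an irreducible semialgebraic set, and $\CC M(G,c)=\im(\phi_{G,c})$ is its continuous rational image, hence irreducible. Consequently a property that holds off a proper subvariety holds generically, in the sense of \Cref{subsec: algebra preliminaries}. Next I would recall that faithfulness of the \emph{uncolored} model $\CC M(G)$ is classical \cite{spirtes2000causation, meek1995strong}: a generic $\Sigma\in\CC M(G)$ is faithful to $G$. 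The task is to transport this from the full parameter space $\PosReal^V\times\Real^E$ to the constrained subspace $A(G,c)$, where the coordinates $\lambda_{ij}$ are no longer free but tied together by the coloring.

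The crux is therefore to argue that the coloring constraints do not force any spurious cancellation in the trek-rule polynomials \eqref{eq:TrekRule}. Concretely, fix a CI statement $\CI{X_i,X_j|X_K}$ not entailed by d-separation; by \Cref{lem: CI polynomials} it corresponds to an almost-principal minor (or rank condition) $p(\Sigma)$ that does \emph{not} lie in the CI ideal $J_G$, so its pullback $\phi_G^*(p)$ is a nonzero polynomial in $\BB R[\Omega,\Lambda]$. The main obstacle — and the step I would spend the most care on — is showing that $\phi_G^*(p)$ remains nonzero after the substitution $\lambda_{ij}\mapsto\lambda_{c(ij)}$ that identifies edge variables in the same color class; a priori the identification could annihilate the polynomial. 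I would handle this by a specialization argument: choose the $\omega_i$ to be algebraically independent (or generic positive reals) and treat each color base parameter $\lambda_{\mathbf c}$ as an indeterminate, then show the resulting polynomial in the $\lambda_{\mathbf c}$'s is not identically zero. The cleanest route is a degree/leading-term argument on a well-chosen monomial: since $G$ is a DAG, one can use a topological order to identify a trek monomial in $\phi_G^*(p)$ whose $\lambda$-support, after color identification, is not matched by any other trek, so no cancellation can occur; the fact that the errors $\omega_i$ are independent parameters keeps the top-most-vertex weights distinct and prevents collapse.

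Finally, intersecting the finitely many dense open conditions (one per undesired elementary CI statement) yields a dense open subset of $\CC M(G,c)$ on which \emph{no} undesired CI statement holds, i.e., a generic $\Sigma\in\CC M(G,c)$ is faithful to $G$. I expect the argument to be short once the non-vanishing of $\phi_{G,c}^*(p)$ is established; indeed, since the paper notes this proposition is ``easy to prove,'' the intended proof is likely exactly this reduction, plus the remark that the single-edge-color identification $\lambda_{ij}=\lambda_{k\ell}$ is too coarse to create the delicate polynomial cancellations needed to manufacture a new conditional independence. The one point requiring genuine verification is that identifying edge parameters within a color class never forces the partial-correlation polynomial to vanish identically, and this is where I would place the weight of the proof.
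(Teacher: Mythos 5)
Your proposal follows essentially the same route as the paper: reduce to showing that the partial-correlation polynomial of each d-connection statement does not vanish identically after the color identification, and observe that the free, pairwise-distinct error variances $\omega_i$ prevent cancellation among trek monomials. The paper closes exactly the step you flag as needing "genuine verification" by citing the construction of Sullivant, Talaska and Draisma (their Lemma~3.2), whose only essential input is that the $\Omega$ variables are generically distinct — a condition untouched by an edge-coloring — so the uncolored argument goes through verbatim.
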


Model equivalence for colored DAGs is defined similarly to the uncolored case.

\begin{definition}
We say that two colored DAGs $(G,c)$ and $(H,d)$ are \textit{model equivalent} if $\CC M(G,c) = \CC M(H,d)$. The \textit{model equivalence class} $[G,c]$ of a colored DAG $(G,c)$ is a set that consists of all the colored DAGs that are model equivalent to $(G,c)$.
\end{definition}

\begin{corollary} \label{thm:MarkovEqv}
Let $(G,c)$ and $(H,d)$ be model equivalent colored DAGs. Suppose that $c$ is a vertex- (edge-) coloring and that $d$ is a vertex- (edge-) coloring. Then $G$ and $H$ are Markov equivalent DAGs. Hence, they have the same skeleton and v-structures.
\end{corollary}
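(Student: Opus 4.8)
The plan is to reduce model equivalence of the colored DAGs to Markov equivalence of the underlying uncolored DAGs by producing a single covariance matrix in the common model that is faithful to both $G$ and $H$. First I would set $\CC M \defas \CC M(G,c) = \CC M(H,d)$ and record that, by \Cref{thm:Smoothness}, this model is the diffeomorphic image of a connected open ball and hence irreducible; equivalently, its vanishing ideal (which equals $P_{G,c} = P_{H,d}$ since the sets coincide) is prime. This irreducibility is exactly what makes the word \emph{generic} meaningful on $\CC M$ and is the device that allows two separate genericity statements to be combined. Recall also that $\CC M(G,c) \subseteq \CC M(G)$ and $\CC M(H,d) \subseteq \CC M(H)$, so every element of $\CC M$ is a legitimate input to the notion of faithfulness to $G$ and to $H$.

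Next I would treat the two cases uniformly. If $c$ and $d$ are both vertex-colorings I invoke \Cref{prop:vertex coloring faithful to G}, and if both are edge-colorings I invoke \Cref{prop: edge faithful}. In either case there is a polynomial $f_G$ (respectively $f_H$) that does not vanish identically on $\CC M$ and such that every $\Sigma \in \CC M$ outside the vanishing locus of $f_G$ is faithful to $G$ (respectively, outside the vanishing locus of $f_H$ is faithful to $H$). Because the vanishing ideal of $\CC M$ is prime, the product $f_G f_H$ does not vanish identically on $\CC M$, so there exists $\Sigma \in \CC M$ on which $f_G f_H$ is nonzero; this single $\Sigma$ is then simultaneously faithful to $G$ and to $H$. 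This composition of two genericity conditions — legitimate precisely because $\CC M$ is irreducible — is the only delicate point in the argument; everything else is bookkeeping.

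Finally, for such a $\Sigma$, faithfulness to $G$ means that the conditional independence statements satisfied by $\Sigma$ are \emph{exactly} the d-separations of $G$ (the global Markov property gives one inclusion and faithfulness the other), and likewise for $H$. Hence $G$ and $H$ induce identical d-separation relations. By \Cref{lem: CI polynomials} (equivalently \Cref{thm:uncoloredMP}), each uncolored model $\CC M(G)$ and $\CC M(H)$ is the set of positive definite matrices satisfying the conditional independence constraints indexed by its d-separations; since these index sets coincide, $\CC M(G) = \CC M(H)$, i.e.\ $G$ and $H$ are Markov equivalent. Applying \Cref{thm:vp} then yields that $G$ and $H$ share the same skeleton and v-structures, completing the proof.
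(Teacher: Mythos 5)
Your proof is correct and follows essentially the same route as the paper: both arguments combine the genericity statements of \Cref{prop:vertex coloring faithful to G} and \Cref{prop: edge faithful} to produce a single $\Sigma$ in the common model that is faithful to both $G$ and $H$, conclude that the d-separations of $G$ and $H$ coincide, and finish with \Cref{thm:vp}. You simply spell out more explicitly (via irreducibility and primality of the vanishing ideal) why the two generic conditions can be intersected, a step the paper leaves implicit.
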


However, if vertex and edge colors occur simultaneously, additional CI~statements not represented by d-separations may be implied on the model.
The following is an example of a colored DAG $(G, c)$ on five vertices whose model does not contain any distribution which is faithful to~$G$.

\begin{example}\label{example:unfaithful CI}
Consider the following colored DAG $(G, c)$:
\begin{center}
\begin{tikzpicture}[thick,scale=0.6]

 	 \node[circle, draw, fill=green!40, inner sep=1pt, minimum width=1pt] (1) at (0,0)  {$1$};
	   \node[circle, draw, fill=red!50, inner sep=1pt, minimum width=1pt] (2) at (0,-4) {$2$};
          \node[circle, draw, fill=red!50, inner sep=1pt, minimum width=1pt] (3) at (2,0) {$3$};
          \node[circle, draw, fill=blue!40, inner sep=1pt, minimum width=1pt] (4) at (2,-2) {$4$};
          \node[circle, draw, fill=blue!40, inner sep=1pt, minimum width=1pt] (5) at (0,-2) {$5$};

  	 \draw[->,cyan]   (1) -- (5) ;
          \draw[->,cyan]   (1) -- (3) ;
          \draw[->, orange]   (2) -- (5) ;
          \draw[->,orange]   (3) -- (4) ;
          \draw[->,orange]   (4) -- (5) ;
    \end{tikzpicture}
\end{center}
In the DAG $G$, $1$ is d-connected to $4$ given $5$ and therefore the generic matrix in the uncolored model does not satisfy $\CI{1,4|5}$. To witness this, it suffices to see that the submatrix $\Sigma_{14|5}$ is generically invertible:
\begin{align*}
  |\Sigma_{14|5}| &= \lambda_{13} \lambda_{25}^2 \lambda_{34} \omega_{1} \omega_{2} - \lambda_{15} \lambda_{34}^2 \lambda_{45} \omega_{1} \omega_{3} - \lambda_{15} \lambda_{45} \omega_{1} \omega_{4} + \lambda_{13} \lambda_{34} \omega_{1} \omega_{5}.
\end{align*}
However, the given coloring identifies parameters and, as one easily verifies, causes all terms in the determinant to cancel.
This in turn means that $\CI{1,4|5}$ holds for all points $\Sigma$ in the colored model. Hence, there does not exist a distribution in $\CC M(G,c)$ which is faithful to~$G$.
\end{example}

The existence of colored DAG models that do not admit faithful distributions complicates efforts to provide easy, general characterizations of model equivalence for colored DAGs. The procedure described in \Cref{rem:ApplyVanishingIdeal} gives an effective model equivalence test based on \Cref{thm: all the graphs}. However, we are still lacking a combinatorial criterion similar to the d-separation criterion for uncolored DAGs.
In the following subsection, we expand upon the results for vertex-colored DAGs due to \cite{peters2014identifiability, wu2023partial} by providing structural identifiability results for edge-colored DAGs, which are known to admit faithful distributions by \Cref{prop: edge faithful}. The following important question is left for future work:

\begin{question}\label{quest: faithfulness}
Which colored DAGs admit distributions that are faithful to~$G$?
\end{question}

\begin{remark}
A complete solution to Question~\ref{quest: faithfulness} amounts to characterizing which minors $|\Sigma_{A,B|C}|$ vanish only after imposing the coloring constraints.
The entries of $\Sigma_{A,B|C}$ are each sums of trek monomials, and some of these monomials will cancel in the computation of the minor $|\Sigma_{A,B|C}|$.
In \cite{Positivity}, a cancellation-free expression of $\Sigma_{A,B|C}$ via trek systems with \emph{no-sided intersection} is given, which for the minor in Example~\ref{example:unfaithful CI} is
\[
\Sigma_{14|5} =
\begin{pmatrix}
  \omega_1\lambda_{34}\lambda_{13}  & \omega_1\lambda_{15}\\
  \omega_4\lambda_{45} + \omega_3\lambda_{34}^2\lambda_{45}  & \omega_2\lambda_{25}^2 + \omega_5
\end{pmatrix}.
\]
Once $\Sigma_{14|5}$ is expressed in this simplified form we see that the coloring constraints impose the condition that the first column is equal to the second scaled by $\lambda_{25}$.
Question~\ref{quest: faithfulness} asks for a complete characterization of the coloring constraints that impose such linear dependences amongst the columns of the submatrices $\Sigma_{A,B|C}$ when expressed via treks with no-sided intersection.
\end{remark}

\subsection{Structural identifiability results for edge-colored DAGs}
\label{subsec: model equivalence edge}

In this subsection, we present structural identifiability results for edge-colored graphs.
We show first that the edge structure of a Gaussian DAG model is identifiable under the assumption of homogeneous structural coefficients.
We then prove structural identifiability for a family of edge-colored DAG models, called BPEC-DAGs, whose coloring clusters the direct causes of each variable in the system according to similar causal effects on their target node.

\subsubsection{Homogeneous structural coefficients}
\label{subsubsec: single edge color}
To prove structural identifiability under the assumption of homogeneous structural coefficients, we use the following lemma pertaining to covered edges in colored graphs.
Recall that an edge $ij$ in a DAG $G$ is called \emph{covered} if $\pa_G(j) = \pa_G(i) \cup \{i\}$.

\begin{lemma}
    \label{lem: no_rev_cov}
    Suppose that $ij$ is a covered edge in $G$, and let $G_{i\leftarrow j}$ be the DAG that differs from $G$ only by the reversal of the edge $ij$ to $ji$.
    Let $c$ be an edge-coloring of $G$ and suppose that there is an edge
    $k\ell\in c(ij)$ such that $\ell \neq i, j$.
    Define the coloring of $G_{i\leftarrow j}$ by
    \[
    c_{i\leftarrow j}(k\ell) =
    \begin{cases}
        c(k\ell) &  k\ell \neq ji\\
        c(ij) & k\ell = ji.
    \end{cases}
    \]
    Then $\CC M(G_{i\leftarrow j}, c_{i\leftarrow j}) \neq \CC M(G,c)$.
\end{lemma}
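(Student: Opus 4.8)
Write $P = \pa_G(i)$. Since $ij$ is covered we have $\pa_G(j) = P \cup i$, and the reversal changes only the parent sets of $i$ and $j$: in $G' := G_{i\leftarrow j}$ one has $\pa_{G'}(i) = P \cup j$, $\pa_{G'}(j) = P$, and $\pa_{G'}(m) = \pa_G(m)$ for every $m \neq i,j$. (Covered-edge reversal preserves the skeleton and v-structures, so $G$ and $G'$ are Markov equivalent as uncolored DAGs; consequently $\CC M(G,c)$ and $\CC M(G',c_{i\leftarrow j})$ can differ only through their coloring constraints, which is exactly what the proof exploits.) The plan is to pin down the two structural coefficients that the coloring forces to agree with the reference edge $k\ell$ and show their identifying formulas are incompatible.

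First I would apply \Cref{lemma:Ident} to both edges. In $\CC M(G,c)$ the edge $ij$ satisfies $\lambda_{ij} = \lambda_{ij|\pa_G(j)}(\Sigma) = |\Sigma_{ij|P}| / |\Sigma_{P\cup i}|$, whereas in $\CC M(G',c_{i\leftarrow j})$ the reversed edge $ji$ satisfies $\lambda_{ji} = \lambda_{ji|\pa_{G'}(i)}(\Sigma) = |\Sigma_{ij|P}| / |\Sigma_{P\cup j}|$, using the symmetry $|\Sigma_{ji|P}| = |\Sigma_{ij|P}|$. These two rational functions share the numerator $|\Sigma_{ij|P}|$ but carry the distinct denominators $|\Sigma_{P\cup i}|$ and $|\Sigma_{P\cup j}|$.

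The reference edge $k\ell$ with $\ell \neq i,j$ is the crux. Because $\ell \neq i,j$, the edge $k\ell$ is present in both $G$ and $G'$ and $\pa_G(\ell) = \pa_{G'}(\ell)$, so the identifier $L(\Sigma) := \lambda_{k\ell|\pa_G(\ell)}(\Sigma)$ is one and the same function of $\Sigma$ in both models. By construction $c$ assigns $ij$ and $k\ell$ the same color, and $c_{i\leftarrow j}$ assigns $ji$ and $k\ell$ the same color, so the local colored Markov property (\Cref{thm: MP}) forces $|\Sigma_{ij|P}|/|\Sigma_{P\cup i}| = L(\Sigma)$ on all of $\CC M(G,c)$ and $|\Sigma_{ij|P}|/|\Sigma_{P\cup j}| = L(\Sigma)$ on all of $\CC M(G',c_{i\leftarrow j})$. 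Here it is essential that the head $\ell$ be distinct from $i,j$: an edge whose head were $i$ or $j$ would have its parent set, and hence its identifier, altered by the reversal, and the argument would break.

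Finally I would derive the contradiction. If the two models coincided, then on $\CC M(G,c)$ we would have $|\Sigma_{ij|P}|/|\Sigma_{P\cup i}| = |\Sigma_{ij|P}|/|\Sigma_{P\cup j}|$, that is
\[
|\Sigma_{ij|P}|\,\bigl(|\Sigma_{P\cup j}| - |\Sigma_{P\cup i}|\bigr) = 0 \quad\text{identically on } \CC M(G,c).
\]
A Schur-complement computation gives $|\Sigma_{P\cup i}| = |\Sigma_P|\,\omega_i$, $|\Sigma_{P\cup j}| = |\Sigma_P|\,(\lambda_{ij}^2\omega_i + \omega_j)$, and $|\Sigma_{ij|P}| = \lambda_{ij}\,|\Sigma_{P\cup i}|$. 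Since $c$ is an \emph{edge}-coloring, the variances $\omega_i,\omega_j$ remain free positive parameters and the coloring does not force $\lambda_{ij}=0$; choosing $\lambda_{ij}\neq 0$ together with $\omega_j \neq \omega_i(1-\lambda_{ij}^2)$ produces a point $\Sigma \in \CC M(G,c)$ at which $|\Sigma_{ij|P}|\neq 0$ and $|\Sigma_{P\cup i}|\neq|\Sigma_{P\cup j}|$. This $\Sigma$ violates $|\Sigma_{ij|P}|/|\Sigma_{P\cup j}| = L(\Sigma)$, hence lies outside $\CC M(G',c_{i\leftarrow j})$, so $\CC M(G_{i\leftarrow j}, c_{i\leftarrow j}) \neq \CC M(G,c)$. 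The only genuine obstacle is the invariance of the reference identifier $L(\Sigma)$ under the reversal, secured precisely by the hypothesis $\ell\neq i,j$; the remaining Schur-complement step is routine.
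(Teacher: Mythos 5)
Your proposal is correct and follows essentially the same route as the paper's proof: both use the third edge $k\ell$ with head $\ell\neq i,j$, whose identifying set $\pa(\ell)$ is unchanged by the reversal, to force $\lambda_{ij}^G=\lambda_{ji}^{G_{i\leftarrow j}}$, and both then reduce to the identity $|\Sigma_{ij|\pa_G(i)}|\,(|\Sigma_{\pa_G(i)\cup j}|-|\Sigma_{\pa_G(i)\cup i}|)=0$, which fails generically on $\CC M(G,c)$. Your explicit Schur-complement computation of the two principal minors just makes the paper's closing genericity assertion concrete.
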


Let $\mathcal{E}_{1}$ denote the collection of all edge-colored DAGs having exactly one color class (e.g., all edges are the same color). The following theorem is an extension of the technique in \Cref{lem: no_rev_cov}.
Its proof technique (Supplementary material~\Cref{section: faithfulness and MP proofs}) is novel and related to the geometry described in Subsection~\ref{subsec:conj}.

\begin{theorem}
    \label{thm: ident single edge color}
    Let $[G,c]$ denote the model equivalence class of an edge-colored DAG $(G,c)$ with constant edge coloring where $G$ contains at least two edges.
    Then $|[G,c]\cap \mathcal{E}_{1}| = 1$; i.e., $(G,c)$ is structurally identifiable in the class of all edge-colored DAGs having exactly one color class.
\end{theorem}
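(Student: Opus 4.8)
The plan is to read off the edge orientations of $(G,c)$ from the differential geometry of the model at its diagonal points, exploiting that a constant edge coloring turns the parametrization into a one-parameter family in the single structural coefficient $\lambda$. Write $A_G$ for the $V\times V$ adjacency matrix of $G$, so that all edges sharing one color makes $\phi_{G,c}(\Omega,\lambda) = (I - \lambda A_G)^{-T}\,\Omega\,(I - \lambda A_G)^{-1}$ with $\Omega = \diag(\omega)$, $\omega\in\PosReal^V$ and $\lambda\in\Real$. Setting $\lambda = 0$ shows that every diagonal matrix $\diag(\omega)$ lies in $\CC M(G,c)$, and by \Cref{thm:Smoothness} it is a smooth point of the $(\vc+\ec)$-dimensional manifold $\CC M(G,c)$, where here $\vc = |V|$ and $\ec = 1$.

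First I would compute the tangent space of $\CC M(G,c)$ at $\Sigma_0 = \diag(\omega)$. Differentiating $\phi_{G,c}$ at $(\omega,0)$ gives $\partial_{\omega_i}\phi_{G,c} = E_{ii}$ (the matrix unit at position $(i,i)$) and $\partial_\lambda \phi_{G,c} = A_G^T\Omega + \Omega A_G =: M_G$, whose entries are $M_G(i,j) = \omega_i$ if $ij\in E$, $\omega_j$ if $ji\in E$, and $0$ otherwise. Hence the tangent space is the direct sum of the space of diagonal matrices and the line $\Real\,M_G$; in particular its off-diagonal part is the single line $\Real\,M_G$. This line, together with the diagonal identification $\omega_i = \sigma_{ii}$, is an intrinsic invariant of the smooth manifold $\CC M(G,c)$ and does not depend on the chosen parametrization. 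The essential point is that $M_G$ records, in each skeleton slot $\{i,j\}$, the error variance of the \emph{parent} endpoint, so that comparing $M_G(i,j)$ with the two candidate values $\omega_i,\omega_j$ reveals the orientation of that edge.

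Now suppose $(H,c')\in[G,c]\cap\mathcal E_1$, so that $\CC M(H,c') = \CC M(G,c)$ with $H$ carrying a constant edge coloring. Choosing $\omega\in\PosReal^V$ generic, the point $\Sigma_0 = \diag(\omega)$ lies in both models, so the two tangent spaces there agree; projecting to the off-diagonal part yields $\Real\,M_G = \Real\,M_H$, hence $M_H = \alpha M_G$ for some $\alpha\neq 0$. Equality of supports gives that $G$ and $H$ share the same skeleton, and for each edge $\{i,j\}$ we have $M_G(i,j),M_H(i,j)\in\{\omega_i,\omega_j\}$ with $M_H(i,j) = \alpha\,M_G(i,j)$. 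Because $G$ has at least two edges, I would pick two distinct edges and observe that $\alpha\neq 1$ would force a multiplicative relation among the generic values $\omega_i$ (such as $\omega_i\omega_\ell = \omega_j\omega_k$ or $\omega_j^2 = \omega_i\omega_k$), which is excluded for generic $\omega$; hence $\alpha = 1$ and $M_G = M_H$. Finally, for each edge $\{i,j\}$ the identity $\omega_i\,[\,ij\in E_G\,] + \omega_j\,[\,ji\in E_G\,] = \omega_i\,[\,ij\in E_H\,] + \omega_j\,[\,ji\in E_H\,]$ together with $\omega_i\neq\omega_j$ forces the orientations in $G$ and $H$ to coincide. Thus $G = H$ and $(G,c) = (H,c')$, proving $|[G,c]\cap\mathcal E_1| = 1$.

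The main obstacle is the step forcing $\alpha = 1$, which is exactly where the hypothesis of at least two edges is indispensable: with a single edge $ij$, reversing it merely rescales the lone off-diagonal direction (from $\omega_i$ to $\omega_j$ in that slot), the proportionality is absorbed into $\alpha = \omega_i/\omega_j$, and the two single-edge models genuinely coincide, so identifiability fails there as the hypothesis anticipates. A secondary technical point is to confirm that the genericity used for $\omega$ is available, which holds because the diagonal locus $\{\diag(\omega):\omega\in\PosReal^V\}$ sits inside the model and is full-dimensional among diagonal matrices, letting us avoid the finitely many exceptional multiplicative relations. An alternative, more combinatorial route would combine \Cref{thm:MarkovEqv} with Chickering's covered-edge-reversal characterization and \Cref{lem: no_rev_cov}, resolving the residual configurations in which all edges point into the two endpoints of the covered edge by means of the explicit coloring relations of \Cref{subsec:conj}; the tangent-space argument above, however, seems shorter and avoids that case analysis.
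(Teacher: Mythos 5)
Your proof is correct, and it takes a genuinely different route from the paper. The paper proceeds combinatorially and ideal-theoretically: it first invokes Markov equivalence of $G$ and $H$ (same skeleton and v\nobreakdash-structures), iteratively marginalizes common sink nodes, handles the residual configurations via \Cref{lem: no_rev_cov} and \Cref{lem:forPratik}, and then argues that any generating set of $\ker(\phi^*_{H,c})$ must involve $\sigma_{jj}$ for a sink $j$ of $G$ while $\ker(\phi^*_{G,c})$ admits a generating set avoiding it; the disconnected case is treated separately by an explicit $2\times 2$ block computation. You instead linearize at the diagonal points $\diag(\omega)$, which lie in every constant-edge-colored model, and read off both the skeleton and the orientations from the single off-diagonal tangent direction $M_G = A_G^T\Omega + \Omega A_G$, whose $(i,j)$ entry records the error variance of the parent endpoint. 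The key steps all check out: the tangent space at $\diag(\omega)$ is $\operatorname{span}\{E_{ii}\}\oplus\Real M_G$ by \Cref{thm:Smoothness} and the expansion $(I-\lambda A_G)^{-T}\Omega(I-\lambda A_G)^{-1} = \Omega + \lambda M_G + O(\lambda^2)$; global identifiability forces $\diag(\omega) = \phi_{H,c'}(\omega,0)$ so the same computation applies to $H$; equality of embedded submanifolds gives equality of tangent spaces, hence $M_H = \alpha M_G$; and if $\alpha\neq 1$ every edge must be reversed, so two distinct edges $i\to j$, $k\to\ell$ yield $\omega_j\omega_k=\omega_i\omega_\ell$, which is never an identity for distinct edges of a simple DAG and thus fails for generic $\omega$. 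What your approach buys is brevity and uniformity: it bypasses the Verma--Pearl step, the sink-marginalization induction, and the somewhat delicate argument about which variables appear in generating sets of the vanishing ideal, and it handles connected and disconnected graphs (including the all-edges-reversed disjoint-union case that the paper treats separately) in one stroke. What it does not give you, and what the paper's machinery is set up to provide, is the extension to BPEC-DAGs in \Cref{theorem:properlyBlocked}: there the coloring is not constant, $\Lambda$ is not a scalar multiple of a fixed matrix, and the off-diagonal tangent space at $\diag(\omega)$ has dimension $\ec>1$, so recovering orientations from that subspace would require a finer argument; the paper's $\sigma_{jj}$-based technique is reused there essentially verbatim.
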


\begin{remark}
    \label{rmk: oneedge}
    The only case excluded by \Cref{thm: ident single edge color} is the case when $G$ contains exactly one edge.
    In this case, the edge-coloring induces no additional relations on the model (as there is only one edge), so the equivalence class is size two by classic Markov equivalence.
\end{remark}

\Cref{thm: ident single edge color} provides a structural coefficient analogue to the result of Peters and B\"uhlmann \cite{peters2014identifiability} who showed that Gaussian DAG models are structurally identifiable under the assumption of equal error variances.
That is, they showed that each vertex-colored DAG is in a model equivalence class of size one in the class of vertex-colored DAGs with a single color.
The result of \cite{peters2014identifiability} admits applications in settings where all variables in the system arise from similar domains.
\Cref{thm: ident single edge color} analogously applies when all effects in a system are similar.

\subsubsection{Structural identifiability for BPEC-DAGs}
\label{subsubsec: BPEC}
The structural identifiability result given in \Cref{thm: ident single edge color}, would mainly be applicable in situations where one may assume the causal system is such that the causal effects between any pair of variables are similar.
This is a somewhat specific assumption.
However, using the same techniques as in the proof of \Cref{thm: ident single edge color}, we can prove a second structural identifiability result with a broader potential for applications.
To do this we define the following subfamily of edge-colored DAGs.

\begin{definition}
    \label{def:properlycolored}
    A \emph{properly} edge-colored DAG is an edge-colored DAG $(G,c)$ for which there is no color class of size less than two.
\end{definition}

\begin{definition}
    \label{def:blockedcoloring}
    A \emph{blocked} edge-colored DAG is an edge-colored DAG $(G,c)$ for which any two edges $ij$ and $k\ell$ belong to the same color class only if $j = \ell$.
    If an edge-colored DAG is both blocked and properly colored we call it a \emph{BPEC-DAG}.
    We let $\mathcal{BP}$ denote the family of all BPEC-DAGs.
\end{definition}

The blocked edge-colored DAGs are the edge-colored DAGs in which any two edges of the same color have the same head node.
The reason to consider BPEC-DAG models in practice is motivated from the perspective of community detection as studied in network modeling; see, for instance, \cite{almendra2024irreducible, holland1983stochastic, karwa2024monte, fienberg1985statistical}.
In our context, for each node $i$, we consider its set of parents $\pa_G(i)$.
The parents of $i$ constitute the population of individuals that have a direct causal effect on~$i$.
It is possible that several individuals within this population have very similar (or the same) causal effect on~$i$.
BPEC-DAGs let us model these communities of causes, where each community is comprised of the direct causes of $i$ that have the same causal effect on~$i$.

\begin{remark}
\label{rmk: decomposability}
Regarding inference, the assumption of a blocked edge-coloring also ensures that the likelihood function for the model is \emph{decomposable} in the sense of \cite[Section~2.3]{chickering2002optimal}. In particular, information-theoretic scoring criterion such as the Bayesian Information Criterion (BIC) are decomposable for BPEC-DAGs.
\end{remark}

\begin{remark}
    \label{rmk:compatible}
    The BPEC-DAGs are a subfamily of the compatibly colored DAGs introduced in \cite{makam2022symmetries}.
    A colored DAG $(G,c)$ is \emph{compatibly colored} if whenever $c(ij) =c(k\ell)$ for edges $ij,k\ell$ in $G$ we have that $c(j) = c(\ell)$.
    Compatibly colored DAGs are the family of colored DAGs for which the Maximum Likelihood Estimator (MLE) can be calculated as the solution to a family of least squares problems, one for each vertex color.
    BPEC-DAGs are compatibly colored since the blocked condition means that when two edges are the same color they point to the same node.
    BPEC-DAGs are the compatibly, properly edge-colored DAGs.
    See Appendix~\ref{app: BPEC additional details} for more details.
\end{remark}

The following theorem says that an edge-colored DAG is structurally identifiable from within the family of BPEC-DAGs.
Hence, learning a BPEC-DAG $(G,c)$ from data amounts to learning a causal structure, given by $G$, as well as a \emph{causal community} structure on the direct causes of each node, given by the coloring~$c$.
The proof uses the same novel techniques introduced in \Cref{thm: ident single edge color}.
In \Cref{sec:causaldiscovery}, we give a causal discovery algorithm for learning BPEC-DAG models.

\begin{theorem}\label{theorem:properlyBlocked}
Let $[G, c]$ be the model equivalence class of a BPEC-DAG $(G, c)$.
Then $|\mathcal{BP}\cap [G, c]| = 1$; i.e., an edge-colored DAG is structurally identifiable in the set of BPEC-DAGs.
\end{theorem}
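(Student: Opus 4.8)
The plan is to reduce the statement to the purely structural claim that $G = H$ for any BPEC-DAG $(H,d)$ with $\CC M(H,d) = \CC M(G,c)$, and then recover the coloring essentially for free. Since $c$ and $d$ are both edge-colorings, \Cref{thm:MarkovEqv} already gives that $G$ and $H$ are Markov equivalent, so they share a skeleton and the same v-structures. Once $G = H$ is known, the colorings must coincide: by \Cref{prop: faithful to c}, for a generic $\Sigma \in \CC M(G,c) = \CC M(G,d)$ and any two edges $ij, k\ell$, the relation $\ecr_c(ij,k\ell)$ of \Cref{def:Relations} vanishes at $\Sigma$ if and only if $c(ij) = c(k\ell)$, and the analogous statement holds for $d$; as the identifying sets $\mathcal{A}_G(ij)$ depend only on $G$, the two colorings induce the same partition of $E$, whence $c = d$ up to relabeling. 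Thus the whole difficulty is concentrated in proving the graph equality $G = H$.

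To establish $G = H$, I would argue by contradiction via the transformational characterization of Markov equivalence \cite{chickering2013transformational}: if $G \neq H$, there is a covered edge $ij$ of $G$ whose reversal stays inside the Markov equivalence class and whose orientation is $ji$ in $H$. Because $(G,c)$ is properly colored, the class $c(ij)$ contains a second edge, and because it is blocked, that edge shares the head $j$; call it $kj$ with $k \neq i$. This produces the edge-coloring relation $\ecr_c(ij,kj)$, which lies in $P_{G,c}$ and enforces $\lambda_{ij} = \lambda_{kj}$ on the model. The aim is to show this constraint cannot be reproduced by \emph{any} BPEC coloring of $H$, so that $\CC M(G,c) \neq \CC M(H,d)$. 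The mechanism I would exploit is that the reversal turns the head of this edge from $j$ into $i$: in $H$ the edge is $ji$, and a blocked, proper coloring $d$ must place it in a class of edges all pointing into $i$. Making the comparison precise through the algebraic description of \Cref{subsec:conj}, model equality is equivalent to $P_{G,c} = P_{H,d}$, which by \Cref{rem:ApplyVanishingIdeal} is tested by pulling generators of one ideal back through the opposite parametrization. Evaluating $\ecr_c(ij,kj)$ through $\phi_{H,d}$ and using the recovery formulas of \Cref{lemma:Ident}, together with the fact that the covered reversal exchanges the parent sets of $i$ and $j$, turns this relation into a constraint linking the recovered coefficient of $j \to i$ to that of $k \to j$, which are edges with \emph{distinct} heads in $H$. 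By faithfulness to $d$ (\Cref{prop: faithful to c}), such a relation can vanish on $\CC M(H,d)$ only if $d$ identifies these two edges, which the blocked condition forbids; this contradiction forces $ij$ to have the same orientation in $G$ and $H$, and since this applies to any covered edge on which they could disagree, $G = H$.

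The step I expect to be the main obstacle is exactly this last comparison, because a covered-edge reversal does not act on the structural coefficients edge-by-edge: it alters the parent sets, and hence the identifying rational functions, of the incident edges, so the clean linear relation $\lambda_{ij} = \lambda_{kj}$ imposed by $c$ need not translate into a single blocked coloring relation of $H$. The delicate part is to rule out every BPEC coloring $d$ of $H$ simultaneously, i.e.\ to show that no reassignment of colors to the edges of $H$ reproduces the entire system of vanishing edge-coloring relations cut out by $(G,c)$. I anticipate needing the saturation description $P_{G,c} = I_{G,c} : S_G$ of \Cref{thm:Saturation} (equivalently \Cref{thm: all the graphs}) to guarantee that the collection of vanishing edge-coloring relations is an intrinsic invariant of the model, independent of the graph chosen to present it; after that, the head mismatch created by the reversal, combined with properness (which prevents hiding the reversed edge in a singleton class), should deliver the contradiction in the same spirit as \Cref{lem: no_rev_cov} and the proof of \Cref{thm: ident single edge color}.
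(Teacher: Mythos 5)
Your reduction of the problem is sound and the first half matches the paper: when $G = H$ the colorings are forced to agree by \Cref{prop: faithful to c}, exactly as in the paper's proof. For the structural half, however, you take a genuinely different route from the paper and leave its central step unproved. The paper does not use covered-edge reversals at all; it marginalizes common sink nodes until it finds a vertex $j$ that is a sink of $G$ but not of $H$, applies the BPEC property to $(H,c')$ (not to $(G,c)$) to produce a twin $i\to\ell$ of the reversed edge $j\to\ell$, and then derives the contradiction from a variable-occurrence argument: the resulting relation $\ecr_{c'}(j\ell,i\ell;\pa_H(\ell),\pa_H(\ell))$ forces \emph{every} generating set of $P_{H,c'}$ to contain a polynomial with a term divisible by $\sigma_{jj}$ (else $|\Sigma_{i\ell|\pa_H(\ell)\setminus\{i,j\}}|$ would vanish on the model, contradicting \Cref{prop: edge faithful}), whereas $P_{G,c}$ admits a generating set avoiding $\sigma_{jj}$ entirely because $\omega_j$ occurs only in $\phi^*_{G,c}(\sigma_{jj})$ when $j$ is a sink of $G$. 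This $\sigma_{jj}$-elimination mechanism is the actual engine of the paper's proof and is absent from your proposal.

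The gap in your argument is the step you yourself flag as the main obstacle, and it is not merely technical. After the covered-edge reversal, the relation $\ecr_c(ij,kj;\pa_G(j),\pa_G(j))$ transported to $\CC M(H,d)$ is \emph{not} of the form $\ecr_d(ji,kj;A,B)$ with $A\in\CC A_H(ji)$ and $B\in\CC A_H(kj)$: the pair $ij$ is not even an edge of $H$, and $\pa_G(j)$ fails to be an identifying set for $kj$ in $H$ because $i\in\pa_G(j)$ is a descendant of $j$ in $H$ (cf. \Cref{thm:IdentifyingSets}\eqref{thm:IdentifyingSets:Edges}). Consequently \Cref{prop: faithful to c} does not apply, and the conclusion "such a relation can vanish only if $d$ identifies these two edges" is unjustified; what you would need is generic $H$-faithfulness to $d$ for \emph{arbitrary} conditioning sets, which the paper defines but never establishes (and whose general validity is tied to its open questions). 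Moreover, since $H$ may differ from $G$ by a whole sequence of covered reversals, the identity $\pa_H(i)=\pa_G(i)\cup\{j\}$, $\pa_H(j)=\pa_G(i)$ that your computation relies on need not hold, so the transported relation is an uncontrolled rational expression rather than "a constraint linking the recovered coefficient of $j\to i$ to that of $k\to j$." Invoking \Cref{thm:Saturation} guarantees that $P_{G,c}=P_{H,d}$ is the right object to compare, but it does not by itself show that the specific polynomial fails to lie in $P_{H,d}$ for every BPEC coloring $d$; some concrete non-vanishing argument, such as the paper's $\sigma_{jj}$ bookkeeping or the explicit minor computations of \Cref{lem: no_rev_cov}, is still required to close the proof.
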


\Cref{theorem:properlyBlocked} is a structural identifiability result for partially homogeneous structural coefficients in analogy to results obtained in the partially homoscedastic setting by  \cite{wu2023partial}.
The model equivalence results obtained in \cite{wu2023partial} yield structural identifiability whenever every color class in a vertex-colored DAG has size at least two.
So a proof of structural identifiability for general properly edge-colored DAGs would be an exact analogue to the structural identifiability results obtained in \cite{wu2023partial}.
Similar to the more general observations on vertex-colored model equivalence made in \cite{wu2023partial}, there exist edge-colored DAGs that define the same model.

\begin{example}
    \label{ex: edge colored nonident}
    The following two edge-colored DAGs, denoted $(G, c)$ and $(G', c')$ respectively, are model equivalent; i.e., they satisfy $\CC M(G, c) = \CC M(G', c')$. This may be verified using \Cref{rem:ApplyVanishingIdeal} with the help of computer algebra software such as \texttt{Macaulay2}.
    \begin{center}
    \begin{tikzpicture}[thick,scale=0.5]

 	 \node[circle, draw, fill=black!0, inner sep=1pt, minimum width=1pt] (1) at (0,0)  {$1$};
	   \node[circle, draw, fill=black!0, inner sep=1pt, minimum width=1pt] (2) at (2,1) {$2$};
          \node[circle, draw, fill=black!0, inner sep=1pt, minimum width=1pt] (3) at (2.5,3) {$3$};
          \node[circle, draw, fill=black!0, inner sep=1pt, minimum width=1pt] (4) at (-2,0) {$4$};
          \node[circle, draw, fill=black!0, inner sep=1pt, minimum width=1pt] (5) at (-2,2) {$5$};
          \node[circle, draw, fill=black!0, inner sep=1pt, minimum width=1pt] (6) at (-3,1) {$6$};

  	 \draw[->,cyan]   (1) -- (2) ;
          \draw[->]   (1) -- (3) ;
          \draw[->]   (2) -- (3) ;
          \draw[->]   (1) -- (4) ;
          \draw[->,cyan]   (4) -- (5) ;
          \draw[->]   (4) -- (6) ;
          \draw[->]   (5) -- (6) ;
    \end{tikzpicture}
    \hspace{1in}
    \begin{tikzpicture}[thick,scale=0.5]

 	 \node[circle, draw, fill=black!0, inner sep=1pt, minimum width=1pt] (1) at (0,0)  {$1$};
	   \node[circle, draw, fill=black!0, inner sep=1pt, minimum width=1pt] (2) at (2,1) {$2$};
          \node[circle, draw, fill=black!0, inner sep=1pt, minimum width=1pt] (3) at (2.5,3) {$3$};
          \node[circle, draw, fill=black!0, inner sep=1pt, minimum width=1pt] (4) at (-2,0) {$4$};
          \node[circle, draw, fill=black!0, inner sep=1pt, minimum width=1pt] (5) at (-2,2) {$5$};
          \node[circle, draw, fill=black!0, inner sep=1pt, minimum width=1pt] (6) at (-3,1) {$6$};

  	 \draw[->,cyan]   (1) -- (2) ;
          \draw[->]   (1) -- (3) ;
          \draw[->]   (2) -- (3) ;
          \draw[<-]   (1) -- (4) ;
          \draw[->,cyan]   (4) -- (5) ;
          \draw[->]   (4) -- (6) ;
          \draw[->]   (5) -- (6) ;
    \end{tikzpicture}
\end{center}
\end{example}

It would be interesting to characterize model equivalence for edge-colored DAGs when some edges are allowed to be uncolored, yielding a full analogue to the result in \cite{wu2023partial}.
This is achievable under certain restrictions on the coloring, but requires significantly more work, which is perhaps more fitting for a follow-up article.
Instead, we pose the following general question.

\begin{question}
    \label{quest: edge-color equivalence}
    What is a graphical characterization of model equivalence for edge-colored DAGs?
\end{question}

A complete answer to \Cref{quest: edge-color equivalence} would be similar to the classical characterization of Markov equivalence of (uncolored) DAGs; i.e., two DAGs are Markov equivalent if and only if they have the same skeleton and v-structures.
However, as seen in the model equivalence characterization of vertex-colored DAGs due to \cite{wu2023partial}, the presence of colors fixes additional substructures in the graphs.
The main challenge is characterizing which substructures are fixed by the presence of colored edges.
For instance, it is not even clear if colored edges $ij$ must always be present (and colored the same) in all model equivalent edge-colored DAGs, or if some can be reversed $ji$ and colored differently to produce the same model.
These challenges are even more unwieldily in the case of general colored DAGs, where faithfulness to $G$ is not guaranteed.
A characterization of model equivalence for $\CC M(G,c)$ admitting faithful distributions would already be significant.

\begin{question}
    \label{quest: equivalence}
    What is a graphical characterization of model equivalence for the family of colored DAGs admitting faithful distributions?
\end{question}
From the discussions above we have that any two model equivalent colored DAGs admitting faithful distributions need to have the same skeleton and v-structures. Thus, depending on how Question \ref{quest: faithfulness} is resolved, it may be possible to impose additional graphical constraints to fully characterize such equivalence. Answering \Cref{quest: equivalence} would be crucial for developing constraint-based causal discovery algorithms (similar to the PC algorithm) for colored DAGs having faithful distributions. It could also yield a transformational characterization of model equivalence, generalizing \cite{chickering2013transformational}, as well provide a notion of essential graphs for colored DAGs.

\section{Causal Discovery}\label{sec:causaldiscovery}
In this section, we apply the structural identifiability results of \Cref{sec: model equivalence} to give a causal discovery algorithm for learning DAG models using partial homogeneity constraints on structural coefficients.
\Cref{theorem:properlyBlocked} provides a family of structurally identifiable edge-colored DAGs; namely, the family of BPEC-DAGs (\Cref{def:blockedcoloring}).
Moreover, the coloring of a BPEC-DAG provides information on how the direct causes of each node in the graph are clustered according to similar causal effects on their target.
Hence, by searching over the family of BPEC-DAGs for an optimal model, we obtain a single DAG estimate of the causal structure (by \Cref{theorem:properlyBlocked}) as well as an estimate of the causal communities around each node in the system.
In \Cref{alg:GECS}, we present the \emph{Greedy Edge-Colored Search} (GECS) for estimating a BIC-optimal BPEC-DAG from a random sample.
\begin{algorithm}
  \caption{Greedy Edge-Colored Search (GECS)}
  \label{alg:GECS}
  \raggedright
  \hspace*{\algorithmicindent} \textbf{Input:} A random sample $\mathbb{D}$ of size $n$ from a distribution of over $p$ variables.\\
  \hspace*{\algorithmicindent} \textbf{Output:} A BPEC-DAG $(G, c)$.
  \begin{algorithmic}[1]
    \State $G \gets$ {$([p], \emptyset)$} \Comment{Initialize at the empty DAG}
    \State $c \gets \{\}$ \Comment{Initialize at the empty coloring}

    \Loop
        \State{$\textrm{score} \gets \textrm{BIC}(G, c ; \mathbb{D})$}
        \State{$G,c \gets \TT{ecDAGmodify}(G,c; \BB D; \TT{addColor}, \TT{splitColor})$} \Comment{Phase 1}
        \State{$G,c \gets \TT{ecDAGmodify}(G,c; \BB D; \TT{addEdge}, \TT{moveEdge}, \TT{reverseEdge}, \TT{removeEdge})$} \Comment{Phase 2}
        \State{$G,c \gets \TT{ecDAGmodify}(G,c; \BB D; \TT{mergeColors}, \TT{removeColor})$} \Comment{Phase 3}
        \If{$\RM{score} = \RM{BIC}(G, c; \BB D)$} \Comment{Stop if local maximum is reached}
        \State \textbf{break}
      \EndIf
    \EndLoop
    \State \Return $(G, c)$
  \end{algorithmic}
\end{algorithm}

Let $\mathbf{x} = (x_{i,j})\in \mathbb{R}^{V\times n}$ be a data matrix in which the columns form a random sample from a joint multivariate Gaussian distribution $P$ on $X = (X_i)_{i\in V}$ with density $f$ and covariance matrix $\Sigma$ belonging to some $\CC M(G, c)$ with $G = (V, E)$.
GECS performs a loop over the updating function $\texttt{ecDAGmodify}$ presented in~\Cref{section:pseudocode}. %
The loop operates on a BPEC-DAG $(G,c)$ and uses the random sample $\mathbf{x}$.
It consists of three modification phases, which are broken down as follows: The first phase loops over transformations of $(G,c)$ that increase the total number of model parameters, the second loops over transformations of $(G,c)$ that keep the parameter count the same, and the third phase loops over transformations that decrease parameter count.
The first and last phase are analogous to the edge addition and edge removal phases, respectively, of the original GES algorithm in \cite{chickering2002optimal}.
The second phase is analogous to the edge reversal phase which was found to improve the performance of GES in \cite{hauser2012characterization}.

Unlike GES, each of the three phases of GECS considers multiple transformations of $(G,c)$ to account for the fact that an edge-colored DAG may change not only by adding, reversing or removing an edge, but also by transforming the colors of edges.
Each move used by GECS returns the BIC-optimal transformation of the input BPEC-DAG, or it returns the input model when no transformation considered by the move improves the BIC score.
The moves in each phase of GECS are broken down as follows:
\begin{enumerate}[leftmargin=3em, rightmargin=2em]
    \item \textbf{Phase 1 (parameter addition phase)} moves:
    \begin{enumerate}[leftmargin=2em, rightmargin=0em]
        \item $\texttt{addColor}(G, c; \mathbf{x})$ considers all possible ways to add a new color class with two edges to $(G,c)$.
        \item $\texttt{splitColor}(G, c; \mathbf{x})$ considers all possible ways to partition a color class into two classes, one with exactly two edges and the other consisting of the remaining edges in the original color class.
    \end{enumerate}
    \item \textbf{Phase 2 (parameter exchange phase)} moves:
    \begin{enumerate}[leftmargin=2em, rightmargin=0em]
        \item $\texttt{addEdge}(G, c; \mathbf{x})$ considers all possible ways to add a single new edge (not present in $G$) to an existing color class.
        \item $\texttt{moveEdge}(G, c; \mathbf{x})$ considers all possible ways to move a single edge to another color class consisting of edges having the same head node as the considered edge.
        \item $\texttt{reverseEdge}(G, c; \mathbf{x})$ considers all possible ways to reverse a single edge, for each edge considering all possible (existing) color classes to which it can be assigned.
        \item $\texttt{removeEdge}(G, c; \mathbf{x})$ considers all possible ways to remove a single edge from a color class in the graph.
    \end{enumerate}
    \item \textbf{Phase 3 (parameter removal phase)} moves:
    \begin{enumerate}[leftmargin=2em, rightmargin=0em]
        \item $\texttt{mergeColors}(G, c; \mathbf{x})$ considers all possible ways to merge two color classes consisting of edges having a common head node.
        \item $\texttt{removeColor}(G, c; \mathbf{x})$ considers all possible ways to remove a single color class from the graph.
    \end{enumerate}
\end{enumerate}

As seen in \Cref{alg:GECS}, GECS loops over each of these phases repeatedly until it completes a loop through all phases without improving the BIC score.
Hence, GECS operates analogous to GES (as implemented in \cite{hauser2012characterization}) while searching over all BPEC-DAGs.
Unlike GES, GECS returns a single DAG, not a Markov equivalence class, since BPEC-DAGs are identifiable (\Cref{theorem:properlyBlocked}).

The moves used by GECS are both specific to the assumption that it considers properly edge-colored DAGs and that the considered edge-colored DAGs are blocked.
For instance, when a new color class is created by the $\texttt{addColor}$ move, it creates a color class containing exactly two edges to ensure the coloring is proper.
It also requires that the two edges in the new color class have the same head node, so as to ensure the result is a blocked edge-colored DAG.
The same principles apply to the formulation of the other seven moves used by GECS.

The moves used by GECS to search the space of BPEC-DAGs were chosen for their relative simplicity.
Similar to GES, phase 1 (the parameter addition phase) should intuitively add sufficiently many parameters to produce a BPEC-DAG $(G,c)$ so that the data-generating distribution is contained in $\CC M(G, c)$; and phase 3 (the parameter removal phase) should remove extraneous parameters to account for overfitting.

Our implementation of GECS uses the Bayesian Information Criterion (BIC) as a score; i.e.,
\[
\textrm{score}(G,c; \mathbf{x}) = \log L(\hat\Lambda, \hat\Omega \mid \mathbb{D}) - \frac{\ln(n)k}{2}
\]
where $(\hat\Lambda, \hat\Omega)$ is the MLE of the parameters $(\Lambda, \Omega)$ for the graph $(G,c)$, $L(\Lambda, \Omega \mid \mathbb{D})$ is the likelihood function and $k$ is the number of free parameters in the model.
For a BPEC-DAG $(G,c)$, we have $k = |V| + |c(E)|$.
As derived in Appendix~\ref{app: BPEC additional details}, the MLE of the model parameters is given by
\[
\begin{split}
    (\hat\lambda_{e})_{e \in \pa_{(G,c)}(k)} &= \textrm{argmin}_{\beta\in\mathbb{R}^{|\pa_{(G,c)}(k)|}}\left|\left|\mathbf{x}_{k:} - \Lambda_{\pa_G(k), k}^T\mathbf{x}_{\pa_G(k):}\right|\right|^2, \\
    \hat\omega_k &= \frac{1}{n}\left|\left|\mathbf{x}_{k:} - \hat\Lambda_{\pa_G(k), k}^T\mathbf{x}_{\pa_G(k):}\right|\right|^2
\end{split}
\]
for $k\in V$
where $\pa_{(G,c)}(k) = \{ e\in c(E) : e= c(jk) \textrm{ for } j\in \pa_G(k)\}$.
We note that the algorithm works for any choice of decomposable score function (see \Cref{rmk: decomposability}).
In particular, BIC is a decomposable score for the family of BPEC-DAG models, as the edge parameters for such a $(G,c)$ are each isolated to a single \emph{family} $\fa_G(i) = \pa_G(i) \cup\{i\}$.
Hence, the BIC of a BPEC-DAG is computed via local regression computations, as for classic (uncolored) Gaussian DAG models (see Appendix~\ref{app: BPEC additional details}).
If desired, one could naturally extend GECS to search over all properly edge-colored DAGs by appropriately augmenting the above list of moves.
However, one may then require scoring models in which the BIC is not necessarily decomposable.

In \cite{chickering2002optimal}, Chickering proved that such a parameter addition phase followed by a parameter deletion phase is sufficient to guarantee consistency of GES when the data-generating distribution is assumed to be faithful to a DAG.
It seems reasonable that GECS would have a similar consistency guarantee.
However, the added complexity of the moves needed to account for coloring considerations likely makes proving such guarantees more challenging.
We leave these questions for consideration in future work, and instead empirically evaluate the performance of GECS on synthetic and real data.
Our implementation of GECS, as well as all necessaries to reproduce the results of the following experiments are available at
\url{https://github.com/soluslab/coloredDAGs}.

\subsection{Synthetic data experiments}
\label{subsec:simulations}
To get a sense of the performance of GECS, we generated synthetic data from random BPEC-DAG models and then tasked both GES and GECS with estimating the data-generating DAG from a random sample drawn from these models.

To generate the random BPEC-DAGs, we first generated an Erd\H{o}s-R\'{e}nyi random DAG $G = ([p], E)$ with its natural topological ordering $\pi = 1\ldots p$ where each possible edge $ij$, with $i < j$, appears with fixed probability $\rho \in (0, 1)$.
The random DAGs were then adjusted to ensure that $|\pa_G(i)| > 1$ for all $i\in [p]$.
Specifically, if the random DAG $G$ contained a node $i$ with $|\pa_G(i)| = 1$, an additional node $j$ was drawn uniformly at random from those $j < i$ not in $\pa_G(i)$.
The edge $ji$ was then added to $G$ to ensure that the edges with head node $i$ may be colored as a BPEC-DAG.

A random (proper) edge-coloring $c$ was then assigned to the resulting DAG $G$.
Here, we introduced a parameter $\texttt{nc}$ which takes a positive integer value indicating how many color classes into which each parent set should be partitioned.
In the case that the pre-specified parameter value $\texttt{nc}$ is larger than the number of parents of $i$ divided by $2$, the value $\texttt{nc}$ was adjusted to $\lfloor|\pa_G(i)| / 2\rfloor$ for the node $i$ to ensure that a BPEC coloring could be assigned to $G$.
The nodes in $\pa_G(i)$ were then assigned to one of $\texttt{nc}$ classes for each node $i$ to produce a BPEC-DAG $(G,c)$.
The BPEC-DAG $(G,c)$ was then parametrized by assigning each color class a random parameter value from $(-1, -0.25] \cup [0.25, 1)$ and a random error variance for each node.

For $p \in\{6, 10\}$ nodes, each $\rho\in \{0.2, 0.3, 0.4, 0.5, 0.6, 0.7, 0.8, 0.9\}$, and each $\texttt{nc}\in\{2,\ldots,p - 1\}$ we generated $25$ random BPEC-DAGs according to the above scheme and drew $n\in \{250, 1000\}$ samples from the resulting BPEC-DAG model.
For each data set, we tasked GECS and GES with recovering the data-generating DAG $G$, and compared the structural Hamming distance (SHD) between the learned DAGs and the ground truth.
For each BPEC-DAG learned by GECS we also recorded the true positive rate of pairs of edges that were assigned the same color as in the data-generating BPEC-DAG; e.g., the \emph{coloring sensitivity}.
The SHD and coloring sensitivity results for $p = 6$ and $n = 1000$ are presented in \Cref{fig:p6n1000} and the SHD results for $p = 6$ and $n = 1000$ are presented in \Cref{fig:p10n1000}.
Plots for all other results can be found in Supplementary material \Cref{section:pseudocode}.

\begin{figure}[t]
    \begin{subfigure}[b]{0.24\textwidth}
    \centering
    \includegraphics[width=\textwidth]{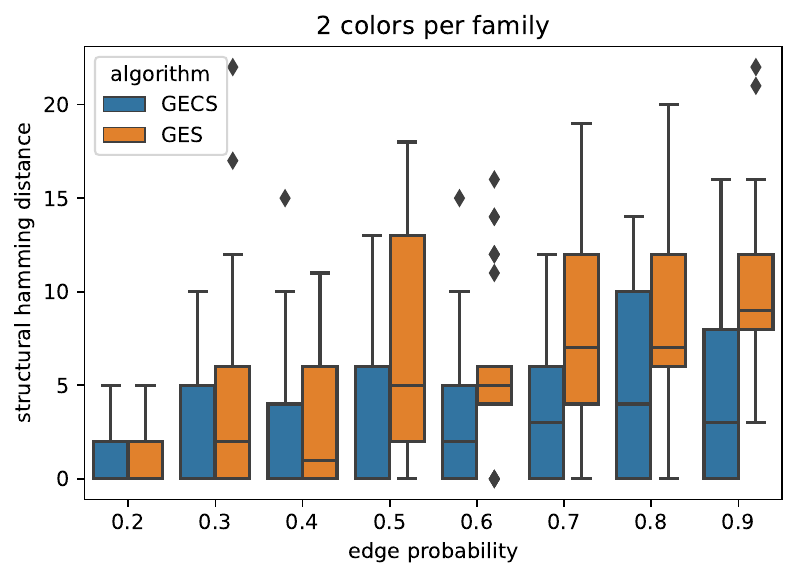}
    \caption{$\texttt{nc} = 2$}
    \label{fig:p6n1000c2}
    \end{subfigure}
    \hfill
    \begin{subfigure}[b]{0.24\textwidth}
    \centering
    \includegraphics[width=\textwidth]{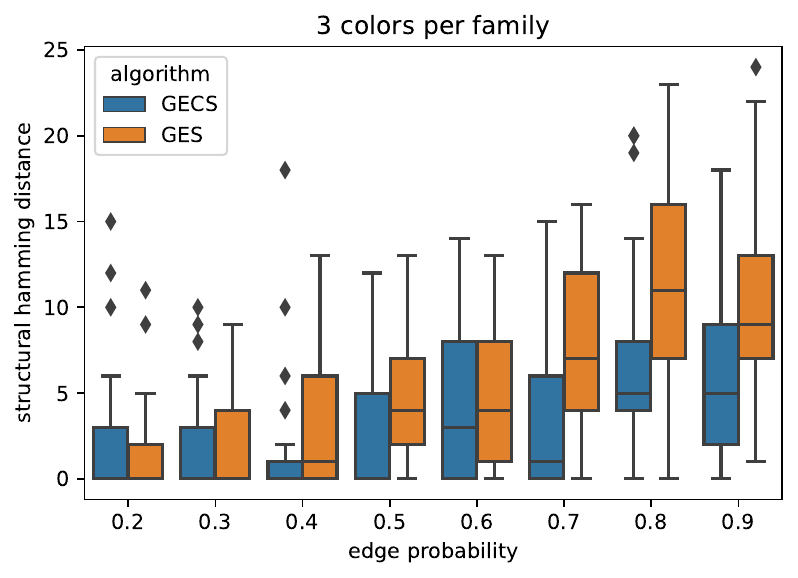}
    \caption{$\texttt{nc} = 3$}
    \label{fig:p6n1000c3}
    \end{subfigure}
    \hfill
    \begin{subfigure}[b]{0.24\textwidth}
    \centering
    \includegraphics[width=\textwidth]{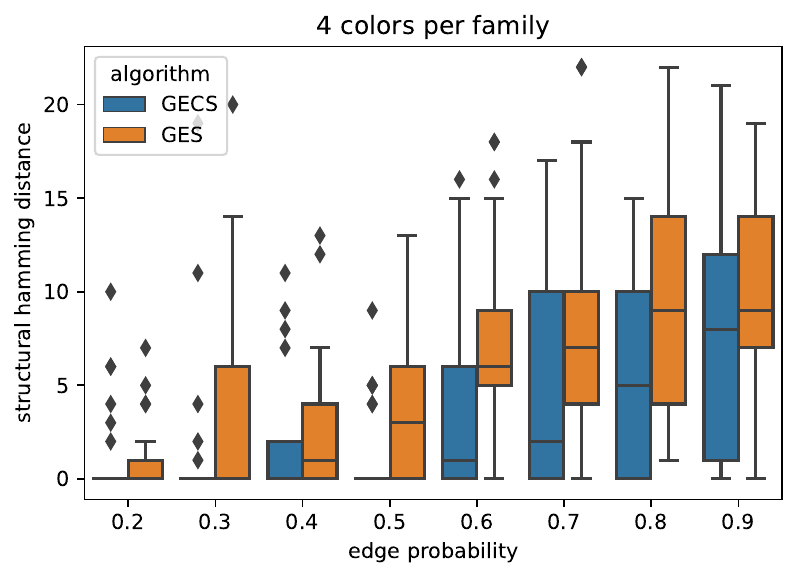}
    \caption{$\texttt{nc} = 4$}
    \label{fig:p6n1000c4}
    \end{subfigure}
    \hfill
    \begin{subfigure}[b]{0.24\textwidth}
    \centering
    \includegraphics[width=\textwidth]{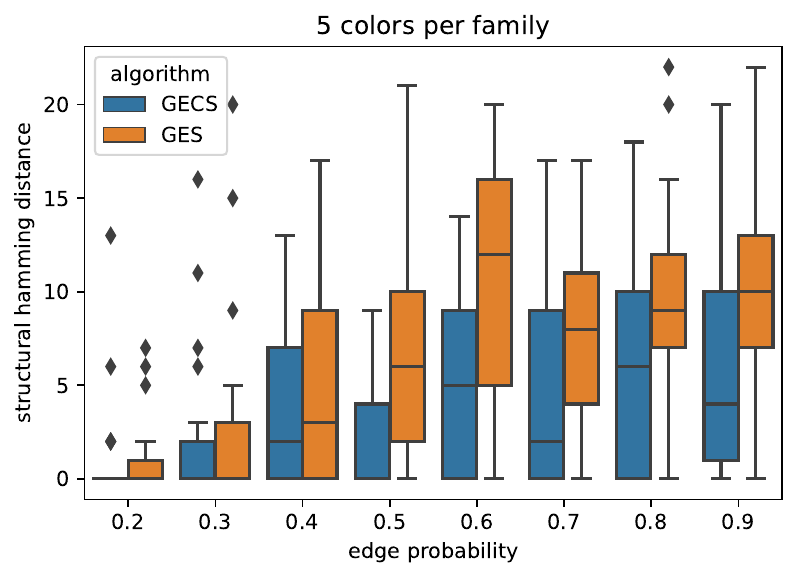}
    \caption{$\texttt{nc} = 5$}
    \label{fig:p6n1000c5}
    \end{subfigure}

    \begin{subfigure}[b]{0.24\textwidth}
    \centering
    \includegraphics[width=\textwidth]{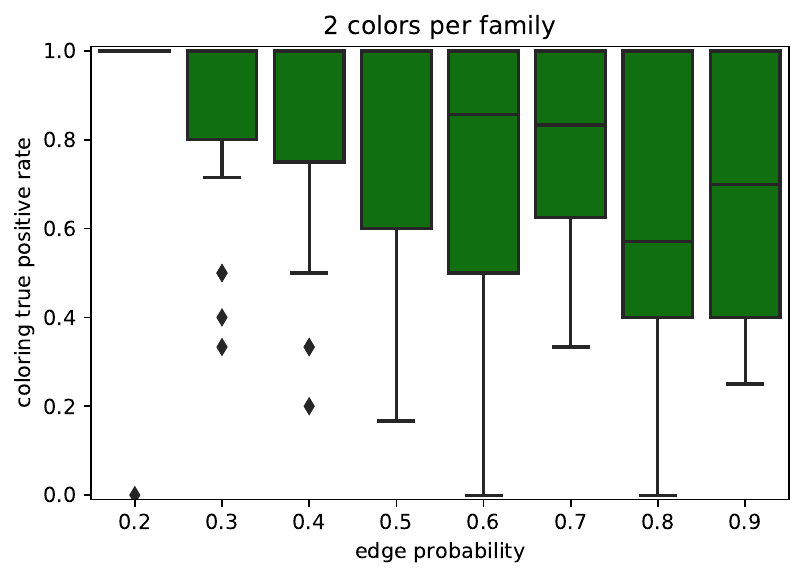}
    \caption{$\texttt{nc} = 2$}
    \label{fig:p6n1000c2cTPR}
    \end{subfigure}
    \hfill
    \begin{subfigure}[b]{0.24\textwidth}
    \centering
    \includegraphics[width=\textwidth]{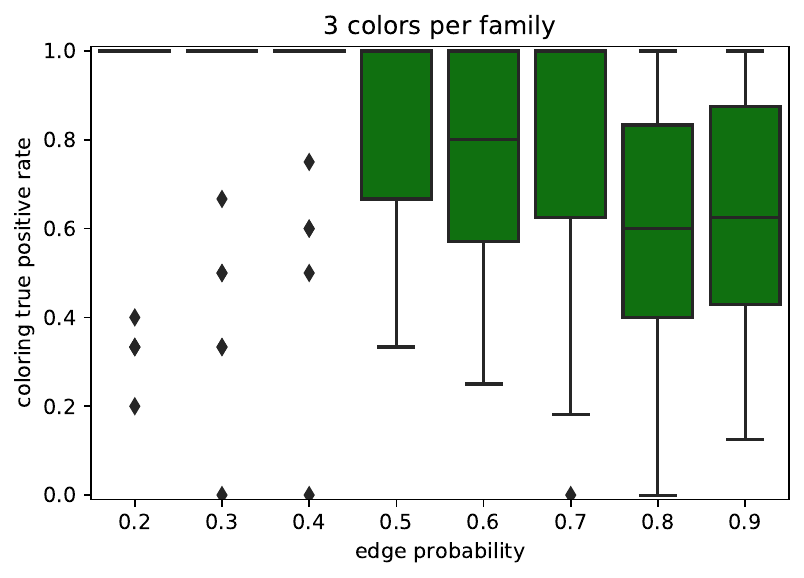}
    \caption{$\texttt{nc} = 3$}
    \label{fig:p6n1000c3cTPR}
    \end{subfigure}
    \hfill
    \begin{subfigure}[b]{0.24\textwidth}
    \centering
    \includegraphics[width=\textwidth]{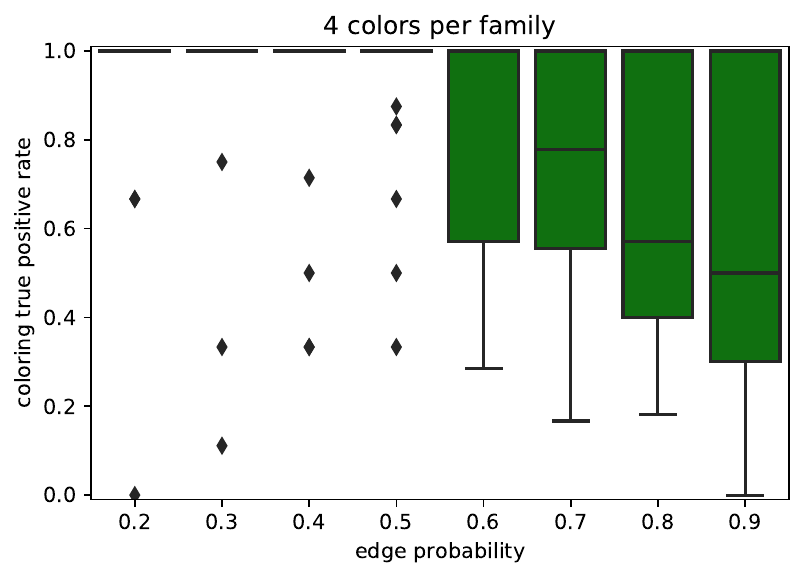}
    \caption{$\texttt{nc} = 4$}
    \label{fig:p6n1000c4cTPR}
    \end{subfigure}
    \hfill
    \begin{subfigure}[b]{0.24\textwidth}
    \centering
    \includegraphics[width=\textwidth]{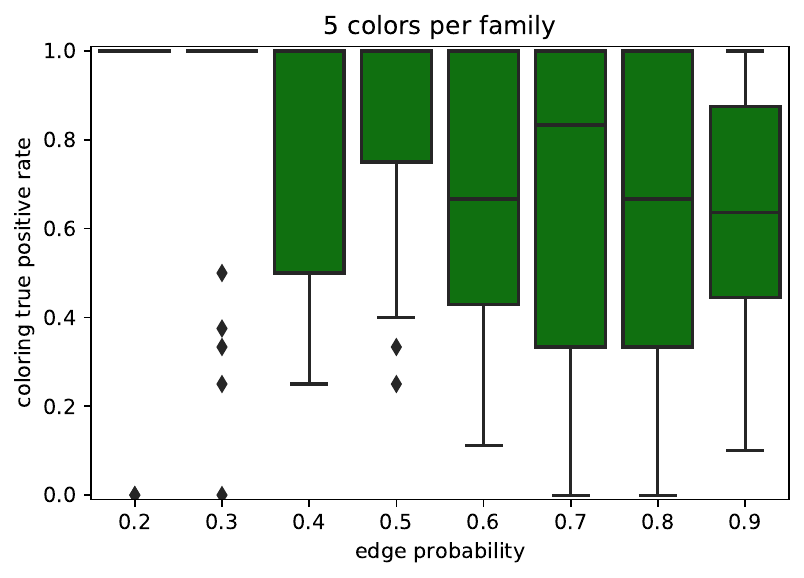}
    \caption{$\texttt{nc} = 5$}
    \label{fig:p6n1000c5cTPR}
    \end{subfigure}
\caption{(a)--(d): Structural Hamming distance results for $p = 6$ and sample size $n = 1000$. (e)--(h): Coloring sensitivity of BPEC-DAGs learned by GECS.}
\label{fig:p6n1000}
\end{figure}

We see from the results presented in \Cref{fig:p6n1000,fig:p10n1000} that GECS generally tends to outperform GES in regards to structural Hamming distance except for a few choices of $(\texttt{nc}, \rho)$.
This~is especially true as the density of the true graph (specified by the edge probability) increases.
This~is likely due to the fact that GECS can include more edges at a lower penalty than GES when modeling dense systems.
Specifically, the penalization term for the BIC used in GES includes the sum of $p$ and the number of edges, whereas the penalization term for GECS is only the sum of $p$ and the number of colors (e.g., the number of free parameters in the edge-colored DAG model).

On the other hand, for very sparse graphs (e.g., $\rho=0.2$) GECS does not appear to exhibit better performance over GES.
This is likely a consequence of a couple of factors:  In these simulations, all graphs are BPEC-DAGs, which means that every edge in the graph has at least one ``twin,'' i.e., if there exists $ij$ in $G$ then there exists $kj$ in $G$ with $k\neq j$.
Extremely sparse graphs with this property will naturally contain sufficiently many v-structures to notably increase the likelihood that the MEC of the uncolored graph $G$ is size one.
Combining this observation with the facts that GES searches over MECs and performs very well in the very sparse setting, we ascertain that GES is very likely to achieve a good structural Hamming distance when $\rho$ is very small.
In this same very sparse setting, GECS is searching over colored BPEC-DAGs (as opposed to uncolored MECs), so the search space for GECS is much larger than for GES, thereby creating more opportunity for error.
However, as the density of the graphs grows these specific advantages in favor of GES disappear and we see GECS emerging with better performance.

As noted above, we leave the question of consistency of GECS under faithfulness open, but the empirical results here provide some supporting evidence when considered together with the corresponding figures for $n = 250$ in Supplementary material \Cref{section:pseudocode}.
Namely, we typically see a decrease in the corresponding SHD  quartile values as we go from $n = 250$ (\Cref{fig:p6n250} for $p = 6$ and \Cref{fig:p10n250} for $p = 10$) to $n = 1000$ (\Cref{fig:p6n1000} for $p = 6$ and \Cref{fig:p10n1000} for $p = 10$).

Regarding coloring sensitivity for GECS, we see from \Cref{fig:p6n1000}~(e)--(h) that GECS tends to learn the correct coloring with a high level of accuracy for sparse graphs, with decreasing performance as graph density increases.
For $\texttt{nc} = 4$ we even observe GECS achieving a color sensitivity of $1$ for edge probabilities as high as $0.5$ for all but a few outliers.
Comparing these plots with the additional plots in Supplementary material \Cref{section:pseudocode} suggests that color sensitivity also improves as sample size increases, providing additional evidence for the consistency of GECS.

\begin{figure}[t]
    \begin{subfigure}[b]{0.24\textwidth}
    \centering
    \includegraphics[width=\textwidth]{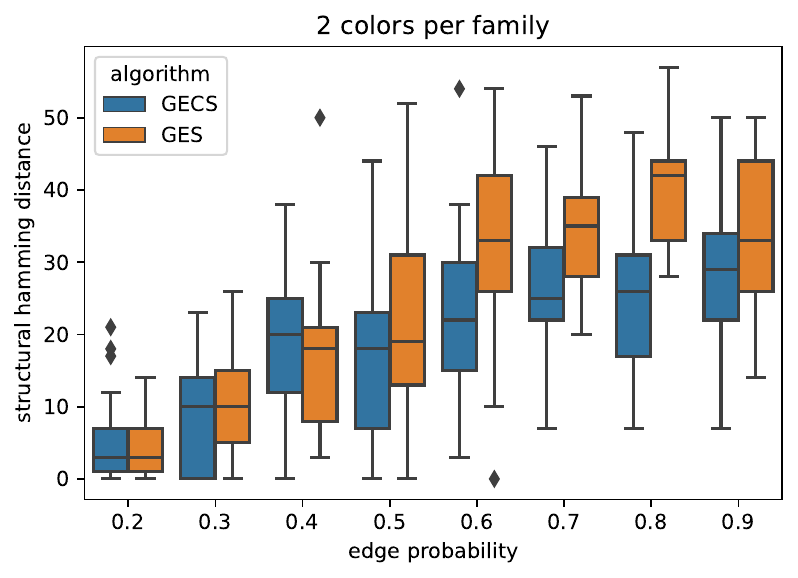}
    \caption{$\texttt{nc} = 2$}
    \label{fig:p10n1000c2}
    \end{subfigure}
    \hfill
    \begin{subfigure}[b]{0.24\textwidth}
    \centering
    \includegraphics[width=\textwidth]{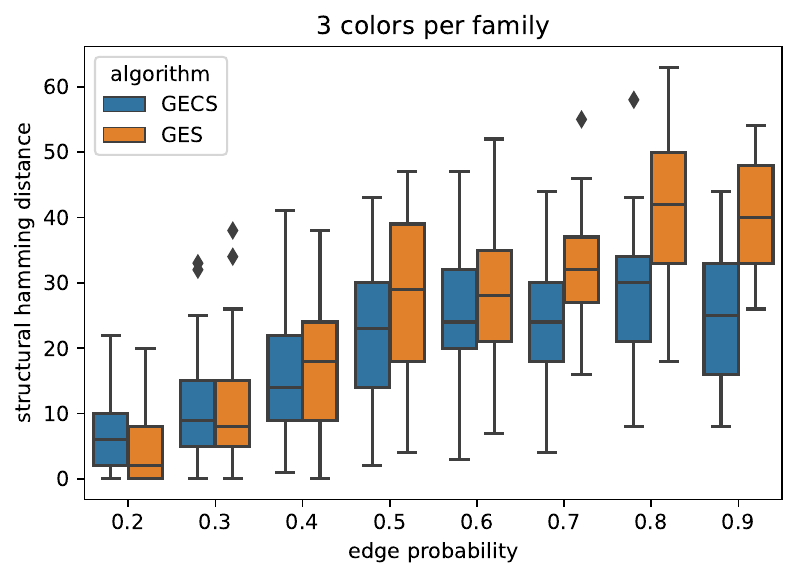}
    \caption{$\texttt{nc} = 3$}
    \label{fig:p10n1000c3}
    \end{subfigure}
    \hfill
    \begin{subfigure}[b]{0.24\textwidth}
    \centering
    \includegraphics[width=\textwidth]{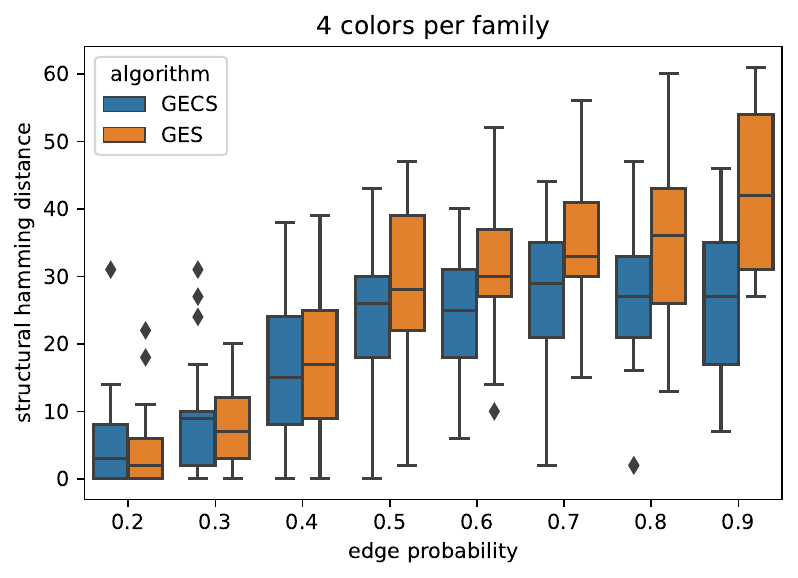}
    \caption{$\texttt{nc} = 4$}
    \label{fig:p10n1000c4}
    \end{subfigure}
    \hfill
    \begin{subfigure}[b]{0.24\textwidth}
    \centering
    \includegraphics[width=\textwidth]{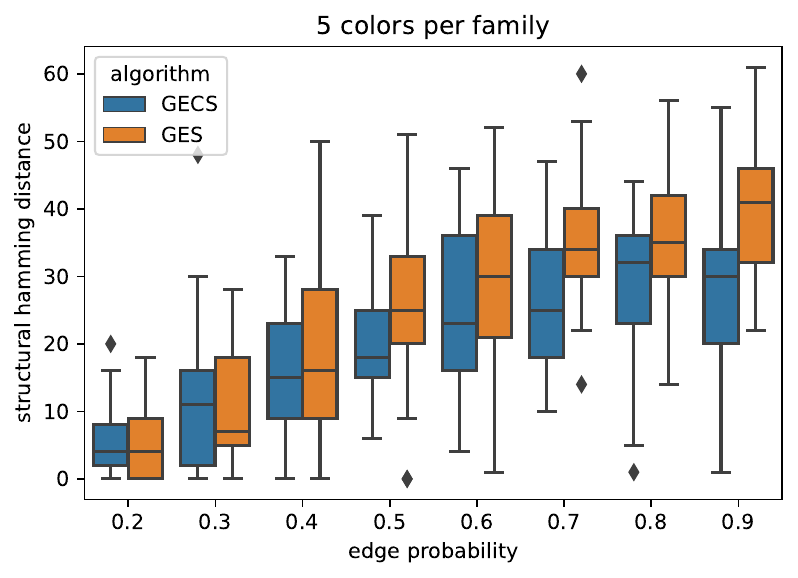}
    \caption{$\texttt{nc} = 5$}
    \label{fig:p10n1000c5}
    \end{subfigure}

    \begin{subfigure}[b]{0.24\textwidth}
    \centering
    \includegraphics[width=\textwidth]{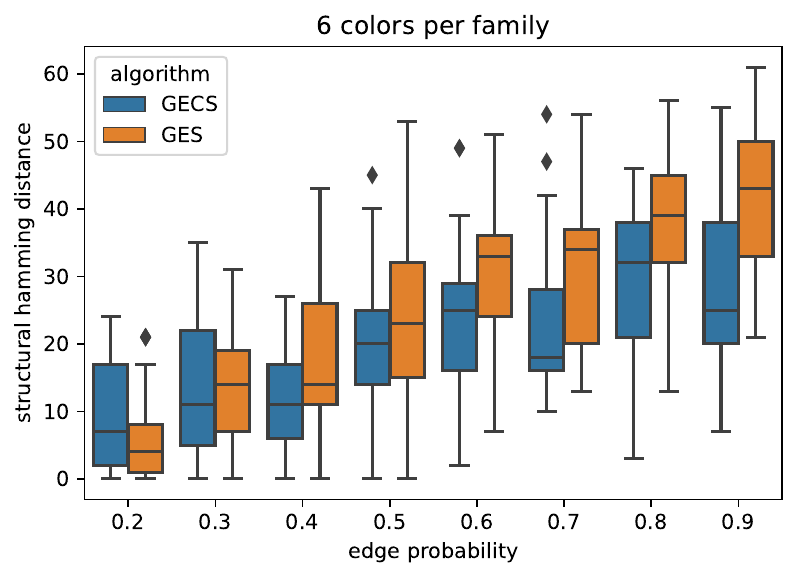}
    \caption{$\texttt{nc} = 6$}
    \label{fig:p10n1000c6}
    \end{subfigure}
    \hfill
    \begin{subfigure}[b]{0.24\textwidth}
    \centering
    \includegraphics[width=\textwidth]{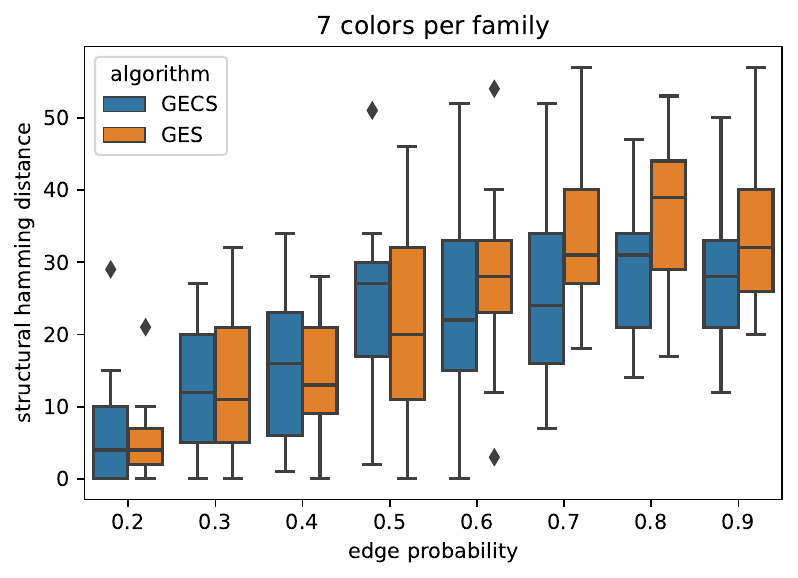}
    \caption{$\texttt{nc} = 7$}
    \label{fig:p10n1000c7}
    \end{subfigure}
    \hfill
    \begin{subfigure}[b]{0.24\textwidth}
    \centering
    \includegraphics[width=\textwidth]{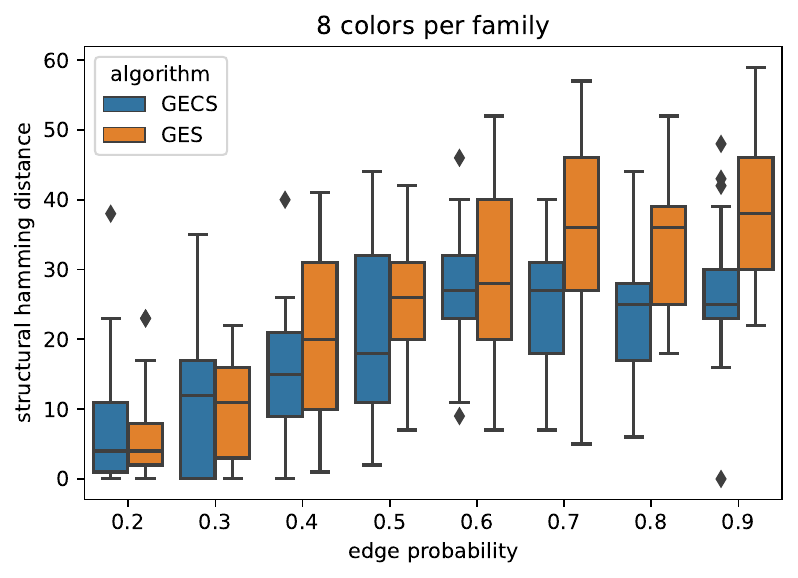}
    \caption{$\texttt{nc} = 8$}
    \label{fig:p10n1000c8}
    \end{subfigure}
    \hfill
    \begin{subfigure}[b]{0.24\textwidth}
    \centering
    \includegraphics[width=\textwidth]{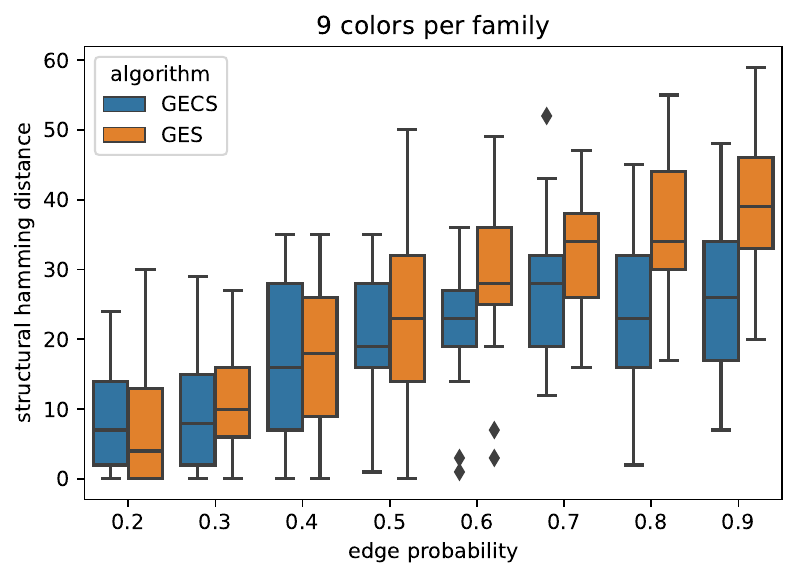}
    \caption{$\texttt{nc} = 9$}
    \label{fig:p10n1000c9}
    \end{subfigure}
\caption{Structural Hamming distance results for $p = 10$ and $n = 1000$ samples. $\texttt{nc}$ is the pre-specified number of colors per family for the data-generating models.}
\label{fig:p10n1000}
\end{figure}

\subsection{Real data experiments}
\label{subsec:realdata}
We ran GECS on three real data sets.
The first two data sets are the Red Wine Quality and White Wine Quality data sets available in the Wine Quality Data Set at the UCI Machine Learning Repository \url{https://archive.ics.uci.edu/dataset/186/wine+quality}.
The third data set is the protein-signaling network data set of Sachs et al.~\cite{sachs2005causal}.

\subsubsection{Wine quality data}\label{subsubsec:winedata}
Each of the two data sets contains samples of $11$ physiochemical properties shared by red and white wine variants of the Portugese Vinho Verde wine which are intended for use as features in the prediction of wine quality.
The mapping of node labels to these physiochemical properties is given in Supplementary material \Cref{tab:WineLabels} in \Cref{section:pseudocode}.
All variables assume numerical values on a continuous (real) domain.
The red wine data set consists of 1599 joint samples of the $11$ physiochemical properties, and the white wine data set has 4898.
More details on the data can be found in \cite{CorCer09}.
We use GECS to give a model of the (causal) dependence structure amongst these $11$ physiochemical properties for both the red wine and white wine data sets, yielding Gaussian hierarchical models that may be used in the prediction of the overall quality of the wine.

The learned BPEC-DAGs for both wine types are presented in \Cref{fig:wine}.
Since BPEC-DAGs are structurally identifiable, we interpret their edges causally.
Indeed, we see that examples of learned arrows have believable causal meaning.
For instance, in the white wine DAG, the arrow from $4$ (chlorides) to $7$ (density) indicates a causal effect of chlorides on the density of the wine.
Some of the causal communities captured by the coloring also appear to suggest that related chemical compounds are exhibiting similar causal effects on their common targets.
For example, the parent set of node $4$ (chlorides) contains the community
\[
\{5 \mbox{ (free sulfur dioxide)}, 6 \mbox{ (total sulfur dioxide)}, 9 \mbox{ (sulphates)}\}.
\]

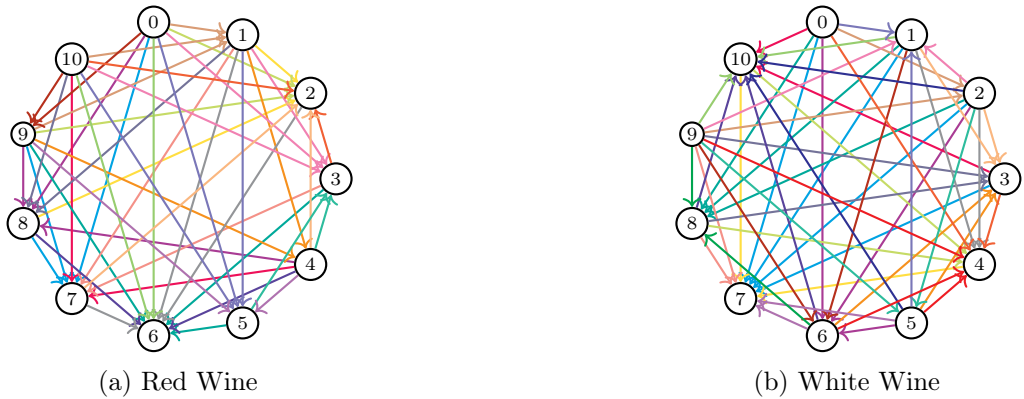
\begin{figure}[t]
    \centering
    \begin{subfigure}[b]{0.45\textwidth}
    \centering
    \begin{tikzpicture}[thick, scale=0.175]

    \def \n {11}
    \def \radius {12cm}
    \def \margin {3} %

  \node[circle, draw, inner sep=2pt, minimum width=2pt] (0) at ({360/\n * (3)}:\radius) {\tiny 0};
  \node[circle, draw, inner sep=2pt, minimum width=2pt] (1) at ({360/\n * (2)}:\radius) {\tiny 1};
  \node[circle, draw, inner sep=2pt, minimum width=2pt] (2) at ({360/\n * (1)}:\radius) {\tiny 2};
  \node[circle, draw, inner sep=2pt, minimum width=2pt] (3) at ({360/\n * (11)}:\radius) {\tiny 3};
  \node[circle, draw, inner sep=2pt, minimum width=2pt] (4) at ({360/\n * (10)}:\radius) {\tiny 4};
  \node[circle, draw, inner sep=2pt, minimum width=2pt] (5) at ({360/\n * (9)}:\radius) {\tiny 5};
  \node[circle, draw, inner sep=2pt, minimum width=2pt] (6) at ({360/\n * (8)}:\radius) {\tiny 6};
  \node[circle, draw, inner sep=2pt, minimum width=2pt] (7) at ({360/\n * (7)}:\radius) {\tiny 7};
  \node[circle, draw, inner sep=2pt, minimum width=2pt] (8) at ({360/\n * (6)}:\radius) {\tiny 8};
  \node[circle, draw, inner sep=1pt, minimum width=1pt] (9) at ({360/\n * (5)}:\radius) {\tiny 9};
  \node[circle, draw, inner sep=1pt, minimum width=1pt] (10) at ({360/\n * (4)}:\radius) {\tiny 10};

\draw[->,Cerulean] (0) -- (7) ;
\draw[->,Cerulean] (8) -- (7) ;
\draw[->,Cerulean] (9) -- (7) ;
\draw[->,Goldenrod] (8) -- (2) ;
\draw[->,Goldenrod] (1) -- (2) ;
\draw[->,Mulberry] (0) -- (8) ;
\draw[->,Mulberry] (4) -- (8) ;
\draw[->,Mulberry] (9) -- (8) ;
\draw[->,OrangeRed] (10) -- (7) ;
\draw[->,OrangeRed] (4) -- (7) ;
\draw[->,JungleGreen] (3) -- (6) ;
\draw[->,JungleGreen] (5) -- (6) ;
\draw[->,JungleGreen] (9) -- (6) ;
\draw[->,Salmon] (1) -- (7) ;
\draw[->,Salmon] (3) -- (7) ;
\draw[->,SpringGreen] (9) -- (2) ;
\draw[->,SpringGreen] (0) -- (2) ;
\draw[->,BurntOrange] (9) -- (4) ;
\draw[->,BurntOrange] (1) -- (4) ;
\draw[->,Tan] (0) -- (1) ;
\draw[->,Tan] (9) -- (1) ;
\draw[->,Tan] (10) -- (1) ;
\draw[->,CadetBlue] (10) -- (8) ;
\draw[->,CadetBlue] (1) -- (8) ;
\draw[->,Violet] (8) -- (6) ;
\draw[->,Violet] (4) -- (6) ;
\draw[->,Gray] (2) -- (6) ;
\draw[->,Gray] (1) -- (6) ;
\draw[->,Gray] (7) -- (6) ;
\draw[->,BrickRed] (10) -- (9) ;
\draw[->,BrickRed] (0) -- (9) ;
\draw[->,CarnationPink] (0) -- (3) ;
\draw[->,CarnationPink] (1) -- (3) ;
\draw[->,CarnationPink] (10) -- (3) ;
\draw[->,RedOrange] (10) -- (2) ;
\draw[->,RedOrange] (3) -- (2) ;
\draw[->,YellowGreen] (0) -- (6) ;
\draw[->,YellowGreen] (10) -- (6) ;
\draw[->,Apricot] (7) -- (2) ;
\draw[->,Apricot] (4) -- (2) ;
\draw[->,SeaGreen] (5) -- (3) ;
\draw[->,SeaGreen] (4) -- (3) ;
\draw[->,Periwinkle] (10) -- (5) ;
\draw[->,Periwinkle] (0) -- (5) ;
\draw[->,Periwinkle] (1) -- (5) ;
\draw[->,Orchid] (9) -- (5) ;
\draw[->,Orchid] (4) -- (5) ;

    \end{tikzpicture}
    \caption{Red Wine}
    \label{fig:redwine}
    \end{subfigure}
    \hfill
    \begin{subfigure}[b]{0.45\textwidth}
    \centering
    \begin{tikzpicture}[thick, scale=0.175]

    \def \n {11}
    \def \radius {12cm}
    \def \margin {3} %

  \node[circle, draw, inner sep=2pt, minimum width=2pt] (0) at ({360/\n * (3)}:\radius) {\tiny 0};
  \node[circle, draw, inner sep=2pt, minimum width=2pt] (1) at ({360/\n * (2)}:\radius) {\tiny 1};
  \node[circle, draw, inner sep=2pt, minimum width=2pt] (2) at ({360/\n * (1)}:\radius) {\tiny 2};
  \node[circle, draw, inner sep=2pt, minimum width=2pt] (3) at ({360/\n * (11)}:\radius) {\tiny 3};
  \node[circle, draw, inner sep=2pt, minimum width=2pt] (4) at ({360/\n * (10)}:\radius) {\tiny 4};
  \node[circle, draw, inner sep=2pt, minimum width=2pt] (5) at ({360/\n * (9)}:\radius) {\tiny 5};
  \node[circle, draw, inner sep=2pt, minimum width=2pt] (6) at ({360/\n * (8)}:\radius) {\tiny 6};
  \node[circle, draw, inner sep=2pt, minimum width=2pt] (7) at ({360/\n * (7)}:\radius) {\tiny 7};
  \node[circle, draw, inner sep=2pt, minimum width=2pt] (8) at ({360/\n * (6)}:\radius) {\tiny 8};
  \node[circle, draw, inner sep=1pt, minimum width=1pt] (9) at ({360/\n * (5)}:\radius) {\tiny 9};
  \node[circle, draw, inner sep=1pt, minimum width=1pt] (10) at ({360/\n * (4)}:\radius) {\tiny 10};

\draw[->, Cerulean] (0) -- (7) ;
\draw[->, Cerulean] (1) -- (7) ;
\draw[->,Cerulean] (2) -- (7) ;
\draw[->,Cerulean] (3) -- (7) ;
\draw[->,Goldenrod] (10) -- (7) ;
\draw[->,Goldenrod] (4) -- (7) ;
\draw[->,Mulberry] (0) -- (6) ;
\draw[->,Mulberry] (2) -- (6) ;
\draw[->,Mulberry] (5) -- (6) ;
\draw[->,OrangeRed] (3) -- (10) ;
\draw[->,OrangeRed] (0) -- (10) ;
\draw[->,JungleGreen] (0) -- (8) ;
\draw[->,JungleGreen] (1) -- (8) ;
\draw[->,JungleGreen] (2) -- (8) ;
\draw[->,Salmon] (9) -- (7) ;
\draw[->,Salmon] (8) -- (7) ;
\draw[->,SpringGreen] (8) -- (4) ;
\draw[->,SpringGreen] (10) -- (4) ;
\draw[->,BurntOrange] (5) -- (3) ;
\draw[->,BurntOrange] (6) -- (3) ;
\draw[->,Tan] (9) -- (2) ;
\draw[->,Tan] (0) -- (2) ;
\draw[->,CadetBlue] (9) -- (3) ;
\draw[->,CadetBlue] (8) -- (3) ;
\draw[->,Violet] (6) -- (10) ;
\draw[->,Violet] (8) -- (10) ;
\draw[->,Gray] (2) -- (4) ;
\draw[->,Gray] (1) -- (4) ;
\draw[->,BrickRed] (9) -- (6) ;
\draw[->,BrickRed] (1) -- (6) ;
\draw[->,CarnationPink] (9) -- (1) ;
\draw[->,CarnationPink] (2) -- (1) ;
\draw[->,RedOrange] (3) -- (4) ;
\draw[->,RedOrange] (0) -- (4) ;
\draw[->,YellowGreen] (9) -- (10) ;
\draw[->,YellowGreen] (1) -- (10) ;
\draw[->,Apricot] (2) -- (3) ;
\draw[->,Apricot] (1) -- (3) ;
\draw[->,SeaGreen] (9) -- (5) ;
\draw[->,SeaGreen] (2) -- (5) ;
\draw[->,Periwinkle] (5) -- (1) ;
\draw[->,Periwinkle] (0) -- (1) ;
\draw[->,Orchid] (6) -- (7) ;
\draw[->,Orchid] (5) -- (7) ;
\draw[->,Blue] (5) -- (10) ;
\draw[->,Blue] (2) -- (10) ;
\draw[->,Green] (9) -- (8) ;
\draw[->,Green] (6) -- (8) ;
\draw[->,Red] (6) -- (4) ;
\draw[->,Red] (5) -- (4) ;
\draw[->,Red] (9) -- (4) ;

    \end{tikzpicture}
    \caption{White Wine}
    \label{fig:whitewine}
    \end{subfigure}
    \caption{(a) Results for Red Wine data set. The learned BPEC-DAG has $47$ edges with $20$ color classes. (b) Results for White Wine data set. The learned BPEC-DAG has $51$ edges with $23$ color classes.}
    \label{fig:wine}
\end{figure}

\subsubsection{Protein signaling data}\label{subsubsec:proteindata}
While GECS learned BPEC-DAG models for the wine data sets with relatively rich structure and numerous causal communities, it is also possible that BPEC-DAG models do not provide reasonable models for certain real data problems.
For example, we considered the Sachs et al. data set consisting of 7644 abundance measurements of certain phospholipids and phosphoproteins present in primary human immune system cells \cite{sachs2005causal}.
The data set is purely interventional; however one may extract an observational data set consisting of $1755$ samples as described in \cite{wang2017permutation, squires2020permutation}.

We ran GECS on this observational data set of 1755 samples over the $11$ measured molecules.
The learned BPEC-DAG, depicted in \Cref{fig:sachs}, has 26 edges and 13 colors classes.
We note that each color class contains precisely two edges.
One explanation for this could be that in each color class there exists one edge that exhibited a strong effect according to the data, and, since BPEC-DAGs are properly colored, GECS was forced to also add an additional edge with an optimally similar causal effect.
Hence, the learned parameter values for the color classes in \Cref{fig:sachs} may not be an optimal fit to the data.
(Note that this is less likely to have occurred for the wine data sets as those learned BPEC-DAGs each contain multiple color classes with more than two edges, suggesting that some causal effects were similar enough to justify larger (nonminimal) coloring classes.)
Thus, it may not be reasonable to conclude that the DAG in \Cref{fig:sachs} is a good approximation of the true causal structure of the protein network.
In particular, it is unlikely that any two causal effects in this protein-signaling network are the same.
On the other hand, the adjacency matrices of the two DAGs agree in about $74\%$ of their entries. %

\begin{figure}[t]
    \centering
    \begin{subfigure}[b]{0.45\textwidth}
    \centering
    \begin{tikzpicture}[thick, scale=0.17]

    \def \n {11}
    \def \radius {12cm}
    \def \margin {3} %

  \node[circle, draw, inner sep=2pt, minimum width=2pt] (0) at ({360/\n * (3)}:\radius) {\tiny Raf};
  \node[circle, draw, inner sep=2pt, minimum width=2pt] (1) at ({360/\n * (2)}:\radius) {\tiny Mek};
  \node[circle, draw, inner sep=2pt, minimum width=2pt] (2) at ({360/\n * (1)}:\radius) {\tiny PLCg};
  \node[circle, draw, inner sep=2pt, minimum width=2pt] (3) at ({360/\n * (11)}:\radius) {\tiny PIP2};
  \node[circle, draw, inner sep=2pt, minimum width=2pt] (4) at ({360/\n * (10)}:\radius) {\tiny PIP3};
  \node[circle, draw, inner sep=2pt, minimum width=2pt] (5) at ({360/\n * (9)}:\radius) {\tiny Erk};
  \node[circle, draw, inner sep=2pt, minimum width=2pt] (6) at ({360/\n * (8)}:\radius) {\tiny Akt};
  \node[circle, draw, inner sep=2pt, minimum width=2pt] (7) at ({360/\n * (7)}:\radius) {\tiny PKA};
  \node[circle, draw, inner sep=2pt, minimum width=2pt] (8) at ({360/\n * (6)}:\radius) {\tiny PKC};
  \node[circle, draw, inner sep=2pt, minimum width=1pt] (9) at ({360/\n * (5)}:\radius) {\tiny p38};
  \node[circle, draw, inner sep=2pt, minimum width=1pt] (10) at ({360/\n * (4)}:\radius) {\tiny JNK};

  \draw[->,Cerulean] (7) -- (6) ;
\draw[->,Cerulean] (5) -- (6) ;
\draw[->,Goldenrod] (9) -- (1) ;
\draw[->,Goldenrod] (0) -- (1) ;
\draw[->,Mulberry] (10) -- (9) ;
\draw[->,Mulberry] (8) -- (9) ;
\draw[->,OrangeRed] (4) -- (3) ;
\draw[->,OrangeRed] (2) -- (3) ;
\draw[->,JungleGreen] (5) -- (7) ;
\draw[->,JungleGreen] (1) -- (7) ;
\draw[->,Salmon] (10) -- (1) ;
\draw[->,Salmon] (8) -- (1) ;
\draw[->,SpringGreen] (9) -- (7) ;
\draw[->,SpringGreen] (0) -- (7) ;
\draw[->,BurntOrange] (7) -- (2) ;
\draw[->,BurntOrange] (4) -- (2) ;
\draw[->,Tan] (10) -- (8) ;
\draw[->,Tan] (5) -- (8) ;
\draw[->,CadetBlue] (7) -- (3) ;
\draw[->,CadetBlue] (1) -- (3) ;
\draw[->,Violet] (10) -- (7) ;
\draw[->,Violet] (8) -- (7) ;
\draw[->,Gray] (7) -- (4) ;
\draw[->,Gray] (1) -- (4) ;
\draw[->,BrickRed] (3) -- (6) ;
\draw[->,BrickRed] (1) -- (6) ;
    \end{tikzpicture}
    \caption{}
    \label{fig:sachs}
    \end{subfigure}
    \hfill
\begin{subfigure}[b]{0.45\textwidth}
    \centering
    \begin{tikzpicture}[thick, scale=0.17]

\def\n{11}
\def \radius {12cm}
\def \margin {3} %

  \node[circle, draw, inner sep=2pt, minimum width=1pt] (Raf) at ({360/\n * (3)}:\radius) {\tiny Raf};
  \node[circle, draw, inner sep=2pt, minimum width=1pt] (Mek) at ({360/\n * (2)}:\radius) {\tiny Mek};
  \node[circle, draw, inner sep=2pt, minimum width=1pt] (PLCg) at ({360/\n * (1)}:\radius) {\tiny PLCg};
  \node[circle, draw, inner sep=2pt, minimum width=1pt] (PIP2) at ({360/\n * (11)}:\radius) {\tiny PIP2};
  \node[circle, draw, inner sep=2pt, minimum width=1pt] (PIP3) at ({360/\n * (10)}:\radius) {\tiny PIP3};
  \node[circle, draw, inner sep=2pt, minimum width=1pt] (Erk) at ({360/\n * (9)}:\radius) {\tiny Erk};
  \node[circle, draw, inner sep=2pt, minimum width=1pt] (Akt) at ({360/\n * (8)}:\radius) {\tiny Akt};
  \node[circle, draw, inner sep=2pt, minimum width=1pt] (PKA) at ({360/\n * (7)}:\radius) {\tiny PKA};
  \node[circle, draw, inner sep=2pt, minimum width=1pt] (PKC) at ({360/\n * (6)}:\radius) {\tiny PKC};
  \node[circle, draw, inner sep=2pt, minimum width=1pt] (p38) at ({360/\n * (5)}:\radius) {\tiny p38};
  \node[circle, draw, inner sep=2pt, minimum width=1pt] (JNK) at ({360/\n * (4)}:\radius) {\tiny JNK};

  \draw[->] (Raf) -- (Mek) ;
  \draw[<-] (Raf) -- (Mek) ;
  \draw[->] (PKC) -- (p38) ;
  \draw[->] (PKC) -- (JNK) ;
  \draw[->] (PKC) -- (Raf) ;
  \draw[->] (PKC) -- (Mek) ;
  \draw[->] (PKA) -- (p38) ;
  \draw[->] (PKA) -- (JNK) ;
  \draw[->] (PKA) -- (Raf) ;
  \draw[->] (PKA) -- (Mek) ;
  \draw[->] (PKA) -- (Erk) ;
  \draw[->] (PKA) -- (Akt) ;
  \draw[->] (PIP3) -- (Akt) ;
  \draw[->] (PIP3) -- (PLCg) ;
  \draw[<-] (PIP3) -- (PLCg) ;
  \draw[->] (PIP3) -- (PIP2) ;
  \draw[<-] (PIP3) -- (PIP2) ;
  \draw[->] (PLCg) -- (PIP2) ;
  \draw[<-] (PLCg) -- (PIP2) ;
  \draw[->] (PLCg) -- (PKC) ;
  \draw[<-] (PLCg) -- (PKC) ;
  \draw[->] (Mek) -- (Erk) ;
\end{tikzpicture}
    \caption{}
    \label{fig:sachsgroundtruth}
    \end{subfigure}
    \caption{(a) BPEC-DAG learned by GECS for the Sachs dataset. (b) Conventionally accepted ground-truth network according to the model from \cite{sachs2005causal}.}
    \label{fig:sachsfig}
\end{figure}
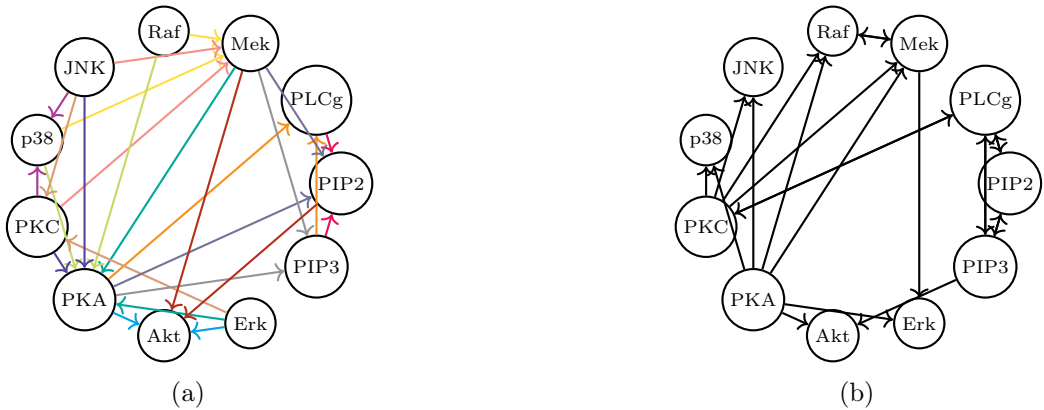

\section{Discussion}
\label{sec:discussion}
In this paper, we considered aspects of Gaussian DAG models under the assumption of partial homogeneity constraints imposed on the vertices and edges of the DAG (i.e., the error variances and structural coefficients, respectively).
Using the graphical representation of such constraints via colored DAGs introduced in \cite{makam2022symmetries}, we investigated fundamental properties of these models for their use in graphical modeling inference tasks.
To embed the colored DAG representation of such models into the theory of DAG models and their many generalizations in the graphical models literature, we described both a local and global Markov property for colored Gaussian DAG models, and showed that their fulfillment is equivalent to containment in the model.

We observed fundamental geometric properties of these models, including smoothness which is useful for likelihood ratio testing \cite{DrtonSmooth}.
We also gave a proof of a conjecture of Sullivant \cite{Sullivant} by showing that the result holds more generally for colored Gaussian DAG models.
Our method of proof relies on a short lemma that applies to other well-studied Gaussian graphical models, including colored (and uncolored) undirected Gaussian graphical models and directed ancestral Gaussian graphical models.
\Cref{lemma:Saturation} is generally applicable to rationally parameterized models satisfying global rational identifiability.
In particular, it provides a tool for identifying a Markov property (e.g.~a set of model-defining constraints) for such models (\Cref{rem:MarkovProperty}).
It would be interesting to study the applications of this result in settings where one wishes to identify testable, model-defining constraints, such as in the evolutionary biology literature \cite{allman2007phylogenetic, casanellas2011relevant}.

We further investigated fundamental properties of the colored DAG models for use in causal discovery, including faithfulness and structural identifiability.
We observed that generic distributions in vertex-colored and edge-colored DAG models are faithful to their defining DAGs and colorings.
However, we also observed that faithfulness may breakdown for general colorings.
As faithfulness is typically used as a first benchmark consistency guarantee for causal discovery algorithms, it would be useful to characterize the colored DAG models admitting faithful distributions.

We additionally gave some first structural identifiability results for models with partially homogeneous structural coefficients in the form of the family of BPEC-DAGs.
These models allow for structure identification, but they also provide additional information on how the direct causes of each node in the system may be clustered into communities of variables having similar causal effects on their target node.
We observed some evidence of this behavior in the real data studies for the GECS causal discovery algorithm, which we introduced as an analogue of GES for learning BPEC-DAGs.
In simulated data, we observed that GECS generally appears to outperform GES at learning causal DAGs, especially for dense models, while offering the additional benefits of causal community identification and structurally identifiable DAG estimates.

We left open the question of consistency of GECS under faithfulness, but provided supporting empirical evidence for consistency via simulations.
It would also be interesting to see if the structural identifiability results generalize to all properly edge-colored DAG models, or even to provide a characterization of model equivalence for general edge-colored DAG models (\Cref{quest: edge-color equivalence}).
Given such results, GECS could be extended to these domains.
Here, characterizations of model equivalence that are both constraint-based (in the style of Verma and Pearl \cite{VermaPearl}) as well as transformational (as considered by Chickering for DAGs \cite{chickering2013transformational}) may be of use in formulating a version of Meek's Conjecture \cite{chickering2002optimal} for colored DAGs whose eventual proof could yield consistency results.
Given a complete characterization of the colored DAGs admitting faithful distributions (\Cref{quest: faithfulness}), one may even consider extensions of these investigations for general colored DAG models (\Cref{quest: equivalence}).

\subsection*{Acknowledgements}
We thank Mathias Drton, Lisa Nicklasson, and Seth Sullivant for helpful discussions.
Part of this research was performed while the authors were visiting the Institute for Mathematical and Statistical Innovation (IMSI), which is supported by the National Science Foundation (Grant No.\ DMS-1929348).
Tobias Boege and Kaie Kubjas were partially supported by the Academy of Finland grant number 323416.
Tobias Boege and Liam Solus were partially supported by the Wallenberg Autonomous Systems and Software Program (WASP) funded by the Knut and Alice Wallenberg Foundation.
Pratik Misra received funding from the Brummers and Partners MathDataLab and the European Research Council (ERC) under the European Union’s Horizon 2020 research and innovation programme (grant agreement No.\ 883818) during the course of this project.
Liam Solus was further supported by a Research Pairs grant from the Digital Futures Lab at KTH, the G\"oran Gustafsson Stiftelse Prize for Young Researchers, and Starting Grant No.\ 2019-05195 from The Swedish Research Council (Vetenskapsr\aa{}det).

\bibliographystyle{tboege}
\bibliography{literature}

\clearpage
\setcounter{page}{1}
\thispagestyle{empty}
\begin{center}\large{\bf COLORED GAUSSIAN DAG MODELS: SUPPLEMENTARY MATERIAL}
\end{center}

\appendix

\section{Proof of \texorpdfstring{\Cref{thm:IdentifyingSets}}{the identifying sets} and \texorpdfstring{\Cref{thm: MP}}{Markov property theorems}}
\label{section:IdentifyingSets}

We first give a self-contained proof of \Cref{lemma:Ident} which is the basis of our parameter identification results.

\begin{proof}[Proof of \Cref{lemma:Ident}]
\begin{paradesc}
\item[\eqref{eq:omega}]
As consequences of the trek rule, we have $\Sigma_{\pa(i),i} = \Sigma_{\pa(i)} \Lambda_{\pa(i),i}$ as well as $\sigma_{ii} = \omega_{i} + \Lambda_{i,\pa(i)} \Sigma_{\pa(i)} \Lambda_{\pa(i),i}$. Using the former to replace $\Lambda_{\pa(i),i}$ in the latter and rearranging terms yields
\[
  \omega_{i} = \sigma_{ii} - \Sigma_{i,\pa(i)} \Sigma_{\pa(i)}^{-1} \Sigma_{\pa(i),i} = \frac{|\Sigma_{\pa(i) \cup i}|}{|\Sigma_{\pa(i)}|},
\]
by Schur's formula.

\item[\eqref{eq:lambda}]
Suppose that $i \not\in \pa(j)$, so $\lambda_{ij} = 0$. In this case, the directed local Markov property of~$G$ dictates that $\CI{i,j|\pa(j)}$ holds for $\Sigma$ and hence the numerator in \eqref{eq:lambda} vanishes as~required. Now~suppose that $i \in \pa(j)$. As above, the trek rule in matrix form reads $\Sigma_{\pa(j),j} = \Sigma_{\pa(j)} \Lambda_{\pa(j),j}$. This implies that $\lambda_{ij}$ is the $i$-th entry of $\Sigma_{\pa(j)}^{-1} \Sigma_{\pa(j),j}$, i.e.,
\begin{align*}
  \lambda_{ij} &= (\Sigma_{\pa(j)}^{-1} \Sigma_{\pa(j),j})_i
  = \frac{(-1)^i}{|\Sigma_{\pa(j)}|} \sum_{s \in \pa(j)} (-1)^s |\Sigma_{is|\pa(j) \setminus is}| \sigma_{sj} \\
  &= \frac{(-1)^i}{|\Sigma_{\pa(j)}|} \left( (-1)^i |\Sigma_{\pa(j) \setminus i}| \sigma_{ij} + \sum_{s \in \pa(j) \setminus i} (-1)^s |\Sigma_{is|\pa(j) \setminus is}| \sigma_{sj} \right).
\end{align*}
The factors $|\Sigma_{is|\pa(j) \setminus is}|$ appearing in every term of the summation are minors with rows $\pa(j) \setminus s$ and columns $\pa(j) \setminus i$. In particular, they all contain row~$i$ since $s \not= i$. In the next step, we write out the Laplace expansions of all of these minors by row~$i$. To avoid confusion about signs, we use the symbol $i[K]$ to denote the position (starting with $1$) of the element $i$ in the ordered set~$K$. Continuing with the expansion,
\begin{align*}
  \lambda_{ij} &= \begin{multlined}[t][.8\linewidth]
    \frac{(-1)^{i[\pa(j)]}}{|\Sigma_{\pa(j)}|} \Bigg( (-1)^{i[\pa(j)]} |\Sigma_{\pa(j) \setminus i}| \sigma_{ij} \\
    {} + \sum_{s \in \pa(j) \setminus i} (-1)^{s[\pa(j)]} \sigma_{sj} \sum_{t \in \pa(j) \setminus i} (-1)^{i[\pa(j) \setminus s] + t[\pa(j) \setminus i]} |\Sigma_{\pa(j)\setminus is, \pa(j)\setminus it}| \sigma_{it} \Bigg),
  \end{multlined}
\end{align*}
and the double sum simplifies to
\begin{align*}
  \sum_{s,t \in \pa(j) \setminus i} (-1)^{s[\pa(j)] + i[\pa(j) \setminus s] + t[\pa(j) \setminus i]} \sigma_{it} |\Sigma_{st|\pa(j) \setminus ist}| \sigma_{sj}.
\end{align*}
The sign $(-1)^{i[\pa(j) \setminus s]}$ equals $(-1)^{i[\pa(j)]}$ if $s > i$ and $(-1)^{i[\pa(j)]+1}$ if $s < i$. The sign $(-1)^{s[\pa(j)]}$ equals $(-1)^{s[\pa(j) \setminus i]}$ if $s < i$ and $(-1)^{s[\pa(j) \setminus i]+1}$ if $s > i$. In both cases, their product equals $(-1)^{i[\pa(j)] + s[\pa(j) \setminus i] + 1}$. Hence we have
\begin{align*}
  \lambda_{ij} &= \begin{multlined}[t][.8\linewidth]
    \frac{(-1)^{i[\pa(j)]}}{|\Sigma_{\pa(j)}|} \Bigg( (-1)^{i[\pa(j)]} |\Sigma_{\pa(j) \setminus i}| \sigma_{ij} \\
    {} - (-1)^{i[\pa(j)]} \sum_{s,t \in \pa(j) \setminus i} (-1)^{s[\pa(j) \setminus i] + t[\pa(j) \setminus i]} \sigma_{it} |\Sigma_{st|\pa(j) \setminus ist}| \sigma_{sj} \Bigg)
  \end{multlined} \\
  &= \frac1{|\Sigma_{\pa(j)}|} \left( |\Sigma_{\pa(j) \setminus i}| \left( \sigma_{ij} - \Sigma_{i,\pa(j) \setminus i} \Sigma_{\pa(j) \setminus i}^{-1} \Sigma_{\pa(j) \setminus i,j} \right) \right),
\end{align*}
which is the claimed expression by Schur's formula.
\qedhere
\end{paradesc}
\end{proof}

We now turn to \Cref{thm:IdentifyingSets}.
Part~\eqref{thm:IdentifyingSets:Vertices} is proved already in \cite[Theorem~3.3]{wu2023partial}. Part~\eqref{thm:IdentifyingSets:NonEdges} concerns identifying $\lambda_{ij} = 0$ if $ij \not\in E$.  The function $\lambda_{ij|A}(\Sigma)$ vanishes if and only if its numerator $|\Sigma_{ij|A \setminus i}|$ does. This is equivalent to the conditional independence $\CI{i,j|A \setminus i}$ which is in turn characterized by d-separation in~$G$ by the global Markov property. This proves claim \eqref{thm:IdentifyingSets:NonEdges}.
The remainder of this section proves part~\eqref{thm:IdentifyingSets:Edges} where we assume $ij \in E$.

\begin{lemma} \label{lemma:EdgeIdentifyBounds}
Assume $ij \in E$. If $A \in \CC A_G(ij)$, then $i \in A \subseteq V \setminus \ol{\de}(j)$.
\end{lemma}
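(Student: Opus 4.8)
The plan is to exploit the interpretation of $\lambda_{ij|A}(\Sigma)$ as a population regression coefficient, together with two homogeneity/specialization arguments: one to force $i\in A$ and one to forbid descendants of $j$ in $A$. Recall from the Schur complement (as in the proof of \Cref{lemma:Ident}) that when $i\in A$ and $j\notin A$ the quantity $\lambda_{ij|A}(\Sigma)=|\Sigma_{ij|A\setminus i}|/|\Sigma_A|$ equals $\RM{Cov}(X_i,X_j\mid X_{A\setminus i})/\RM{Var}(X_i\mid X_{A\setminus i})$, i.e.\ the coefficient of $X_i$ in the least-squares regression of $X_j$ on $X_A$. I would first dispose of the boundary condition $j\notin A$ directly: if $j\in A$ then $j\in A\setminus i$, so the putative minor $\Sigma_{ij|A\setminus i}=\Sigma_{i(A\setminus i),\,j(A\setminus i)}$ is the determinant of a non-square matrix and $\lambda_{ij|A}$ is not even defined; hence no set containing $j$ can lie in $\CC A_G(ij)$.

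To show $i\in A$, I would use the scaling $\Omega\mapsto t\Omega$ for $t>0$, under which $\phi_G(\Omega,\Lambda)$ is replaced by $t\,\phi_G(\Omega,\Lambda)$ while $\Lambda$, and in particular $\lambda_{ij}$, is unchanged. Since a $k$-minor of $\Sigma$ scales by $t^{k}$ under $\Sigma\mapsto t\Sigma$, if $i\notin A$ then the numerator $|\Sigma_{ij|A}|$ is an $(|A|+1)$-minor and the denominator $|\Sigma_A|$ an $|A|$-minor, so $\lambda_{ij|A}(t\Sigma)=t\,\lambda_{ij|A}(\Sigma)$. If $A$ were identifying this would give $\lambda_{ij}=t\lambda_{ij}$ for all $t>0$, forcing $\lambda_{ij}\equiv0$; but $\lambda_{ij}$ is a free coordinate on $\Real^E$, a contradiction. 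Hence $i\in A$. (When $i\in A$ both minors have the same size, the ratio is scale-invariant, and no contradiction arises, consistent with identification.)

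The main work is to rule out proper descendants, and this is the step I expect to be the crux. Writing the structural equation $X_j=\lambda_{ij}X_i+U$ with $U=\sum_{k\in\pa_G(j)\setminus i}\lambda_{kj}X_k+\eps_j$ and using $i\in A$, linearity of regression gives $\lambda_{ij|A}(\Sigma)=\lambda_{ij}+\gamma_i$, where $\gamma_i$ is the coefficient of $X_i$ in the regression of $U$ on $X_A$ (the regression of $X_i$ on $X_A$ being $X_i$ itself); identification forces $\gamma_i\equiv0$. Suppose $Z:=A\setminus i$ contained a descendant of $j$. I would choose $d\in Z\cap\de_G(j)$ at minimal directed distance from $j$ together with a shortest directed path $P$ from $j$ to $d$, so that all internal vertices of $P$ lie outside $Z$. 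Specializing the parameters by setting every structural coefficient to $0$ except those on the edge $i\to j$ and on $P$ (retaining positive error variances) makes $U=\eps_j$, turns every $X_z$ with $z\in Z\setminus d$ into an independent noise term, and leaves the open path $i\to j\to\cdots\to d$ as the only source of dependence among $X_i$, $X_d$ and $\eps_j$. The regression of $\eps_j$ on $X_A$ then decouples and reduces to the two-variable regression of $\eps_j$ on $(X_i,X_d)$; a direct computation with $X_d=\beta X_j+\eta$, where $\beta$ is the product of the path coefficients and $\eta$ independent noise of variance $V_\eta>0$, yields $\gamma_i=-\lambda_{ij}\beta^2\omega_j/(\beta^2\omega_j+V_\eta)$, which is nonzero for generic positive variances and nonzero $\beta,\lambda_{ij}$. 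This contradicts $\gamma_i\equiv0$, so $Z$ contains no descendant of $j$; combined with $j\notin A$ this gives $A\subseteq V\setminus\ol{\de}(j)$.

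The delicate point, and the main obstacle, is ensuring that the specialization genuinely isolates a single open directed path: one must take $d$ closest to $j$ so that the internal path vertices are not among the conditioning variables (otherwise conditioning on them could block the path and kill the effect), and one must check that the remaining off-path conditioning variables become independent of $\eps_j$, $X_i$ and $X_d$, so that they drop out of the regression and $\Sigma_A$ stays positive definite at the chosen point. Once this reduction to a chain is justified, the computation of $\gamma_i$ is routine.
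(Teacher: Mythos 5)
Your proof is correct, and while it follows the same overall strategy as the paper's --- exhibit a single specialized point of $\CC M(G)$ at which $\lambda_{ij|A}$ fails to return $\lambda_{ij}$ --- two of your three steps take genuinely different routes. For $i \in A$, the paper sets all structural coefficients except $\lambda_{ij}$ to zero and computes $\lambda_{ij|A}(\Sigma) = \lambda_{ij}\omega_i \neq \lambda_{ij}$, whereas your scaling argument $\Omega \mapsto t\Omega$ exploits the degree mismatch between the $(|A|+1)$-minor in the numerator and the $|A|$-minor in the denominator; this is a clean homogeneity argument that avoids any explicit computation and would transfer verbatim to other identification formulas of this shape. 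For the descendant case, both proofs kill all coefficients off the edge $i \to j$ and a single directed path $j \to \cdots \to d$ (your choice of $d$ at minimal directed distance from $j$, forcing internal path vertices outside $Z$, plays exactly the role of the paper's ``smallest descendant of $j$ in $A$''), but the paper then manipulates $2\times 2$ determinants directly using the trek identity $\sigma_{jk} = (\lambda_{ij} + \omega_j/\sigma_{ij})\sigma_{ik}$, while you decompose the regression of $X_j = \lambda_{ij}X_i + U$ via linearity of projection and solve the resulting two-variable normal equations; the two computations yield the same nonzero defect $-\lambda_{ij}\beta^2\omega_j/(\beta^2\omega_j + V_\eta)$, and your formulation makes the ``decoupling'' of the off-path conditioning variables conceptually transparent at the cost of invoking the Frisch--Waugh interpretation of $\lambda_{ij|A}$. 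The only cosmetic discrepancy is in the case $j \in A$: under the paper's convention the numerator $|\Sigma_{ij|A\setminus i}|$ is a determinant with a repeated column (hence zero, so $\lambda_{ij|A} \equiv 0 \neq \lambda_{ij}$ generically) rather than an undefined non-square determinant, but either reading excludes $j$ from any identifying set.
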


\begin{proof}
Suppose first that $A$ does not contain~$i$. To show that $A$ cannot be identifying, it suffices to find one point $\Sigma = \phi_G(\Omega, \Lambda) \in \CC M(G)$ such that $\lambda_{ij|A}(\Sigma) \not= \lambda_{ij}$. Since $i \not\in A$, the numerator equals $|\Sigma_{ij|A}|$ and $\lambda_{ij|A}(\Sigma)$ is exactly the Schur complement
\[
  \sigma_{ij} - \Sigma_{i,A} \Sigma_A^{-1} \Sigma_{A,j}.
\]
We set all $\Lambda$-parameters except for $\lambda_{ij}$ to zero. This gives a matrix $\Sigma = \phi_G(\Omega, \lambda_{ij})$. Since $\Sigma_{A,j} = 0$ as a result of there being no treks between $A$ and $j$, the expression for $\lambda_{ij|A}$ simplifies to $\sigma_{ij} = \lambda_{ij} \omega_i$ which differs from $\lambda_{ij}$.

If $j \in \ol{\de}(j)$ is in $A$, then the numerator $|\Sigma_{ij|A\setminus i}|$ has a repeated column and must vanish. However, generically $\lambda_{ij} \not= 0$ on the model.
Finally suppose that $A$ contains a descendant $k \in \ol{\de}(j) \setminus j$. We may pick the smallest such $k$ in $A$. Again, it suffices to find one point on which the identification of $\lambda_{ij}$ fails. Let $p$ be a directed path from $j$ to $k$. By setting all $\Lambda$-parameters outside of $p$ except for $\lambda_{ij}$ to zero (which disconnects all vertices not on this path), we see that it is sufficient to prove the statement for the path $i \to j \to k_1 \to \dots \to k_n = k$ with $\Set{i, k} \subseteq A \subseteq \Set{i, k_1, \dots, k_n}$. Since all $k_i$ are descendants of $j$ and $k$ is the smallest descendant of $j$ in $A$, we have $A = \Set{i,k}$. The statement that $A$ is edge-identifying is equivalent to the vanishing of the polynomial expression in $2 \times 2$ determinants:
\begin{align*}
  |\Sigma_{ij|k}| - \lambda_{ij} |\Sigma_{ik}|
  &= \begin{vmatrix} \sigma_{ij} & \sigma_{ik} \\ \sigma_{jk} & \sigma_{kk} \end{vmatrix} - \lambda_{ij} \begin{vmatrix} \sigma_{ii} & \sigma_{ik} \\ \sigma_{ik} & \sigma_{kk} \end{vmatrix}.
\end{align*}
There is a unique trek between $i$ and every other vertex on the path and its top node is~$i$. Between $j$ and its descendant $k$ there are exactly two treks, one with top~$i$ and one with top~$j$. We~have $\sigma_{jk} = \left(\lambda_{ij} + \frac{\omega_j}{\sigma_{ij}}\right) \sigma_{ik}$. Thus
\begin{align*}
  \begin{vmatrix} \sigma_{ij} & \sigma_{ik} \\ \sigma_{jk} & \sigma_{kk} \end{vmatrix} - \lambda_{ij} \begin{vmatrix} \sigma_{ii} & \sigma_{ik} \\ \sigma_{ik} & \sigma_{kk} \end{vmatrix}
  &= \sigma_{ij} \sigma_{kk} - \sigma_{ik}^2\left(\lambda_{ij} + \frac{\omega_j}{\sigma_{ij}}\right) - \lambda_{ij}\left(\omega_i \sigma_{kk} - \sigma_{ik}^2\right) \\
  &= -\frac{\omega_j}{\omega_i \lambda_{ij}} \sigma_{ik}^2.
\end{align*}
This is the negative product of the trek monomials from $i$ to $k$ and from $j$ to $k$. For generic choices of parameters, neither of them is zero.
\end{proof}

Recall from the statement of \Cref{thm:IdentifyingSets} that for a DAG $G = (V,E)$ with $ij \in E$, we denote by $G_{ij}$ the induced subgraph of $G$ in which the edge $i \to j$ and all descendants of~$j$ are removed.

\begin{lemma}
Assume $ij \in E$. Let $A$ be a d-separating set for $i$ and $j$ in $G_{ij}$. Then $A \cup i \in \CC A_G(ij)$.
\end{lemma}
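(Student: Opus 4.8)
The plan is to reduce the required identity $\lambda_{ij|A'}(\Sigma) = \lambda_{ij}$, where I abbreviate $A' = A \cup i$, to a single conditional independence statement in a modified DAG, and then to establish that statement graphically. First I would rewrite the rational function in regression terms. Since $i \notin A$ we have $A' \setminus i = A$, and Schur's formula gives $|\Sigma_{ij|A}| = |\Sigma_A|\,\RM{Cov}(X_i, X_j \mid X_A)$ together with $|\Sigma_{A'}| = |\Sigma_{iA}| = |\Sigma_A|\,\RM{Var}(X_i \mid X_A)$, so that
\[
  \lambda_{ij|A'}(\Sigma) = \frac{\RM{Cov}(X_i, X_j \mid X_A)}{\RM{Var}(X_i \mid X_A)};
\]
that is, $\lambda_{ij|A'}$ is exactly the partial regression coefficient of $X_i$ when $X_j$ is regressed on $X_{A'}$.

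Next I would isolate the direct effect. Writing the structural equation as $X_j = \lambda_{ij} X_i + R$ with $R = \sum_{k \in \pa(j) \setminus i} \lambda_{kj} X_k + \eps_j$ and substituting, bilinearity of the conditional covariance yields $\lambda_{ij|A'}(\Sigma) = \lambda_{ij} + \RM{Cov}(X_i, R \mid X_A)/\RM{Var}(X_i \mid X_A)$. Hence it suffices to prove that $\RM{Cov}(X_i, R \mid X_A) = 0$ for every parameter choice $(\Omega, \Lambda)$. The key observation is that $R$ is precisely the $j$-th coordinate of the SEM associated to the DAG $G' = G - \{i \to j\}$, built from the \emph{same} error vector $\eps$: deleting the edge $i \to j$ only alters the coordinates indexed by $\ol\de(j)$, so $X_i$ and $X_A$ (which index non-descendants of $j$) are unchanged while the new $j$-coordinate equals $R$. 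Coupling the two models through $\eps$ therefore gives $\RM{Cov}_G(X_i, R \mid X_A) = \RM{Cov}_{G'}(X_i, X_j \mid X_A)$, and since $\phi_{G'}(\Omega, \Lambda') \in \CC M(G')$ (with $\Lambda'$ the restriction of $\Lambda$), the global Markov property makes this covariance vanish as soon as $A$ d-separates $i$ and $j$ in $G'$. As this argument is uniform in the parameters, it establishes $A' \in \CC A_G(ij)$ once the d-separation is verified.

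The main obstacle is the remaining graph-theoretic step: translating the hypothesis that $A$ d-separates $i$ and $j$ in $G_{ij}$ (which additionally deletes $\de(j)$) into the same statement for $G' = G - \{i \to j\}$. I would handle this via the moralization criterion for d-separation. Because $A \subseteq V(G_{ij}) = V \setminus \de(j)$, we have $A \cap \de(j) = \emptyset$ and $j \notin A$; consequently no vertex of $\de(j)$ can be an ancestor in $G'$ of any vertex of $\{i,j\}\cup A$ (an ancestor in $\de(j)$ of some $a \in A$ would force $a \in \de(j)$, and an ancestor of $i$ or of $j$ lying in $\de(j)$ would contradict acyclicity). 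Thus the set consisting of $\{i,j\}\cup A$ together with all its ancestors in $G'$ lies entirely in $V \setminus \de(j)$, and the induced subgraphs of $G'$ and $G_{ij}$ on this set, as well as their moral graphs, coincide; the moralization criterion then gives the same separation verdict in both graphs. (Alternatively, one argues directly that any path between $i$ and $j$ that is active given $A$ in $G'$ must avoid $\de(j)$ altogether: entering and leaving $\de(j)$, or terminating through an outgoing edge of $j$, forces a collider in $\ol\de(j)$, which has neither itself nor a descendant in $A$ and hence blocks the path; such an active path therefore already lives in $G_{ij}$.) This is the only delicate point, and once it is in place the probabilistic reduction above finishes the proof.
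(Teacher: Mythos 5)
Your argument is correct, and at its core it performs the same reduction as the paper's proof: express $\lambda_{ij|A\cup i}(\Sigma)$ as $\lambda_{ij}$ plus an error term which is a partial covariance in a model on an edge-deleted graph, and then annihilate that term with the global Markov property. The difference lies in which deleted graph is used and how the decomposition is derived. The paper sets to zero the parameters of the edge $i\to j$ \emph{and} of all of $\de(j)$, so the auxiliary covariance matrix lies in $\CC M(G_{ij})$ and the d-separation hypothesis applies verbatim; the identity $|\Sigma_{ij|A}| = |\Sigma'_{ij|A}| + \lambda_{ij}\,|\Sigma_{iA}|$ is then read off from the trek rule (only the $j$-column changes, by $\lambda_{ij}\Sigma_{iA,i}$) together with multilinearity of the determinant. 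You delete only the edge, realize the residual $R$ as the $j$-th coordinate of the SEM on $G-\{i\to j\}$ coupled through the same noise vector, and obtain the same identity (after clearing $|\Sigma_A|$) from bilinearity of the partial covariance. The price of keeping $\de(j)$ in the graph is the d-separation transfer from $G_{ij}$ to $G-\{i\to j\}$, which you correctly flag as the delicate point and supply by two valid routes (coinciding ancestral sets and moral graphs, or the blocked-collider-in-$\de(j)$ path argument); the paper's choice of $G_{ij}$ avoids this lemma entirely, at the cost of having to check via treks that deleting $\de(j)$ does not alter the submatrix indexed by $i$, $j$ and $A$ --- a fact your coupling obtains for free from the observation that only the coordinates in $\ol{\de}(j)$ change. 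Both arguments are uniform in $(\Omega,\Lambda)$, as the definition of an identifying set requires.
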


\begin{proof}
Let $\Sigma = \phi_G(\Omega, \Lambda) \in \CC M(G)$ be arbitrary. The graph $G_{ij}$ arises from $G$ by deleting edge $ij$ and descendants of~$j$. This corresponds to setting some $\Omega$ and $\Lambda$ parameters to zero. Taking the leftover parameters through the parametrization $\phi_{G_{ij}}$ results in a matrix $\Sigma' \in \CC M(G_{ij})$. By~assumption $|\Sigma'_{ij|A}| = 0$. The submatrices $\Sigma_{ij|A}$ and $\Sigma'_{ij|A}$ are very similar:
\begin{itemize}
\item Since $j$ is a sink node in the induced subgraph of $G$ on vertices $ijA$, no descendant of $j$ establishes a new trek among any of the vertices in $ijA$.
\item The presence of the edge $ij$ is the only other difference between $G$ and~$G_{ij}$. Again, since $j$ is a sink in $ijA$, the only entries of $\Sigma_{ij|A}$ that differ from $\Sigma'_{ij|A}$ are: $\sigma_{ij} = \sigma'_{ij} + \lambda_{ij} \sigma'_{ii}$ and $\sigma_{jk} = \sigma'_{jk} + \lambda_{ij} \sigma'_{ik}$, for $k \in A$.
\end{itemize}
This shows that the $j$-column of $\Sigma_{ij|A}$ is just $\Sigma_{iA,j} = \Sigma'_{iA,j} + \lambda_{ij} \Sigma'_{iA,i}$. Hence, by multilinearity of the determinant:
\begin{align*}
  |\Sigma_{ij|A}| = \underbrace{|\Sigma'_{ij|A}|}_{= 0} + \lambda_{ij} |\Sigma_{iA}|,
\end{align*}
which establishes identifiability.
\end{proof}

\begin{lemma}
Assume $ij \in E$. Let $A$ be a d-connecting set for $i$ and $j$ in $G_{ij}$. Then $A \cup i \not\in \CC A_G(ij)$.
\end{lemma}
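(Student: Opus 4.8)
The plan is to reduce the edge-identification function $\lambda_{ij|A\cup i}(\Sigma)$ to a comparison with the submodel $\CC M(G_{ij})$ and then invoke completeness of the global Markov property for the uncolored DAG $G_{ij}$. First I would record that, since $A$ is disjoint from $\{i,j\}$, we have $(A\cup i)\setminus i = A$, so that by \Cref{def:RatFun}
\[
  \lambda_{ij|A\cup i}(\Sigma) = \frac{|\Sigma_{ij|A}|}{|\Sigma_{iA}|}.
\]

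The key step is the determinant identity already established in the preceding lemma for the d-separating case, and the observation that its derivation never used d-separation. Writing $\Sigma = \phi_G(\Omega,\Lambda)$ and letting $\Sigma'$ be the image under $\phi_{G_{ij}}$ of the parameters surviving the deletion of the edge $ij$ and the vertices $\de(j)$, I would first check that $\Sigma$ and $\Sigma'$ agree on the $iA$-submatrix: no trek between two vertices of $iA$ can pass through a descendant of $j$, because such a descendant would force one of the endpoints to be a descendant of $j$ as well, contradicting $iA\subseteq V\setminus\ol{\de}(j)$. The only discrepancy is in the $j$-column, where $\Sigma_{iA,j} = \Sigma'_{iA,j} + \lambda_{ij}\Sigma'_{iA,i}$ (since $j$ is a sink in the induced subgraph on $ijA$). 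Multilinearity of the determinant then yields
\[
  |\Sigma_{ij|A}| = |\Sigma'_{ij|A}| + \lambda_{ij}\,|\Sigma_{iA}|,
\]
and hence, because $\Sigma\succ 0$ forces $|\Sigma_{iA}| > 0$,
\[
  \lambda_{ij|A\cup i}(\Sigma) - \lambda_{ij} = \frac{|\Sigma'_{ij|A}|}{|\Sigma_{iA}|}.
\]
Thus $A\cup i$ identifies $\lambda_{ij}$ if and only if $|\Sigma'_{ij|A}|$ vanishes for every admissible parameter choice, i.e.\ for every $\Sigma'\in \CC M(G_{ij})$.

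Finally I would invoke completeness. Since $A$ is d-connecting for $i$ and $j$ in $G_{ij}$, the equivalence of membership and the global Markov property for the uncolored DAG $G_{ij}$ (\Cref{thm:uncoloredMP}), together with the existence of distributions in $\CC M(G_{ij})$ faithful to $G_{ij}$, shows that $|\Sigma'_{ij|A}|$ does not vanish identically on $\CC M(G_{ij})$. Choosing such a $\Sigma'$ and lifting it to a point $\Sigma\in\CC M(G)$ (setting the deleted edge parameters to $0$, the deleted error variances to arbitrary positive values, and $\lambda_{ij}$ arbitrarily) produces a matrix with $\lambda_{ij|A\cup i}(\Sigma)\neq\lambda_{ij}$, so $A\cup i\notin\CC A_G(ij)$. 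I expect the main obstacle to be the bookkeeping that $\Sigma$ and $\Sigma'$ coincide on the entire $iA$-submatrix, which is precisely the trek-rule observation above; the remaining ingredient, that d-connection yields a generically nonvanishing almost-principal minor, is exactly the completeness of the global Markov property and requires no new work beyond a citation.
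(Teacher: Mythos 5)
Your proposal is correct, but it reaches the conclusion by a genuinely different route than the paper. You prove the general identity $|\Sigma_{ij|A}| = |\Sigma'_{ij|A}| + \lambda_{ij}\,|\Sigma_{iA}|$ (valid for every parameter choice, since its derivation in the d-separating lemma only uses $iA \subseteq V \setminus \ol{\de}(j)$, which holds automatically because $A$ lives inside $G_{ij}$), deduce that $\lambda_{ij|A\cup i}(\Sigma) - \lambda_{ij} = |\Sigma'_{ij|A}|/|\Sigma_{iA}|$, and then invoke completeness of the global Markov property for the uncolored DAG $G_{ij}$ — i.e.\ the existence of faithful distributions — to produce a $\Sigma'$ with $|\Sigma'_{ij|A}| \neq 0$, which lifts to a counterexample in $\CC M(G)$. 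The paper instead argues entirely by hand: it zeroes out all structural coefficients except those on a single d-connecting path (plus $\lambda_{ij}$), reduces to the case where $A$ is exactly the set of colliders on that path, and computes the defect $|\Sigma_{ij|A}| - \lambda_{ij}|\Sigma_{iA}|$ explicitly via the Schur complement and the inverse-tridiagonal formula, exhibiting it as a nonzero product of trek monomials. Your argument is shorter and more conceptual — it shows in one stroke that $A\cup i$ is identifying if and only if $A$ d-separates $i$ and $j$ in $G_{ij}$, unifying this lemma with its predecessor — at the cost of importing the (standard but nontrivial) fact that d-connection forces the almost-principal minor to be generically nonzero on the Gaussian model. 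The paper's computation is self-contained, avoids that external input, and yields an explicit closed form for the identification error on the constructed submodel. Both are valid proofs; just make sure, if you write yours up, to cite the completeness/faithfulness result explicitly (e.g.\ the discussion following \Cref{thm:uncoloredMP} or the trek-separation literature), since \Cref{thm:uncoloredMP} itself only gives the soundness direction.
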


\begin{proof}
As in the proof of \Cref{lemma:EdgeIdentifyBounds}, it suffices to find a matrix~$\Sigma$ in the model (or any submodel) where $\lambda_{ij|A}(\Sigma) \not= \lambda_{ij}$. For any set $A$ which d-connects $i$ and $j$ in $G_{ij}$, we get a supporting undirected path in $G_{ij}$. We set all edge parameters for the model $\CC M(G)$ outside of this path and except for $\lambda_{ij}$ to zero. This gives a sequence of treks $i \ot \dots \ot t_1 \to \dots \to c_1 \ot \dots \ot t_2 \to \dots \to c_2 \ot \dots \to c_m \ot \dots \ot t_{m+1} \to \dots \to j$ which are glued together at colliders $c_1, \dots, c_m$. The set $A$ consisting of these colliders is the only set which d-connects $i$ and $j$ in this graph. Hence, in this special case --- which covers every d-connection statement in $G_{ij}$ --- it suffices to only treat $A = \Set{c_1, \dots, c_m}$. In the subgraph of $G$ which we consider, only the edge $ij$ is added to this gluing of treks.

Suppose first that $m = 0$, so that we have a simple trek $i \ot \dots \ot t \to \dots \to j$ and $A = \emptyset$. Then $|\Sigma_{ij|A}| = \lambda_{ij} \sigma_{ii} + \omega_t \lambda^\tau = \lambda_{ij} |\Sigma_i| + \omega_t \lambda^\tau$, where $\tau$ is the trek from $i$ to $j$ over~$t$. Generically, $\omega_t \lambda^\tau \not= 0$, so $A = \emptyset$ is not identifying.

In case $m > 0$, we can deduce the following facts from the structure of the graph:
\begin{itemize}[noitemsep, itemsep=0.3em]
\item $\Sigma_A$ is a tridiagonal matrix.
\item In $\Sigma_{i,A}$ the only nonzero entry is $\sigma_{i c_1}$.
\item In $\Sigma_{A,j}$ the only nonzero entries are $\sigma_{c_1 j}$ and $\sigma_{c_m j}$.
\item We have $\sigma_{ij} = \lambda_{ij} \sigma_{ii}$ and $\sigma_{c_1 j} = \lambda_{ij} \sigma_{c_1 i}$.
\end{itemize}
Using multilinearity of the determinant in the $j$-column of $|\Sigma_{ij|A}|$ shows
\begin{align*}
  |\Sigma_{ij|A}| &= \lambda_{ij} |\Sigma_{iA}| + \begin{vNiceMatrix}
  0 & \sigma_{i c_1} & 0 & \Cdots & 0 \\
  \Vdots & \Block{3-4}<\Large>{\Sigma_A} &&& \\
  0      & &&& \\
  \sigma_{j c_m} & &&&
  \end{vNiceMatrix}.
\end{align*}
It suffices to prove that the second determinant is nonzero. By the Schur complement formula, the determinant equals $-|\Sigma_A| \sigma_{i c_1} \sigma_{c_m j} (\Sigma_A^{-1})_{c_1 c_m}$. The last factor $(\Sigma_A^{-1})_{c_1 c_m}$ is an entry of an inverse tridiagonal matrix which by \cite{InverseTridiagonal} equals $\sfrac1{|\Sigma_A|} \prod_{k=1}^{m-1} \sigma_{c_k c_{k+1}}$. Hence, the defect to identifiability is the negative product of trek monomials $-\sigma_{i c_1} \cdot \prod_{k=1}^{m-1} \sigma_{c_k c_{k+1}} \cdot \sigma_{c_m j}$ which is generically nonzero.
\end{proof}

\begin{proof}[Proof of \Cref{thm: MP}]
    \eqref{thm: MP:3} clearly implies \eqref{thm: MP:2}.
    Suppose \eqref{thm: MP:2} holds.
    Then $\Sigma\in \CC M(G)$.
    Hence, there exist $(\Omega, \Lambda)\in \PosReal^V\times \Real^E$ such that $\Sigma = \phi_G(\Omega, \Lambda)$.
    Since $\Sigma$ satisfies conditions~\eqref{def: local colored MP:2} and~\eqref{def: local colored MP:3} of the local Markov property, by global rational identifiability, we see that $\omega_i = \omega_j$ and $\lambda_{ij} = \lambda_{k\ell}$ whenever $c(i) = c(j)$ and $c(ij) = c(k\ell)$, respectively.
    Since $\Sigma$ satisfies the local Markov property with respect to $G$, it also satisfies the global Markov property with respect to $G$ by \Cref{thm:uncoloredMP}.
    Hence, $\Sigma\in \CC M(G,c)$, meaning that \eqref{thm: MP:2} implies \eqref{thm: MP:1}.

    It remains to see that \eqref{thm: MP:1} implies \eqref{thm: MP:3}.
    Suppose that $\Sigma \in \CC M(G,c)$.
    Since $\Sigma$ satisfies the global Markov property with respect to $G$, \Cref{thm:uncoloredMP} implies that $\Sigma\in \CC M(G)$.
    Global rational identifiability for $\CC M(G)$ implies that there exist unique parameters $(\Omega, \Lambda)\in \PosReal^V \times \Real^E$ such that $\Sigma = \phi_G(\Omega, \Lambda)$.
    Since $\Sigma \in \CC M(G,c)\subseteq \CC M(G)$, we further have $\omega_i = \omega_j$ whenever $c(i) = c(j)$ and $\lambda_{ij} = \lambda_{k\ell}$ whenever $c(ij) = c(k\ell)$.
    In particular, the formulas for identifying $\omega_i$ and $\lambda_{ij}$ will be equal whenever two vertices, or respectively edges, are in the same color class.
    It therefore follows from \Cref{thm:IdentifyingSets} that $\Sigma$ satisfies the global Markov property with respect to $(G,c)$, which completes the proof.
\end{proof}

\section{Additional Details on Ancestral Graphs}
\label{app: ancestral}

In this section, we collect a few additional details on ancestral graphs which are needed to prove \Cref{thm: all the graphs}~(\ref{cor:ancestralideals}).
We first note that the formula
\[
\omega_{ij} = \lambda_{ij | \pa_A(i)\cup \pa_A(j)}(\Sigma) \mbox{ for $i\leftrightarrow j\in B$}
\]
indeed holds for any ancestral graph $A$. (In what follows we elide the subscript $\pa_A$ since the ancestral graph $A = (V, D, B)$ is arbitrary but fixed.)
The details of this argument have essentially already appeared in \cite[Proof~B of Theorem~7.1]{drton2018algebraic}.
Letting $\Sigma = (\sigma_{ij})_{i,j\in V} = \varphi_A(\Lambda, \Omega)$, we have that
\[
\Sigma_{\pa(i), i} = \Sigma_{\pa(i)}\Lambda_{\pa(i),i}
\]
since any trek from $j\in\pa(i)$ to $i$ must end in an edge $k\rightarrow i$ for some $k\in\pa(i)$.
(Note, here that $\pa(i):=\{j\in V : j\rightarrow i\in D\}$.)
Since $\Sigma_{\pa(i),\pa(i)}$ is a principal submatrix of the positive definite matrix $\Sigma$, it is invertible.
Hence, we have
\[
\Lambda_{\pa(i),i} = \Sigma_{\pa(i)}^{-1}\Sigma_{\pa(i), i}.
\]
Moreover, by the trek rule we have that
\[
\sigma_{ij} = \sum_{T\in\mathcal{T}(i,j)}m_T.
\]
In the case that $i \leftrightarrow j\in B$, this edge is itself a trek between $i$ and $j$.
Moreover, by the ancestral property of $A$, any other trek between $i$ and $j$ must be a trek $T\in\mathcal{T}(k,\ell)$ with $k\in\pa(i)$ and $\ell\in\pa(j)$ plus the edges $k\rightarrow i$ and $\ell\rightarrow j$. (Otherwise, a trek starting from $i$ with an edge $i\rightarrow k$ would imply that the trek is a directed path from $i$ to $j$, which in combination with the edge $i\leftrightarrow j$ violates the ancestral property of $A$.)
Hence, the above formula for $\sigma_{ij}$ can be re-written as
\[
\sigma_{ij} = \omega_{ij} + \Lambda_{\pa(i)\cup\pa(j),i}^T \Sigma_{\pa(i)\cup\pa(j)}\Lambda_{\pa(i)\cup\pa(j), j}.
\]
Making the appropriate substitutions according to the formula for $\Lambda_{\pa(i),i}$ above and simplifying we obtain
\[
\omega_{ij} = \sigma_{ij} - \Sigma_{i,\pa(i)\cup\pa(j)} \Sigma_{\pa(i)\cup\pa(j)}^{-1}\Sigma_{\pa(i)\cup\pa(j), j}.
\]
In other words, $\omega_{ij} = \textrm{Cov}[X_i, X_j \mid X_{\pa(i)\cup\pa(j)}]$, or equivalently (according to Schur's formula),
\[
\omega_{ij} = \frac{|\Sigma_{ij\mid \pa(i)\cup\pa(j)}|}{|\Sigma_{\pa(i)\cup\pa(j)}|},
\]
as desired.

In order to apply \Cref{lemma:Saturation} to deduce \Cref{thm: all the graphs}~(\ref{cor:ancestralideals}), we require that the ancestral graph $A$ can be realized as a subgraph of a \emph{complete} ancestral graph, i.e., an ancestral graph which contains one edge between each pair of vertices.
This fact follows from a result due to Richardson and Spirtes \cite{richardson2002ancestral}.
In \cite[Section~5.2]{richardson2002ancestral} they specify a construction for realizing any maximal ancestral graph as a subgraph of some complete ancestral graph.
Note that our work focuses on \emph{directed} ancestral graphs, i.e., ancestral graphs that do not contain any undirected edges.
Hence, for the graphs considered in this paper, Richardson and Spirtes' notion of anterior is the same as ancestral, i.e., $\mathrm{ant}_A(i) = \an_A(i) =\{j\in V : \textrm{there is a directed path from $j$ to $i$ in $A$}\}$.
In general ancestral graphs, there may be undirected edges $i - j$, in which case it is possible that there are nodes with no arrowheads pointing into them. Such nodes are denoted by $\mathrm{un}_A$ and in our case they are simply the source nodes of the graph.
The \emph{associated complete ancestral graph} of $A = (V, D, B)$ is then the graph $\widetilde{A} =(V, \widetilde{U}, \widetilde{D}, \widetilde{B})$ where
\[
\begin{split}
    &i - j \in \widetilde{U} \qquad \textrm{ if $i,j\in \mathrm{un}_A$},\\
    &i\rightarrow j\in \widetilde{D} \qquad \mbox{if $i\in \an_A(j)$},\\
    &i\leftrightarrow j\in \widetilde{B} \qquad \mbox{otherwise}.
\end{split}
\]
Here, $\widetilde{U}$ is the set of undirected edges $i - j$ in $\widetilde{A}$.
In particular, the associated complete ancestral graph of a directed ancestral graph $A$ will contain undirected edges between the source nodes in $A$, and therefore will not be directed in general.
Richardson and Spirtes then show in \cite[Lemma~5.5]{richardson2002ancestral} that (i) $A$ is a subgraph of $\widetilde{A}$ and (iv) $\widetilde{A}$ is an ancestral graph.
Hence, any ancestral graph is a subgraph of some complete ancestral graph, as desired.

The complete ancestral graph $\widetilde{A}$ may have undirected edges, which are not included in our parametrization. We now explain how this more general setting of Richardson and Spirtes connects to directed ancestral graphs and enables an application of our \Cref{lemma:Saturation}. The parametrization of a Gaussian (general) ancestral graph $A = (V,U,B,D)$ is identical to the parametrization given for directed ancestral models with one additional parameter $\omega_{ij}=\omega_{ji}$ added to the matrix $\Omega$ for every undirected edge $i - j$.
For more detail, see \cite[Section~8.1]{richardson2002ancestral}, where it is discussed how these additional parameters for undirected edges are \emph{concentration} parameters $K = (k_{ij})_{i,j\in\mathrm{un}_{A}}$ of the undirected Gaussian graphical model formed on only the undirected edges of $A$.
Specifically, the error covariance matrix $\Omega$ will have a block-diagonal structure when the rows and columns are ordered with all nodes in $\mathrm{un}_A$ coming first, where
\[
\Omega =
\begin{pmatrix}
    K^{-1} & 0 \\
    0 & \Omega'\\
\end{pmatrix},
\]
where $k_{ij} = 0$ if and only if $i-j\notin U$.
In applying \Cref{lemma:Saturation}, all parameters corresponding to edges added in passing from $A$ to $\widetilde{A}$ must be set to zero. For directed and bidirected edges, the corresponding parameter-identifying functions $\lambda_{ij|\pa_A(i)\cup\pa_A(j)}$ and $\lambda_{ij|\pa_A(j)}$ were already discussed above.
Since the undirected subgraph of $\widetilde{A}$ is complete, the matrix $K^{-1} \in \PD_{\mathrm{un}_A}$ is arbitrary and the matrix $K$ of parameters is also arbitrary. Setting all entries of $K^{-1}$ to zero, corresponding to the absence of undirected edges in $A$, is equivalent to setting all entries of $K$ to zero. By construction, parent sets in $\widetilde{A}$ are ancestor sets in $A$. Hence, a conditional independence ideal suitable for applying \Cref{lemma:Saturation} for a directed ancestral graph $A$ is generated by
\begin{itemize}[noitemsep, itemsep=0.3em]
\item $\sigma_{ij} = 0$ for $i \neq j$ source nodes in $A$ (added undirected edges in $\widetilde{A}$),
\item $|\Sigma_{ij|\an_A(j)\setminus i}| = 0$ for $i \in \an_A(j) \setminus \pa_A(j)$ (added directed edges in $\widetilde{A}$),
\item $|\Sigma_{ij|\an_A(i)\cup\an_A(j)}| = 0$ for $i \notin \an_A(j)$ (added bidirected edges in $\widetilde{A}$).
\end{itemize}

\section{Proof of \texorpdfstring{\Cref{thm:Smoothness}}{the smoothness theorem}, \texorpdfstring{\Cref{thm: all the graphs}}{the vanishing ideal theorem} and \texorpdfstring{\Cref{lemma:Saturation}}{the saturation lemma}}
\label{section:Smoothness}

We start by introducing some algebraic terminology needed in the following appendices. Given a multiplicatively closed set $S \subseteq \BB R[x]$ and the natural inclusion $f\colon \BB R[x] \rightarrow S^{-1}\BB R[x]$ of $\BB R[x]$ into its localization at $S$, the \emph{contraction} of an ideal $I \subseteq S^{-1}\BB R[x]$ is the ideal $f^{-1}(I) \subseteq \BB R[x]$. We may take an ideal $I \subseteq \BB R[x]$ and consider the ideal $I' \subseteq S^{-1} \BB R[x]$ generated by $f(I)$. It is easy to show that its contraction back to $\BB R[x]$ yields precisely the saturation ideal~$I:S$.

Given an ideal $I \subseteq \BB R[x]$, we can define the \emph{quotient ring} $\BB R[x]/I$. Its elements are cosets $f+I$, where $f \in \BB R[x]$, and the addition and multiplication in $\BB R[x]/I$ are defined as $(f+I) + (g+I):=(f+g)+I$ and $(f+I) \cdot (g+I):=(f\cdot g)+I$. There is a \emph{canonical projection} $\pi:\BB R[x] \rightarrow \BB R[x]/I$ defined by $\pi(f):= f+I$. An \emph{integral domain} is a commutative ring with identity in which the product of any two nonzero elements is nonzero. An ideal $I \subseteq \BB R[x]$ is prime if and only if the quotient ring $\BB R[x]/I$ is an integral domain. Given two polynomials $f,g \in \BB R[x]$ and an ideal $I \subseteq \BB R[x]$, we say that $f=g$ \emph{modulo} $I$ if $f-g \in I$.

Recall that the Jacobian of $\phi_{G,c}$ has its columns indexed by the $\binom{p+1}{2}$ image coordinates $\sigma_{ij}$ with $i \le j$ and its rows indexed by the $\vc + \ec$ base parameters $\omega_i$ and $\lambda_{ij}$. The fewer color classes there are, the fewer rows the Jacobian has while the number of columns depends only on the number of vertices $p = |V|$ in the DAG.

The Jacobian exhibits a block-triangular structure when the rows and columns are suitably ordered. We now explain this ordering and how rows and columns are grouped to form blocks. The topological ordering on $V$ extends lexicographically from the right to all the base parameters; this orders the rows of the Jacobian. Note that by this ordering, a base vertex $j$ is identified with the pair $(j,j)$ and comes after all base edges $(i,j)$, $i \in \pa(j)$. A base vertex by itself is a \emph{block of type $\omega$}. All base edges (if any) into a particular vertex $j$ are arranged next to each other and they together form a \emph{block of type $\lambda$}. Let $P_1 < \dots < P_m$ be the ordered collection of all blocks in the rows of the Jacobian.

To order and group the columns, we assign to every block $P_t$ a set of columns $\sigma_{ij}$, as follows:
\begin{itemize}
\item If $P_t$ is of type $\omega$ for base vertex~$j$, then we pair it with $Q_t \defas \Set{ (j,j) }$.
\item If $P_t$ is of type $\lambda$ for base edges into vertex~$j$, then we pair it with $Q_t \defas \Set{ (i,j) : i \in \pa(j) }$, ordered lexicographically from the right.
\end{itemize}
All remaining $\sigma$ variables are ordered arbitrarily (they will contribute to the inside of the triangle and not matter in our proof that the Jacobian has full rank). Notice that the sets $Q_1 < \dots < Q_m$ are indeed in order (lexicographically from the right), because the corresponding $P_t$ were in order.

We prove below that the $\bigcup_{t=1}^m P_t \times \bigcup_{t=1}^m Q_t$-submatrix of the Jacobian is block-triangular. The diagonal blocks are then decisive for the rank of this matrix. We say that a diagonal block $P_t \times Q_t$ is of \emph{type $\omega$ or $\lambda$} based on the type of~$P_t$.

\begin{lemma} \label{lemma:Smoothness:Triangular}
The Jacobian of $\phi_{G,c}$ is block-triangular when the rows and columns are ordered as described above.
\end{lemma}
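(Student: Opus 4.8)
The plan is to fix a topological order of $G$ so that $M \defas (\BBm1_V - \Lambda)^{-1}$ is upper triangular with unit diagonal. Since $M_{v,a}$ is the sum of $\lambda$-monomials over directed paths from $v$ to $a$, we have $M_{v,a} \neq 0 \Rightarrow v \le a$ and $M_{a,a} = 1$; this single ancestry constraint is the engine of the proof. Writing $\Sigma = M^T \Omega M$ and using $\partial M/\partial \lambda_{uw} = M\,E_{uw}\,M$ (with $E_{uw}$ the matrix unit at position $(u,w)$), I would first record the only two shapes of entries occurring in the Jacobian: for the base parameter of a vertex-color class $\mathbf v$ and of an edge-color class $\mathbf e$,
\[
\frac{\partial \sigma_{ab}}{\partial \omega_{\mathbf v}} = \sum_{v \in \mathbf v} M_{v,a}\,M_{v,b}, \qquad
\frac{\partial \sigma_{ab}}{\partial \lambda_{\mathbf e}} = \sum_{(u,w) \in \mathbf e} \bigl( M_{w,a}\,\sigma_{ub} + M_{w,b}\,\sigma_{ua} \bigr).
\]
Both follow from a one-line computation, or directly from the trek rule \eqref{eq:TrekRule}.

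Next I would pin down the indexing. Assign to each row block an \emph{index vertex}: $\min \mathbf v$ for the $\omega$-block of $\mathbf v$ (its representative is the least vertex of the class) and the head of the representative edge for a $\lambda$-block (which, by the lexicographic-from-the-right convention, is the least head occurring in $\mathbf e$). The one structural fact that drives the estimate is that every selected column $\sigma_{ab} \in Q_t$ has $a \le b$ with $b$ equal to the index vertex of $Q_t$ and $a \in \pa(b) \cup \{b\}$; thus the larger coordinate of a selected column is exactly its block's index vertex, giving a clean upper bound on the indices that can appear.

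The heart of the argument is to show that the block $P_s \times Q_t$ vanishes whenever $s > t$, with the blocks ordered by index vertex (ascending, i.e.\ topologically) and the $\lambda$-block preceding the $\omega$-block at a common vertex. The strictly below-diagonal blocks are exactly of two kinds: (i) a row block at index vertex $r$ against a column $\sigma_{ab}$ at index vertex $b < r$; and (ii) the $\omega$-block at $r$ against the $\lambda$-column at the same vertex $r$, where $b = r$ but $a \in \pa(r)$, so $a < r$. In both kinds every summand in the formulas above is forced to vanish: for an $\omega$-row, every $v \in \mathbf v$ satisfies $v \ge r$, so $M_{v,a} \neq 0$ would require $v \le a < r \le v$; for a $\lambda$-row (which below the diagonal only meets columns with $b < r$), every $(u,w) \in \mathbf e$ has $w \ge r$, so both $M_{w,a} \neq 0$ (needing $w \le a \le b < r$) and $M_{w,b} \neq 0$ (needing $w \le b < r$) are impossible. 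This yields the claimed block-triangular shape, with the diagonal blocks $P_t \times Q_t$ being precisely the ones that can be nonzero (their ranks are then the subject of the following lemma).

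The main obstacle I anticipate is bookkeeping rather than a deep idea: aligning the block order with the ancestry constraint $M_{v,a}\neq 0 \Rightarrow v \le a$ hinges on the minimality of the representatives (least vertex, least head) and on treating the same-vertex case correctly, where the $\lambda$-block must precede the $\omega$-block so that the only below-diagonal same-vertex block is the $\omega$-over-$\lambda$ one. Keeping the direction of these inequalities consistent across $\omega$-rows and $\lambda$-rows, and remembering that the relevant coordinate of a selected column is always the larger one $b$, is the delicate point; once these conventions are fixed, the vanishing of each below-diagonal entry is immediate.
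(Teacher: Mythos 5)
Your proof is correct and follows essentially the same route as the paper's: both rest on the fact that, in a topological order, $\sigma_{ab}$ involves only parameters whose right-lexicographic index is at most $(a,b)$, combined with the minimality of the base representatives in their color classes. The only difference is cosmetic --- you make the partial derivatives explicit via $\Sigma = M^T\Omega M$ and matrix calculus, whereas the paper argues directly from the trek rule about which variables occur in each $\sigma_{ab}$ and notes that passing to base parameters only lowers indices.
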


\begin{proof}
The following observations immediately follow from the trek rule \eqref{eq:TrekRule} and the topological ordering of the vertices of~$G$:
\begin{itemize}
\item Treks from $i$ to $j$ only contain vertex parameters $\omega_k$ with $k < j$ and edge parameters $\lambda_{kl}$ with $k < l \le \max\Set{i,j} = j$. Replacing these $\omega_k$ and $\lambda_{kl}$ by their respective base parameters only lowers the indices.

\item This implies that $\partial_{\omega_k} \sigma_{jj} = 0$ for base vertex $k > j$ and $\partial_{\lambda_{kl}} \sigma_{jj} = 0$ for base edge $(k,l) > (j,j)$ in the lexicographic ordering from the right. This is clear if $l > j$ by the previous point. The case $l = j$ and $k > j$ is absurd since always $k < l$. Moreover, $\partial_{\omega_j} \sigma_{jj} = 1$ for base vertices~$j$.

\item Similarly, we have $\partial_{\omega_k} \sigma_{ij} = 0$ for $k \ge j$ and $\partial_{\lambda_{kl}} \sigma_{ij} = 0$ for $l > j$.
\end{itemize}
This shows that the submatrix of the Jacobian with all rows and with columns $Q_1 \cup \dots \cup Q_t$, as defined before this proof, is block-triangular.
\end{proof}

\begin{lemma} \label{lemma:Smoothness:Local}
The diagonal blocks $P_t \times Q_t$ of the Jacobian have full rank at every parameter vector.
\end{lemma}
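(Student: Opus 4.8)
The plan is to treat the two types of diagonal blocks separately, since the blocks of type $\omega$ are $1 \times 1$ while those of type $\lambda$ require a genuine rank computation. Throughout I fix a parameter vector $(\Omega, \Lambda) \in \PosReal^{\vc} \times \BB R^{\ec}$ and write $\Sigma = \phi_{G,c}(\Omega, \Lambda)$; since $\Sigma \in \PD^V$ by construction, every principal submatrix $\Sigma_A$ is positive definite and in particular invertible, a fact I will use for the $\lambda$ blocks.

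For a block $P_t \times Q_t$ of type $\omega$ attached to a base vertex $j$, both $P_t = \Set{(j,j)}$ and $Q_t = \Set{(j,j)}$ are singletons, so the block is the single entry $\partial_{\omega_j}\sigma_{jj}$. By the trek rule, $\sigma_{jj}$ is a sum of monomials $\omega_{\top(\tau)}\lambda^\tau$ over treks $\tau$ from $j$ to $j$, whose top vertices range over $\an(j) \cup \Set{j}$. After substituting base parameters, $\partial_{\omega_j}\sigma_{jj}$ collects a contribution of $1$ from the trivial trek at $j$, together with contributions from any vertex in the color class of $j$ that is a strict ancestor of $j$. But $j$ is by definition the smallest vertex of its color class, while strict ancestors of $j$ precede $j$ in the topological order; hence no such vertex exists and $\partial_{\omega_j}\sigma_{jj} = 1$, so the block has full rank.

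For a block of type $\lambda$ attached to vertex $j$, the rows are indexed by the color classes $\mathbf{c}$ whose base edge points into $j$ and the columns by $\Set{(i,j) : i \in \pa(j)}$. The key computational input is the identity $\Sigma_{\pa(j),j} = \Sigma_{\pa(j)}\,\Lambda_{\pa(j),j}$ from the proof of \Cref{lemma:Ident}, i.e. $\sigma_{ij} = \sum_{k\in\pa(j)} \sigma_{ik}\,\lambda_{kj}$ for $i \in \pa(j)$. Differentiating in the base parameter $\lambda_{\mathbf{c}}$ and using that $\sigma_{ik}$ for $i,k \in \pa(j)$ does not involve $\lambda_{\mathbf{c}}$ (justified below), I obtain
\[
\partial_{\lambda_{\mathbf{c}}}\sigma_{ij} = \sum_{\substack{k \in \pa(j) \\ (k,j)\in\mathbf{c}}} \sigma_{ik} = \bigl(\Sigma_{\pa(j)}\, v_{\mathbf{c}}\bigr)_i,
\]
where $v_{\mathbf{c}} \in \BB R^{\pa(j)}$ is the indicator vector of $\Set{k \in \pa(j) : (k,j)\in\mathbf{c}}$. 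Thus the block equals $V^{T}\Sigma_{\pa(j)}$, where $V$ has the $v_{\mathbf{c}}$ as its columns. Since each edge into $j$ lies in exactly one color class, the vectors $v_{\mathbf{c}}$ are nonzero with pairwise disjoint supports and hence linearly independent, so $V^{T}$ has full row rank; multiplying by the invertible matrix $\Sigma_{\pa(j)}$ preserves this, and the block has full (row) rank. Note that the number of such classes is at most $|\pa(j)|$, so full rank is indeed full row rank.

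The step I expect to be the main obstacle — and the point on which the clean factorization rests — is justifying that $\sigma_{ik}$ for $i,k \in \pa(j)$ is independent of $\lambda_{\mathbf{c}}$. This is where the lexicographic-from-the-right ordering enters: because $\mathbf{c}$ has its base edge into $j$, the head $j$ is minimal among the heads of edges in $\mathbf{c}$, so every edge of $\mathbf{c}$ has head $\ge j$. As $i,k \in \pa(j)$ satisfy $i,k < j$ in the topological order, any trek between $i$ and $k$ uses only edges with head $< j$, none of which belongs to $\mathbf{c}$; hence $\lambda_{\mathbf{c}}$ cannot appear in $\sigma_{ik}$. I would present this observation carefully, as it is precisely what decouples the differentiation from the parameters of $\mathbf{c}$ acting on earlier covariances and validates the reduction to $V^{T}\Sigma_{\pa(j)}$.
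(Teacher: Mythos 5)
Your proof is correct, and for the $\lambda$-blocks it takes a genuinely different and in fact shorter route than the paper. Both arguments start from the same two facts: the trek-rule identity $\sigma_{ij} = \sum_{l \in \pa(j)} \sigma_{il}\lambda_{lj}$ and the observation (via the lexicographic-from-the-right choice of base edges) that no edge of a class $\mathbf{c}$ with base edge into $j$ can occur in $\sigma_{il}$ for $i,l \in \pa(j)$, so that the block entries are $\sum_{l \in \pa(j),\, c(lj)=c(kj)} \sigma_{il}$. From there the paper does \emph{not} factor the block: it enumerates the maximal square submatrices $J_A$ obtained by choosing one column-representative from each color class, expands each $|J_A|$ by multilinearity into $\sum_B |\Sigma_{A,B}|$, and rules out simultaneous singularity of all $J_A$ by showing that the matrix $(|\Sigma_{A,B}|)_{A,B}$ is, up to sign conjugation, a principal submatrix of the positive definite compound matrix $\CC A_r(\Sigma)$ and hence cannot annihilate the all-ones vector. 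Your factorization $J = V^{T}\Sigma_{\pa(j)}$, with $V$ the matrix of indicator vectors of the (nonempty, pairwise disjoint) color classes restricted to $\pa(j)$, reaches the same conclusion in one line from invertibility of $\Sigma_{\pa(j)}$, and it covers the same level of generality as the paper's argument, since it only uses that $\Sigma_{\pa(j)}$ is an invertible symmetric matrix; you correctly do not need the classes $C_k$ to cover all of $\pa(j)$, only that they are disjoint and nonempty, which handles colorings where some edge into $j$ belongs to a class whose base edge points elsewhere. The one thing the paper's compound-matrix detour "buys" is an explicit identification of which square subblocks are invertible, but that information is not used anywhere else, so your argument is a clean improvement. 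Your treatment of the $\omega$-blocks coincides with the paper's.
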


\begin{proof}
A diagonal block of type $\omega$ is a $1 \times 1$ matrix whose entry is $\partial_{\omega_j} \sigma_{jj} = 1$. For the rest of the proof we concentrate on a diagonal block $J$ of type $\lambda$. The rows and columns of $J$ all correspond to edges going into a unique vertex; call it $j$. We begin by writing down the entries of~$J$. The trek rule \eqref{eq:TrekRule} implies
\begin{align*}
  \sigma_{ij} &= \sum_{l \in \pa(j)} \sigma_{il} \lambda_{lj}, \; \text{whenever $i \not= j$}.
\end{align*}
Since $l < j$, the parameter $\lambda_{kj}$ cannot appear in the trek polynomial for $\sigma_{il}$. Since $\lambda_{kj}$ is a base parameter, it is the smallest in its color class and therefore no $c(kj)$-colored edge occurs in~$\sigma_{il}$. Hence,
\begin{align}
  \label{eq:Jentry}
  \partial_{\lambda_{kj}} \sigma_{ij}
   = \frac1{\partial \lambda_{kj}}\left( \sum_{l \in \pa(j)} \sigma_{il} \lambda_{lj} \right)
   = \sum_{l \in \pa(j)} \sigma_{il} \frac{\partial \lambda_{lj}}{\partial \lambda_{kj}}
   = \sum_{\substack{l \in \pa(j), \\ c(lj) = c(kj)}} \sigma_{il}.
\end{align}
Note that the entries of $J$ are really polynomials in base parameters $\omega_i$ and $\lambda_{ij}$; however, the above formula shows that we can also write the entries more comfortably in terms of $\sigma$'s (which are in turn polynomials in $\omega$'s and $\lambda$'s). It is sensible to treat these $\sigma$'s as variables at this point and plug in arbitrary symmetric matrices instead of just points in the model (i.e., parameter vectors). This perspective is fruitful because: to show that $J$ has full rank, we will identify a collection of maximal square submatrices such that at every positive definite matrix $\Sigma$, at least one of them is invertible.

Let $R$ be the set of $k \in \pa(j)$ such that $(k,j)$ is a base edge. Note that $R$ indexes the rows of $J$ and so is nonempty whenever this lemma is applied. Consider the sets $C_k = \Set{ l \in \pa(j) : c(lj) = c(kj) }$ for $k \in R$ encoding the relevant color classes that appear in $J$ and let $C = \bigtimes_{k \in R} C_k$. Each element $A \in C$ is an $|R|$-tuple picking one representative edge $(A_k, j)$ from each class~$C_k$. Let $J_A$ denote the $|R| \times |R|$-submatrix of $J$ with all rows and with columns $\sigma_{A_k j}$. The $(kj,ij)$-entry of $J_A$ according to \eqref{eq:Jentry} now reads as
\[
  \sum_{l \in C_k} \sigma_{il}.
\]

We claim that for every positive definite $\Sigma$, at least one of the $J_A$, $A \in C$, is invertible. Suppose not, then for every $A \in C$, $|J_A| = 0$. Every entry of $J_A$ is a sum over representatives of the $C_k$, so by multilinearity of the determinant applied to every column, we obtain
\[
  0 = |J_A| = \sum_{B \in C} |\Sigma_{A,B}|,
\]
where $\Sigma_{A,B}$ is the submatrix of $\Sigma$ with rows and columns indexed, in order, by the tuples $A$ and~$B$. The simultaneous singularity of all $J_A$ is therefore equivalent to the matrix $\CC A_C(\Sigma) \defas (|\Sigma_{A,B}|)_{A,B \in C}$ having the all-ones vector in its kernel.

The last argument in this proof relates $\CC A_C(\Sigma)$ to the $r$-th \emph{compound matrix} $\CC A_r(\Sigma) = (|\Sigma_{K,L}|)_{K,L \in \binom{V}{r}}$ of~$\Sigma$, where $r = |R|$. The convention in the formation of these minors is that submatrices of $\Sigma$ indexed by subsets $K$ and $L$ are taken with respect to the natural order on the rows and columns, which is the topological order on~$V$. In this case, it is a well-known fact in matrix analysis that if $\Sigma$ is positive definite, then so is $\CC A_r(\Sigma)$; see for instance \cite[Section~2.3, Problem~12]{MatrixAnalysis}.

On the other hand, elements $A, B \in C$ are $r$-tuples and the submatrix $\Sigma_{A,B}$ is formed with respect to the ordering in the tuples. Denote by $\Sigma'_{A,B}$ the submatrix of $\Sigma$ where the indices in $A$ and $B$ are permuted first to be ordered with respect to~$V$. The determinants $|\Sigma'_{A,B}|$ are entries in $\CC A_r(\Sigma)$. The reordering of the tuples $A$ and $B$ is achieved by applying a permutation matrix $\pi_A$ on the left and another permutation matrix $\pi_B$ on the right of $\Sigma_{A,B}$ and hence we have $|\Sigma_{A,B}| = s_A s_B |\Sigma'_{A,B}|$, where the signs $s_A, s_B \in \Set{\pm1}$ depend only on the row~$A$ and the column~$B$. Hence, there is a $|C| \times |C|$ diagonal matrix $D = \diag(s_A : A \in C)$ such that $D \CC A_C(\Sigma) D$ is a principal submatrix of $\CC A_r(\Sigma)$ which is positive definite. But this implies that $\CC A_C(\Sigma)$ is positive definite, has a trivial kernel and consequently not all of the $J_A$ can be simultaneously singular. Hence $J$ has full rank on every $\Sigma \in \PD^V$ and in particular on every parameter vector.
\end{proof}

\begin{proof}[Proof of \Cref{thm:Smoothness}]
It was already established that $\phi_{G,c}$ is a homeomorphism. By \cite[\S3]{GuilleminPollack}, it then suffices to show that $\phi_{G,c}$ is a proper immersion. The map is proper because the inverse image of every compact subset of $\PD^V$ under $\phi_{G,c}$ is compact, since $\phi_{G,c}^{-1}$ is given by rational functions without poles in~$\PD^V$. To prove that $\phi_{G,c}$ is an immersion, we have to show that its Jacobian has full rank at every parameter vector. This is accomplished by a series of lemmas in \Cref{section:Smoothness}: \Cref{lemma:Smoothness:Triangular} shows that the Jacobian has a block-triangular structure, and \Cref{lemma:Smoothness:Local} shows that the diagonal blocks all have full rank.
Together, these statements imply that the Jacobian has full rank at every parameter vector.
The statements about dimension and topology of $\CC M(G,c)$ follow from it being diffeomorphic to its parameter space $\PosReal^{\vc} \times \BB R^{\ec}$.
\end{proof}

\begin{proof}[Proof of \Cref{thm: all the graphs}]
We prove the result for \eqref{cor:coloredDAGideals}, noting that the proofs for \eqref{cor:RCONideals} and \eqref{cor:ancestralideals} follow the same template.
In the notation of \Cref{lemma:Saturation}, pick $R = \BB R[\Sigma]$, $R' = \BB R[\Omega, \Lambda]$, $\phi^*$ and $\psi^*$ the trek rule and identification map for the complete DAG, $S = S_V$
and $I' = \ideal{\lambda_{ij} : ij \not\in E} + \ideal{\omega_i - \omega_j : c(i) = c(j)} + \ideal{\lambda_{ij} - \lambda_{kl} : c(ij) = c(kl)}$. The ideal $I'$ is linear and hence prime. This yields $I = {\phi^*}^{-1}(I') = P_{G,c}$ and $J = I_{G,c}$ by the definitions of the ideals on the right-hand sides. From these identities, the relations $J \subseteq I$ and $I \cap S = \emptyset$ are clear. \Cref{lemma:Saturation} then proves the desired result $P_{G,c} = I = J:S = I_{G,c} : S_V$.
\end{proof}

\begin{proof}[Proof of \Cref{lemma:Saturation}]
By assumption $J \subseteq I$ and hence $J : S \subseteq I : S$ but $I$ is prime (as a contraction of the prime ideal $I'$) and disjoint from $S$, so $I:S = I$ which shows the first inclusion $J:S \subseteq I$.

For the reverse inclusion, let $f \in I$ be arbitrary. Since $\phi^*(f) \in I'$, there is a representation $\phi^*(f) = \sum_i k_i' f_i'$, with $k_i' \in R'$, and thus $f = \psi^*(\phi^*(f)) = \sum_i \psi^*(k_i') \sfrac{g_i}{h_i}$. There exists $h' \in S$ which clears all the denominators on the right-hand side and yields $fh' = \sum_i k_i g_i$ with $k_i \in R$, so $fh' \in J$ and $f \in J:S$.
\end{proof}

\section{Proof of the uncolored version of \texorpdfstring{\Cref{thm:Saturation}}{the vanishing ideal theorem}}
\label{section:UncoloredSaturation}

The objective of this section is to give a characterization of the vanishing ideal of $\CC M(G)$ for any (uncolored) DAG~$G$. The method is due to Roozbehani and Polyanskiy \cite{SaturationProof} and succeeds even if their original proof is not completely rigorous. The proof presented below patches the problems in \cite{SaturationProof} and is explicitly written to require saturation only at the parental principal minors.

\begin{theorem} \label{thm:UncoloredSaturation}
Let $G$ be a DAG. The vanishing ideal $P_G$ of the (uncolored) Gaussian DAG model $\CC M(G)$ is the saturation $I_G : S_G$ of the directed local Markov property ideal at the parental principal minors.
\end{theorem}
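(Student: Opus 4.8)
The plan is to work entirely inside the localization $S_G^{-1}\BB R[\Sigma]$ and to prove the two inclusions of $P_G = I_G : S_G$ separately, assuming without loss of generality that $G$ is naturally ordered. The inclusion $I_G : S_G \subseteq P_G$ is immediate: every generator $\cir_G(i,j) = |\Sigma_{ij|\pa(j)}|$ lies in $P_G$ by the local Markov property (\Cref{lem: CI polynomials}), and the parental minors generating $S_G$ are nowhere zero on the model, so $P_G : S_G = P_G$. All the content is in the reverse inclusion $P_G \subseteq I_G : S_G$, which is equivalent to $P_G \subseteq I_G^{\mathrm{loc}}$ where $I_G^{\mathrm{loc}} = I_G\cdot S_G^{-1}\BB R[\Sigma]$. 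I note that \Cref{lemma:Saturation} cannot be applied directly here, since for a non-complete $G$ the identification map is only a retraction and the hypothesis $\psi^*\circ\phi^* = \id$ fails; the complete-DAG argument of \Cref{thm: all the graphs} would only yield saturation at all leading principal minors. Obtaining the sharper statement for $S_G$ requires the direct computation below.

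Following \Cref{lemma:Ident}, I first set up the pullback of the identification map over $S_G^{-1}\BB R[\Sigma]$: let $\hat\Lambda$ be the strictly upper-triangular matrix with $\hat\lambda_{ij} = |\Sigma_{ij|\pa(j)\setminus i}|/|\Sigma_{\pa(j)}|$ for $ij\in E$ and $\hat\lambda_{ij}=0$ otherwise, and let $\hat\Omega = \diag(\hat\omega_i)$ with $\hat\omega_i = |\Sigma_{\pa(i)\cup i}|/|\Sigma_{\pa(i)}|$. By Cramer's rule these satisfy the regression identity $\Sigma_{\pa(j)}\,\hat\Lambda_{\pa(j),j} = \Sigma_{\pa(j),j}$ as an honest identity in the localization, which yields the orthogonality relations $\bigl(\Sigma(\BBm1_V - \hat\Lambda)\bigr)_{aj} = 0$ for every $a\in\pa(j)$.

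The heart of the proof is to show that the symmetric matrix $M := (\BBm1_V - \hat\Lambda)^T\,\Sigma\,(\BBm1_V - \hat\Lambda)$ satisfies $M \equiv \hat\Omega \pmod{I_G^{\mathrm{loc}}}$ entrywise. By Schur's formula the diagonal entries reduce to exactly $M_{ii} = \hat\omega_i$, with no recourse to the ideal. For $i<j$ each off-diagonal entry $M_{ij}$ is an $S_G^{-1}\BB R[\Sigma]$-linear combination of the quantities $\rho_{aj} := \sigma_{aj} - \Sigma_{a,\pa(j)}\Sigma_{\pa(j)}^{-1}\Sigma_{\pa(j),j}$ ranging over $a\in\{i\}\cup\pa(i)$. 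Those with $a\in\pa(j)$ vanish identically by orthogonality; those with $a\notin\pa(j)$ satisfy $a<j$ and $aj\notin E$, so $|\Sigma_{\pa(j)}|\,\rho_{aj} = |\Sigma_{aj|\pa(j)}| = \cir_G(a,j)$ is a generator of $I_G$ and hence $\rho_{aj}\in I_G^{\mathrm{loc}}$. Collecting terms gives $M_{ij}\in I_G^{\mathrm{loc}}$. The essential point, and the reason the saturating set can be taken to be $S_G$, is that only parental principal minors ever appear as denominators in this computation.

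Since $\BBm1_V - \hat\Lambda$ is unitriangular and hence invertible over $S_G^{-1}\BB R[\Sigma]$, multiplying the congruence $M\equiv\hat\Omega$ on both sides by $(\BBm1_V-\hat\Lambda)^{-T}$ and $(\BBm1_V-\hat\Lambda)^{-1}$ gives $\sigma_{ij} \equiv \bigl[(\BBm1_V-\hat\Lambda)^{-T}\hat\Omega(\BBm1_V-\hat\Lambda)^{-1}\bigr]_{ij} \pmod{I_G^{\mathrm{loc}}}$ for all $i,j$; the right-hand side is exactly $\pi^*(\sigma_{ij})$ for the retraction $\pi^* = \psi_G^*\circ\phi_G^*$. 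As congruence modulo $I_G^{\mathrm{loc}}$ is a ring congruence, this propagates to $p \equiv \pi^*(p)\pmod{I_G^{\mathrm{loc}}}$ for every $p\in\BB R[\Sigma]$. If $p\in P_G = \ker\phi_G^*$, then $\pi^*(p) = \psi_G^*(\phi_G^*(p)) = 0$, so $p\in I_G^{\mathrm{loc}}$; thus $P_G\subseteq I_G^{\mathrm{loc}}$ and, together with $I_G\subseteq P_G$, we get $I_G^{\mathrm{loc}} = P_G^{\mathrm{loc}}$. Contracting to $\BB R[\Sigma]$, using that $P_G$ is prime and disjoint from $S_G$, yields $I_G : S_G = P_G : S_G = P_G$. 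I expect the main obstacle to be the off-diagonal computation for $M$: one must verify that every deviation of $M$ from the diagonal matrix $\hat\Omega$ is a localized conditional-independence generator and, critically, that no non-parental minor is ever introduced as a denominator.
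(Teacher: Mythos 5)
Your proposal is correct, and it takes a genuinely different route from the paper's. The paper proves the hard inclusion indirectly: after noting $I_G : S_G \subseteq P_G$ (exactly as you do, via the identity matrix and primality of $P_G$), it shows that $I_G : S_G$ is prime (\Cref{lemma:UncoloredPrime}, by exhibiting the localized quotient ring as a localization of $\BB R[\Sigma_E]$ through an elimination-of-nonedge-variables map built from the substitution \Cref{lemma:UncoloredSub}) and that $\dim(I_G) = |V|+|E| = \dim(P_G)$ (\Cref{lemma:UncoloredDimension}, a Jacobian computation at the identity matrix); two nested primes of equal dimension then coincide. You instead prove $P_G \subseteq I_G : S_G$ directly by establishing the congruence $\Sigma \equiv (\BBm1_V-\hat\Lambda)^{-T}\hat\Omega(\BBm1_V-\hat\Lambda)^{-1} \pmod{I_G\cdot S_G^{-1}\BB R[\Sigma]}$, i.e.\ that the round-trip map $\psi_G^*\circ\phi_G^*$ is congruent to the identity modulo the localized local Markov ideal --- a ``relative'' version of the hypothesis $\psi^*\circ\phi^*=\id$ of \Cref{lemma:Saturation}, which you correctly observe fails on the nose for non-complete $G$ and, via the complete-DAG device of \Cref{thm: all the graphs}, would only yield saturation at $S_V$. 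Your key computation checks out: the entries of $(\BBm1_V-\hat\Lambda)^T\Sigma(\BBm1_V-\hat\Lambda)$ decompose into residuals $\rho_{aj}$ that either vanish identically by Cramer's rule (for $a\in\pa(j)$) or equal a generator $\cir_G(a,j)$ divided by the parental minor $|\Sigma_{\pa(j)}|$ (for $a\le i<j$ with $aj\notin E$), the diagonal reduces exactly to $\hat\Omega$ by Schur's formula, and every denominator introduced anywhere is a parental principal minor, which is precisely what the sharper saturating set $S_G$ demands. As to what each approach buys: yours is shorter and self-contained, needs neither the dimension formula nor a separate primality argument, and in fact delivers primality of $I_G:S_G$ as a corollary (since $I_G$ and $P_G$ acquire the same extension in the localization); the paper's route yields in addition the explicit structural description of $S_G^{-1}\BB R[\Sigma]/S_G^{-1}I_G$ as a localization of the edge-variable polynomial ring, which is of independent interest.
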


\begin{proof}
Let $\BBm1_V$ denote the $V \times V$ identity matrix. It is in the graphical model $\CC M(G)$ and satisfies $s(\BBm1_V) = 1$ for every $s \in S_G$. This shows that $P_G \cap S_G = \emptyset$ and then, since $P_G$ is prime, $P_G : S_G = P_G$. It follows that $I_G : S_G \subseteq P_G$. We get equality by showing that $I_G : S_G$ is prime of the same dimension as $P_G$. The primality is proved in \Cref{lemma:UncoloredPrime} and the dimension in \Cref{lemma:UncoloredDimension}.
\end{proof}

\begin{lemma} \label{lemma:UncoloredSub}
For every finite set $f_1, \dots, f_r \in \BB R[\Sigma]$ there exist $s \in S_G$ and $h_1, \dots, h_r \in \BB R[\Sigma_E]$ such that $sf_i - h_i \in I_G$ for $i = 1, \dots, r$.
\end{lemma}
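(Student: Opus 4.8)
The plan is to prove the stronger, variable-by-variable statement: for every covariance variable $\sigma_{ij}$ whose index pair $\{i,j\}$ is \emph{non-adjacent} in $G$, there exist $s_{ij} \in S_G$ and $h_{ij} \in \BB R[\Sigma_E]$ with $s_{ij}\sigma_{ij} - h_{ij} \in I_G$, where $\BB R[\Sigma_E]$ is generated by the coordinate variables, namely the diagonal entries $\sigma_{ii}$ and the edge entries $\sigma_{kl}$ with $kl \in E$. Granting this, the lemma follows by a uniform denominator-clearing argument: for each non-coordinate variable $\sigma_V$ appearing in some $f_i$ we have $s_V\sigma_V \equiv h_V \pmod{I_G}$, and since $(s_V\sigma_V)^a \equiv h_V^a \pmod{I_G}$, setting $s = \prod_V s_V^{d}$ with $d$ dominating the degree of every $f_i$ in each $V$ lets us replace each power $\sigma_V^a$ by $h_V^a$ modulo $I_G$ simultaneously. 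This produces a single $s \in S_G$ with $s f_i \equiv h_i \in \BB R[\Sigma_E]$ for all $i$ at once.

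The elimination step for a single variable uses the Schur complement expansion recorded in \Cref{subsec: Gaussian CI}. Fix a topological order and write a non-adjacent pair as $\{i,j\}$ with $i<j$, so that $ij \notin E$ and $i \notin \pa(j)$. The generator $\cir_G(i,j) = |\Sigma_{ij|\pa(j)}|$ of $I_G$ then satisfies
$$
\cir_G(i,j) = |\Sigma_{\pa(j)}|\,\sigma_{ij} - \Sigma_{i,\pa(j)}\,\adj(\Sigma_{\pa(j)})\,\Sigma_{\pa(j),j},
$$
using $|\Sigma_{\pa(j)}|\,\Sigma_{\pa(j)}^{-1} = \adj(\Sigma_{\pa(j)})$, a polynomial matrix. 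Writing $Q_{ij} := \Sigma_{i,\pa(j)}\,\adj(\Sigma_{\pa(j)})\,\Sigma_{\pa(j),j}$ gives $|\Sigma_{\pa(j)}|\,\sigma_{ij} \equiv Q_{ij} \pmod{I_G}$. The crucial structural observation is that $Q_{ij}$ involves only the entries $\sigma_{ik}$ and $\sigma_{kl}$ with $k,l \in \pa(j)$, together with the entries $\sigma_{kj}$ for $k \in \pa(j)$. The latter are edge variables, hence coordinate, while all remaining variables $\sigma_{kl}$ occurring have $\max(k,l) < j$ because $i<j$ and $\pa(j) \subseteq \{1,\dots,j-1\}$. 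Thus, apart from coordinate variables, $Q_{ij}$ contains only non-coordinate variables of strictly smaller largest index than $\sigma_{ij}$.

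This sets up an induction on $\max(i,j)$. Assuming the reduction is already available for every non-coordinate variable of smaller largest index, I substitute their inductive expressions into $Q_{ij}$ and clear the finitely many parental-minor denominators exactly as in the first paragraph, obtaining $s'\,Q_{ij} \equiv h \pmod{I_G}$ with $s' \in S_G$ and $h \in \BB R[\Sigma_E]$. Multiplying through yields
$$
|\Sigma_{\pa(j)}|\,s'\,\sigma_{ij} \equiv h \pmod{I_G},
$$
and $|\Sigma_{\pa(j)}|\,s' \in S_G$, which completes the inductive step. The base case is automatic: when $Q_{ij}$ contains no non-coordinate variables it already lies in $\BB R[\Sigma_E]$ and one takes $s' = 1$.

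I expect the main difficulty to be bookkeeping rather than conceptual. One must confirm that the elimination order is genuinely well-founded, i.e.\ that every non-coordinate variable occurring in $Q_{ij}$ has strictly smaller largest index (which follows from $i<j$, from $\pa(j)\subseteq\{1,\dots,j-1\}$, and from the fact that $\sigma_{jj}$ never appears since $j$ occurs only among the column indices and $j \notin \pa(j)$), and that the repeated denominator-clearing keeps all error terms inside $I_G$ while producing a single multiplier $s \in S_G$ valid for the whole finite family $f_1,\dots,f_r$. In essence this is a triangular Gaussian-elimination argument carried out modulo $I_G$, with the parental principal minors playing the role of the pivots.
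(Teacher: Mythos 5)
Your overall strategy is the paper's: the pivot identity $|\Sigma_{\pa(j)}|\,\sigma_{ij} \equiv \Sigma_{i,\pa(j)}\,\adj(\Sigma_{\pa(j)})\,\Sigma_{\pa(j),j} \pmod{I_G}$, combined with the observation that every nonedge variable on the right-hand side has strictly smaller largest index, drives a terminating triangular elimination; your verification of well-foundedness (including that $\sigma_{jj}$ never occurs in $Q_{ij}$) is correct. There is, however, a genuine gap in the denominator-clearing step, and it infects both your first paragraph and the inductive step of the per-variable claim. After writing $s f = \prod_V s_V^{d-a_V}(s_V\sigma_V)^{a_V}\cdot(\cdots)$ and replacing each $(s_V\sigma_V)^{a_V}$ by $h_V^{a_V}$, the leftover factors $s_V^{d-a_V}$ remain in the expression. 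These are products of parental principal minors $|\Sigma_{\pa(c)}|$, and such a minor lies in $\BB R[\Sigma_E]$ only when $\pa(c)$ induces a complete subgraph of the skeleton: already for the collider $1\to 3\leftarrow 2$ with $1,2$ nonadjacent one has $|\Sigma_{\pa(3)}| = \sigma_{11}\sigma_{22}-\sigma_{12}^2 \notin \BB R[\Sigma_E]$. So a single round of substitution does not produce $h_i \in \BB R[\Sigma_E]$ as you assert, and the same defect recurs when you claim $s'Q_{ij}\equiv h \pmod{I_G}$ with $h\in\BB R[\Sigma_E]$ inside the induction.

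The repair is the one you gesture at with ``repeated denominator-clearing'': the nonedge variables reintroduced by the leftover minors all have strictly smaller largest index than the variable just eliminated, so iterating the substitution strictly decreases the maximal nonedge variable present and lands in $\BB R[\Sigma_E]$ after finitely many rounds. This is exactly how the paper organizes the argument --- it recurses on whole polynomials ordered by their largest nonedge variable (lexicographically from the right), multiplying by $|\Sigma_{\pa(j)}|^r$ at each stage and noting only that the result involves strictly smaller nonedge variables, rather than first establishing a clean reduction $s_{ij}\sigma_{ij}-h_{ij}\in I_G$ with $h_{ij}\in\BB R[\Sigma_E]$ for each single variable. If you restate your induction at the level of polynomials (``every polynomial all of whose nonedge variables lie below $\sigma_{ij}$ admits such a reduction'') the argument closes; as written, the per-variable statement followed by one round of clearing does not.
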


\begin{proof}
Every generator of $I_G$ is a CI~polynomial of the form $|\Sigma_{ij|\pa(j)}|$ for $ij \not\in E$ with $i<j$. Using Schur complement expansion, this determinant rewrites to
\begin{equation*}
  \label{eq:SchurCompl} \tag{$*$}
  |\Sigma_{ij|\pa(j)}| = |\Sigma_{\pa(j)}| \sigma_{ij} - \Sigma_{i,\pa(j)} \adj(\Sigma_{\pa(j)}) \, \Sigma_{\pa(j),j}.
\end{equation*}
Hence the equality $|\Sigma_{\pa(j)}| \sigma_{ij} = \Sigma_{i,\pa(j)} \adj(\Sigma_{\pa(j)}) \, \Sigma_{\pa(j),j}$ holds modulo $I_G$ and its right-hand side is a polynomial all of whose nonedge variables are of the forms $ik$ or $kl$ with $k,l \in \pa(j)$. These nonedge variables are all less than $ij$ in the lexicographic order from the right.

Now let $f_1, \dots, f_r \in \BB R[\Sigma]$ be arbitrary and let $\sigma_{ij}$ be the largest nonedge variable appearing in any of the~$f_i$. Let $r$ be the largest exponent with which $\sigma_{ij}$ appears. Repeated use of \eqref{eq:SchurCompl} removes all occurences of $\sigma_{ij}$ from $|\Sigma_{\pa(j)}|^r f_i$ resulting in a polynomial $f'_i$ which is equivalent to $|\Sigma_{\pa(j)}|^r f_i$ modulo~$I_G$ but in which only nonedge variables strictly below $\sigma_{ij}$ occur. Proceeding recursively with the $f'_i$ proves the claim.
\end{proof}

\begin{lemma} \label{lemma:UncoloredPrime}
The saturation ideal $I_G : S_G$ is prime.
\end{lemma}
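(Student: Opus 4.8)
The plan is to show that the localization $(I_G : S_G)$ becomes a prime ideal after inverting the elements of $S_G$, and then to transfer this primality back to $\BB R[\Sigma]$ via the standard correspondence between saturations and contractions of localized ideals. Concretely, primality of $I_G : S_G$ is equivalent to primality of the extended ideal in the localization $S_G^{-1}\BB R[\Sigma]$, since $I_G : S_G$ is the contraction of that extended ideal and contractions of prime ideals are prime. So I would work in the ring $S_G^{-1}\BB R[\Sigma]$ throughout.

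The key idea is to exhibit an explicit isomorphism between a quotient of $S_G^{-1}\BB R[\Sigma]$ and a polynomial ring, which is manifestly an integral domain. The tool for this is \Cref{lemma:UncoloredSub}: it says that modulo $I_G$, after multiplying by a suitable parental principal minor $s \in S_G$, every polynomial can be rewritten so that the only surviving nonedge variables $\sigma_{ij}$ (those with $ij \notin E$) are eliminated in favor of edge variables $\sigma_{ij}$ with $ij \in E$. In the localization, where the $|\Sigma_{\pa(j)}|$ are units, this means that modulo the extension of $I_G$ every element of $S_G^{-1}\BB R[\Sigma]$ is represented by an element of the edge-variable subring $\BB R[\Sigma_E]$ (localized appropriately). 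Thus I would argue that the composite map
\[
  \BB R[\Sigma_E] \longhookrightarrow \BB R[\Sigma] \longrightarrow S_G^{-1}\BB R[\Sigma] \longrightarrow S_G^{-1}\BB R[\Sigma] \big/ (I_G)
\]
is surjective. If I can further show it is injective, then the quotient is isomorphic to (a localization of) the polynomial ring $\BB R[\Sigma_E]$, hence an integral domain, proving that $(I_G)$ is prime in the localization and therefore $I_G : S_G$ is prime.

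For injectivity, the natural route is a dimension/transcendence-degree count combined with the surjectivity already established, together with the fact that $I_G : S_G \subseteq P_G$ and $P_G$ is prime. Alternatively, and more directly, I would use the global identifiability recorded in \Cref{lemma:Ident}: the parametrization $\phi_G$ restricted to the edge coordinates gives a section, so the edge variables $\sigma_{ij}$, $ij \in E$, together with the vertex variables, freely parametrize the model. This furnishes a well-defined inverse on the level of function fields, confirming that the surjection above has trivial kernel. The cleanest formulation is to observe that $\BB R[\Sigma_E]$ maps onto the coordinate ring of $\CC M(G)$ and that this map is a bijection on the relevant localizations because the parameters $(\Omega,\Lambda)$ are rational functions of $\Sigma$ with denominators in $S_G$.

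The main obstacle I anticipate is establishing injectivity rigorously rather than merely counting dimensions. Surjectivity is a clean consequence of \Cref{lemma:UncoloredSub}, but to conclude that the reduced representative in $\BB R[\Sigma_E]$ is \emph{unique} modulo $(I_G)$ — i.e. that no nonzero edge-variable polynomial lies in the extended ideal — I must rule out hidden relations among the edge variables coming from the CI generators. This is precisely where the distinction between $I_G : S_G$ and $P_G$ could bite: a priori $I_G : S_G$ might be strictly smaller than $P_G$, so I cannot simply invoke primality of $P_G$. The resolution is to pair this primality argument with the dimension computation (\Cref{lemma:UncoloredDimension}, invoked in the proof of \Cref{thm:UncoloredSaturation}): once I know $I_G : S_G$ is prime and has the same dimension as $P_G$, the inclusion $I_G : S_G \subseteq P_G$ forces equality, which retroactively confirms that the edge variables are algebraically independent modulo the ideal. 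Thus the primality proof and the dimension proof are genuinely interlocking, and the careful bookkeeping of which variables survive the reduction in \Cref{lemma:UncoloredSub} is the delicate technical heart of the argument.
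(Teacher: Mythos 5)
Your overall strategy coincides with the paper's: pass to the localization $S_G^{-1}\BB R[\Sigma]$, use \Cref{lemma:UncoloredSub} to eliminate the nonedge variables, and identify the quotient by the extended ideal with a localization of $\BB R[\Sigma_E]$, which is a domain. The gap is in your injectivity step. Your stated ``resolution'' --- that once $I_G : S_G$ is known to be prime and of the right dimension, equality with $P_G$ retroactively confirms the algebraic independence of the edge variables --- is circular: primality is exactly what this lemma must establish, so it cannot be an input to the argument. Your alternative route via global identifiability points in the right direction, but as written (``a well-defined inverse on the level of function fields'') it is not a proof; what one actually needs is the concrete chain $(I_G:S_G)\cap \BB R[\Sigma_E] \subseteq P_G \cap \BB R[\Sigma_E] = \{0\}$, where the first inclusion requires $I_G \subseteq P_G$ together with $P_G : S_G = P_G$, and the second equality requires knowing that the diagonal and edge coordinates are algebraically independent on $\CC M(G)$ --- a dominance/Jacobian statement that would have to be proved separately.

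The paper's proof sidesteps this entirely by building the map in the opposite direction. Instead of including $\BB R[\Sigma_E]$ into the quotient and then worrying about the kernel, it defines a substitution homomorphism $\alpha: \BB R[\Sigma] \to U^{-1}\BB R[\Sigma_E]$ sending each nonedge variable $\sigma_{ij}$ to the explicit fraction $h_{ij}/u_{ij}$ supplied by \Cref{lemma:UncoloredSub} and fixing every edge and diagonal variable, and then extends it to $\overline{\alpha}$ on $S_G^{-1}\BB R[\Sigma]$. Since $\alpha$ restricts to the identity on $\BB R[\Sigma_E]$, the troublesome injectivity is automatic, and the only thing left to check is that $\ker(\overline{\alpha})$ equals $S_G^{-1}I_G$: the containment of the generators $u_{ij}\sigma_{ij}-h_{ij}$ gives one direction, and for the other, \Cref{lemma:UncoloredSub} produces $s \in S_G$ and $h \in \BB R[\Sigma_E]$ with $sf - h \in I_G$, so $\overline{\alpha}(f/t)=0$ forces $\alpha(h) = h = 0$. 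This makes the argument purely formal, with no appeal to $P_G$, to the geometry of the model, or to any dimension count. If you keep your direction of the map, you must supply the algebraic-independence statement above as an honest separate lemma; reversing the direction, as the paper does, removes the need for it.
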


\begin{proof}
By the prime ideal correspondence for localizations \cite[Theorem~6.5]{Kemper}, $I_G : S_G = \BB R[\Sigma] \cap S_G^{-1} I_G$ is prime if and only if $S_G^{-1} I_G$ is prime as an ideal in the localized ring $S_G^{-1} \BB R[\Sigma]$. To~prove this, we show that $(S_G^{-1} \BB R[\Sigma]) / (S_G^{-1} I_G)$ is isomorphic to an integral domain which is explicitly constructed below as a localization of $\BB R[\Sigma_E]$.

Consider the generator $g_{ij} = |\Sigma_{\pa(j)}| \sigma_{ij} - \Sigma_{i,\pa(j)} \adj(\Sigma_{\pa(j)}) \, \Sigma_{\pa(j),j}$ of~$I_G$. Applying \Cref{lemma:UncoloredSub} to the two polynomials $|\Sigma_{\pa(j)}|$ and $\Sigma_{i,\pa(j)} \adj(\Sigma_{\pa(j)}) \, \Sigma_{\pa(j),j}$ yields $s \in S_G$ and $u_{ij}, h_{ij} \in \BB R[\Sigma_E]$ such that $s |\Sigma_{\pa(j)}| = u_{ij}$ and $s (\Sigma_{i,\pa(j)} \adj(\Sigma_{\pa(j)}) \, \Sigma_{\pa(j),j}) = h_{ij}$ modulo~$I_G$.  Then $s g_{ij} = u_{ij} \sigma_{ij} - h_{ij}$ modulo~$I_G$. Because the $g_{ij}$ generate $I_G$ and $s$ is a unit in $S_G^{-1} \BB R[\Sigma]$, the polynomials $u_{ij} \sigma_{ij} - h_{ij}$ generate $S_G^{-1} I_G$.
The equality $s |\Sigma_{\pa(j)}| = u_{ij}$ holds modulo $I_G$, so it also holds in the ring of functions on $\CC M(G)$, and hence $u_{ij}$ vanishes nowhere on $\CC M(G)$ and in particular $u_{ij} \not= 0$. Thus, as functions on $\CC M(G)$, the nonedge variable $\sigma_{ij}$ equals $\sfrac{h_{ij}}{u_{ij}}$.

Let $U$ denote the multiplicatively closed set generated by the $u_{ij}$ in $\BB R[\Sigma_E]$ and consider the ring homomorphism $\alpha: \BB R[\Sigma] \to U^{-1} \BB R[\Sigma_E]$ which maps
\[
  \sigma_{ij} \mapsto \begin{cases}
    \sigma_{ij}, & \text{$i = j$ or $ij \in E$}, \\
    \sfrac{h_{ij}}{u_{ij}}, & ij \not\in E.
  \end{cases}
\]
This map is constructed so that $\alpha(\sigma_{ij}) = \sigma_{ij}$ as functions on $\CC M(G)$ for every entry of $\Sigma$. It follows that for every $f \in \BB R[\Sigma]$, we have $f = \alpha(f)$ as functions on $\CC M(G)$. In particular, $\alpha(t)$ is nowhere zero on $\CC M(G)$ for every $t \in S_G$.

For the final step, let $\ol{U}$ be the multiplicatively closed set generated by $U$ and $\alpha(S_G)$. We~want to extend $\alpha$ to a map $\ol{\alpha}: S_G^{-1} \BB R[\Sigma] \to \ol{U}^{-1} \BB R[\Sigma_E]$. The construction is summarized in the following diagram:
\begin{center}
\begin{tikzcd}
\BB R[\Sigma] \arrow[r, "\alpha"] \arrow[d, hook] & U^{-1} \BB R[\Sigma_E] \arrow[r, hook] & \ol{U}^{-1} \BB R[\Sigma_E] \\
S_G^{-1} \BB R[\Sigma] \arrow[rru, "\ol{\alpha}", dashed] &  &
\end{tikzcd}
\end{center}
Since $U^{-1} \BB R[\Sigma_E]$ is an integral domain and $0 \not\in \alpha(S_G)$, $U^{-1} \BB R[\Sigma_E]$ is canonically embedded into $\ol{U}^{-1} \BB R[\Sigma_E]$ and we may extend $\alpha$ into the latter localization. The existence (and uniqueness) of the dashed arrow $\ol{\alpha}$ is guaranteed by the universal property of localizations \cite[Proposition~6.3~(e)]{Kemper}: since $\alpha(S_G)$ are units in $\ol{U}^{-1} \BB R[\Sigma_E]$, we may simply set $\ol{\alpha}(\sfrac{f}{t}) \defas \sfrac{\alpha(f)}{\alpha(t)}$.

The codomain of $\ol{\alpha}$ is an integral domain, so $\ker (\ol{\alpha})$ is prime. We claim that this kernel is $S_G^{-1} I_G$. By~construction of $\alpha$, the kernel contains all the generators $u_{ij} \sigma_{ij} - h_{ij}$ of the ideal $S_G^{-1} I_G$ and thus it contains the entire ideal. For the converse, let any $\sfrac{f}{t} \in \ker (\ol{\alpha})$ be given, where $f \in \BB R[\Sigma]$ and $t \in S_G$. By~\Cref{lemma:UncoloredSub} there exist $s \in S_G$ and $h \in \BB R[\Sigma_E]$ such that $sf - h \in I_G$. Thus $\sfrac{f}{t} = \sfrac{h}{st} + S_G^{-1} I_G$ and $0 = \ol{\alpha}(\sfrac{f}{t}) = \sfrac{\alpha(h)}{\alpha(st)}$ shows $\alpha(h) = 0$. But since $h \in \BB R[\Sigma_E]$ and $\alpha|_{\BB R[\Sigma_E]} = \id$, it follows that $h = 0$ and hence $\sfrac{f}{t} \in S_G^{-1} I_G$ as required.
\end{proof}

\begin{lemma} \label{lemma:UncoloredDimension}
The conditional independence ideal $I_G$ of a DAG $G = (V,E)$ has dimension $|V| + |E|$.
\end{lemma}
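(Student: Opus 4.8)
The plan is to compute $\dim I_G$ by exhibiting $I_G$ as a complete intersection cut out by its natural generators. Recall that $I_G = \langle\, |\Sigma_{ij|\pa(j)}| : ij \notin E,\ i<j \,\rangle$, so $I_G$ is generated by exactly $N := \binom{p}{2} - |E|$ polynomials in the $\binom{p+1}{2}$ variables $\sigma_{ij}$, where $p = |V|$. Since $\binom{p+1}{2} - N = |V| + |E|$, Krull's height theorem immediately gives the lower bound $\dim I_G \ge |V| + |E|$: every minimal prime over an ideal generated by $N$ elements has height at most $N$, hence at least one component has dimension at least $\binom{p+1}{2} - N$. The whole content of the lemma is therefore the reverse inequality $\dim I_G \le |V| + |E|$, i.e.\ that \emph{every} component of the zero set of $I_G$ has codimension at least $N$.

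For the upper bound I would show that the generators, ordered by the lexicographic order from the right on the missing edges $ij$, form a regular sequence in $\BB R[\Sigma]$; a regular sequence of length $N$ presents $I_G$ as a complete intersection, which is unmixed of codimension $N$ in the Cohen--Macaulay ring $\BB R[\Sigma]$, so every associated prime has height exactly $N$ and every component has dimension exactly $|V| + |E|$. The structural input is the Schur expansion \eqref{eq:SchurCompl}, which writes each generator as $g_{ij} = |\Sigma_{\pa(j)}|\,\sigma_{ij} - R_{ij}$, where both $|\Sigma_{\pa(j)}|$ and $R_{ij} = \Sigma_{i,\pa(j)}\adj(\Sigma_{\pa(j)})\Sigma_{\pa(j),j}$ involve only the diagonal and edge variables together with non-edge variables strictly smaller than $\sigma_{ij}$. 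In particular $\sigma_{ij}$ is \emph{fresh}: it occurs in no earlier generator, so the ideal $I_{<ij}$ of the strictly earlier generators lives in the polynomial subring $\BB R[\Sigma']$ obtained by deleting $\sigma_{ij}$ and all larger non-edge variables. Thus $\BB R[\Sigma]/I_{<ij}$ is a polynomial ring in $\sigma_{ij}$ over $\BB R[\Sigma']/I_{<ij}$, and McCoy's theorem on zero-divisors of univariate polynomials reduces the statement ``$g_{ij}$ is a nonzerodivisor modulo $I_{<ij}$'' to the single statement that its leading coefficient $|\Sigma_{\pa(j)}|$ is a nonzerodivisor modulo $I_{<ij}$.

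The main obstacle is exactly this last nonzerodivisor property, which is equivalent to the assertion that the parental minor $|\Sigma_{\pa(j)}|$ does not vanish identically on any component of the zero set of $I_{<ij}$ — i.e.\ that no spurious component lies entirely inside the hypersurface $\{|\Sigma_{\pa(j)}| = 0\} \subseteq \{s = 0 : s \in S_G\}$. On components meeting the positive definite cone this is automatic, since principal minors are strictly positive on $\PD^V$; more generally, on any component \emph{not} contained in the vanishing locus of $S_G$ one may argue by transcendence degree, solving $\sigma_{ij} = R_{ij}/|\Sigma_{\pa(j)}|$ recursively in the function field to show that every non-edge coordinate is algebraic over the subfield generated by the $|V|+|E|$ diagonal and edge coordinates. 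This already yields $\dim(I_G : S_G) = |V| + |E|$ for the saturation — which, together with \Cref{lemma:UncoloredPrime}, is what the proof of \Cref{thm:UncoloredSaturation} ultimately consumes. The delicate point, and precisely where the argument of Roozbehani--Polyanskiy needs repair, is ruling out large components \emph{inside} the vanishing locus of $S_G$, where a leading coefficient degenerates and the fresh variable $\sigma_{ij}$ is no longer pinned down.

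To close this gap I would leverage the explicit domain model constructed in \Cref{lemma:UncoloredPrime}. Running the substitution of \Cref{lemma:UncoloredSub} and the homomorphism $\alpha$ inductively over the missing edges shows that each localized partial ideal $S_G^{-1}I_{<ij}$ is prime and that, under $\alpha$, the parental minor $|\Sigma_{\pa(j)}|$ maps to a nonzero element of the relevant localization of $\BB R[\Sigma_E]$ — nonzero because the minor is strictly positive at the identity matrix, which lies in the model of every subgraph involved. The remaining work is to transfer this nonvanishing from the saturation back to $I_{<ij}$ itself, organized as an induction in which each newly adjoined generator drops the dimension by exactly one, its fresh variable being genuinely constrained on every component. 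Granting the regular sequence property, $I_G$ is a complete intersection and $\dim I_G = \binom{p+1}{2} - N = |V| + |E|$, as claimed.
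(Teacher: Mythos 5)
Your lower bound via Krull's height theorem is fine (and in fact slightly stronger than what the paper records, which deduces $\dim(I_G) \ge |V|+|E|$ from the inclusion $I_G \subseteq P_G$ together with $\dim(P_G) = |V|+|E|$ from global identifiability). The problem is the upper bound, which is where all the content of the lemma lies and where your argument is a programme rather than a proof. Via the Schur expansion and McCoy's theorem you reduce to the claim that each parental principal minor $|\Sigma_{\pa(j)}|$ is a nonzerodivisor modulo the ideal $I_{<ij}$ of the earlier generators, i.e.\ that no irreducible component of $V(I_{<ij})$ lies inside the hypersurface $|\Sigma_{\pa(j)}| = 0$. You correctly identify this as the crux, but you never establish it: your final paragraph begins ``Granting the regular sequence property,'' and the proposed mechanism --- transferring nonvanishing from \Cref{lemma:UncoloredPrime} back to $I_{<ij}$ --- cannot work as described, because that lemma only describes the localization $S_G^{-1} I_G$, i.e.\ the components of $V(I_G)$ meeting the complement of $V(S_G)$. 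The dangerous components, the ones that could make a leading coefficient a zerodivisor and inflate the dimension, are exactly those contained in $V(S_G)$, and the domain model is blind to them by construction. Note also that you are attempting something strictly stronger than the lemma (that the generators form a regular sequence, so that $I_G$ is a complete intersection and $V(I_G)$ is pure-dimensional); the paper makes no such claim and it is not obvious that it is true. The fresh-variable/Schur observation itself is sound --- the paper uses it in \Cref{lemma:UncoloredSub} and \Cref{lemma:UncoloredPrime} --- but not for the dimension count.

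The paper's upper bound is much softer and sidesteps all of this: it evaluates the Jacobian of the $\binom{|V|}{2}-|E|$ generators at the identity matrix $\BBm1_V \in V(I_G)$. By Jacobi's formula the only nonvanishing partial derivative of $|\Sigma_{ij|\pa(j)}|$ there is $\partial_{\sigma_{ij}}|\Sigma_{ij|\pa(j)}| = |\Sigma_{\pa(j)}| = 1$, so the Jacobian contains a full-rank identity block, and the cited codimension lemma yields $\operatorname{codim}(I_G) \ge \binom{|V|}{2}-|E|$ directly. If you want to salvage your route, you need an honest induction showing that $\{|\Sigma_{\pa(j)}| = 0\}$ meets every component of $V(I_{<ij})$ properly --- but at that point the single Jacobian computation at $\BBm1_V$ accomplishes the same bound with far less machinery.
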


\begin{proof}
Global rational identifiability is used in \cite[Proposition~2.5]{AlgGeoOfGBN} to show that $\dim(P_G) = |V|+|E|$. By inclusion of ideals, this yields a lower bound $|V|+|E| = \dim(P_G) \le \dim(I_G)$. To prove that this is indeed an equality, we use that $I_G$ lies in a polynomial ring with $|V|+\binom{|V|}{2}$ variables and find a lower bound of $\binom{|V|}{2}-|E|$ on its codimension. By a well-known lemma (see for example \cite[Lemma~10]{DoubleMarkov}) this can be accomplished by finding a point on $V(I_G)$ on which the Jacobian of the $\binom{|V|}{2}-|E|$ generators $|\Sigma_{ij|\pa(j)}|$, $ij \not\in E$, of $I_G$ has full rank.

Using Jacobi's formula, $\partial_{\sigma_{i'j'}} |\Sigma_{ij|\pa(j)}|$ is either zero (if $i'$ and $j'$ do not both belong to the set $\Set{i,j} \cup \pa(j)$) or it is a cofactor of $\Sigma_{ij|\pa(j)}$. Evaluating the partial derivative at $\BBm1_V \in \CC M(G)$ simplifies the computation substantially as all the diagonal cofactors are~$1$ and the nondiagonal ones are~$0$:
\[
  \frac{\partial |\Sigma_{ij|\pa(j)}|}{\partial \sigma_{i'j'}} = \begin{cases}
    |\Sigma_{\pa(j)}| = 1 & \text{if $i = i'$ and $j = j'$}, \\
    0 & \text{otherwise}.
  \end{cases}
\]
This shows that there is a full-rank identity matrix in the Jacobian, implying the desired lower bound on the codimension and hence $\dim(I_G) = |V|+|E|$.
\end{proof}

\begin{proof}[Proof of \Cref{thm:Saturation}]
The proof is done in two stages. First, in \Cref{thm:UncoloredSaturation}, we show that in the uncolored case $I_G : S_G = P_G$. The method is due to Roozbehani--Polyanskiy \cite{SaturationProof}.
Given the uncolored result, we apply \Cref{lemma:Saturation} modulo~$P_G$. The situation is summarized in the following commutative diagram.
\begin{center}
\begin{tikzcd}
I_{G,c}:S_G \arrow[d, equal] \arrow[r, two heads] & J:S \arrow[d, equal] \arrow[r, two heads] & J(S^{-1} R) \arrow[dd, hook] \\
P_{G,c} \arrow[dd, hook] \arrow[r, two heads] & I \arrow[dd, hook] \arrow[rr, two heads, crossing over] & & I' \arrow[dd, hook] \\
& & S^{-1} R \\
\BB R[\Sigma] \arrow[r, two heads, "\pi"] & R = \BB R[\CC M(G)] \arrow[ru, hook] \arrow[rr, "\phi^*"] & & R' = \BB R[\Omega, \Lambda_G] \arrow[lu, "\psi^*"]
\end{tikzcd}
\end{center}

We now set up the application of \Cref{lemma:Saturation} using the same notation as in its statement:
\begin{itemize}[leftmargin=2em, rightmargin=1em]
\item Let $R = \BB R[\CC M(G)] = \BB R[\Sigma] / P_G$ and $R' = \BB R[\Omega, \Lambda_G] = \BB R[\Omega, \Lambda] / \ideal{\lambda_{ij} : ij \not\in E}$ be quotient rings and $\pi: \BB R[\Sigma] \onto R$ the canonical projection. Let $S = \pi(S_G)$ be the image in $R$ of the multiplicatively closed set generated by the parental principal minors. It is naturally multiplicatively closed.
Note that $S_G \cap P_G = \emptyset$ (as witnessed by the identity matrix in $\CC M(G)$), so $S$ does not contain the zero element of $R$ and hence $\pi$ lifts to a well-defined homomorphism $\pi': S_G^{-1} \BB R[\Sigma] \to S^{-1} R$ sending $\sfrac{f}{s} \mapsto \sfrac{\pi(f)}{\pi(s)}$.

\item The trek rule map $\phi_G^*: \BB R[\Sigma] \to R'$ factors through $R = \BB R[\Sigma] / \ker (\phi_G^*)$ by the homomorphism theorem: there exists a map $\phi^*: R \to R'$ such that $\phi_G^* = \phi^* \circ \pi$. Furthermore, the composition $\psi^* = \pi' \circ \psi_G^*$ is a well-defined homomorphism $R' \to S^{-1} R$. Since $\psi_G^* \circ \phi_G^* = \id_{\BB R[\Sigma]}$, we have in particular $\psi^* \circ \phi^* = \id_R$.

\item As~the prime ideal $I'$ we choose the linear ideal generated by the coloring relations on $\Omega$ and $\Lambda_G$ parameters. Then the numerators of these generators under $\psi^*$ generate the ideal~$J = \pi(I_c)$ (by definition of $I_c$) and the denominators belong to~$S = \pi(S_G)$.

\item It remains to compute the ideal $I$ in our setup and check that $J \subseteq I$ and $I \cap S = \emptyset$. Denote by $\phi_{G,c}^*: \BB R[\Sigma] \to \BB R[\Omega_c, \Lambda_{G,c}]$ the pullback of the trek rule parametrization with respect to the colored graph; note that $\Omega_c$ and $\Lambda_{G,c}$ only contain base parameters. The vanishing ideal $P_{G,c}$ is the kernel of $\phi_{G,c}^*$. The pullback factors as $\phi_{G,c}^* = \tau \circ \phi_G^* = \tau \circ \phi^* \circ \pi$, where $\tau: \BB R[\Omega, \Lambda_G] \onto \BB R[\Omega_c, \Lambda_{G,c}]$ is the canonical projection with kernel~$I'$. Hence $P_{G,c} = \ker(\tau \circ \phi^* \circ  \pi) = \pi^{-1}({\phi^*}^{-1}(\ker(\tau))) = \pi^{-1}({\phi^*}^{-1}(I')) = \pi^{-1}(I)$ and $I = \pi(P_{G,c})$.

\item The relation $I_c \subseteq P_{G,c}$ implies $J = \pi(I_c) \subseteq \pi(P_{G,c}) = I$. To show $I \cap S = \emptyset$, we have to show that $\pi(P_{G,c}) \cap \pi(S_G) = \emptyset$. Recall that any element of $\pi(P_{G,c})$ is represented by a sum $f + g$ in the quotient ring~$R$, where $f \in P_{G,c}$ and $g \in P_G$. But since $P_G \subseteq P_{G,c}$, we have $(f+g)(\Sigma) = f(\Sigma) + g(\Sigma) = 0 + 0 = 0$ for every distribution $\Sigma \in \CC M(G,c)$. Similarly, every element in $\pi(S_G)$ is represented as $s + g$ with $s \in S_G$ and $g \in P_G$. It evaluates to $(s+g)(\Sigma) = s(\Sigma) + 0 \not= 0$ for, say, $\Sigma = \BBm1_V \in \CC M(G,c)$. Thus $I \cap S = \emptyset$.
\end{itemize}

Hence, \Cref{lemma:Saturation} applies and yields $I = J:S$, which translates to $\pi(P_{G,c}) = \pi(I_c) : \pi(S_G)$ in $\BB R[\CC M(G)]$. This equality contracts to an equality $P_{G,c} = \pi^{-1}(\pi(I_c) : \pi(S_G))$ in $\BB R[\Sigma]$. It is an easy exercise in commutative algebra to show that $\pi^{-1}(\pi(I_c) : \pi(S_G)) \subseteq (I_c + \ker (\pi)) : S_G = (I_c + P_G) : S_G$. \Cref{thm:UncoloredSaturation} shows $P_G = I_G : S_G$ and hence we have the chain
\begin{align*}
  P_{G,c} &= \pi^{-1}(\pi(I_c) : \pi(S_G)) \\
  &\subseteq (I_c + (I_G : S_G)) : S_G \\
  &\subseteq ((I_c : S_G) + (I_G : S_G)) : S_G \\
  &\subseteq (I_c + I_G) : S_G = I_{G,c} : S_G \subseteq P_{G,c},
\end{align*}
which shows that there is equality throughout.
\end{proof}

\section{Proof of Results\texorpdfstring{ in \Cref{sec: model equivalence}}{ on model equivalence}}
\label{section: faithfulness and MP proofs}

\begin{proof}[Proof of~\Cref{prop: faithful to c}]
If $c(ij) = c(k\ell)$ then since $A$ and $B$ are edge-identifying sets for $ij$ and $k\ell$, respectively, it follows that $\ecr_c(ij, k\ell; A, B)$ evaluates to $0$ on all points $\Sigma$ in the model $\CC M(G,c)$.
Conversely, if $c(ij) \neq c(k\ell)$, consider $\Sigma = \phi_{G,c}(\Omega, \Lambda)$ where all base parameters are distinct (e.g., a generic choice of parameters).
Then $\lambda_{ij} \neq \lambda_{k\ell}$.
Since the base parameters are uniquely recoverable from $\Sigma$ via the formulas $\lambda_{ij} = \lambda_{ij | A}(\Sigma)$ and $\lambda_{k\ell} = \lambda_{k\ell | B}(\Sigma)$, it is clear that $\ecr_c(ij, k\ell; A, B)$ does not evaluate to $0$ on generic $\Sigma$ in the model $\CC M(G,c)$.
\end{proof}

\begin{proof}[Proof of~\Cref{prop:vertex coloring faithful to G}]
Wu and Drton~\cite[Theorem~2.2]{wu2023partial} show that a conditional independence $\CI{i,j|K}$ holds for all points in the model $\CC M(G,c)$ if and only if the d-separation $\Dsep{i,j|K}$ holds in~$G$. Hence if the d-separation $\Dsep{i,j|K}$ does not hold in~$G$, there exists a matrix in $\CC M(G,c)$ which does not satisfy $\CI{i,j|K}$. Since the model $\CC M(G,c)$ is irreducible, the set of covariance matrices which do satisfy $\CI{i,j|K}$ is a proper subvariety of codimension at~least~one. Hence, a dense subset of matrices in $\CC M(G,c)$ does not satisfy $\CI{i,j|K}$. The intersection of finitely many dense subsets, one for each d-connection statement in $G$, is still dense and so the generic matrix in $\CC M(G,c)$ is faithful to~$G$.
\end{proof}

\begin{proof}[Proof of \Cref{prop: edge faithful}]
Sullivant, Talaska and Draisma \cite[Section~3]{TrekSeparation} give a construction and algebraic proof that a generic matrix in the uncolored model $\CC M(G)$ is faithful to~$G$. The proof consists of showing that the polynomial $|\Sigma_{ij|K}|$ corresponding to a d-connection statement $\Dconn{i,j|K}$ in $G$ is not the zero polynomial in $\BB R[\Omega, \Lambda]$. Together with irreducibility of the model, this proves faithfulness similarly to the proof above. The crucial step in \cite[Lemma~3.2]{TrekSeparation} uses the fact that generically all $\Omega$ variables are distinct. This condition is maintained in edge-colored DAGs and the rest of the proof goes through verbatim.
\end{proof}

\begin{proof}[Proof of~\Cref{thm:MarkovEqv}]
By \Cref{prop: edge faithful}, generic $\Sigma$ in $\CC M(G,c)$ are faithful to $G$, and analogously for $(H,c)$.  Hence, we can find $\Sigma\in \CC M(G,c)$ which is faithful to both $G$ and $H$. It follows that all CI relations defining the model correspond to the d-separations in $G$ and the d-separations in $H$. In other words, $G$ and $H$ are Markov equivalent DAGs. By the classical characterization of Markov equivalent DAGs due to Verma and Pearl, we have that $G$ and $H$ have the same skeleton and v-structures.
\end{proof}

\begin{proof}[Proof of \Cref{lem: no_rev_cov}]
Let $\Sigma\in M(G,c)$.
Since $i\rightarrow j$ is the only edge reversed in $G_{i\leftarrow j}$ relative to $G$ then
\[
\pa_G(\ell) = \pa_{G_{i\leftarrow j}}(\ell) \qquad \mbox{ for all $\ell\neq i,j$}.
\]
By assumption, we have that there is an edge $k\ell\in c(ij)$ such that $\ell \neq i, j$.
Since $\pa_G(\ell)$ is an edge-identifying set for $k\ell$, it follows from an application of the inverse map of the trek rule for each graph to $\Sigma$ that
\begin{equation}
    \label{eqn: equalparams}
    \lambda_{k\ell}^G = \lambda_{k\ell}^{G_{i\leftarrow j}}. %
\end{equation}
Since we have assumed that $c_{i\leftarrow j}(ji) = c(ij)$, it must also be that $\lambda_{ji}^{G_{i\leftarrow j}} = \lambda_{ij}^G$.
To see this, we use the observation in~\eqref{eqn: equalparams} and the fact that $c(ij)
= c(k\ell)$ and $c_{i\leftarrow j}(ij) = c_{i\leftarrow j}(k\ell)$: %
\[
\lambda_{ji}^{G_{i\leftarrow j}} = \lambda_{k\ell}^{G_{i\leftarrow j}} = \lambda_{k\ell}^G = \lambda_{ij}^G.
\]

Note next that by expanding as a block matrix and applying the Schur complement we obtain
\[
|\Sigma_{\pa_{G_{i\leftarrow j}}(i)}| = |\Sigma_{\pa_G(i) \cup \{j\}}| = |\Sigma_{\pa_G(i)}|(\sigma_{jj} - \Sigma_{j,\pa_G(i)}\Sigma_{\pa_G(i)}^{-1}\Sigma_{\pa_G(i),j}).
\]
We also observe that $|\Sigma_{ij | \pa_G(i)}| = |\Sigma_{ji | \pa_G(i)}|$ since $\Sigma$ is symmetric.
Since $\lambda_{ij}^{G}  = \lambda_{ji}^{G_{i\leftarrow j}}$ and the parent sets are edge-identifying sets, we then have that
\begin{equation*}
    \begin{split}
        \lambda_{ij}^G &= \lambda_{ji}^{G_{i\leftarrow j}},\\
        \frac{|\Sigma_{ij | \pa_G(j) \setminus i}|}{|\Sigma_{\pa_G(j)}|} &= \frac{|\Sigma_{ji | \pa_{G_{i\leftarrow j}}(i)\setminus j}|}{|\Sigma_{\pa_{G_{i\leftarrow j}}(i)}|},\\
        \frac{|\Sigma_{ij | \pa_G(i)}|}{|\Sigma_{\pa_G(i)\cup i}|} &= \frac{|\Sigma_{ij | \pa_G(i)}|}{|\Sigma_{\pa_G(i)\cup j}|},\\
        |\Sigma_{ij | \pa_G(i)}||\Sigma_{\pa_G(i)\cup j}| &= |\Sigma_{ij | \pa_G(i)}||\Sigma_{\pa_G(i)\cup i}|.
    \end{split}
\end{equation*}
However, the final equation does not hold for every point in the model
since the minors $|\Sigma_{\pa_G(i)\cup j}|$ and $|\Sigma_{\pa_G(i)\cup i}|$ need not be equal for generic parameter values.
This leads to a contradiction. We therefore conclude that $\CC M(G_{i\leftarrow j}, c^\prime) \neq \CC M(G,c)$, which completes the proof.
\end{proof}

\begin{lemma}
    \label{lem:forPratik}
    Suppose that $G$ and $H$ are Markov-equivalent DAGs and that $j$ is a sink node in $G$ that is not a sink node in $H$.
    Suppose also that there exist at least two edges in $H$, including an edge $j\ell$.
    If for all edges $ik$ not equal to the edge $j\ell$
    in $H$ we have $j\in\pa_H(k)$ then $j$ is a source node in a complete connected component of $H$ and all other connected components of $H$ are isolated vertices.
\end{lemma}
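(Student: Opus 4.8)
The plan is to exploit the rigid interaction between the two facts that $j$ is a sink in $G$ and a source in $H$, via the Verma--Pearl characterization of Markov equivalence (\Cref{thm:vp}): $G$ and $H$ share both skeleton and v-structures. First I would record some immediate consequences of the hypothesis. The edge $j\ell$ already gives $j \in \pa_H(\ell)$, so together with the assumption the statement ``$j \in \pa_H(k)$ for every edge $ik$'' holds for \emph{all} edges of $H$ without exception; in particular the head of every edge of $H$ is a child of $j$, and no edge can point into $j$ (else $j$ would be its own parent), so $j$ is a source of $H$. Writing $N \defas \ch_H(j)$, the non-source vertices of $H$ are therefore contained in $N$, and since $j$ is a source its skeleton-neighbors are exactly $N$. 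Passing to $G$, where $j$ is a sink, every skeleton edge incident to $j$ is oriented into $j$, so $\pa_G(j) = N$.

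Next I would show that $N \cup \{j\}$ is a clique in the common skeleton. Because $j$ is a source in $H$ it can never be the collider of a v-structure, so $H$ has no v-structure centered at $j$; by Markov equivalence neither does $G$. In $G$ the node $j$ is a sink with parent set $N$, so if two elements $a, b \in N$ were non-adjacent we would obtain the forbidden v-structure $a \to j \leftarrow b$ in $G$. Hence every pair in $N$ is adjacent, and since $j$ is adjacent to all of $N$, the induced subgraph on $N \cup \{j\}$ has complete skeleton; being a DAG, it is a complete (transitive) DAG with source $j$.

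Finally I would argue that every vertex $v \notin N \cup \{j\}$ is isolated. Such a $v$ has no incoming edge (the non-source vertices all lie in $N$), so if it were not isolated it would have an outgoing edge $v \to w$; its head $w$ lies in $N$ and satisfies $j \to w$. Since $v \notin N \cup \{j\}$ and $j$ is a source, the vertices $v$ and $j$ are non-adjacent, so $v \to w \leftarrow j$ is a genuine v-structure of $H$ centered at $w$. Markov equivalence forces this same v-structure into $G$, but it requires the edge $j \to w$, which is impossible as $j$ is a sink in $G$ --- a contradiction. Thus $v$ is isolated, $N \cup \{j\}$ is a full connected component (with no edges leaving it, since all outside vertices are isolated), and all other components are isolated vertices. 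The main obstacle is not any single computation but keeping straight the two opposite roles of $j$ (sink in $G$, source in $H$): the argument hinges on using v-structures \emph{centered at $j$} to force the clique and v-structures \emph{centered at a child $w$} to kill outside edges, and both deductions become contradictions precisely because $j$'s orientation is reversed between the two graphs.
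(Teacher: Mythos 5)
Your proof is correct and follows essentially the same route as the paper's: both first deduce that $j$ is a source of $H$, then use a v-structure centered at the head of an arbitrary edge (impossible in $G$ because $j$ is a sink there) to force every edge-endpoint into $\ch_H(j)$ and hence isolate all outside vertices, and finally use the absence of v-structures centered at $j$ to force the component containing $j$ to be complete. The only difference is organizational — the paper phrases the second step as "the tail of every edge must be adjacent to $j$" rather than "every vertex outside $\ch_H(j)\cup\{j\}$ is isolated" — but the underlying argument is identical.
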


\begin{proof}
    We note first that $j$ must be a source node in $H$.
    This is due to the assumption that every edge $i \rightarrow k$ in $H$ is such that $j\in \pa_H(k)$, which implies that there can be no edge in $H$ pointing into $j$, as $H$ is a simple, loopless graph.
    Hence, $j$ must be a source node in $H$.

    Now let $i\rightarrow k$ be an edge in $H$ other than $j\rightarrow \ell$.
    By assumption, we have that $j\rightarrow k$ is also an edge in $H$.
    Note that if $i$ and $j$ are not adjacent in $H$ then we would have a v-structure $i\rightarrow k \leftarrow j$ in $H$.
    Since $G$ and $H$ are Markov equivalent, then $G$ and $H$ have the same skeleton.
    Since $j$ is a sink node in $G$, it follows that $k\rightarrow j$ is an edge in $G$.
    Hence, we cannot have the v-structure $i\rightarrow k\leftarrow j$ in $G$, contradicting the assumption that $G$ and $H$ are Markov equivalent.
    Thus, $i$ and $j$ must be adjacent in $H$.
    Moreover, since $j$ is a source node in $H$, we have that $j\rightarrow i$ is an edge in $H$.

    To see that any connected components of $H$ not containing $j$ are isolated vertices, note that if a connected component contains at least two vertices, then it contains an edge between two vertices and by the above argument, must contain $j$.
    So all connected components in $H$ not containing $j$ are isolated vertices.

    Now suppose that $k$ and $i$ are vertices in the connected component of $H$ containing $j$.
    From above, we have that $j\rightarrow k$ and $j\rightarrow i$ are edges in $H$.
    Since $G$ and $H$ are Markov equivalent, they have the same skeleton.
    So since $j$ is a sink node in $G$, we have the edges, $j\leftarrow k$ and $j\leftarrow i$ in $G$.
    However, $G$ and $H$ must have the same v-structures, so it follows that $k$ and $i$ are adjacent in $H$.
    This completes the proof.
\end{proof}

\begin{proof}[Proof of ~\Cref{thm: ident single edge color}]
Since we restrict ourselves to the collection of edge-colored DAGs with only one color class, we denote the coloring of each DAG in $\mathcal{E}_1$ by $c: E \to k$ where $E$ is the edge set of any DAG on $p$ nodes and $k$ is a constant.
    Let $(G,c), (H,c) \in \mathcal{E}_{1}$, suppose that $G \neq H$, and assume for the sake of contradiction that $\CC M(G,c) = \CC M(H,c)$.
    By \Cref{thm:MarkovEqv}, $G$ and $H$ have the same skeleton and v-structures.

\noindent\textbf{The case of a connected graph.}
Suppose first that $G$ is a connected graph containing at least two edges.
We now pick a sink node $j$ in $G$.
If $j$ is also a sink node in $H$, then we marginalize $G$ and $H$ with respect to $j$.
By \cite[Proposition 3.22]{lauritzen1996graphical}, we know that marginalizing sink nodes in a DAG produces a marginal distribution that is Markov to the DAG with the marginalized nodes removed.
Moreover, if $\Sigma$ is faithful to $G$, then the resulting marginal distribution is also faithful to $G\setminus \{j\}$, and $\CC M(G\setminus \{j\},c) = \CC M(H\setminus \{j\},c)$ if $\CC M(G,c) = \CC M(H,c)$.
Thus, from our first assumption that $\CC M(G,c) = \CC M(H,c)$ and $\Sigma$ is faithful to $G$, we can identify a distribution $\Sigma_j\in \CC M(G\setminus \{j\}, c) = \CC M(H\setminus \{j\},c)$ that is faithful to $G\setminus \{j\}$ and $H\setminus \{j\}$.
Therefore, we either have a vertex $j$ which is a sink node in $G$ but not in $H$, or we iteratively marginalize sink nodes from $G$ and $H$ until we reach subgraphs $G'$ and $H'$ where the set of sink nodes of $G'$ and $H'$ are different.
Note that this process terminates with graphs $G'$ and $H'$ each having at least one edge, as otherwise the colored DAGs $(G,c)$ and $(H,c)$ would be identical.

Consider first the situation where the resulting graphs $G'$ and $H'$ following this marginalization process have exactly one edge.
Since $G \neq H$, we have $H'$ is $\ell\leftarrow j$ and $G'$ is $\ell \rightarrow j$ for a pair of nodes $\ell$ and $j$.
Since $G$ and $H$ each had at least two edges, they must have had at least one sink in common, and in fact the graphs had $\pa_G(k) = \pa_H(k)$ for all $k$ other than $k = \ell, j$.
(Here, we are using the fact that $G$ and $H$ are Markov equivalent DAGs.)
Moreover, we have in $G$ that $\pa_{G}(j) = \{\ell\}$ and $\pa_{G}(\ell) = \emptyset$.
Similarly, in $H$, we have that $\pa_{H}(\ell) = \{j\}$ and $\pa_{H}(j) = \emptyset$.
In other words, $G$ and $H$ differ by a single covered edge reversal, where the edge is $\ell \leftarrow j$ in $H$.
Hence, we are in the situation of \Cref{lem: no_rev_cov}, and we conclude that $\CC M(G,c) \neq \CC M(H,c)$.

In the remainder of the proof, we assume that the DAGs $G'$ and $H'$ resulting from the iterative marignalization process above each have at least two edges.
For simplicity we set $G:= G'$ and $H:=H'$.
For such $G$ and $H$, we may always pick a vertex $j$ in $G$ which is a sink in $G$ but not a sink in $H$.
In this case, we know that there exists an edge $\ell\rightarrow j$ in $G$ which is reversed in $H$.
As every edge in $H$ has the same color, we know that the polynomial
$\ecr_c(j\ell,ik; \pa_H(\ell), \pa_H(k))$
lies in the vanishing ideal of $(H,c)$ for any edge $i\rightarrow k$ in $H$.
We then consider two cases:
\begin{enumerate}[label=(\roman*)]
    \item \label{thm: ident single edge color:1} $G$ and $H$ have at least two edges and there exists an edge $i\rightarrow k$ in $H$ other than specified edge $j \rightarrow \ell$ such that $j\notin \pa_H(k)$, and
    \item \label{thm: ident single edge color:2} $G$ and $H$ have at least two edges and all edges $i\rightarrow k$ in $H$ other than the specified edge $j \rightarrow \ell$ satisfy $j\in \pa_H(k)$.
\end{enumerate}

For case~\ref{thm: ident single edge color:1}, if we select an edge $i\rightarrow k$ such that $j\notin \pa_H(k)$, we get the following polynomial lying in the global colored conditional independence ideal of $(H,c)$ (see \Cref{subsec:conj}), and hence in the vanishing ideal of $(H,c)$:
\begin{eqnarray}
    \ecr_c(j\ell,ik; \pa_H(\ell), \pa_H(k)) = |\Sigma_{j\ell|\pa_H(\ell)\setminus j}||\Sigma_{\pa_H(k)}|-|\Sigma_{\pa_H(\ell)}||\Sigma_{ik|\pa_H(k)\setminus i}|.
\end{eqnarray}
As $j\in \pa_H(\ell)$, it is clear that $\sigma_{jj}$ divides at least one of the terms of the above polynomial. Expanding the determinant $|\Sigma_{\pa_H(\ell)}|$ in the polynomial using the Laplace expansion along the row containing the $(j,j)$ entry, we get
\begin{eqnarray}\label{eqnarray:edge identifier}
   \ecr_c(j\ell,ik; \pa_H(\ell), \pa_H(k))= \sigma_{jj}|\Sigma_{\pa_H(\ell)\setminus j}||\Sigma_{ik|\pa_H(k)\setminus i}|+F(\Sigma\setminus \sigma_{jj}),
\end{eqnarray}
where $F(\Sigma\setminus \sigma_{jj})$ is a polynomial in the ring $\BB R[\Sigma\setminus\sigma_{jj}]$.
Using this expansion, we first show that there cannot exist a minimal generating set for $\ker (\phi^*_{H,c})$ which does not involve $\sigma_{jj}$. After this we will show that there cannot exist any irreducible polynomial in the generating set of $\ker (\phi^*_{G,c})$ where one of the terms is divisible by $\sigma_{jj}$, contradicting the original assumption that $\CC M(G,c)= \CC M(H,c)$. Suppose there exists a generating set $\{f_1,f_2,\ldots,f_m\}$ for $\ker (\phi^*_{H,c})$ which completely lies in $\BB R[\Sigma\setminus \sigma_{jj}]$. So, we have
\[
\ecr_c(j\ell,ik; \pa_H(\ell), \pa_H(k))=h_1f_1+h_2f_2+\ldots h_mf_m,
\]
for some some polynomials $h_t\in \BB R[\Sigma]$.
As $\sigma_{jj}$ appears in $\ecr_c(j\ell,ik; \pa_H(\ell), \pa_H(k))$, we further collect the monomials of $h_t$ which involve $\sigma_{jj}$, i.e.,
\[
h_t=\sigma_{jj}h_t'+ h_t'',
\]
for all $t=1,2,\ldots, m$, where $h_t'$ is the sum of terms in $h_t$ divisible by $\sigma_{jj}$ and $h_t''$ is the sum of terms in $h_t$ not divisible by $\sigma_{jj}$
(note that $h_t'$ and $h_t''$ could also be zero for some values of $t$).
This gives us
\begin{eqnarray*}
\sigma_{jj}|\Sigma_{\pa_H(\ell)\setminus j}||\Sigma_{ik|\pa_H(k)\setminus i}|+F(\Sigma\setminus \sigma_{jj})&=&
\sum_{t=1}^m h_t f_t \\
&=&\sigma_{jj}\sum_{t=1}^m h_t'f_t + \sum_{t=1}^m h_t''f_t.
\end{eqnarray*}
This means that $|\Sigma_{\pa_H(\ell)\setminus j}||\Sigma_{ik|\pa_H(k)\setminus i}|$ has to be equal to $h_1'f_1+\ldots h_m'f_m$, implying that $|\Sigma_{\pa_H(\ell)\setminus j}||\Sigma_{ik|\pa_H(k)\setminus i}|$ lies in $\ker (\phi^*_{H,c})$.

Note, however, that there exist distributions $\Sigma\in \CC M(H,c)$ that are faithful to $H$.
For such distributions, the minor $|\Sigma_{\pa_H(\ell)\setminus j}|$ cannot vanish since $\Sigma$ is positive definite, and the minor $|\Sigma_{ik|\pa_H(k)\setminus i}|$ cannot vanish since since $i\rightarrow k$ is an edge in $H$, and hence there is no CI relation $\CI{X_i,X_k|X_C}$ for any set $C$ satisfied by the faithful distribution $\Sigma$.
Since neither of these minors vanish on faithful $\Sigma$, the polynomial $|\Sigma_{\pa_H(\ell)\setminus j}||\Sigma_{ik|\pa_H(k)\setminus i}|$ cannot be in $\ker (\phi^*_{H,c})$, contrary to the above conclusion.

Thus, in order to complete the proof, we show that there cannot exist any irreducible polynomial in the generating set of vanishing ideal of $(G,c)$ where one of the terms is divisible by $\sigma_{jj}$, contradicting the original assumption that $\CC M(G,c)= \CC M(H,c)$.
To prove this, we first look at the image of $\sigma_{jj}$ under the colored trek rule of $(G,c)$.
We have
\[
\phi^*_{G,c}(\sigma_{jj})=\sum_{p\in \pa_G(j)}\sum_{q\in \pa_G(j)}(\lambda^G)^2\sigma_{pq}+\omega^G_j.
\]
As $j$ is a sink node of $G$, it is clear that $\omega^G_j$ does not appear in the image of any other $\sigma_{ab}$.
Now, let $f=f_1+f_2+\ldots +f_m$ be an irreducible polynomial in a generating set of $\ker (\phi^*_{G,c})$, where each $f_i$ is a monomial.
If $\sigma_{jj}$ is a factor of each $f_i$, then it would contradict the fact that $f$ is irreducible.
So, let us assume that $\sigma_{jj}$ is a factor of $f_1,f_2,\ldots,f_k$ but not a factor of $f_{k+1},\ldots, f_m$.
This gives us
\[
    f=\sigma_{jj}(g_1+g_2+\ldots +g_k)+f_{k+1}+\ldots +f_m,
\]
where $\sigma_{jj}g_i=f_i$ for $i=1,2,\ldots,k$.
As $\phi^*_{G,c}$ is a homomorphism and $\phi^*_{G,c}(f)$ is $0$, we have
\[
\phi^*_{G,c}(\sigma_{jj})(\phi^*_{G,c}(g_1)+\ldots + \phi^*_{G,c}(g_k))=-(\phi^*_{G,c}(f_{k+1})+\ldots + \phi^*_{G,c}(f_m)).
\]
As the right hand side of the equality does not have $\omega^G_j$, we can conclude that $\phi^*_{G,c}(g_1+\ldots +g_k)= 0$ (as that is the polynomial coefficient of $\omega^G_j$ on the left) and consequently $\phi^*_{G,c}(f_{k+1}+\ldots +f_m)=0$. This allows us to replace $f$ in the generating set with $\overline{g}=g_1+\ldots + g_k$ and $\overline{f}=f_{k+1}+\ldots +f_k$, where $\overline{f}$ does not involve $\sigma_{jj}$. If $\overline{g}$ again involves $\sigma_{jj}$, we continue this process recursively by replacing $f$ with $\overline{g}$ until we have obtained two generators which do not involve $\sigma_{jj}$.
Thus, we can construct a generating set for $\ker (\phi^*_{G,c})$ where none of the generators involve the variable $\sigma_{jj}$.
Since we have observed that $\ker(\phi^\ast_{H,c})$ necessarily contains generators involving $\sigma_{jj}$, we conclude that $\ker(\phi^\ast_{G,c}) \neq \ker(\phi^\ast_{H,c})$, which implies $\CC M(G,c) \neq \CC M(H,c)$.
This completes the proof in this~case.

For case~\ref{thm: ident single edge color:2}, we are in the situation of \Cref{lem:forPratik}.
Namely, by \Cref{lem:forPratik}, we have that $H$ is composed of a collection of isolated vertices together with a complete connected DAG containing at least two edges where $j$ is a source node.
    Without loss of generality, we assume $H$ is simply the connected component containing $j$.
    Since $H$ is then a complete DAG, it has a single topological ordering $\pi_1\cdots\pi_p$.
    Since $j$ is the source node in $H$, we have that $\pi_1 = j$.
    Let $\pi_2 = \ell$ and $\pi_3 = k$.
    We then have that $\pa_H(\ell) = \{j\}$ and $\pa_H(k) = \{\ell,j\}$.
    Since $c$ is the constant edge coloring, we have that $j\ell$ and $jk$ have the same edge parameter.
    It then follows from \Cref{thm:IdentifyingSets} that
    \[
    \lambda_{j\ell| \pa_H(\ell)}(\Sigma) = \lambda^H = \lambda_{jk | \pa_H(k)}(\Sigma).
    \]
    Therefore,
    \[
    \ecr_c(j\ell,jk; \pa_H(\ell), \pa_H(k)) =\sigma_{\ell j}(\sigma_{\ell\ell}\sigma_{jj} - \sigma_{\ell j}^2) - \sigma_{jj}(\sigma_{jk}\sigma_{\ell \ell} - \sigma_{k\ell}\sigma_{j\ell})
    \]
    is a polynomial in $\ker(\phi^\ast_{H,c})$.
    Equivalently,
    \[
    \sigma_{jj}(\sigma_{j\ell }\sigma_{\ell \ell} - \sigma_{jk}\sigma_{\ell\ell} + \sigma_{k\ell}\sigma_{j\ell}) - \sigma_{j\ell}^3
    \]
    is in $\ker(\phi^\ast_{H,c})$. However, from the initial assumption that $\CC M(G,c)= \CC M(H,c)$, the above polynomial must also lie in $\ker (\phi^*_{G,c})$. Following the same argument as in Case~\ref{thm: ident single edge color:1}, we can conclude
    that $\sigma_{j\ell}^3$ must be in $\ker(\phi^\ast_{G,c})$, which is not true for generic parameter choices as we do not have $\CI{X_j,X_\ell}$ in this model. This completes the proof in case~\ref{thm: ident single edge color:2} by the same reasoning as in case~\ref{thm: ident single edge color:1}.

\noindent\textbf{The case of multiple connected components.}
    We have observed that, for any connected graph $G$ containing at least two edges with constant edge-coloring $c$, $(G,c)$ is not model equivalent to any other DAG in $\mathcal{E}_1$.
    Consider now $G$ with multiple connected components and constant edge-coloring $c$.
    By the above argument, any $(H,c)$ to which $(G,c)$ is model equivalent must have $G$ and $H$ model equivalent; i.e., $G$ and $H$ must have the same skeleton and v-structures.
    Hence, each connected component of $G$ and $H$ must have the same skeleton and v-structures.
    The above argument shows that the constant edge-coloring $c$ forces any connected components of $G$ containing at least two edges to have the exact same edges in $H$ if $(G,c)$ and $(H,c)$ are model equivalent.
    Hence, we need only consider the connected components of $G$ that contain a single edge $i\rightarrow j$.

    Suppose first that $G$ contains a connected component with at least two edges and a connected component consisting of a single edge $i\rightarrow j$.
    Let $H$ be the graph in which all edges are identical to $G$ with the exception that $i\rightarrow j$ is replaced with $i\leftarrow j$, and suppose that $\CC M(G,c) = \CC M(H, c)$.
    Since $H$ contains the same connected component with two edges as $G$, then it follows from global identifiability of the model parameters that for any $\Sigma \in\CC M(G,c) = \CC M(H,c)$, the single edge parameter $\lambda^G$ used to parametrize $\Sigma$ according to $(G,c)$ is equal to the parameter $\lambda^H$ used to parametrize $\Sigma$ according to $(H,c)$.
    In particular, the marginal distribution with covariance matrix $\Sigma_{ij,ij}$ for the edge $i\rightarrow j$ in $G$ must satisfy
    \[
    \Sigma_{ij,ij} =
    \begin{pmatrix}
        \omega_i & \lambda^G\omega_i\\
        \lambda^G\omega_i & \omega_j + (\lambda^G)^2\omega_i
    \end{pmatrix}
    =
    \begin{pmatrix}
        \omega_i^\prime + (\lambda^G)^2\omega^\prime_j & \lambda^G\omega^\prime_j\\
        \lambda^G\omega_j^\prime & \omega_j^\prime
    \end{pmatrix}
    \]
    Here, the left matrix is the parametrization of the covariance matrix of the marginal distribution for the edge $i\rightarrow j$ in $G$ and the right matrix is the parametrization according to the edge $i\leftarrow j$ in $H$.
    Now choose $\Sigma\in \CC M(G,c)$, satisfying $\lambda^G = 1$ and $\omega_i,\omega_j >0$.
    Since $\CC M(G,c) = \CC M(H,c)$, we should be able to identify $\omega_i^\prime, \omega_j^\prime>0$.
    Note that the off-diagonal entries in the above matrix equality imply that $\omega_j^\prime = \omega_i>0$.
    Combining this observation with the equality of the first entries on the diagonal, we find that $\omega_i^\prime = \omega_i(1 - (\lambda^G)^2) = \omega_i \cdot 0 = 0$.
    Hence, $\Sigma\in \CC M(G,c)$ but $\Sigma\notin \CC M(H,c)$, a contradiction.
    Thus, we conclude that $(G,c)$ is in a model equivalence class of size one whenever it contains a connected component with at least two edges.

    Finally, suppose that $G$ is a DAG with at least two edges and every connected component is a single edge.
    If $H$ is any graph identical to $G$ where at least one edge has the same orientation as $G$,
    it follows by the same argument in the preceding paragraph that $\CC M(G, c) \neq \CC M(H,c)$.
    On the other hand, if $H$ is the graph in which all edges are reversed in $G$ then we no longer necessarily have that $\lambda^G$ is equal to $\lambda^H$ for any given $\Sigma$. Now, let $i\rightarrow j$ and $k\rightarrow l$ be two edges in $G$.
    In particular, $\CC M(G,c) = \CC M(H,c)$ if and only if for any $\Sigma \in \CC M(G,c)$ parameterized by $\lambda^G$ and $\omega_i > 0$ there exists $\lambda^H \in \mathbb{R}$ and $\omega_i^\prime>0$ solving the system of equations induced by the matrix equality on the marginal model $\Sigma_{i,j,k,\ell}$:
    \[
    \begin{pmatrix}
    \omega_i & \lambda^G\omega_i & 0 & 0\\
    \lambda^G\omega_i & \omega_j + (\lambda^G)^2\omega_i & 0 & 0 \\
    0 & 0 & \omega_k & \lambda^G\omega_k \\
    0 & 0 & \lambda^G\omega_k & \omega_\ell + (\lambda^G)^2\omega_k \\
    \end{pmatrix}
    =
    \begin{pmatrix}
        \omega_i^\prime + (\lambda^H)^2\omega^\prime_j & \lambda^H\omega^\prime_j & 0 & 0 \\
        \lambda^H\omega_j^\prime & \omega_j^\prime & 0 & 0 \\
        0 & 0 & \omega_k^\prime + (\lambda^H)^2\omega^\prime_\ell & \lambda^H\omega^\prime_\ell\\
        0 & 0 & \lambda^H\omega_\ell^\prime & \omega_\ell^\prime\\
    \end{pmatrix}.
    \]
    The equations in the upper block imply that
    \[
    \lambda^H = \frac{\lambda^G\omega_i}{\omega_j + (\lambda^G)^2\omega_i},
    \]
    and similarly the equations in the lower block imply
    \[
    \lambda^H = \frac{\lambda^G\omega_k}{\omega_\ell + (\lambda^G)^2\omega_k}.
    \]
    However, if for instance, we choose $\Sigma\in\CC M(G,c)$ with $\lambda^G = \omega_i =\omega_j = 1$ and $\omega_k, \omega_\ell > 0$ arbitrary, it is easy to see that these two rational functions need not be equal.
    Hence, we conclude that $\CC M(G,c) \neq \CC M(H,c)$, completing the proof.
\end{proof}

\begin{proof}[Proof of \Cref{theorem:properlyBlocked}]
First consider the case when $(G, c), (H, c')\in\mathcal{BP}$ and $G = H$.
In this case, we have that the edge-identifying sets $\mathcal{A}(ij)$ associated to the edges of $G$ and $H$, respectively, are equal for all edges $ij$ (as both graphs have the same edge set).
Suppose that $c \neq c'$.
Then there exists a pair of edges $ij$ and $k\ell$ such that $c(ij) = c(k\ell)$ but $c'(ij) \neq c'(k\ell)$.
Let $A\in\mathcal{A}(ij)$ and $B\in\mathcal{A}(k\ell)$.
By \Cref{prop: faithful to c}, there exists some $\Sigma\in \CC M(H,c')$ such that $\ecr_c(ij, k\ell; A, B)$ does not evaluate to $0$ on $\Sigma$.
Hence, $\Sigma$ is not Markov to $(G,c)$; i.e., $\Sigma\notin \CC M(G,c)$.
It follows that $\CC M (G, c) = \CC M(H, c')$ if and only if $c = c'$; i.e., $(G, c) = (H, c')$.

Suppose now that $(G, c), (H, c')\in\mathcal{BP}$ with $G \neq H$ and assume for the sake of contradiction that $\CC M(G, c) = \CC M(H, c')$.
By \Cref{thm:MarkovEqv}, we know that $G$ and $H$ have the same skeleton and v-structures.

We begin by following the same procedure as seen in the proof of \Cref{thm: ident single edge color}. We pick a sink node $j$ in $G$ and marginalize out $j$ in both $G$ and $H$ if $j$ is also a sink node in $H$.
We repeat this process until we have either marginalized out all nodes (in which case it must be that $G = H$, a contradiction) or until we obtain a sink node $j$ in the subgraph of $G$ that is not a sink node in the corresponding subgraph of $H$.
Thus, we end in a situation where we can assume that we can pick a sink node $j$ in $G$ that is not a sink in $H$.
Since $G \neq H$, but $G$ and $H$ have the same skeleton, it follows that there is an edge $\ell\rightarrow j$ in $G$ that is reversed in $H$.
By the definition of BPEC-DAGs, we know that there must exist another edge $i\rightarrow \ell$ in $H$ with $\lambda_{j\ell}^H=\lambda_{i\ell}^H$.
In order to complete the proof, we construct a polynomial in $\ker (\phi^*_{H, c'})$ (in analogy to the polynomial used in the proof of \Cref{thm: ident single edge color}), and then use it to argue that there cannot exist a minimal generating set for $\ker (\phi^*_{H,c'})$ which does not involve $\sigma_{jj}$.
As $\lambda_{j\ell}^H=\lambda_{i\ell}^H$, we know that the following polynomial lies in the vanishing ideal of $\CC M(H,c')$:
\begin{eqnarray*}
    \ecr_{c'}(j\ell,i\ell; \pa_H(\ell),\pa_H(\ell))&=&|\Sigma_{j\ell|\pa_H(\ell)\setminus j}||\Sigma_{\pa_H(\ell)}|-|\Sigma_{\pa_H(\ell)}||\Sigma_{i\ell|\pa_H(\ell)\setminus i}| \\
    &=& |\Sigma_{\pa_H(\ell)}|(|\Sigma_{j\ell|\pa_H(\ell)\setminus j}|-|\Sigma_{i\ell|\pa_H(\ell)\setminus i}|).
\end{eqnarray*}
As $|\Sigma_{\pa_H(\ell)}|$ is a principal minor (and hence cannot vanish on the model), we know that the factor $|\Sigma_{j\ell|\pa_H(\ell)\setminus j}|-|\Sigma_{i\ell|\pa_H(\ell)\setminus i}|$ has to lie in the vanishing ideal.
Observe that as $\sigma_{jj}$ appears only in $|\Sigma_{i\ell|\pa_H(\ell)\setminus i}|$ but not in $|\Sigma_{i\ell|\pa_H(\ell)\setminus j}|$, expanding the determinant $|\Sigma_{i\ell|\pa_H(\ell)\setminus i}|$ with respect to the $(j,j)$ entry gives us
\begin{eqnarray*}
    |\Sigma_{j\ell|\pa_H(\ell)\setminus j}|-|\Sigma_{i\ell|\pa_H(\ell)\setminus i}|= \sigma_{jj}|\Sigma_{i\ell|\pa(\ell)\setminus \{i,j\}}| + F(\Sigma\setminus \sigma_{jj}),
\end{eqnarray*}
where $F(\Sigma\setminus \sigma_{jj})$ is some polynomial in $\BB R[\Sigma \setminus \sigma_{jj}]$.
Now, if there exists a generating set $\{f_1,f_2,\ldots, f_m\}$ for $\ker (\phi^*_{H,c'})$ which lies completely in $\BB R[\Sigma\setminus \sigma_{jj}]$, we would have
\[
\sigma_{jj}|\Sigma_{i\ell|\pa(\ell)\setminus \{i,j\}}| + F(\Sigma\setminus \sigma_{jj})=\sigma_{jj}\sum_{t=1}^m h'_tf_t+\sum_{t=1}^m h''_t f_t,
\]
for some $h'_i$ and $h''_i$ in $\BB R[\Sigma]$.
This means that $|\Sigma_{i\ell|\pa(\ell)\setminus \{i,j\}}|$ is equal to $\sum_{t=1}^m h'_tf_t$, implying that $|\Sigma_{i\ell|\pa(\ell)\setminus \{i,j\}}|$ lies in $\ker (\phi^*_{H,c'})$.
However, since $i \rightarrow l$ is an edge in $H$, there is no CI relation of the form $\CI{X_i,X_\ell|X_C}$ for any set $C$ that is satisfied by any distribution faithful to $H$.
Since, there exist distributions in $\CC M(H, c')$ faithful to $H$ by \Cref{prop: edge faithful}, this brings us to a contradiction of the assumption that there exists a generating set of $\ker (\phi^*_{H,c'})$ which does not involve $\sigma_{jj}$.
Hence, any generating set of $\ker (\phi^*_{H,c})$ must contain a polynomial having some term divisible by $\sigma_{jj}$.

Now, as $j$ is a sink node in $G$, we have already shown in the proof of \Cref{thm: ident single edge color} that there cannot exist any irreducible polynomial in the generating set of $\ker (\phi^*_{G,c})$ where one of the terms is divisible by $\sigma_{jj}$. Thus, we can conclude that $\ker (\phi^*_{G,c})\neq \ker (\phi^*_{H,c'})$, implying that $\CC M(G,c)\neq \CC M(H,c')$.
\end{proof}

\section{Additional Details on BPEC DAGs}
\label{app: BPEC additional details}

BPEC DAGs are examples of the so-called compatibly colored DAGs introduced by \cite{makam2022symmetries}.
A colored DAG $(G,c)$ is called \emph{compatibly colored} if whenever $c(ij) = c(k\ell)$ for edges $ij,k\ell\in E$, we have that $c(j) = c(\ell)$.
In words, a compatibly colored DAG is a colored DAG in which any two edges of the same color point towards nodes of the same color.
Since BPEC DAGs are defined by the property that two edges of the same color must point to the same vertex, then they are trivially compatibly colored.

Compatibly colored DAGs are compatible with standard, efficient methods for computing the maximum likelihood estimate of a (uncolored) Gaussian DAG model.
Specifically, let $\mathbf{x} = (x_{i,j})\in \mathbb{R}^{V\times n}$ be a data matrix in which the columns form a random sample from a joint multivariate Gaussian distribution $P$ on $X = (X_i)_{i\in V}$ with density $f$.
If $P$ belongs to the Gaussian DAG model $\CC M(G) = \varphi_G(\Lambda, \Omega)$, we have that the likelihood function for $(\Lambda, \Omega)$ satisfies
\[
\begin{split}
L(\Lambda, \Omega \mid \mathbf{x})
&= \prod_{i=1}^n f(\mathbf{x}_{:i} \mid \Lambda, \Omega),\\ %
&= \prod_{i=1}^n\prod_{k\in V} f(\mathbf{x}_{k,i} \mid \mathbf{x}_{\pa_G(k), i},\Lambda, \Omega),\\ %
&= \prod_{i=1}^n\prod_{k\in V} f(\mathbf{x}_{k,i} \mid \mathbf{x}_{\pa_G(k), i}, \Lambda_{\pa_G(k), k}, \omega_k),\\ %
&= \prod_{i=1}^n\prod_{k\in V}\frac{1}{\sqrt{2\pi\omega_k}} \exp\left(-\frac{1}{2\omega_k}\left(\mathbf{x}_{k,i} - \Lambda_{\pa_G(k), k}^T\mathbf{x}_{\pa_G(k), i}\right)^2\right), %
\end{split}
\]
where in the last line we use the fact that $X_k\mid X_{\pa_G(k)}=\mathbf{x}_{\pa_G(k),i} \sim \textrm{N}\left(\sum_{j\in\pa_G(k)}\lambda_{jk}x_j, \omega_k\right)$.
This follows from the structural equation $X_k = \sum_{j\in\pa_G(k)}\lambda_{jk}X_j + \varepsilon_k$ with $\varepsilon_k\sim \textrm{N}(0,\omega_k)$.
Flipping the order of the products, we obtain
\[
\begin{split}
   L(\Lambda, \Omega \mid \mathbf{x})
&= \prod_{k\in V}\frac{1}{(\sqrt{2\pi\omega_k})^n}\exp\left(-\frac{1}{2\omega_k}\left|\left|\mathbf{x}_{k:} - \Lambda_{\pa_G(k), k}^T\mathbf{x}_{\pa_G(k):}\right|\right|^2\right),
\end{split}
\]
which reveals the classic approach for computing the MLE of the (uncolored) Gaussian DAG models parameters, specifically solving $|V|$ least squares problems to recover $\hat\omega_k$ and $\hat{\lambda}_{jk}$ for all $j\in\pa_G(k)$, for all $k \in V$.

Note now that if we restrict to finding the MLE over the colored DAG model $\CC M(G,c)$ we need to adjust the above likelihood function to account for the parameter homogeneities $\omega_k = \omega_\ell$ whenever $c(k) = c(\ell)$ and $\lambda_{jk} = \lambda_{t\ell}$ whenever $c(jk) = c(t\ell)$.
In the case that $c$ is a compatible coloring, we will always have $\omega_k = \omega_\ell$ whenever $\lambda_{jk} = \lambda_{t\ell}$.  This is significant due to the factor of $\frac{1}{\omega_k}$ in front of the summand in each exponential in the above product.
Namely, the compatible coloring means that we can group factors in the above likelihood function and still formulate the MLE problem as the solution to a collection of disjoint least squares problems.
Specifically, when $c$ is a compatible coloring we have
\[
\begin{split}
   L(\Lambda, \Omega \mid \mathbf{x})
&= \prod_{c \in c(V)}\frac{1}{(\sqrt{2\pi\omega_c})^{n|c^{-1}(c)|}}\exp\left(-\frac{1}{2\omega_c}\sum_{k\in c^{-1}(c)}\left|\left|\mathbf{x}_{k:} - \Lambda_{\pa_G(k), k}^T\mathbf{x}_{\pa_G(k):}\right|\right|^2\right).
\end{split}
\]
By taking the logarithm and solving one least squares problem for each vertex color $c\in c(V)$ we obtain the MLE of the model parameters.
Note specifically, that we would not be able to solve such a collection of least squares problem if some $jk$ was colored the same as some $t\ell$ where $k$ and $\ell$ were colored differently.
In this way, compatible colorings are exactly the colorings that are compatible with classic solutions to the likelihood equations for Gaussian DAG models.

Recall that the Bayesian Information Criterion (BIC) of a statistical model $M$ with parameters $\theta$ for the random sample $\mathbf{x}$ is
\[
\textrm{BIC}(M; \theta, \mathbf{x}) = k\ln(n) - 2\log L(\hat\theta \mid \mathbf{x}), %
\]
where $k$ denotes the number of free parameters (typically taken as $\theta = (\theta_1,\ldots, \theta_d)$) and $\hat\theta$ denotes the maximum likelihood estimate of the model parameters given the data $\mathbf{x}$.
In graphical models, it is typical to work with the associated penalized maximum likelihood function, which is a scalar multiple of the BIC:
\[
\textrm{score}(M; \theta, \mathbf{x}) = -\frac{1}{2}\textrm{BIC}(M; \theta, \mathbf{x}) = \log L(\hat\theta \mid \mathbf{x}) - \frac{\ln(n)k}{2}.
\]
Hence, the optimal model will \emph{maximize} $\textrm{score}(M;\theta, \mathbf{x})$.

For the colored DAG model $\CC M(G,c)$ we have $|c(V)| + |c(E)|$ free parameters, one for each vertex color and one for each edge color.
When $(G,c)$ is compatibly colored, the MLE $(\hat\Lambda, \hat\Omega)$ is obtained by solving the $|c(V)|$ least squares problems as outlined above.
Specifically, letting $\pa_{(G,c)}(c) = \{ e\in c(E) : e = c(jk) \textrm{ for some } jk\in E \textrm{ with } k \in c^{-1}(c)\}$ for the vertex color $c\in c(V)$, we have
\[
\begin{split}
    (\hat\lambda_{e})_{e \in \pa_{(G,c)}(c)} &= \textrm{argmin}_{\beta\in\mathbb{R}^{|\pa_{(G,c)}(c)|}}\sum_{k\in c^{-1}(c)}\left|\left|\mathbf{x}_{k:} - \Lambda_{\pa_G(k), k}^T\mathbf{x}_{\pa_G(k):}\right|\right|^2, \\
    \hat\omega_c &= \frac{1}{n|c^{-1}(c)|}\sum_{k\in c^{-1}(c)}\left|\left|\mathbf{x}_{k:} - \hat\Lambda_{\pa_G(k), k}^T\mathbf{x}_{\pa_G(k):}\right|\right|^2
\end{split}
\]
for $c\in c(V)$.
In the case of BPEC-DAGs, the vertex coloring is the identity map; i.e., $c(V) = V$.
Hence, this formula reduces to
\[
\begin{split}
    (\hat\lambda_{e})_{e \in \pa_{(G,c)}(k)} &= \textrm{argmin}_{\beta\in\mathbb{R}^{|\pa_{(G,c)}(k)|}}\left|\left|\mathbf{x}_{k:} - \Lambda_{\pa_G(k), k}^T\mathbf{x}_{\pa_G(k):}\right|\right|^2, \\
    \hat\omega_k &= \frac{1}{n}\left|\left|\mathbf{x}_{k:} - \hat\Lambda_{\pa_G(k), k}^T\mathbf{x}_{\pa_G(k):}\right|\right|^2
\end{split}
\]
for $k\in V$.
Thus, given a random sample $\mathbf{x}$ and a BPEC-DAG $(G,c)$ specifying the model parameters $(\Lambda, \Omega)$, the GECS algorithm is computing the score
\[
\textrm{score}(G,c; \mathbb{D}) = \log L(\hat\Lambda, \hat\Omega \mid \mathbb{D}) - \frac{\ln(n)(|c(V)| + |c(E)|)}{2}.
\]

\section{Pseudocode and additional experiments for the GECS algorithm}\label{section:pseudocode}

\begin{algorithm}
  \scriptsize
  \label{alg:ecDAGmodify}
  \raggedright
  \hspace*{\algorithmicindent} \textbf{Input:} A BPEC-DAG $(G,c)$ with $G = (V,E)$.\\
  \hspace*{\algorithmicindent} \textbf{Input:} A random sample $\mathbb{D}$ of size $n$ from a distribution of over $|V|$ variables.\\
  \hspace*{\algorithmicindent} \textbf{Input:} A list of functions $\TT f_1, \dots, \TT f_k$ modifying a BPEC-DAG.\\
  \hspace*{\algorithmicindent} \textbf{Output:} A BPEC-DAG $(G, c)$.
  \begin{algorithmic}[1]
    \Loop
      \State{$\RM{score} \gets \RM{BIC}(G,c; \BB D)$}
      \For{$\TT{f}$ \textbf{in} $\TT f_1, \dots, \TT f_k$} \Comment{Keep modifying the BPEC-DAG}
        \State{$G', c' \gets \TT f(G, c; \BB D)$}
        \If{$\RM{score} < \RM{BIC}(G', c'; \BB D)$}
          \State{$G, c \gets G', c'$}
        \EndIf
      \EndFor
      \If{$\RM{score} = \RM{BIC}(G, c; \BB D)$} \Comment{Stop if local maximum is reached}
        \State \textbf{break}
      \EndIf
    \EndLoop
    \State \Return $(G, c)$
  \end{algorithmic}
  \caption{The $\TT{edDAGmodify}$ routine used to implement the three phases of GECS.}
\end{algorithm}

\begin{algorithm}
  \scriptsize
  \caption{\texttt{addColor}}
  \label{alg:addColor}
  \raggedright
  \hspace*{\algorithmicindent} \textbf{Input:} A BPEC-DAG $(G,c)$ with $G=(V,E)$. \Comment{$c$ is dictionary object with keys being colors}\\
  \hspace*{\algorithmicindent} \textbf{Input:} A random sample $\mathbb{D}$ of size $n$ from a distribution of over $|V|$ variables.\\
  \hspace*{\algorithmicindent} \textbf{Output:} A BPEC-DAG $(G, c)$.
  \begin{algorithmic}[1]
    \State{\texttt{possible-additions} $\gets \emptyset$}
    \For{$i$ \textbf{in} $V$}
        \State{\texttt{non-adjacencies} $\gets \{j\in V : ij\notin E \textrm{ and } ji\notin E$\}}
        \For{ $p_1,p_2$ \textbf{in} \texttt{non-adjacencies}}
            \State{$E' \gets E \cup \{p_1i, p_2i\}$}
            \State{$G' \gets (V, E')$}
            \State{$c' \gets c + \{\textrm{new color} : \{p_1i,p_2i\}\}$}
            \If{$G'$ is acyclic}
                \State{\texttt{possible-additions} $\gets \texttt{possible-additions}\cup\{(G',c')\}$}
            \EndIf
        \EndFor
    \EndFor
    \State{$(G^*, c^*) \gets \arg\max_{(G',c')\in \texttt{possible-additions}}\texttt{score}(G',c'; \BB D)$}
    \If{$\texttt{score}(G',c'; \BB D) > \texttt{score}(G,c; \BB D)$}
        \State{$(G,c) \gets (G',c')$}
    \EndIf
    \State \Return{$(G, c)$}
  \end{algorithmic}
\end{algorithm}

\begin{algorithm}
  \scriptsize
  \caption{\texttt{splitColor}}
  \label{alg:splitColor}
  \raggedright
  \hspace*{\algorithmicindent} \textbf{Input:} A BPEC-DAG $(G,c)$ with $G = (V,E)$. \Comment{$c$ is dictionary object with keys being colors}\\
  \hspace*{\algorithmicindent} \textbf{Input:} A random sample $\mathbb{D}$ of size $n$ from a distribution of over $|V|$ variables.\\
  \hspace*{\algorithmicindent} \textbf{Output:} A BPEC-DAG $(G, c)$.
  \begin{algorithmic}[1]
    \State{$\texttt{possible-splits} \gets \emptyset$}
    \State{$\texttt{splittable-colors} \gets \{\textrm{color} \in c.\textrm{keys} : \textrm{ $c[\textrm{color}]$ has size at least $4$}\}$}
    \For{$\textrm{color}\in\texttt{splittable-colors}$}
        \For{$ij,kj \in c[\textrm{color}]$}
            \State{$E' \gets E$}
            \State{$G'\gets (V,E')$}
            \State{$c' \gets c \textrm{ with $c[\textrm{color}] \gets c[\textrm{color}]\setminus\{ij, kj\}$ and $c[\textrm{new-color}] \gets \{ij,kj\}$}$}
            \State{$\texttt{possible-splits} \gets \texttt{possible-splits}\cup\{(G',c')\}$}
        \EndFor
    \EndFor
    \State{$(G^*, c^*) \gets \arg\max_{(G',c')\in \texttt{possible-splits}}\texttt{score}(G',c'; \BB D)$}
    \If{$\texttt{score}(G',c'; \BB D) > \texttt{score}(G,c; \BB D)$}
        \State{$(G,c) \gets (G',c')$}
    \EndIf
    \State \Return{$(G, c)$}
  \end{algorithmic}
\end{algorithm}

\begin{algorithm}
  \scriptsize
  \caption{\texttt{addEdge}}
  \label{alg:addEdge}
  \raggedright
  \hspace*{\algorithmicindent} \textbf{Input:} A BPEC-DAG $(G,c)$ with $G = (V,E)$. \Comment{$c$ is dictionary object with keys being colors}\\
  \hspace*{\algorithmicindent} \textbf{Input:} A random sample $\mathbb{D}$ of size $n$ from a distribution of over $|V|$ variables.\\
  \hspace*{\algorithmicindent} \textbf{Output:} A BPEC-DAG $(G, c)$.
  \begin{algorithmic}[1]
    \State{\texttt{possible-edge-additions} $\gets \emptyset$}
    \State{$\texttt{nonedges} \gets \{ij: ij\notin E\}$}
    \For{$ij\in \texttt{nonedges}$}
        \State{$\texttt{family-colors} \gets \{\textrm{color}\in \textrm{c.keys} : kj\in c[\textrm{color}] \textrm{ for some $k\in V$}\}$}
        \For{$\textrm{color}\in \texttt{family-colors}$}
            \State{$E' \gets E\cup\{ij\}$}
            \State{$G' \gets (V, E')$}
            \State{$c' \gets c \textrm{ with c[\textrm{color}]+=\{ij\}}$}
            \If{$G'$ is acyclic}
                \State{\texttt{possible-edge-additions} $\gets \texttt{possible-edge-additions}\cup\{(G',c')\}$}
            \EndIf
        \EndFor
    \EndFor
    \State{$(G^*, c^*) \gets \arg\max_{(G',c')\in \texttt{possible-edge-additions}}\texttt{score}(G',c'; \BB D)$}
    \If{$\texttt{score}(G',c'; \BB D) > \texttt{score}(G,c; \BB D)$}
        \State{$(G,c) \gets (G',c')$}
    \EndIf
    \State \Return{$(G, c)$}
  \end{algorithmic}
\end{algorithm}

\begin{algorithm}
  \scriptsize
  \caption{\texttt{moveEdge}}
  \label{alg:moveEdge}
  \raggedright
  \hspace*{\algorithmicindent} \textbf{Input:} A BPEC-DAG $(G,c)$ with $G=(V,E)$. \Comment{$c$ is dictionary object with keys being colors}\\
  \hspace*{\algorithmicindent} \textbf{Input:} A random sample $\mathbb{D}$ of size $n$ from a distribution of over $|V|$ variables.\\
  \hspace*{\algorithmicindent} \textbf{Output:} A BPEC-DAG $(G, c)$.
  \begin{algorithmic}[1]
    \State{\texttt{possible-moves} $\gets \emptyset$}
    \For{$i\in V$}
        \State{$\texttt{family-colors} \gets \{\textrm{color}\in \textrm{c.keys} : ki\in c[\textrm{color}] \textrm{ for some $k\in V$}\}$}
        \For{$\textrm{color1, color2}\in \texttt{family-colors}$ with $|c[\textrm{color1}]| >2$}
            \For{$ki \in c[\textrm{color1}]$}
                \State{$G' \gets (V, E)$}
                \State{$c' \gets c \textrm{ with $c[\textrm{color1}]\setminus\{ki\}$ and $c[\textrm{color2}]+=\{ki\}$}$}
                \State{\texttt{possible-moves} $\gets \texttt{possible-moves}\cup\{(G',c')\}$}
            \EndFor
        \EndFor
    \EndFor
    \State{$(G^*, c^*) \gets \arg\max_{(G',c')\in \texttt{possible-moves}}\texttt{score}(G',c'; \BB D)$}
    \If{$\texttt{score}(G',c'; \BB D) > \texttt{score}(G,c; \BB D)$}
        \State{$(G,c) \gets (G',c')$}
    \EndIf
    \State \Return{$(G, c)$}
  \end{algorithmic}
\end{algorithm}

\begin{algorithm}
  \scriptsize
  \caption{\texttt{reverseEdge}}
  \label{alg:reverseEdge}
  \raggedright
  \hspace*{\algorithmicindent} \textbf{Input:} A BPEC-DAG $(G,c)$ with $G = (V,E)$. \Comment{$c$ is dictionary object with keys being colors}\\
  \hspace*{\algorithmicindent} \textbf{Input:} A random sample $\mathbb{D}$ of size $n$ from a distribution of over $|V|$ variables.\\
  \hspace*{\algorithmicindent} \textbf{Output:} A BPEC-DAG $(G, c)$.
  \begin{algorithmic}[1]
    \State{$\texttt{possible-reverses} \gets \emptyset$}
    \State{$\texttt{reversable-edges} \gets \{ij \in E : \textrm{ color class of $ij$ is size at least $2$ and reversing $ij$ preserves acyclicity}\}$}
    \For{$ij\in\texttt{reversable-edges}$}
        \State{$\texttt{family-colors} \gets \{\textrm{color}\in \textrm{c.keys} : ki\in c[\textrm{color}] \textrm{ for some $k\in V$}\}$}
        \For{$\textrm{color} \in \texttt{family-colors}$}
            \State{$E' \gets E\setminus\{ij\}\cup\{ji\}$}
            \State{$G'\gets (V,E')$}
            \State{$c' \gets c \textrm{ with $c[\textrm{color}] += \{ji\}$ and $ij$ removed from its color class}$}
            \State{$\texttt{possible-reverses} \gets \texttt{possible-reverses}\cup\{(G',c')\}$}
        \EndFor
    \EndFor
    \State{$(G^*, c^*) \gets \arg\max_{(G',c')\in \texttt{possible-reverses}}\texttt{score}(G',c'; \BB D)$}
    \If{$\texttt{score}(G',c'; \BB D) > \texttt{score}(G,c; \BB D)$}
        \State{$(G,c) \gets (G',c')$}
    \EndIf
    \State \Return{$(G, c)$}
  \end{algorithmic}
\end{algorithm}

\begin{algorithm}
  \scriptsize
  \caption{\texttt{removeEdge}}
  \label{alg:removeEdge}
  \raggedright
  \hspace*{\algorithmicindent} \textbf{Input:} A BPEC-DAG $(G,c)$ with $G = (V,E)$. \Comment{$c$ is dictionary object with keys being colors}\\
  \hspace*{\algorithmicindent} \textbf{Input:} A random sample $\mathbb{D}$ of size $n$ from a distribution of over $|V|$ variables.\\
  \hspace*{\algorithmicindent} \textbf{Output:} A BPEC-DAG $(G, c)$.
  \begin{algorithmic}[1]
    \State{$\texttt{possible-removals} \gets \emptyset$}
    \State{$\texttt{removable-edges} \gets \{ij \in E : \textrm{ color class of $ij$ is size at least $3$}\}$}
    \For{$ij\in\texttt{removable-edges}$}
        \State{$E' \gets E\setminus\{ij\}$}
        \State{$G'\gets (V,E')$}
        \State{$\textrm{color} \gets \textrm{color of edge $ij$}$}
        \State{$c' \gets c \textrm{ with $c[\textrm{color}] \gets c[\textrm{color}]\setminus\{ij\}$}$}
        \State{$\texttt{possible-removals} \gets \texttt{possible-removals}\cup\{(G',c')\}$}
    \EndFor
    \State{$(G^*, c^*) \gets \arg\max_{(G',c')\in \texttt{possible-removals}}\texttt{score}(G',c'; \BB D)$}
    \If{$\texttt{score}(G',c'; \BB D) > \texttt{score}(G,c; \BB D)$}
        \State{$(G,c) \gets (G',c')$}
    \EndIf
    \State \Return{$(G, c)$}
  \end{algorithmic}
\end{algorithm}

\begin{algorithm}
  \scriptsize
  \caption{\texttt{mergeColors}}
  \label{alg:mergeColors}
  \raggedright
  \hspace*{\algorithmicindent} \textbf{Input:} A BPEC-DAG $(G,c)$ with $G = (V,E)$. \Comment{$c$ is dictionary object with keys being colors}\\
  \hspace*{\algorithmicindent} \textbf{Input:} A random sample $\mathbb{D}$ of size $n$ from a distribution of over $|V|$ variables.\\
  \hspace*{\algorithmicindent} \textbf{Output:} A BPEC-DAG $(G, c)$.
  \begin{algorithmic}[1]
    \State{$\texttt{possible-merges} \gets \emptyset$}
    \For{$i \in V$}
        \State{$\texttt{family-colors} \gets \{\textrm{color}\in \textrm{c.keys} : ki\in c[\textrm{color}] \textrm{ for some $k\in V$}\}$}
        \If{\texttt{family-colors} has size at least $2$}
            \For{$\textrm{color1, color2}\in \texttt{family-colors}$}
                \State{$G'\gets (V,E)$}
                \State{$c' \gets c \textrm{ with $c[\textrm{color1}] \gets c[\textrm{color1}]\cup c[\textrm{color2}]$ and $c[\textrm{color2}] \gets \emptyset$}$}
                \State{$\texttt{possible-merges} \gets \texttt{possible-merges}\cup\{(G',c')\}$}
            \EndFor
        \EndIf
    \EndFor
    \State{$(G^*, c^*) \gets \arg\max_{(G',c')\in \texttt{possible-merges}}\texttt{score}(G',c'; \BB D)$}
    \If{$\texttt{score}(G',c'; \BB D) > \texttt{score}(G,c; \BB D)$}
        \State{$(G,c) \gets (G',c')$}
    \EndIf
    \State \Return{$(G, c)$}
  \end{algorithmic}
\end{algorithm}

\begin{algorithm}
  \scriptsize
  \caption{\texttt{removeColor}}
  \label{alg:removeColor}
  \raggedright
  \hspace*{\algorithmicindent} \textbf{Input:} A BPEC-DAG $(G,c)$ with $G = (V,E)$. \Comment{$c$ is dictionary object with keys being colors}\\
  \hspace*{\algorithmicindent} \textbf{Input:} A random sample $\mathbb{D}$ of size $n$ from a distribution of over $|V|$ variables.\\
  \hspace*{\algorithmicindent} \textbf{Output:} A BPEC-DAG $(G, c)$.
  \begin{algorithmic}[1]
    \State{$\texttt{possible-removals} \gets \emptyset$}
    \For{$\textrm{color}\in c.\textrm{keys}$}
        \State{$E' \gets E\setminus c[\textrm{color}]$}
        \State{$G'\gets (V,E')$}
        \State{$c' \gets c \textrm{ with $c[\textrm{color}] \gets \emptyset$}$}
        \State{$\texttt{possible-removals} \gets \texttt{possible-removals}\cup\{(G',c')\}$}
    \EndFor
    \State{$(G^*, c^*) \gets \arg\max_{(G',c')\in \texttt{possible-removals}}\texttt{score}(G',c'; \BB D)$}
    \If{$\texttt{score}(G',c'; \BB D) > \texttt{score}(G,c; \BB D)$}
        \State{$(G,c) \gets (G',c')$}
    \EndIf
    \State \Return{$(G, c)$}
  \end{algorithmic}
\end{algorithm}

\clearpage

\subsection{Additional experimental results}
To see the performance on smaller sample size, we ran the same simulations as in~\Cref{subsec:simulations} for $p \in\{6, 10\}$ nodes with $n = 250$ samples,  $\texttt{nc}\in\{2,\ldots, p - 1\}$, and $\rho\in\{0.2,0.3,0.4,0.5,0.6,0.7,0.8,0.9\}$. The SHD and coloring sensitivity results for $p = 6$ are presented in \Cref{fig:p6n250}.
The SHD and coloring sensitivity results for $p = 10$ are presented in \Cref{fig:p10n250} and \Cref{fig:p10n250cTPR}, respectively. \Cref{fig:p10n1000cTPR} gives the coloring sensitivity results for the experiments on $p = 10$ nodes with $n = 1000$ samples.

\begin{figure}[h]
    \begin{subfigure}[b]{0.24\textwidth}
    \centering
    \includegraphics[width=\textwidth]{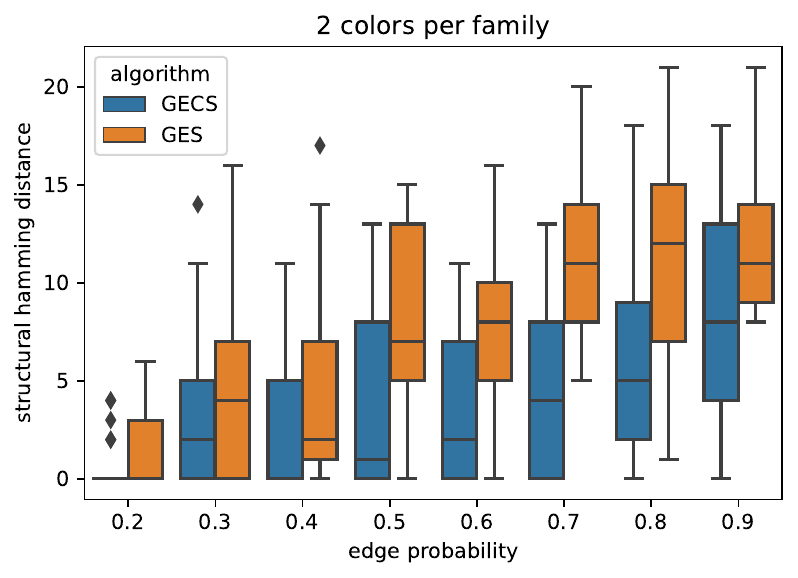}
    \caption{$\texttt{nc} = 2$}
    \label{fig:p6n250c2}
    \end{subfigure}
    \hfill
    \begin{subfigure}[b]{0.24\textwidth}
    \centering
    \includegraphics[width=\textwidth]{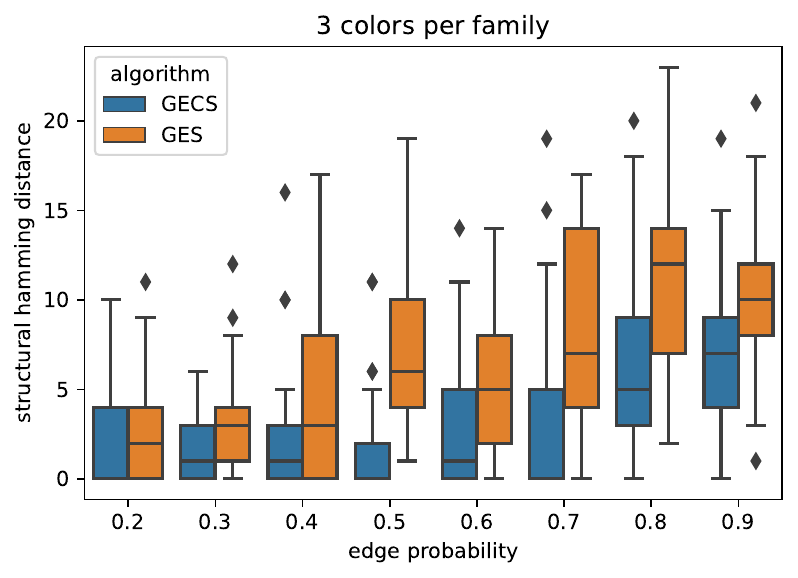}
    \caption{$\texttt{nc} = 3$}
    \label{fig:p6n250c3}
    \end{subfigure}
    \hfill
    \begin{subfigure}[b]{0.24\textwidth}
    \centering
    \includegraphics[width=\textwidth]{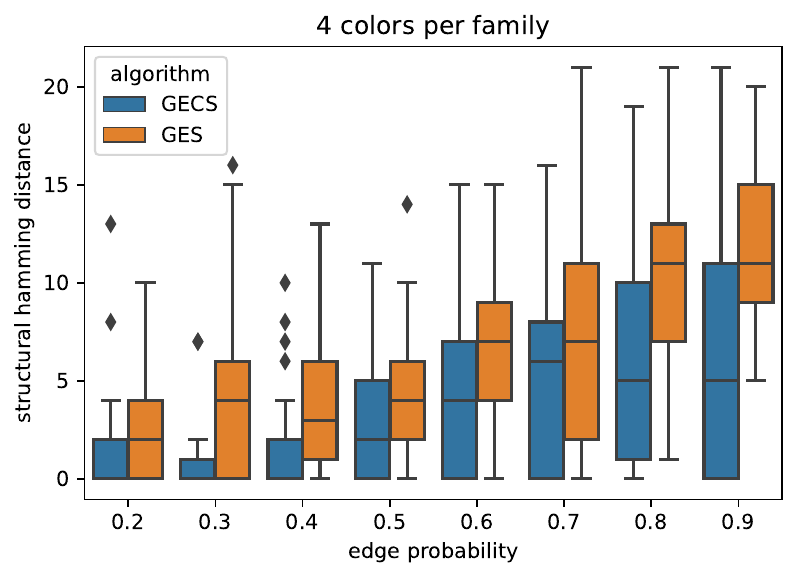}
    \caption{$\texttt{nc} = 4$}
    \label{fig:p6n250c4}
    \end{subfigure}
    \hfill
    \begin{subfigure}[b]{0.24\textwidth}
    \centering
    \includegraphics[width=\textwidth]{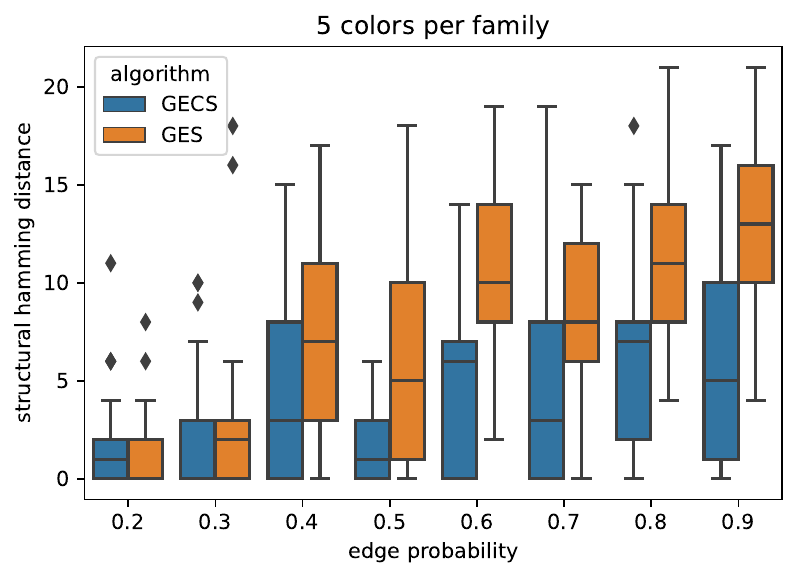}
    \caption{$\texttt{nc} = 5$}
    \label{fig:p6n250c5}
    \end{subfigure}

    \begin{subfigure}[b]{0.24\textwidth}
    \centering
    \includegraphics[width=\textwidth]{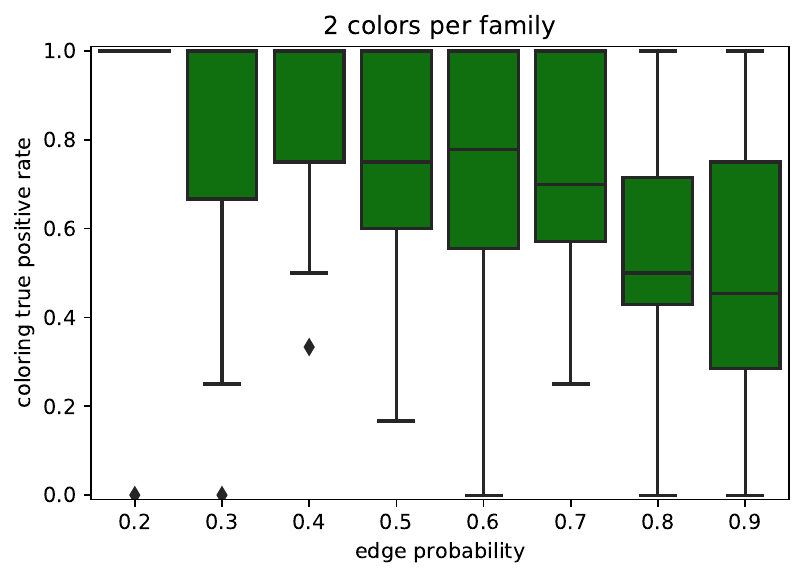}
    \caption{$\texttt{nc} = 2$}
    \label{fig:p6n250c2cTPR}
    \end{subfigure}
    \hfill
    \begin{subfigure}[b]{0.24\textwidth}
    \centering
    \includegraphics[width=\textwidth]{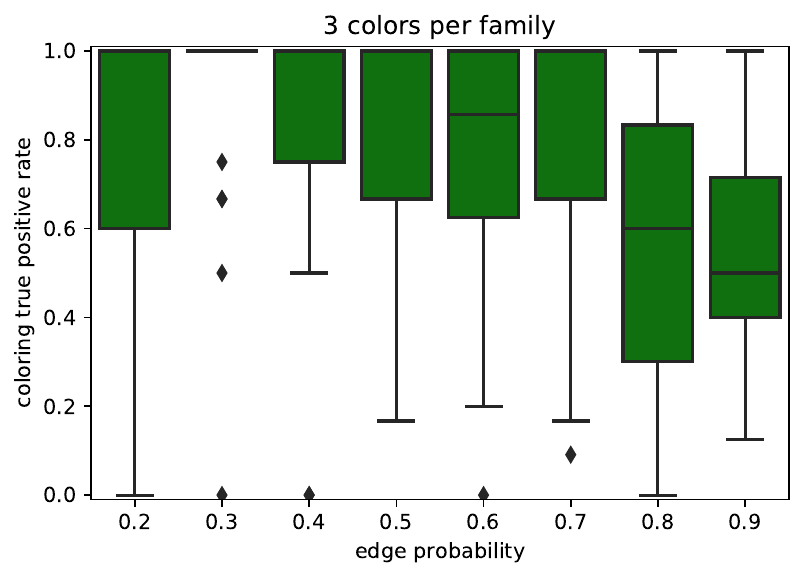}
    \caption{$\texttt{nc} = 3$}
    \label{fig:p6n250c3cTPR}
    \end{subfigure}
    \hfill
    \begin{subfigure}[b]{0.24\textwidth}
    \centering
    \includegraphics[width=\textwidth]{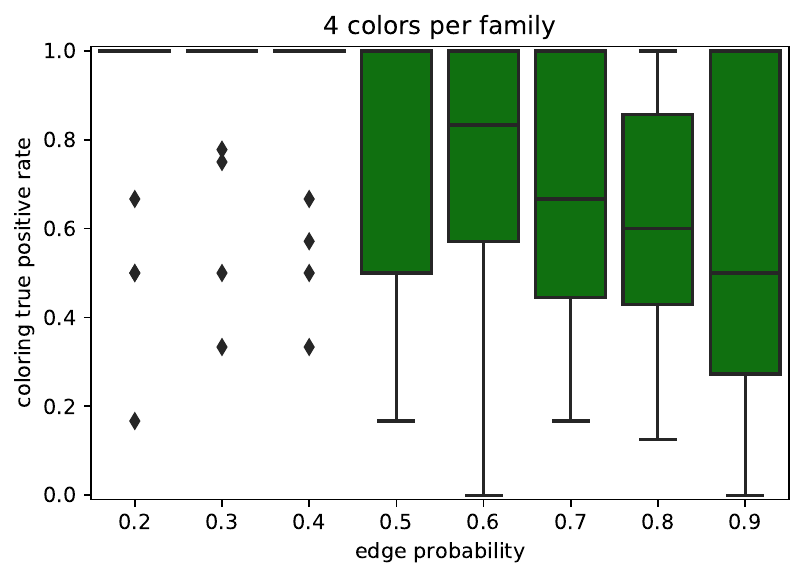}
    \caption{$\texttt{nc} = 4$}
    \label{fig:p6n250c4cTPR}
    \end{subfigure}
    \hfill
    \begin{subfigure}[b]{0.24\textwidth}
    \centering
    \includegraphics[width=\textwidth]{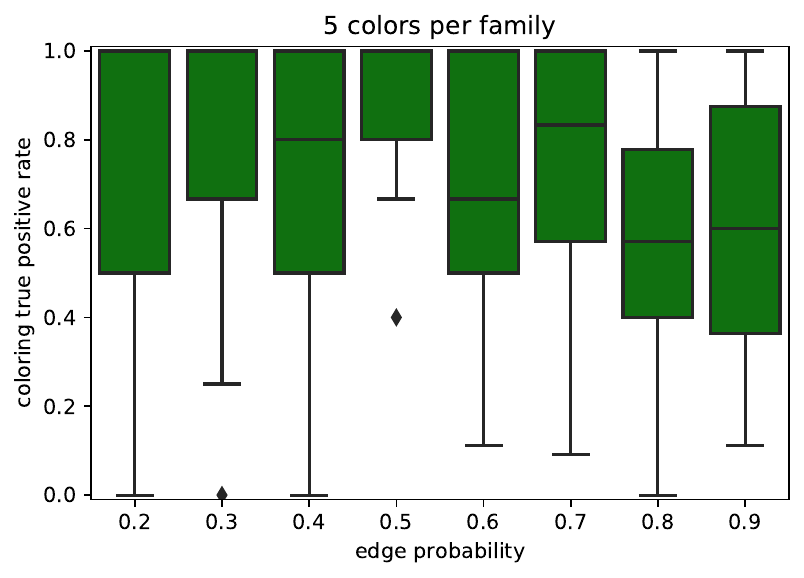}
    \caption{$\texttt{nc} = 5$}
    \label{fig:p6n250c5cTPR}
    \end{subfigure}
\caption{(a)--(d): Structural Hamming distance results for $p = 6$ and sample size $n = 250$. (e)--(h): True positive rates for learned colorings.}
\label{fig:p6n250}
\end{figure}

\begin{figure}
    \begin{subfigure}[b]{0.24\textwidth}
    \centering
    \includegraphics[width=\textwidth]{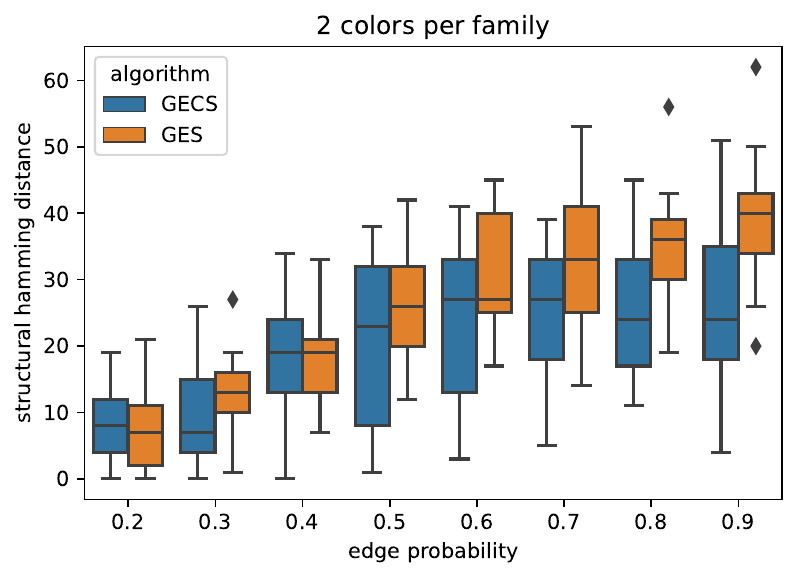}
    \caption{$\texttt{nc} = 2$}
    \label{fig:p10n250c2}
    \end{subfigure}
    \hfill
    \begin{subfigure}[b]{0.24\textwidth}
    \centering
    \includegraphics[width=\textwidth]{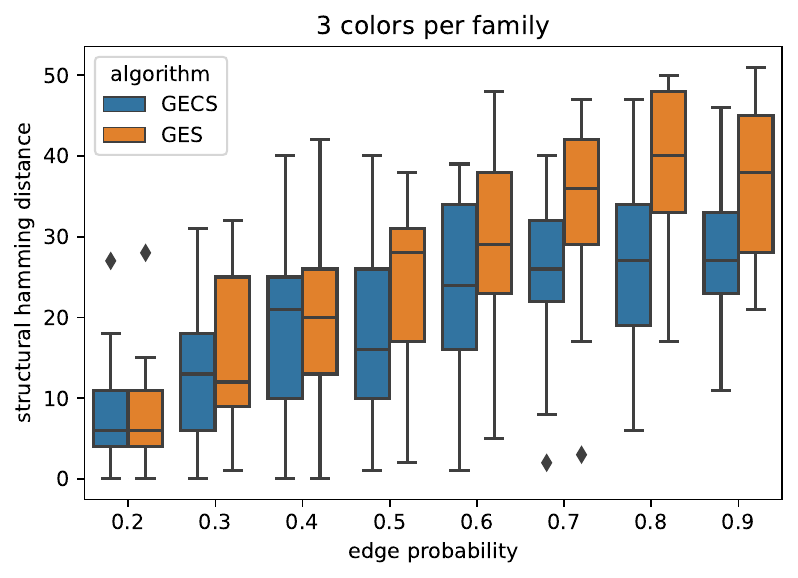}
    \caption{$\texttt{nc} = 3$}
    \label{fig:p10n250c3}
    \end{subfigure}
    \hfill
    \begin{subfigure}[b]{0.24\textwidth}
    \centering
    \includegraphics[width=\textwidth]{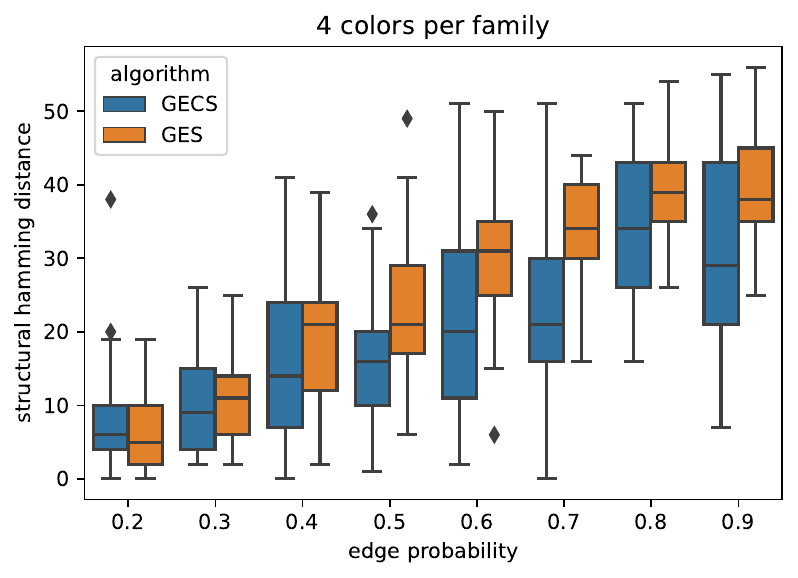}
    \caption{$\texttt{nc} = 4$}
    \label{fig:p10n250c4}
    \end{subfigure}
    \hfill
    \begin{subfigure}[b]{0.24\textwidth}
    \centering
    \includegraphics[width=\textwidth]{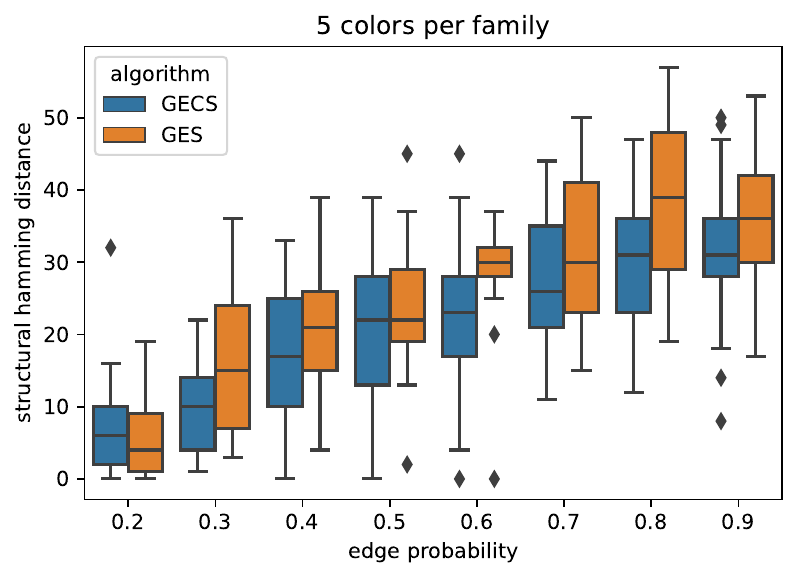}
    \caption{$\texttt{nc} = 5$}
    \label{fig:p10n250c5}
    \end{subfigure}

    \begin{subfigure}[b]{0.24\textwidth}
    \centering
    \includegraphics[width=\textwidth]{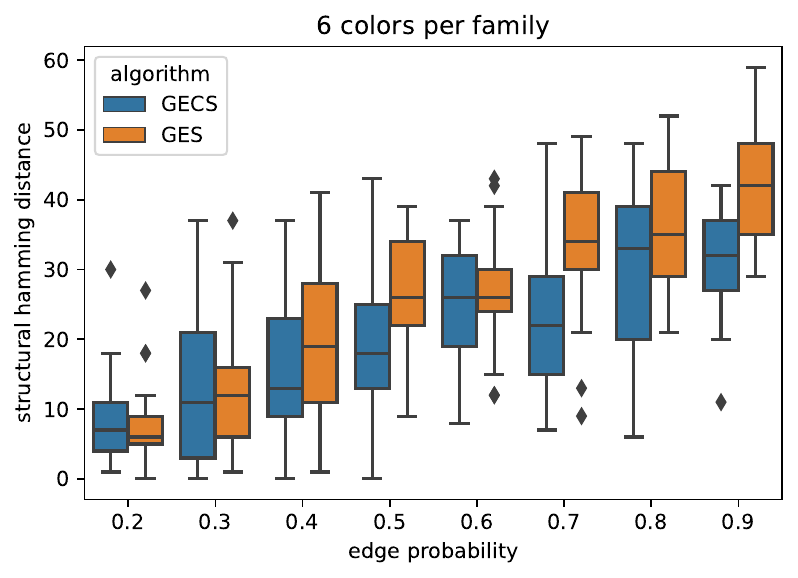}
    \caption{$\texttt{nc} = 6$}
    \label{fig:p10n250c6}
    \end{subfigure}
    \hfill
    \begin{subfigure}[b]{0.24\textwidth}
    \centering
    \includegraphics[width=\textwidth]{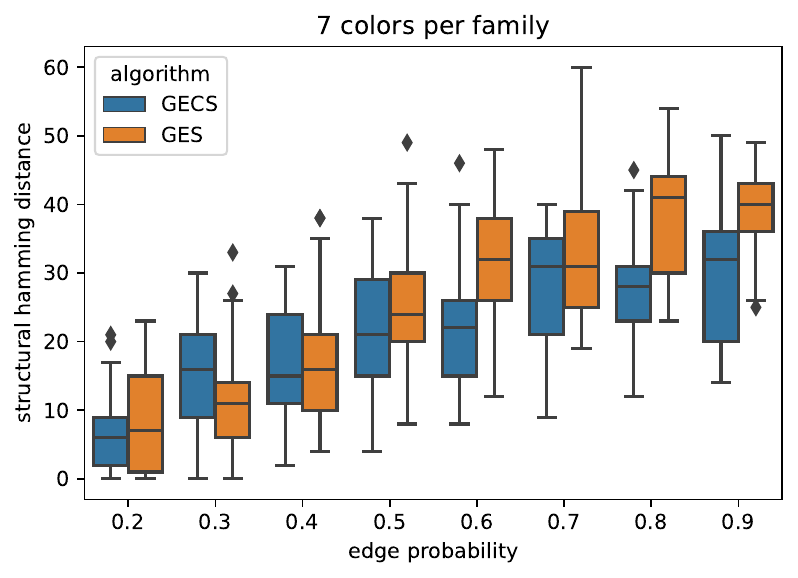}
    \caption{$\texttt{nc} = 7$}
    \label{fig:p10n250c7}
    \end{subfigure}
    \hfill
    \begin{subfigure}[b]{0.24\textwidth}
    \centering
    \includegraphics[width=\textwidth]{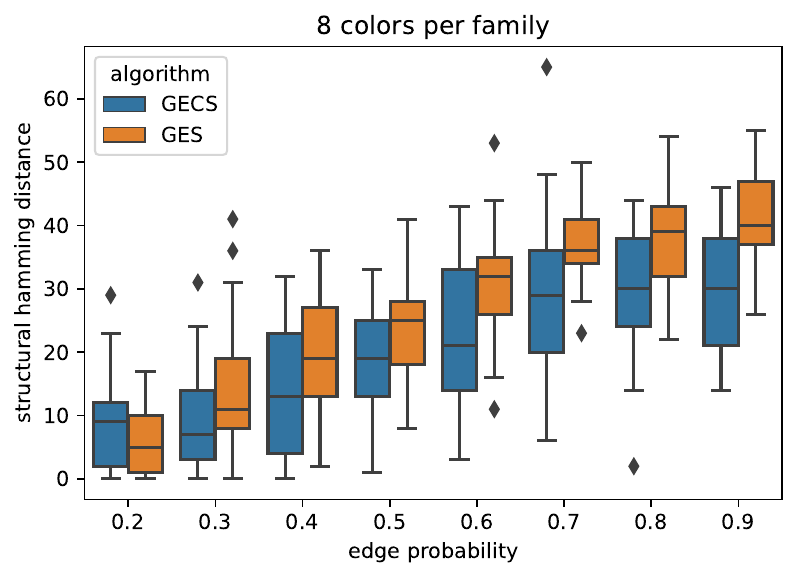}
    \caption{$\texttt{nc} = 8$}
    \label{fig:p10n250c8}
    \end{subfigure}
    \hfill
    \begin{subfigure}[b]{0.24\textwidth}
    \centering
    \includegraphics[width=\textwidth]{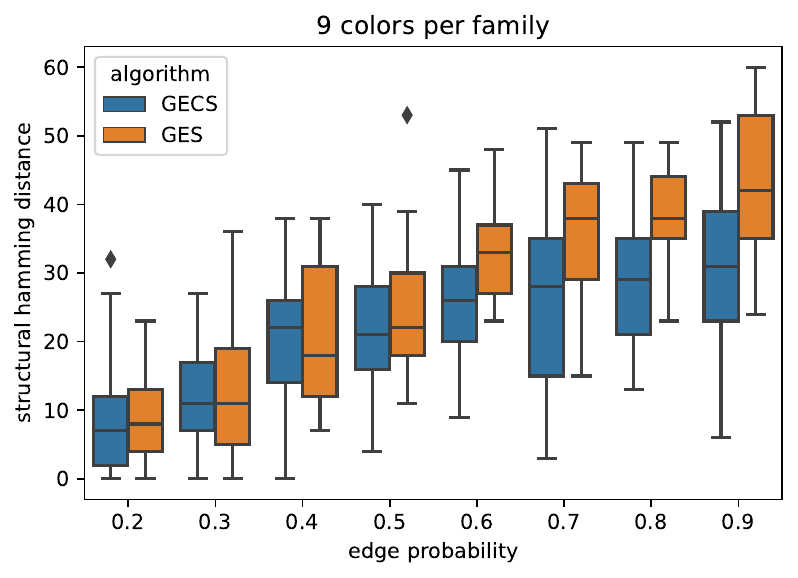}
    \caption{$\texttt{nc} = 9$}
    \label{fig:p10n250c9}
    \end{subfigure}
\caption{Structural Hamming distance results for $p = 10$ and $n = 250$ samples. $\texttt{nc}$ is the pre-specified number of colors per family for the data-generating models.}
\label{fig:p10n250}
\end{figure}

\begin{figure}
    \begin{subfigure}[b]{0.24\textwidth}
    \centering
    \includegraphics[width=\textwidth]{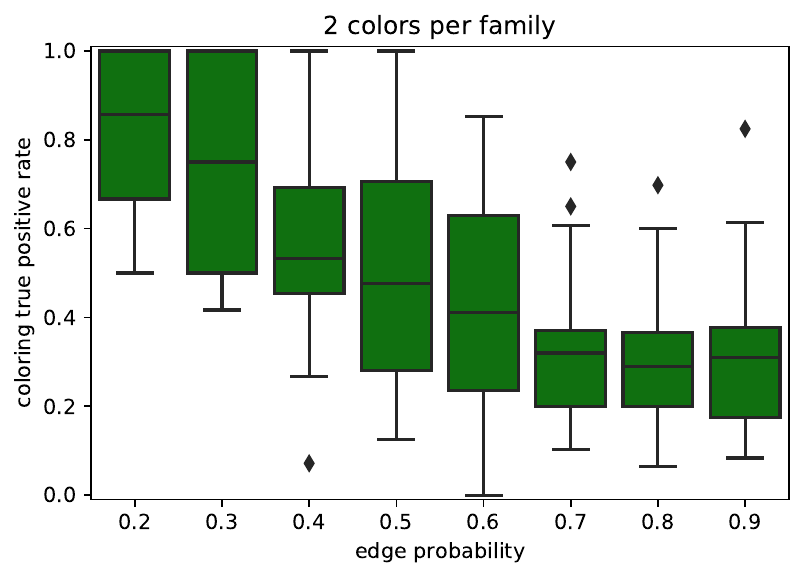}
    \caption{$\texttt{nc} = 2$}
    \label{fig:p10n250c2cTPR}
    \end{subfigure}
    \hfill
    \begin{subfigure}[b]{0.24\textwidth}
    \centering
    \includegraphics[width=\textwidth]{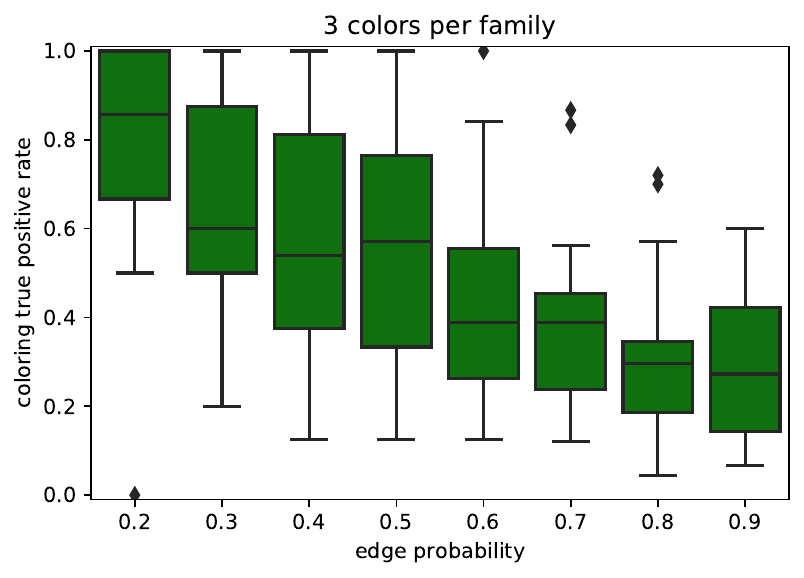}
    \caption{$\texttt{nc} = 3$}
    \label{fig:p10n250c3cTPR}
    \end{subfigure}
    \hfill
    \begin{subfigure}[b]{0.24\textwidth}
    \centering
    \includegraphics[width=\textwidth]{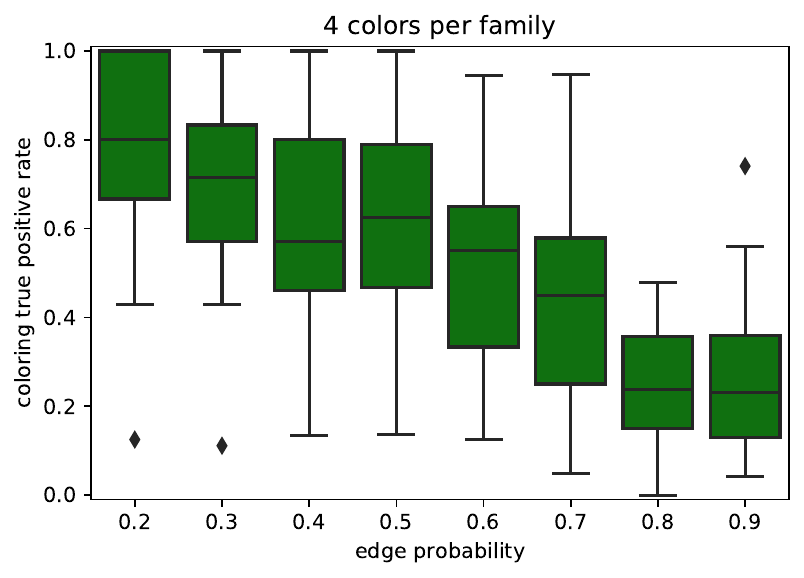}
    \caption{$\texttt{nc} = 4$}
    \label{fig:p10n250c4cTPR}
    \end{subfigure}
    \hfill
    \begin{subfigure}[b]{0.24\textwidth}
    \centering
    \includegraphics[width=\textwidth]{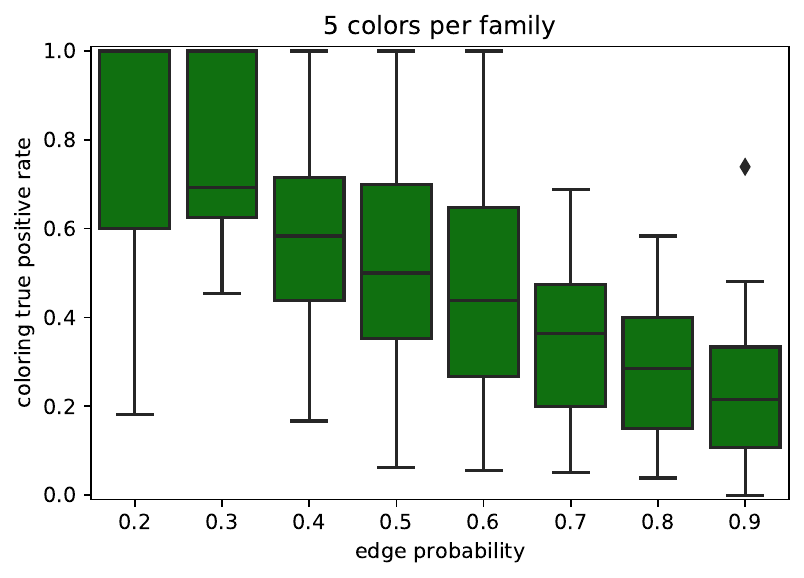}
    \caption{$\texttt{nc} = 5$}
    \label{fig:p10n250c5cTPR}
    \end{subfigure}

    \begin{subfigure}[b]{0.24\textwidth}
    \centering
    \includegraphics[width=\textwidth]{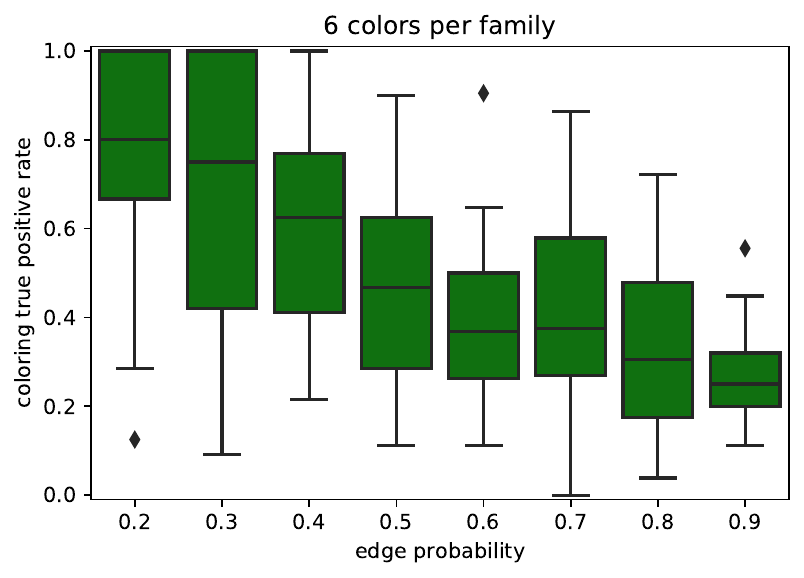}
    \caption{$\texttt{nc} = 6$}
    \label{fig:p10n250c6cTPR}
    \end{subfigure}
    \hfill
    \begin{subfigure}[b]{0.24\textwidth}
    \centering
    \includegraphics[width=\textwidth]{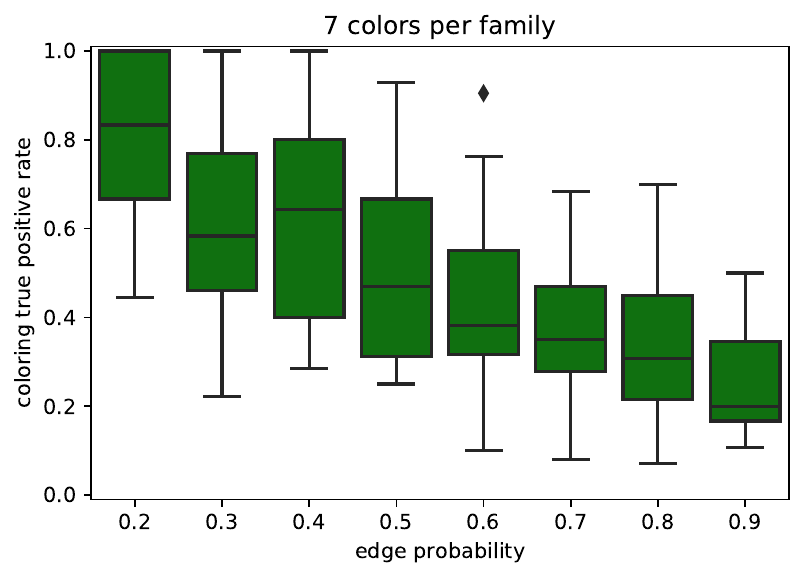}
    \caption{$\texttt{nc} = 7$}
    \label{fig:p10n250c7cTPR}
    \end{subfigure}
    \hfill
    \begin{subfigure}[b]{0.24\textwidth}
    \centering
    \includegraphics[width=\textwidth]{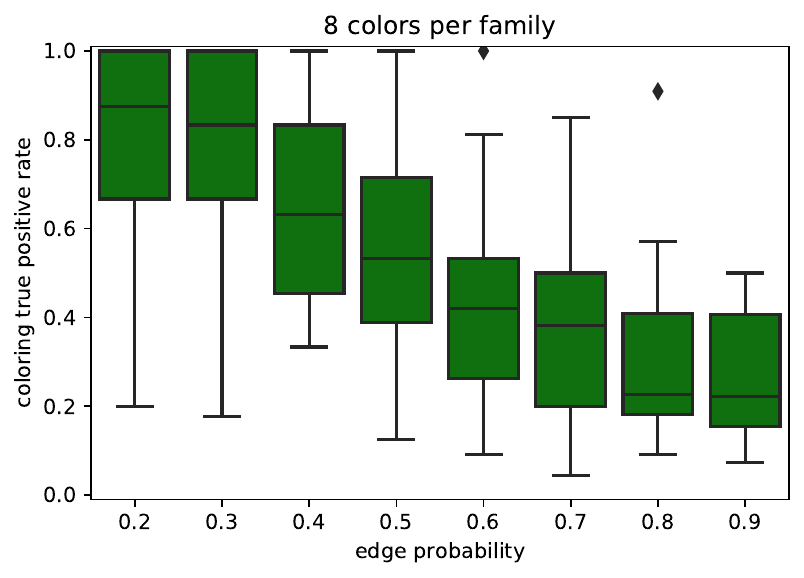}
    \caption{$\texttt{nc} = 8$}
    \label{fig:p10n250c8cTPR}
    \end{subfigure}
    \hfill
    \begin{subfigure}[b]{0.24\textwidth}
    \centering
    \includegraphics[width=\textwidth]{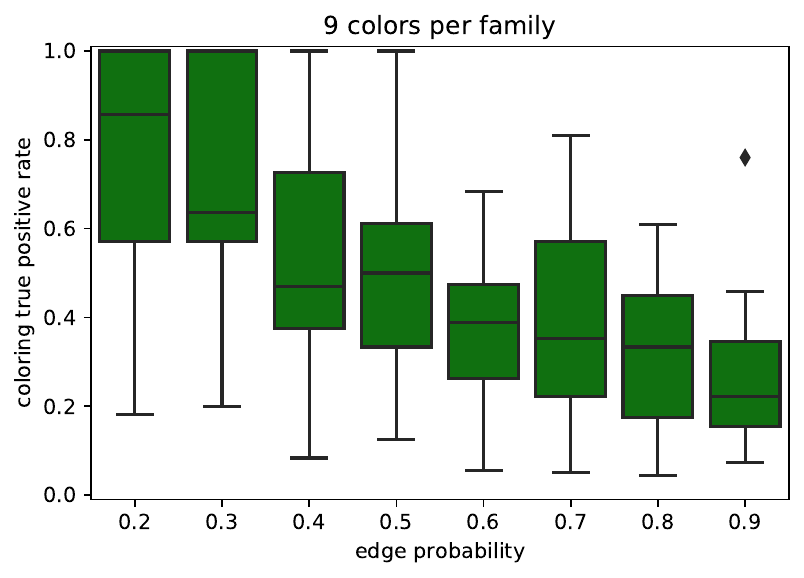}
    \caption{$\texttt{nc} = 9$}
    \label{fig:p10n250c9cTPR}
    \end{subfigure}
\caption{Color sensitivity results for $p = 10$ and $n = 250$ samples. $\texttt{nc}$ is the pre-specified number of colors per family for the data-generating models.}
\label{fig:p10n250cTPR}
\end{figure}

\begin{figure}
    \begin{subfigure}[b]{0.24\textwidth}
    \centering
    \includegraphics[width=\textwidth]{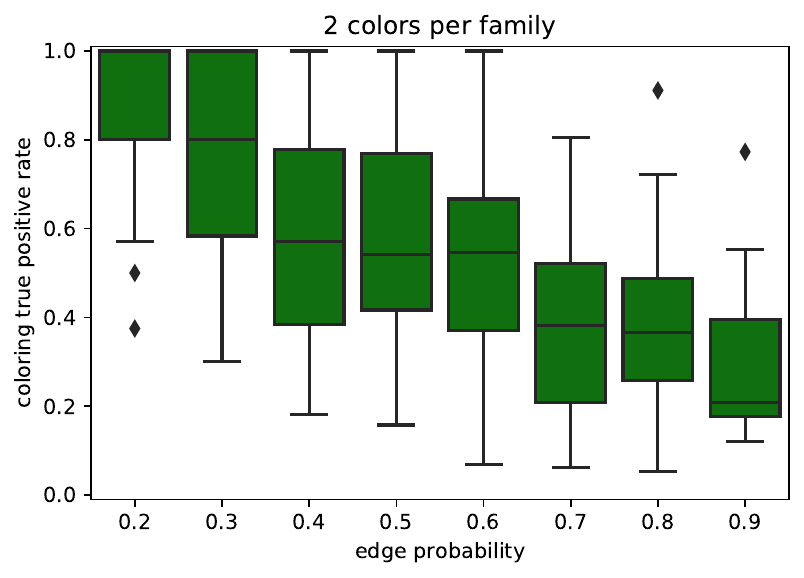}
    \caption{$\texttt{nc} = 2$}
    \label{fig:p10n1000c2cTPR}
    \end{subfigure}
    \hfill
    \begin{subfigure}[b]{0.24\textwidth}
    \centering
    \includegraphics[width=\textwidth]{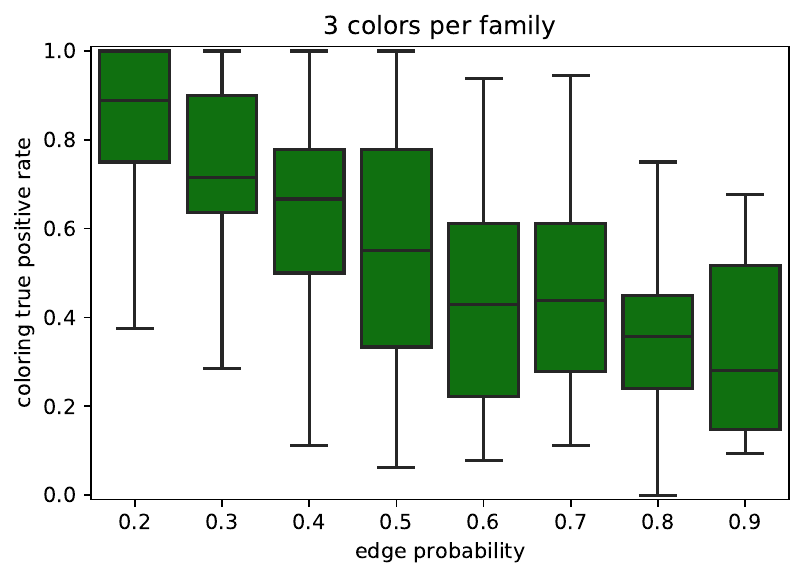}
    \caption{$\texttt{nc} = 3$}
    \label{fig:p10n1000c3cTPR}
    \end{subfigure}
    \hfill
    \begin{subfigure}[b]{0.24\textwidth}
    \centering
    \includegraphics[width=\textwidth]{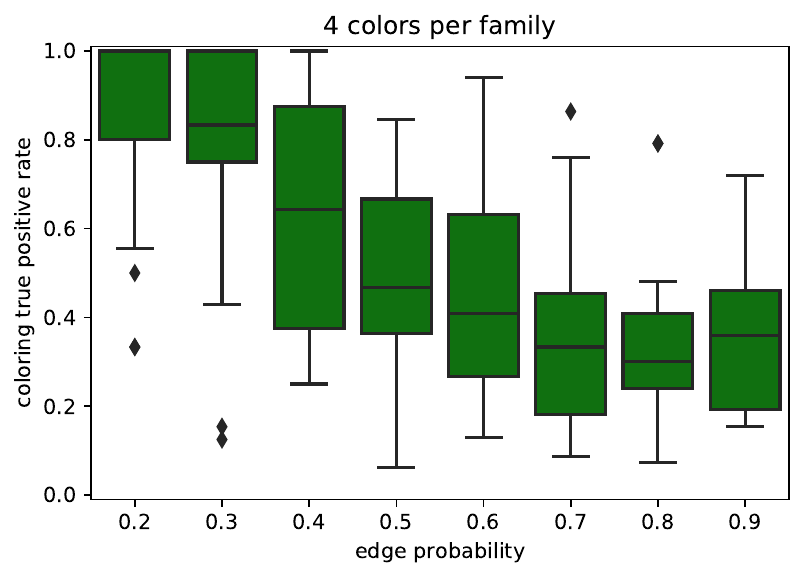}
    \caption{$\texttt{nc} = 4$}
    \label{fig:p10n1000c4cTPR}
    \end{subfigure}
    \hfill
    \begin{subfigure}[b]{0.24\textwidth}
    \centering
    \includegraphics[width=\textwidth]{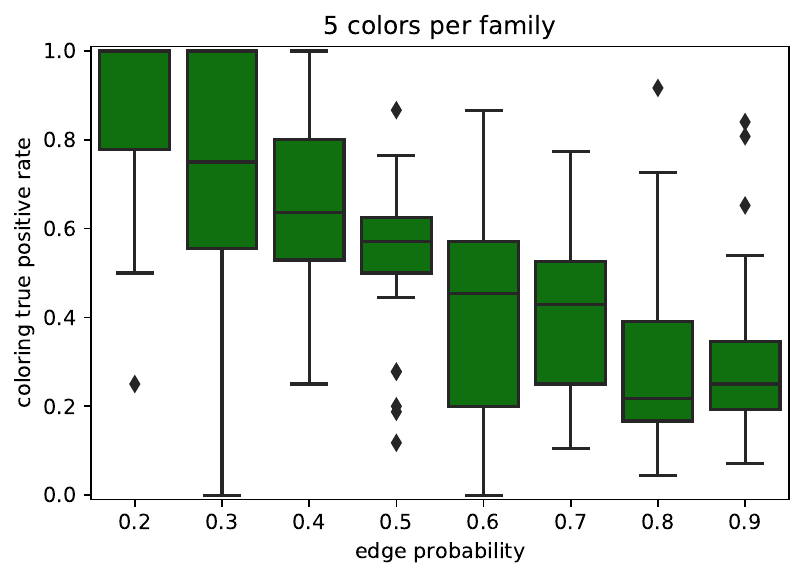}
    \caption{$\texttt{nc} = 5$}
    \label{fig:p10n1000c5cTPR}
    \end{subfigure}

    \begin{subfigure}[b]{0.24\textwidth}
    \centering
    \includegraphics[width=\textwidth]{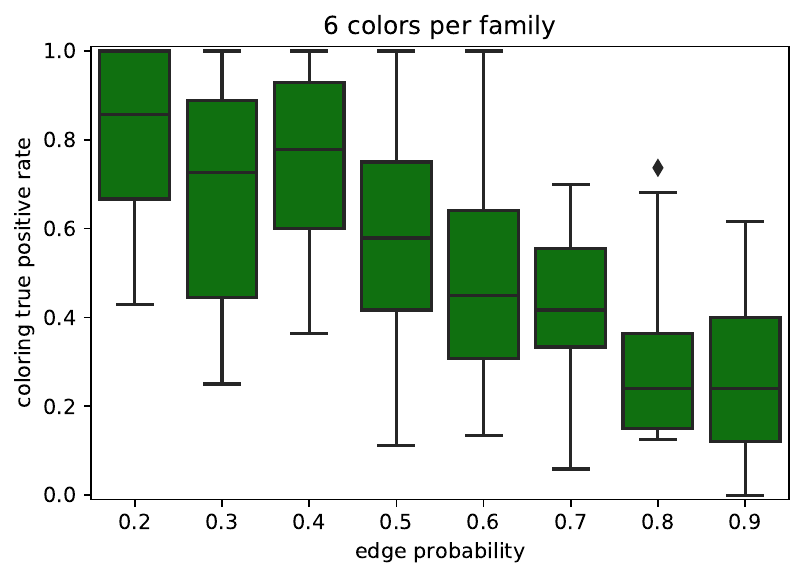}
    \caption{$\texttt{nc} = 6$}
    \label{fig:p10n1000c6cTPR}
    \end{subfigure}
    \hfill
    \begin{subfigure}[b]{0.24\textwidth}
    \centering
    \includegraphics[width=\textwidth]{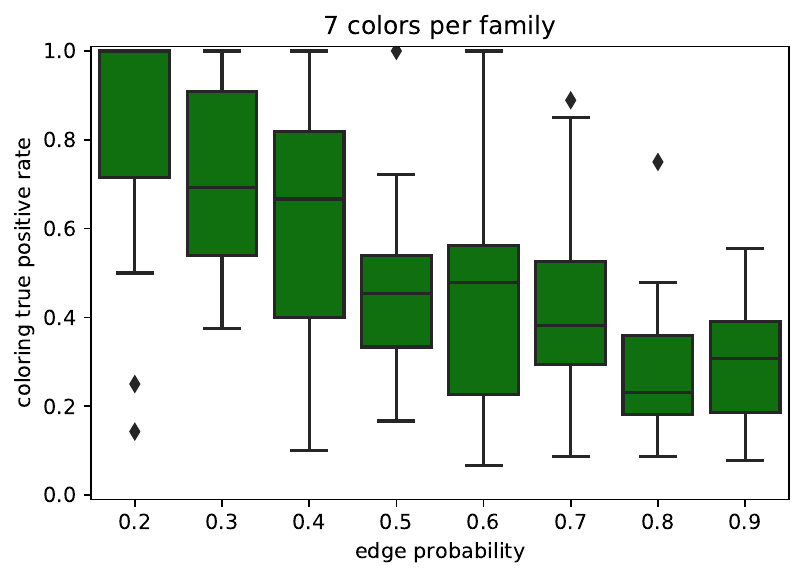}
    \caption{$\texttt{nc} = 7$}
    \label{fig:p10n1000c7cTPR}
    \end{subfigure}
    \hfill
    \begin{subfigure}[b]{0.24\textwidth}
    \centering
    \includegraphics[width=\textwidth]{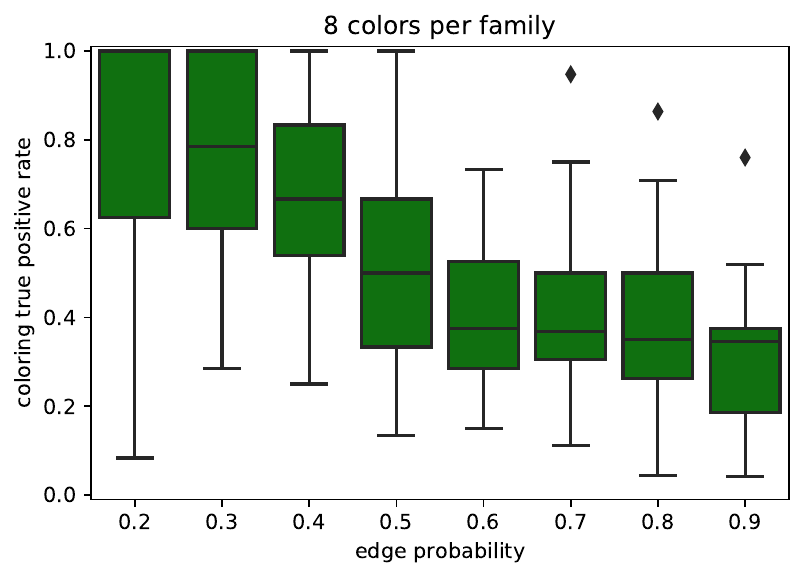}
    \caption{$\texttt{nc} = 8$}
    \label{fig:p10n1000c8cTPR}
    \end{subfigure}
    \hfill
    \begin{subfigure}[b]{0.24\textwidth}
    \centering
    \includegraphics[width=\textwidth]{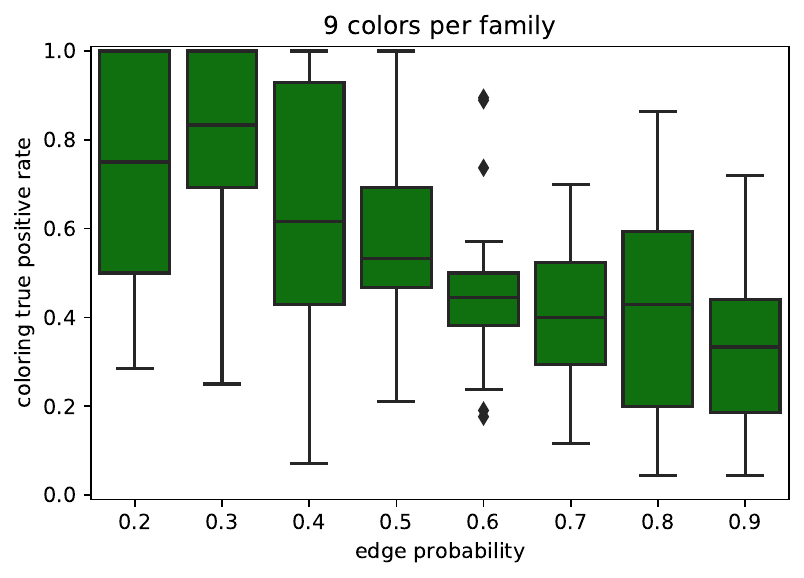}
    \caption{$\texttt{nc} = 9$}
    \label{fig:p10n1000c9cTPR}
    \end{subfigure}
\caption{Color sensitivity results for $p = 10$ and $n = 1000$ samples. $\texttt{nc}$ is the pre-specified number of colors per family for the data-generating models.}
\label{fig:p10n1000cTPR}
\end{figure}

\clearpage

\begin{figure}
    \begin{subfigure}[b]{0.45\textwidth}
    \centering
    \includegraphics[width=\textwidth]{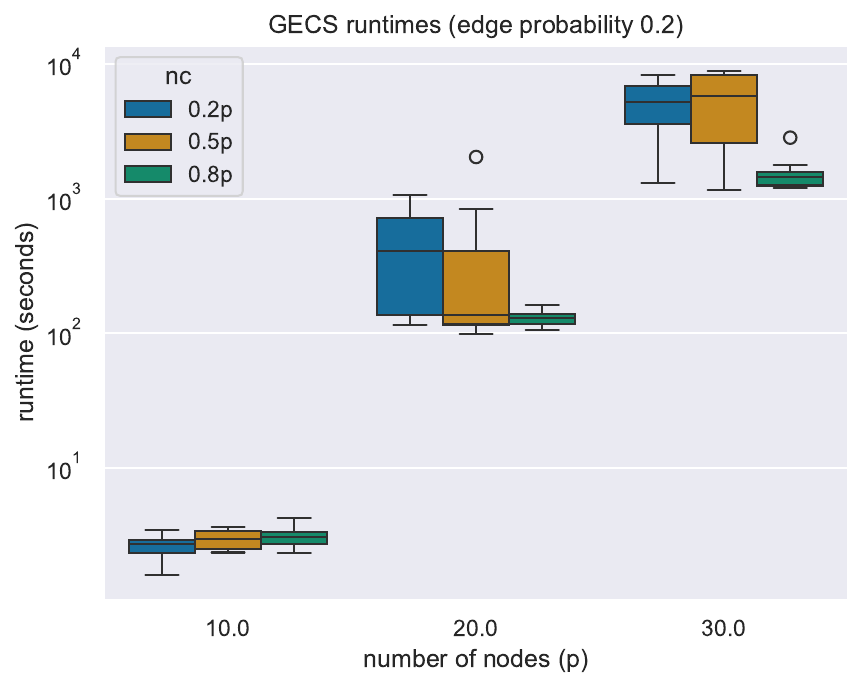}
    \caption{Edge probability $\rho =0.2$}
    \label{fig:GECS_runtimes02}
    \end{subfigure}
    \hfill
    \begin{subfigure}[b]{0.45\textwidth}
    \centering
    \includegraphics[width=\textwidth]{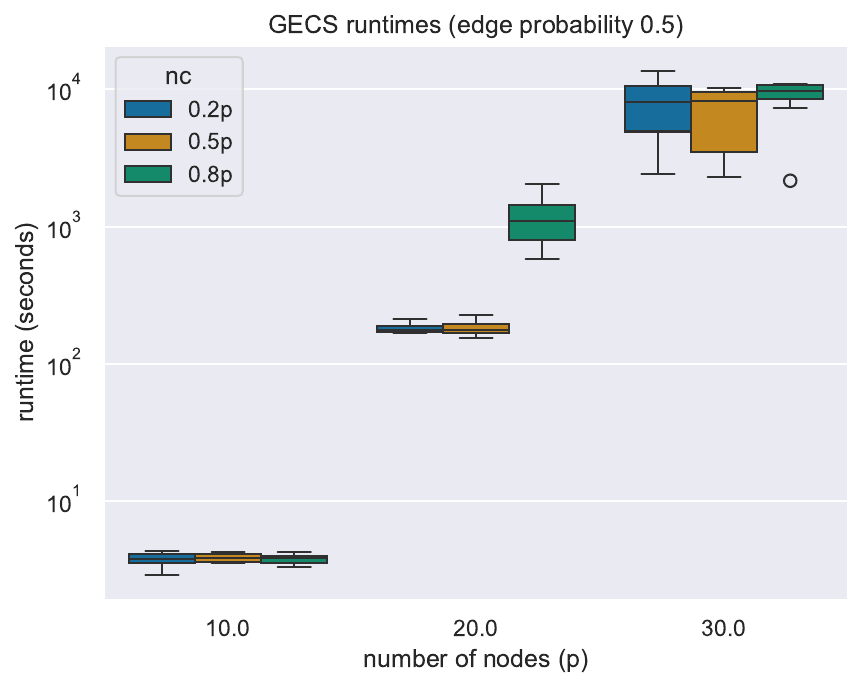}
    \caption{Edge probability $\rho =0.5$}
    \label{fig:GECS_runtimes05}
    \end{subfigure}

\caption{
Empirical runtime estimates of the implementation of GECS used in these experiments. $10$ random BPEC-DAGs were generated for each $\rho\in\{0.2,0.5\}$, $p\in\{10,20,30\}$ and $\texttt{nc}\in\{0.2p, 0.5p,0.8p\}$.  In particular, the number of randomly generated color classes is proportional to the number of nodes in each experiment.
The runtime of GECS (in seconds) for each model using $n=1000$ samples was then recorded. The predominant trend is that the implementation of GECS used in this paper tends to take several seconds for graphs on 10 nodes, several minutes for graphs on 20 nodes and several hours for graphs on 30 nodes.  This exponential growth is to be expected given that the size of the search space of colored DAGs on $p$ nodes is much larger than the size of the space of all DAGs, which is already super-exponential in $p$. Interestingly, while there is no definitive trend in this data regarding the impact of sparsity and the $\texttt{nc}$ coloring parameter on runtime, it does appear that certain pairs $(\rho, \texttt{nc})$ yield better complexity of larger $p$, e.g., $(0.2, 0.8p)$ and $(0.5, 0.2p)$.  This suggests the ratio $\rho/\texttt{nc}$ should have some impact on runtime.  Deducing this effect would require a careful complexity analysis of the submethods in Algorithms~\ref{alg:addColor}-\ref{alg:removeColor}.
}
\label{fig:GECS_runtimes}
\end{figure}

\begin{table}
    \begin{tabular}{ | l | l || l | l | }\hline
    {\bf Node label}   &   {\bf Variable} & {\bf Node label}   &   {\bf Variable} \\\hline
    0   &  fixed acidity & 6   &  total sulfur dioxide\\\hline
    1   &  volatile acidity & 7   &  density\\\hline
    2   &  citric acid & 8   &  pH\\\hline
    3   &  residual sugar& 9   &  sulphates\\\hline
    4   &  chlorides& 10   & alcohol\\\hline
    5   &  free sulfur dioxide & \multicolumn{2}{|l|}{} \\\hline
    \end{tabular}
    \caption{Variable names for the Wine Quality data set BPEC-DAGs (see \Cref{fig:wine}).}
    \label{tab:WineLabels}
\end{table}

\end{document}